\providecommand{\germ}{\mathfrak}
\DeclareMathOperator*{\tensor}{\otimes}
\newcommand{\supp}{{\rm Supp}}
\newcommand{\Alg}{\Pi}
\newcommand{\Cusp}{{\rm Cusp}}
\newcommand{\Irr}{{\rm Irr}}
\newcommand{\jm}{{\bf r}}
\newcommand{\ip}{{\bf i}}
\newcommand{\ind}{{\rm ind}}
\newcommand{\indset}{\mathfrak{I}}
\newcommand{\length}{\ell}
\newcommand{\C}{\mathbb{C}}
\newcommand{\Z}{\mathbb{Z}}
\newcommand{\N}{\mathbb{N}}
\newcommand{\mult}{\mathfrak{m}}
\newcommand{\multn}{\mathfrak{n}}
\newcommand{\inv}{\theta}
\newcommand{\ob}{\imath}
\newcommand{\Nu}{\mathcal{V}}
\newcommand{\sgm}{\operatorname{Sgm}}
\newcommand{\OO}{\mathcal{O}}              
\newcommand{\Ind}{\operatorname{Ind}}
\newcommand{\K}{\mathcal{K}}
\newcommand{\Hom}{\operatorname{Hom}}
\newcommand{\bs}{\backslash}
\newcommand{\diag}{\operatorname{diag}}
\newcommand{\GL}{\operatorname{GL}}
\newcommand{\Sp}{\operatorname{Sp}}
\newcommand{\cusp}{{\operatorname{cusp}}}
\newcommand{\abs}[1]{\left|{#1}\right|}
\newcommand{\Res}{\operatorname{Res}}
\newcommand{\sm}[4]{{\bigl(\begin{smallmatrix}{#1}&{#2}\\{#3}&{#4}
\end{smallmatrix}\bigr)}}
\newtheorem{theorem}{Theorem}[section]
\newtheorem{lemma}[theorem]{Lemma}
\newtheorem{proposition}[theorem]{Proposition}
\newtheorem{remark}[theorem]{Remark}
\newtheorem{definition}[theorem]{Definition}
\newtheorem{corollary}[theorem]{Corollary}
\newtheorem{example}[theorem]{Example}
\newtheorem{hypothesis}[theorem]{Hypothesis}
\newtheorem*{theorem*}{Theorem}
\title {Klyachko models for ladder representations}
\author{Arnab Mitra}
\address{Department of Mathematics, Technion -- Israel Institute of Technology , Haifa 3200003, Israel}
\email{00.arnab.mitra@gmail.com}
\author{Omer Offen}
\address{Department of Mathematics, Technion -- Israel Institute of Technology , Haifa 3200003, Israel}
\email{offen@tx.technion.ac.il}
\author{Eitan Sayag}
\address{Department of Mathematics, Ben-Gurion University of the Negev ,  P.O.B. 653,
B'eer Sheva 84105, ISRAEL}
\email{eitan.sayag@gmail.com}
\date{\today}
\thanks{Arnab Mitra, partially supported by postdoctoral fellowships funded by the Skirball Foundation via the Center for Advanced Studies in Mathematics at Ben-Gurion University of the Negev and the Department of Mathematics, Technion.}
\thanks{Omer Offen, partially supported by ISF grant No. 1394/12}  \thanks{Eitan Sayag, partially supported by ISF grant No. 1138/10.} 
\begin{document}

\setcounter{tocdepth}{1}

\begin{abstract}

We give a new proof for the existence of Klyachko models for unitary representations of ${\rm GL}_{n}(F)$ over a non-archimedean local field $F$.
Our methods are purely local and are based on studying distinction within the class of ladder representations introduced by Lapid and M\'inguez. 
We classify those ladder representations that are distinguished with respect to Klyachko models. We prove the hereditary property of these models for induced representations from arbitrary finite length representations. Finally, in the other direction and in the context of admissible representations induced from ladder, we study the relation between distinction of the parabolic induction with respect to the symplectic groups and distinction of the inducing data. 


\end{abstract}

\maketitle
\tableofcontents
\section{Introduction}

Let $G$ be a totally disconnected locally compact group and $H$ a closed subgroup. A smooth, complex valued representation $(\pi,V)$ of $G$ is called $H$-distinguished if there exists a non-zero linear form $\ell$ on $V$ such that $\ell(\pi(h)v)=\ell(v)$ for all $h\in H$ and $v\in V$. 

If $\pi$ is irreducible, then such a linear form realizes $\pi$ in a space of functions on $G$, to wit,
\[
\pi\simeq\{g\mapsto \ell(\pi(g^{-1})v): v\in \pi\}\subseteq C^\infty(G/H).
\]

The class of $H$-distinguished representations play an important role in the harmonic analysis of the homogeneous space $G/H$ (see \cite{MR1075727} for instance).
Furthermore, distinguished representations are crucial for the global theory of period integrals of automorphic forms, have applications to the study of special values of $L$-functions and to the description of the image of functorial lifts in the Langlands program.

This paper continues the study of \cite{MR2417789} of distinguished representations of $\GL_n$ over a non-archimedean local field $F$ with respect to Klyachko subgroups.

A Klyachko model is an induced representation $\Ind_{H_{2k,r}}^G(\psi)$ of $G=\GL_n(F)$. Here, $n=2k+r$,
$H_{2k,r}$ is the subgroup of $G$ consisting of matrices of the form $\sm{h}{X}{0}{u}$ where $h\in \Sp_{2k}(F)$, $X\in M_{2k\times r}(F)$, $u$ is an upper-triangular unipotent matrix, and $\psi$ is the character of $H_{2k,r}$ trivially extending a non-degenerate character on the upper-triangular unipotent matrices in $\GL_r(F)$. For a fixed $n$, as $k$ varies, these models `interpolate' between the well known Whittaker model (the case $k=0$) and, if $n$ is even, the sympectic model (the case $k=n/2$).

When $F$ is a finite field, they were introduced by Klyachko in \cite{MR691984}. Together they form a complete model in that case (\cite{MR1129515}), that is, they satisfy 
\[
\oplus_{k=0}^{\lfloor n/2 \rfloor} \Ind_{H_{2k,r}}^G(\psi)=\oplus_{\pi\in \hat{G}}\pi.
\]

Klyachko models over non-archimedean local fields were first studied in \cite{MR1078382} where it was observed, among other things, that some irreducible representations do not imbed into a Klyachko model. In \cite{MR2414223} it is proved that the sum $\oplus_{k=0}^{\lfloor n/2 \rfloor} \Ind_{H_{2k,r}}^G(\psi)$ is  multiplicity free. 
We shall say that an irreducible representation $\pi$ admits a Klyachko model if it can be embedded into $\oplus_{k=0}^{\lfloor n/2 \rfloor} \Ind_{H_{2k,r}}^G(\psi).$

The main result in \cite{MR2417789} prescribes a Klyachko model for any irreducible representation in the unitary dual of $\GL_n(F)$ based on Tadi\'c classification. 
This was achieved by first prescribing a Klyachko model to any Speh representation. For a general unitary representation, a hereditary property of Klyachko models for representations parabolically induced from the Speh representations is applied.

Recently, Lapid and M\'inguez introduced a class of irreducible representations of $\GL_n$ over a non-archimedean local field \cite{MR3163355}. Inspired by their presentation in the Zelevinsky classification scheme, they called them {\it ladder} representations. 
(see \S \ref{sss: ladder}  for the definition). The class of ladder representations contains the Speh representations, the building blocks of the unitary dual (see Tadi\'c classification of the unitary dual of $\GL_n(F)$ \cite{MR870688}).  Thus any irreducible unitarizable representation of $\GL_{n}(F)$ is a product of ladder representations. 


In \cite{MR2515933} we found a connection between the Klyachko model and a partition naturally obtained from the Langlands parameter of a representation. Inspired by this relation, we were led to extend our study of Klyachko models to the entire admissible dual.
The present paper provides a collection of results regarding distinction of  representations of finite length with respect to the Klyachko groups.
In particular, in Theorem \ref{thmC:main} below we classify the distinguished ladder representations in the context of Klyachko models over a non-archimedean local field $F$ of characteristic different then $2$. The special case, when $G=\GL_{2n}(F)$ and $H=\Sp_{2n}(F)$ is described in Theorem \ref{thmB:main} below. These results, together with the Hereditary property established in Thereom \ref{thmA:main} below, recovers, using only local methods, our recipe for the Klyachko model of any representation in the unitary dual of a general linear group.

To help understand the motivation for our results and techniques, we mention two general strategies that one could employ to approach the problem of classifying distinguished admissible representations in the context of a reductive $p$-adic group.  The first strategy is based on Langlands classification, the second based on the notion of imprimitive representations.

We start with the strategy based on the Langlands classification and the notion of standard modules. The smooth dual of $G$ was classified by Langlands in terms of tempered representations of Levi subgroups: Every irreducible smooth representation of $G$ is the unique irreducible quotient of a unique standard module.
Clearly, every non-zero $H$-invariant linear form on an irreducible representation produces such a linear form on its standard module.
A possible strategy for classifying $H$-distinguished representations is based on the Langlands classification: 
\begin{enumerate}
\item   Classify all $H$-distinguished standard modules;
\item   Determine if an $H$-invariant linear form on the standard module 
descends to one on the irreducible quotient. 
\end{enumerate}

To implement the first part of this strategy one can use the geometric lemma of Bernstein and Zelevinsky to analyze distinction of induced representations. 
We refer to \cite{MR2930996} and  \cite{MR3421655} for cases where a complete classification of distinguished standard modules was achieved.

Implementing the second step turns out to be subtler. An $H$-invariant linear form on a standard module will induce such a linear form on some irreducible component.  To determine whether the irreducible quotient admits 
an $H$-invariant linear form is equivalent to determining whether one of the $H$-invariant linear forms on the standard module descends to the irreducible quotient. This problem can be approached by studying the distinction properties of the maximal proper submodule. However, in general, not enough is known about its structure.

The second strategy is based on the concept of imprimitive representation. An irreducible representation of $G$ is called {\it imprimitive} if it is not parabolically induced from any proper parabolic subgroup.
Any irreducible representation is induced from an imprimitive one. 
Thus an approach to the classification problem of $H$-distinction on the smooth dual of $G$ could be: 
\begin{enumerate}
\item Classify all $H$-distinguished imprimitive representations
\item Determine the relation between $H$-distinction and parabolic induction. 
\end{enumerate}

We now focus on the case where $G=\GL_n(F)$ or a product of general linear groups. In this case, the second step might be more accessible.
We further propose to carry this step in two stages: 
\begin{itemize}
\item {\bf Hereditary Property}: Showing that $H$-distinction is compatible with parabolic induction
\item {\bf Purity Lemma}: Showing that an $H$-distinguished representation that is induced from a parabolic subgroup, must be induced from a distinguished representation of the Levi subgroup of that parabolic.  
\end{itemize}


The second strategy is problematic even for $\GL_n$ as the classification of imprimitive representations is an open problem. Nevertheless, within the class of ladder representations, the imprimitive representations are easy to describe and one of the contributions of this work is to pursue this second approach for the problem of distinction with respect to Klyachko models.
Moreover, since the maximal proper subrepresentation of the standard module associated to a ladder representation has a particularly simple description we can implement the steps in the first strategy for these representations. 
This makes distinction problems for the class of irreducible representations parabolically induced from the ladder representations more accessible. 

\subsection{Main Results} 

To simplify our exposition we omit from the introduction, the results whose formulation will require heavy notation. The interested reader should look in the body of the paper for further results of interest.

We state our main results in the form of Theorem \ref{thmA:main}, Theorem  \ref{thmB:main} and Theorem  \ref{thmC:main}. Additionally we will formulate a conditional Theorem \ref{thmD:main}. 

Theorem \ref{thmA:main} concerns the distinction of representations of finite length with respect to Klyachko subgroups while Theorem \ref{thmB:main} (resp.~Theorem \ref{thmC:main}) provides a classification of the distinguished ladder representations with respect to the symplectic (resp.~general Klyachko) subgroup. Theorem  \ref{thmD:main}, conditional on a certain combinatorial assumption (Hypothesis \ref{hyp*}), provides a complete classification of representations induced from ladder representations that are distinguished with respect to the symplectic group. Furthermore, assuming Hypothesis \ref{hyp*}, we provide a necessary consition for a standard module to be distinguished by the symplectic group.

For the sake of notational simplification let us say that a smooth finite length representation $\pi$ of $\GL_{n}(F)$ is $\Sp$-distinguished if $n$ is even and $\pi$ is $\Sp_n(F)$-distinguished.

We also require some of the notation and beautiful results of Zelevinsky \cite{MR584084}. 
We recall that for an irreducible cuspidal representation $\rho$ of $\GL_{n}(F)$ and integers $a\le b$ one considers the segment 
\[
\Delta=\Delta_{\rho}=[\nu^a\rho,\nu^b\rho]=\{\nu^i\rho:i=a,\dots,b\},
\] 
where $\nu(g)=|\det(g)|$ for $g\in\GL_n(F)$. We set $b(\Delta)=a$ for the begining, $e(\Delta)=b$ for the end and $\ell(\Delta)=b-a+1$ for the length of $\Delta$. To $\Delta$ one associates a representation $Z(\Delta)$ and a representation $L(\Delta)$ as follows: $Z(\Delta)$ is the unique irreducible subrepresentation while $L(\Delta)$ is the unique irreducible quotient of the Bernstein-Zelevinsky product $\nu^a\rho \times \dots  \times \nu^b\rho$.
To a multi-set $\mult=\{\Delta_{1},\dots,\Delta_{t}\}$ (a set with possible repetitions) of segments of irreducible cuspidal representations one associates an irreducible representation $Z(\mult)$ and an irreducible representation $L(\mult)$ as follows (See \ref{info: segments}): $Z(\mult)$ is the unique irreducible submodule of the  product $Z(\Delta_1) \times Z(\Delta_2) \times \dots \times Z(\Delta_{t})$ where we have arranged the segments $\Delta \in \mult$ in a standard form (See \ref{sec: standard}).
Analogously, the representation $L(\mult)$ is the unique irreducible quotient of the  standard module $\lambda(\mult)=L(\Delta_1) \times L(\Delta_2) \times \dots \times L(\Delta_{t}).$
The Zelevinsky classification implies that the map $\mult \mapsto Z(\mult)$ is a bijection between the set of such muti-sets of segments and the disjoint union of admissible duals of $\GL_n(F)$ for all $n$,  while the Langlands classification implies that the map $\mult \mapsto L(\mult)$ is a bijection between these sets.

\begin{theorem}\label{thmA:main}
\begin{enumerate}
\item \label{part nec}{\bf A necessary condition for $\Sp$-distinction} (See Proposition \ref{prop: Z dist}):  
If $Z(\mult)$ is $\Sp$-distinguished then $\ell(\Delta)$ is even for all $\Delta\in \mult$.
\item \label{part hered}{\bf Hereditary property for Klyachko models} (See Proposition \ref{prop: herad}): 
Let $\pi_i$ be representations of finite length and $n_i=2k_i+r_i$ be such that $\pi_i$ is $(H_{2k_i,r_i},\psi)$-distinguished for $i=1,\dots,t$. Then $\pi=\pi_1\times\cdots\times\pi_t$ is $(H_{2k,r},\psi)$-distinguished where $k=k_1+\cdots+ k_t$ and $r=r_1+\cdots+r_t$.
\item \label{part cusp line}{\bf Reduction to cuspidal lines} \label{part3}(See Proposition \ref{lem: Kly cusp}): 
Let $\pi_i$ be representations of finite length, $i=1,\dots,t$, such that their cuspidal supports, $\supp(\pi_i)$ and $\supp(\pi_j)$ are totally disjoint for all $i\ne j$ (See \ref{def: cusp supp}). Then $\pi=\pi_1\times\cdots\times\pi_t$ admits a Klyachko model if and only if $\pi_i$ admits a Klyachko model for all $i=1,\dots,t$.  
\end{enumerate}
\end{theorem}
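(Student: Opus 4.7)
The plan is to address the three parts with somewhat different techniques, building progressively.

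For Part~(1), the necessary condition for $\Sp$-distinction of $Z(\mult)$, I begin by observing that $Z(\mult)$ embeds as the unique irreducible submodule of the Zelevinsky standard module $Z(\Delta_1) \times \cdots \times Z(\Delta_t)$ (with segments in standard order); any $\Sp$-invariant linear form on $Z(\mult)$ thus extends to one on this induced representation. I would then analyze $\Sp$-distinction of the parabolically induced representation via the Bernstein--Zelevinsky geometric lemma applied to the double cosets $P \backslash G / \Sp_{2n}$, where $P$ is the standard parabolic with Levi $\GL_{n_1} \times \cdots \times \GL_{n_t}$ and $n_i = \ell(\Delta_i)\deg(\rho_i)$. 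The orbit filtration expresses distinction in terms of certain Jacquet modules of the $Z(\Delta_i)$'s being distinguished by smaller symplectic or Shalika-type subgroups; a single-segment analysis shows that $Z(\Delta)$ can be $\Sp$-distinguished only when $\ell(\Delta)$ is even, and combinatorial tracking of orbit contributions for multiple segments, combined with the rigidity of Zelevinsky socles, forces each $\ell(\Delta)$ to be even. This is the main obstacle, since the $\ell(\Delta)$-even condition is genuinely stronger than the central-character constraint on $-I \in \Sp_{2n}$ (which only sees the parity of $\ell(\Delta)\deg(\rho)$), and isolating the orbit whose contribution forces the segment-by-segment parity requires precise control over the compatibility of Jacquet modules with symplectic subgroup intersections.

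For Part~(2), the hereditary property, the plan is purely geometric. Write $\pi = \pi_1 \times \cdots \times \pi_t = \Ind_P^G(\pi_1 \otimes \cdots \otimes \pi_t)$ with $P$ the standard parabolic of Levi $\GL_{n_1} \times \cdots \times \GL_{n_t}$. The key point is that the Klyachko subgroup $H_{2k,r}$ admits an open double coset in $P \backslash G / H_{2k,r}$ whose stabilizer is conjugate to the ``block-diagonal'' Klyachko subgroup $H_{2k_1, r_1} \times \cdots \times H_{2k_t, r_t}$, with the character $\psi$ restricting compatibly under this identification. Given $(H_{2k_i, r_i}, \psi)$-invariant forms $\ell_i$ on $\pi_i$, one defines the desired form on $\Ind_P^G(\otimes_i \pi_i)$ by evaluating a section at a representative of the open orbit, applying $\otimes_i \ell_i$, and integrating over the open orbit. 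Verification of convergence and of $(H_{2k,r}, \psi)$-equivariance is then routine once the open-orbit structure and stabilizer computation are in place.

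For Part~(3), the ``if'' direction is immediate from Part~(2): if each $\pi_i$ embeds into some Klyachko model $\Ind_{H_{2k_i, r_i}}^{G_i}(\psi)$, then the hereditary property embeds $\pi$ into the combined Klyachko model with $k=\sum k_i$, $r=\sum r_i$. The ``only if'' direction is subtler; my plan is to use the multiplicity-freeness result of \cite{MR2414223} together with the geometric lemma applied to $\Hom_{H_{2k,r}}(\Ind_P^G(\otimes_i \pi_i), \psi)$. The hypothesis of totally disjoint cuspidal supports implies, via Bernstein center considerations, that only the open orbit in $P \backslash G / H_{2k,r}$ can contribute to this Hom-space: any other orbit would require non-trivial intertwining between Jacquet modules of distinct $\pi_i$'s (twisted by modulus characters coming from the orbit), and the disjoint-support hypothesis forbids such intertwining. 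Consequently any $(H_{2k,r},\psi)$-invariant form on $\pi$ must factor through the restriction to the open-orbit stabilizer, which is itself a product of Klyachko subgroups, yielding the desired invariant forms on each $\pi_i$.
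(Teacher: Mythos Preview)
Your proposal contains a genuine error in Part~(1) and a significant gap in Part~(2).

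\textbf{Part~(1).} You write that $Z(\mult)$ embeds as the unique irreducible submodule of $\zeta(\mult)=Z(\Delta_1)\times\cdots\times Z(\Delta_t)$, and that ``any $\Sp$-invariant linear form on $Z(\mult)$ thus extends to one on this induced representation.'' This implication is false: an $H$-invariant linear form on a \emph{submodule} has no reason to extend to the ambient representation. The correct direction (Lemma~\ref{drmk: ist quot}) is that distinction passes from a representation to any \emph{quotient}. The paper therefore uses the realization of $Z(\mult)$ as the unique irreducible \emph{quotient} of $\tilde\zeta(\mult)=Z(\Delta_t)\times\cdots\times Z(\Delta_1)$ (the product in reverse standard order). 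Distinction of $Z(\mult)$ then gives distinction of $\tilde\zeta(\mult)$, and the geometric lemma is applied to this product, with the segments ordered so that $b(\Delta_1)\le\cdots\le b(\Delta_k)$. This ordering is what makes the combinatorics work: the minimal index $\imath$ with $\tau(\imath)\ne\imath$ must be of the form $(i,1)$, and then the explicit Jacquet module description of $Z(\Delta_i)$ (\S\ref{Z jm on M}) together with $\Delta_{(i,1)}=\nu\Delta_{\tau(i,1)}$ contradicts the chosen order on beginnings. Your plan, as stated, does not get off the ground.

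\textbf{Part~(2).} Your ``purely geometric'' plan asserts that the open-orbit integral converges and that checking equivariance is routine. It is not: the open-orbit integral does \emph{not} converge in general, which is precisely why the paper invokes the Blanc--Delorme meromorphic continuation (Lemma~\ref{BD her}) followed by a leading-term argument. Moreover, the paper does not work directly with $P\backslash G/H_{2k,r}$ at all. Instead it uses the derivative isomorphism $\Hom_{H_{2k,r}}(\pi,\psi)\simeq\Hom_{\Sp_{2k}(F)}(\pi^{(r)},1)$ to reduce to the symplectic case, then the Leibnitz rule for derivatives to separate the factors, and only then Blanc--Delorme for the $\Sp$-model. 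Your claim that the stabilizer of the open $P$-orbit on $G/H_{2k,r}$ is conjugate to $\prod_i H_{2k_i,r_i}$ with compatible $\psi$ is neither obvious nor established in the paper; the derivative reduction sidesteps this entirely.

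\textbf{Part~(3).} Your open-orbit argument for the ``only if'' direction is in the right spirit, but again the paper does not analyze $P\backslash G/H_{2k,r}$ directly. It first applies the derivative isomorphism and the Leibnitz rule to reduce to showing that $\pi_1^{(r_1)}\times\cdots\times\pi_t^{(r_t)}$ $\Sp$-distinguished with totally disjoint supports implies each factor is $\Sp$-distinguished; this last step (Lemma~\ref{lem: useful cusp}) is where the geometric lemma and the total-disjointness hypothesis are used to force the involution $\tau$ to be trivial. Your approach might be made to work, but the reduction via derivatives is cleaner and avoids having to analyze the full Klyachko orbit structure.
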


The Zelevinsky classification implies that any irreducible representation $\pi$ of $\GL_n(F)$ can be written as a product $\pi=\pi_{1} \times \dots \times \pi_{t}$ where the cuspidal support $\supp(\pi_{i})$ is contained in a cuspidal line 
and $\supp(\pi_{i})$ , $\supp(\pi_{j})$ are disjoint for $i \ne j.$  This is sometimes called a decomposition of $\pi$ into a product of irreducible {\it rigid} representations. Now, using Theorem \ref{thmA:main} \eqref{part3}, the study of distinction with respect to the Klyachko groups is reduced to the study of distinction within the class of rigid irreducible representations. 

We say that a rigid irreducible representation $\pi=L(\mult)$ is a ladder representation if the multi-set of segments $\mult=\{\Delta_{1},\dots, \Delta_{t}\}$ satisfies the conditions $b(\Delta_{1})> \dots > b(\Delta_{t})$ and  $e(\Delta_{1})> \dots > e(\Delta_{t}).$ 

The next theorems provide the classification of distinguished ladder representations in terms of Langlands classification. We begin with $\Sp$-distinguished representations since the result is easier to formulate.

\begin{theorem}\label{thmB:main}
{\bf $\Sp$-distinguished Ladder representations} (See Theorem \ref{thm: dist lad}):
Let $L(\mult)$ be a ladder representation with $\mult=\{\Delta_1,\dots,\Delta_t\}$. Then $L(\mult)$ is $\Sp$-distinguished if and only if, $t$ is even and $\Delta_{2i-1}=\nu\Delta_{2i}$ for all $i=1,\dots,t/2$.

\end{theorem}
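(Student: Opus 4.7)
The plan is to prove the two directions separately, reducing each to the two-segment building block.

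\textbf{Sufficiency.} Suppose $t$ is even and $\Delta_{2i-1} = \nu\Delta_{2i}$ for every $i$. Each block $L(\Delta_{2i-1}, \Delta_{2i})$ is, up to a central character twist that is trivial on $\Sp$, a Speh representation $U(\delta_i, 2)$ with $\delta_i = L(\Delta_{2i})$ discrete series; such Spehs are $\Sp$-distinguished by the recipe of \cite{MR2417789}. The hereditary property (Theorem \ref{thmA:main}\eqref{part hered}, applied with $r_i=0$) then yields that
\[
\tau = L(\Delta_1,\Delta_2) \times L(\Delta_3,\Delta_4) \times \cdots \times L(\Delta_{t-1},\Delta_t)
\]
is $\Sp$-distinguished. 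Descent of this $\Sp$-form to $L(\mult)$ relies on the ladder hypothesis: the Lapid--M\'inguez analysis \cite{MR3163355} guarantees that $L(\mult)$ is the unique irreducible quotient of $\tau$, and combining the multiplicity-one result of \cite{MR2414223} with an inductive argument on the complexity of $\mult$ should rule out $\Sp$-distinction of every other composition factor of $\tau$, thereby forcing the $\Sp$-form to factor through $L(\mult)$.

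\textbf{Necessity, existence of a pairing.} Conversely, assume $L(\mult)$ is $\Sp$-distinguished. Pulling back along $\lambda(\mult) \twoheadrightarrow L(\mult)$, the standard module $\lambda(\mult) = L(\Delta_1) \times \cdots \times L(\Delta_t)$ is $\Sp$-distinguished, and by Theorem \ref{thmA:main}\eqref{part cusp line} I may assume that the $\Delta_i$ all lie on a single cuspidal line. The geometric lemma analysis of $\Sp$-orbits on $G/P$, in the vein of Heumos--Rallis and the authors' earlier work, then forces a pairing of $\{1,\dots,t\}$ into disjoint pairs $(i_k, j_k)$ whose associated discrete series satisfy the known symplectic pairing condition for a product of two discrete series; on a single cuspidal line this reduces to the segment relation $\Delta_{i_k} = \nu \Delta_{j_k}$, so in particular $t$ must be even.

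\textbf{Upgrading to the consecutive pairing.} The crux of the proof is to promote the existence of \emph{some} such pairing to the \emph{consecutive} pairing $(1,2),(3,4),\dots,(t-1,t)$, while also ensuring that the form descends to $L(\mult)$ rather than factors through a proper subquotient of $\lambda(\mult)$. The ladder hypothesis enters through the strict inequalities $b(\Delta_1) > \cdots > b(\Delta_t)$ and $e(\Delta_1) > \cdots > e(\Delta_t)$, which severely restrict which pairs $(i,j)$ can satisfy $\Delta_i = \nu \Delta_j$. Combining this combinatorial rigidity with the Lapid--M\'inguez description of the maximal proper submodule of $\lambda(\mult)$ in the ladder case, the plan is to argue that any non-consecutive pairing produces an $\Sp$-form that factors through this maximal proper submodule and therefore vanishes on $L(\mult)$, contradicting the distinction assumption. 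I expect this descent step to be the main obstacle: for general admissible representations the passage from distinction of a standard module to distinction of its Langlands quotient is typically intractable, and here it is precisely the Lapid--M\'inguez description that makes the case analysis feasible.
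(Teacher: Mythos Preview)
Your outline captures the two-direction structure and correctly identifies the Lapid--M\'inguez kernel description as the key tool for descent in the sufficiency direction. However, there is a genuine gap in your necessity argument, and the roles of the two main ingredients are somewhat tangled between the directions.

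\textbf{The gap in necessity.} You claim the geometric lemma applied to $\lambda(\mult)$ ``forces a pairing of $\{1,\dots,t\}$ into disjoint pairs $(i_k,j_k)$ with $\Delta_{i_k}=\nu\Delta_{j_k}$.'' This is only the conclusion when the relevant orbit is $M$-admissible, i.e., corresponds to the trivial decomposition of the segments. In general the relevant orbit may involve cutting each $\Delta_i$ into subsegments $\Delta_{i,1},\dots,\Delta_{i,k_i}$ and pairing on the finer index set $\indset$ (see \S\ref{General orbits} and Definition \ref{rel dec}). Ruling out non-trivial decompositions is precisely the content of Proposition \ref{prop: hyp set}: for a \emph{set} of segments (which a ladder is), one exhibits a specific standard order for which no non-trivial decomposition can be relevant. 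This is the combinatorial heart of the necessity direction, and once it is in place the Speh-type condition drops out immediately; the ladder inequalities then force the consecutive pairing by the one-line argument you indicate. The Lapid--M\'inguez kernel plays no role in necessity.

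\textbf{Sufficiency and the descent.} Your plan to descend from $\tau$ to $L(\mult)$ via ``multiplicity one plus induction on complexity'' is not wrong in spirit, but it is not made precise and multiplicity one alone does not help here. The paper instead works with the full standard module $\lambda(\mult)$ (of which $\tau$ is a quotient), applies Theorem \ref{thm: main lad} to write the kernel as $\K=\sum_{i}\K_i$ with each $\K_i\simeq\lambda(\mult_i)$ for an explicit multi-set $\mult_i$, and then observes that each $\mult_i$ is again a \emph{set} ordered by strictly decreasing end points. Corollary \ref{cor: set case} (i.e., Proposition \ref{prop: hyp set} again) then shows that if $\K_i$ were $\Sp$-distinguished, $\mult_i$ would have to be of Speh type; a short parity check on the $a$'s and $b$'s rules this out. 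So the same combinatorial lemma on sets is doing the work in \emph{both} directions, and the Lapid--M\'inguez input is used only for sufficiency, not for necessity as your third paragraph suggests.
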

The condition on $\mult$ in Theorem \ref{thmB:main} is equivalent to the existence of a multi-set of segments $\multn$ such that $\mult=\multn+\nu \multn.$
We call such $\mult$ a multi-set of {\it Speh} type (See \ref{def: Sp type}).

For the next theorem we need the notion of right-alignment (See \ref{def: ra seg}). For segments $\Delta=[\nu^{a}\rho, \nu^{b}\rho]$ and $\Delta'=[\nu^{a'}\rho, \nu^{b'}\rho]$ we say that $\Delta'$ is {\it right-aligned} with $\Delta$ and write $\Delta' \vdash \Delta$ if 
$a\ge a'+1$ and $ b= b'+1$.
When $\rho$ is a representation of $\GL_{d}(F)$ we label this relation by the integer $r=d(a-a'-1)$  and write $\Delta'\vdash_r \Delta$.

Our description of ladder representations distinguished with respect to Klyachko groups will be given in two steps. 
We will say that a ladder representation $\pi=L(\mult)$ is a {\it proper ladder} if for all $i=1,\dots, t-1$ we have $e(\Delta_{i+1}) \geq b(\Delta_{i})-1.$ The proper ladder representations are imprimitive and every ladder representation is a product of proper ladders in an essentially unique way. This decomposition into proper ladders is explicit in terms of the underlying multi-set of segments associated to the ladder representation.  

\begin{theorem}\label{thmC:main}
{\bf Ladder representations distinguished with respect to Klyachko groups:}
\begin{enumerate}
\item (See Proposition \ref{prop: prop lad})
Let $L(\mult)$ be a proper ladder representation of $\GL_{n}(F)$ with $\mult=\{\Delta_1,\dots,\Delta_t\}$ and let $n=2k+r$. 

\begin{itemize}
\item If $t$ is even then the representation $L(\mult)$ is $(H_{2k,r},\psi)$-distinguished if and only if $\Delta_{t-2i}\vdash_{r_i}\Delta_{t-2i-1}$ 
for some $r_i$ $(i=0,\dots,t/2-1)$ and $r=r_0+\cdots +r_{t/2-1}$. 

\item If $t$ is odd, let $s$ be such that $L(\Delta_1)$ is an irreducible representation of $\GL_{s}(F).$
The representation $L(\mult)$ is $(H_{2k,r},\psi)$-distinguished if and only if $\Delta_{t-2i}\vdash_{r_i}\Delta_{t-2i-1}$ 
for some $r_i$ $(i=0,\dots,(t-3)/2)$ and $r=r_0+\cdots +r_{(t-3)/2}+s$. 
\end{itemize}
\item (See Theorem \ref{thm: kly lad})
Let $\pi=\pi_{1} \times \dots \times\pi_{t}$ be the decomposition of the ladder representation $\pi$ into proper ladder representations $\pi_{i}$, $i=1, \dots, t.$ Then $\pi$ admits a Klyachko model if and only if the proper ladder representations $\pi_{i}$ admit Klyachko models for all $i=1,\dots,t$. 
\end{enumerate}

\end{theorem}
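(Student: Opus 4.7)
The plan for Theorem \ref{thmC:main} is to handle part (1) by a direct study of the standard module together with its Lapid--M\'inguez description, and part (2) by combining a natural decomposition of a ladder into proper ladder factors with the hereditary property of Theorem \ref{thmA:main}(2) and a purity-type argument.

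For part (1), fix a proper ladder $\pi = L(\mult)$ with $\mult = \{\Delta_1, \ldots, \Delta_t\}$, and realize $\pi$ as the unique irreducible quotient of the standard module $\lambda(\mult) = L(\Delta_1) \times \cdots \times L(\Delta_t)$. The properness assumption $e(\Delta_{i+1}) \ge b(\Delta_i) - 1$ guarantees a particularly short description of the maximal proper submodule of $\lambda(\mult)$ in the Lapid--M\'inguez framework. For the ``only if'' direction, I would apply the Bernstein--Zelevinsky geometric lemma to $\Hom_{H_{2k,r}}(\lambda(\mult), \psi)$ to obtain a filtration indexed by $H_{2k,r}\bs \GL_n(F)/P$; combining the known Klyachko distinction of the essentially Steinberg factors $L(\Delta_i)$ with the parity count built into the $H_{2k,r}$-orbit structure forces the bottom-up pairing $\Delta_{t-2i} \vdash_{r_i} \Delta_{t-2i-1}$, with the Klyachko rank $r$ decomposing as $\sum r_i$ (plus a leading term $s$ from the unpaired top segment $\Delta_1$ when $t$ is odd, which itself must then be Whittaker-distinguished).

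For the ``if'' direction of part (1), I would induct on $t$. The base cases $t \le 2$ reduce to classical Whittaker distinction of a single segment and to Theorem \ref{thmB:main} twisted by a right-aligned shift. For the inductive step, extracting the bottom right-aligned pair produces an $(H_{2k,r}, \psi)$-invariant form on a suitable intermediate standard module, and the proper-ladder hypothesis is precisely what allows this form to descend to $L(\mult)$: using the explicit Lapid--M\'inguez description of the maximal proper submodule, one checks that the constructed form annihilates the kernel of the projection.

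For part (2), the decomposition of a ladder into proper ladder factors, together with the irreducibility of the resulting product, follows from the Lapid--M\'inguez irreducibility criterion applied iteratively to a canonical splitting of the multi-segment. The forward implication is then immediate from the hereditary property of Theorem \ref{thmA:main}(2): Klyachko models for the individual proper ladders combine into one for the product, with the Klyachko parameters summing correctly. The main obstacle is the reverse implication, a purity statement asserting that a Klyachko model for the product forces each proper ladder factor to admit a Klyachko model with compatible parameters $n_i = 2k_i + r_i$. To attack this, I would again deploy the geometric lemma, using the already-proven part (1) to rule out contributions from all but the open-type orbit, after which Frobenius reciprocity and the rigid combinatorics of proper ladders yield component-wise distinction.
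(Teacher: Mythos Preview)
Your approach to part (1) differs substantially from the paper's, and your approach to part (2) misses the key observation that makes it nearly immediate.

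For part (1), the paper does \emph{not} work with the standard module $\lambda(\mult)$ or the Lapid--M\'inguez kernel at all. Instead, it applies Frobenius reciprocity directly to $L(\mult)$ to obtain
\[
\Hom_{H_{2k,r}}(L(\mult),\psi)\simeq \Hom_{\Sp_{2k}(F)\times N_r}\bigl(\jm_{M_{(2k,r)},G_n}(L(\mult)),1\otimes\psi\bigr),
\]
and then invokes the Kret--Lapid description of the Jacquet module of a ladder representation: every irreducible component of $\jm_{M_{(2k,r)},G_n}(L(\mult))$ is of the form $L(\mult_1)\otimes L(\mult_2)$ for explicit sub-ladders $\mult_1,\mult_2$ obtained by horizontally cutting each segment. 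For the ``only if'' direction, one such component must have $L(\mult_1)$ $\Sp$-distinguished and $L(\mult_2)$ generic; an elementary combinatorial lemma then forces the right-alignment pattern. For the ``if'' direction, the right-aligned data specify such a component explicitly, and Kret--Lapid tells you it occurs as a \emph{direct summand}, so its invariant form lifts immediately. Your route through $\lambda(\mult)$ would require developing a geometric lemma for $H_{2k,r}$-orbits on $G/P$ (which the paper never does; it only treats the $\Sp$ case), and your inductive descent through the Lapid--M\'inguez kernel for the ``if'' direction, while plausible, is considerably more work than the one-line direct-summand argument.

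For part (2), you are overlooking the decisive structural fact: when a ladder decomposes into proper ladder blocks $\pi_1,\dots,\pi_t$, the supports $\supp(\pi_i)$ and $\supp(\pi_j)$ are \emph{totally disjoint} for $i\ne j$ (segments from different blocks are unlinked by the very definition of the block decomposition). Given this, part (2) is literally a one-line consequence of Theorem~\ref{thmA:main}\eqref{part cusp line}, which already gives both directions at once. There is no need for the Lapid--M\'inguez irreducibility criterion, no need for a purity argument, and no need to run the geometric lemma again. Your proposed reverse-implication strategy via orbit analysis is not wrong in spirit, but it is solving a problem that has already been dissolved by the total-disjointness observation.
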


Our last main result contains in its formulation a certain combinatorial property of multi-sets of segments that we call
Hypothesis \ref{hyp*} (See section \S 8). Roughly speaking, it says that the restrictions imposed by the geometric lemma on $\Sp$-distinction of a standard module $\lambda(\mult)$ imply that $\mult$ is of Speh type.
For more details see Section \ref{into: pfs}.

\begin{theorem}\label{thmD:main}
\begin{enumerate}

\item {\bf On $\Sp$-Distinguished Standard modules} (See Proposition \ref{prop: dist hyp}): 
Suppose $\lambda(\mult)$ is rigid and $\Sp$-distinguished. Assume further that $\mult$ satisfies Hypothesis \ref{hyp*}. Then $\mult$ is of Speh type. 
In particular, if $L(\mult)$ is $\Sp$-distinguished and $\mult$ satisfies Hypothesis \ref{hyp*} then $\mult$ is of Speh type.
\item \label{part prod lad}{\bf Distinction for irreducible products of ladder representations} (See Proposition \ref{prop: li}):
Assume Hypothesis \ref{hyp**} holds true for all multisegement. 
Let $\pi_1,\dots,\pi_k$ be ladder representations such that $\pi=\pi_1\times\cdots\times\pi_k$ is an irreducible representation. 
If $\pi$ is $\Sp$-distinguished then $\pi_i$ is $\Sp$-distinguished for all $i=1,\dots,k$.
\item \label{part: pure}{\bf Purity of symplectic distinction within ladder class} (See Corollary \ref{cor: prd two}): 
Let $\pi_1$ and $\pi_2$ be ladder representations such that $\pi=\pi_1\times\pi_2$ is irreducible. 
If $\pi$ is $\Sp$-distinguished then $\pi_1$ and $\pi_2$ are $\Sp$-distinguished.
\end{enumerate}
\end{theorem}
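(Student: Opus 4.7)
The plan is to establish the three parts in sequence, with parts (2) and (3) reducing to part (1) combined with Theorem \ref{thmB:main}. For part (1), the approach is to apply the geometric lemma of Bernstein-Zelevinsky to the standard module $\lambda(\mult) = L(\Delta_1) \times \cdots \times L(\Delta_t)$: restricting the parabolic induction to $\Sp_n(F)$ yields a finite filtration indexed by orbits of $\Sp_n(F)$ on $P \bs \GL_n(F)$, where $P$ is the standard parabolic with Levi $\GL_{n_1}(F) \times \cdots \times \GL_{n_t}(F)$. These orbits are encoded by admissible pairings on subsets of $\{1,\ldots,t\}$ together with length-matching data between the paired factors, and the associated subquotient of $\lambda(\mult)|_{\Sp_n(F)}$ is an induction of twisted tensors of Jacquet modules of the $L(\Delta_i)$'s with $\Sp$-invariance conditions on the paired pieces. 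Nonvanishing of $\Hom_{\Sp_n(F)}(\lambda(\mult), \C)$ therefore forces at least one orbit to contribute a nonzero Hom, producing a pairing datum on $\mult$. Hypothesis \ref{hyp*} is precisely the combinatorial statement that for rigid $\mult$ any such pairing data forces $\mult$ to be of Speh type. For the irreducible quotient $L(\mult)$, any $\Sp$-invariant form pulls back along $\lambda(\mult) \twoheadrightarrow L(\mult)$, reducing the irreducible case to the standard-module case.

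For part (2), given ladder representations $\pi_i = L(\mult_i)$ with $\pi = \pi_1 \times \cdots \times \pi_k$ irreducible, the Lapid-M\'inguez irreducibility criterion identifies $\pi$ with $L(\mult)$ for $\mult = \mult_1 + \cdots + \mult_k$. If $\pi$ is $\Sp$-distinguished, then Hypothesis \ref{hyp**} applied to $\mult$ together with part (1) yields that $\mult$ is of Speh type, so that $\mult = \multn + \nu\multn$ for some multi-set $\multn$. The key combinatorial step is to show that this $\nu$-pairing is internal to each $\mult_i$, so that each ladder multi-set is itself of Speh type. Here I would use the strict decrease of both $b(\cdot)$ and $e(\cdot)$ within a ladder, together with the irreducibility hypothesis: any cross-pairing between $\Delta \in \mult_i$ and $\nu^{-1}\Delta \in \mult_j$ for $i \ne j$ would produce linked segments across the $\mult_i$'s that violate the Lapid-M\'inguez non-linkage criterion for irreducibility of the product. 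Once Speh type descends to each $\mult_i$, Theorem \ref{thmB:main} yields $\Sp$-distinction of each $\pi_i$.

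For part (3), with $k=2$ the situation simplifies considerably: only two ladders are in play, the cross-pairing question reduces to a direct case analysis, and the irreducibility of $\pi_1 \times \pi_2$ (equivalently, the Lapid-M\'inguez non-linkage condition between $\mult_1$ and $\mult_2$) is strong enough to verify the relevant instance of Hypothesis \ref{hyp**} by hand. Thus the argument of part (2) applies unconditionally in this two-factor setting, recovering the corollary.

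The main obstacle I anticipate is the cross-pairing step in part (2): descending the Speh-type structure from the union $\mult$ to each $\mult_i$ requires controlling \emph{all} $\nu$-pairings compatible with the multi-set $\mult$, not only the pairing that is obviously internal to each ladder. This is precisely what the combinatorial assumption Hypothesis \ref{hyp**} is designed to handle, and its verification for general $k$ is what prevents an unconditional statement. A secondary technical difficulty in part (1) is writing down the subquotients of the geometric-lemma filtration in sufficient detail to extract the pairing datum; while the computation is standard, the precise parametrization of $\Sp_n(F)$-orbits on $P \bs \GL_n(F)$ and the resulting twisted tensor structure of the Jacquet modules must be tracked carefully to make contact with Hypothesis \ref{hyp*}.
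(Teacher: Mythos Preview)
Your treatment of part~(1) is essentially the paper's: apply the geometric lemma to $\lambda(\mult)$, read off from each relevant $P$-orbit a decomposition-plus-involution on $\mult$ (this is exactly what the paper packages as ``$\mult$ is distinguished'' in Definition~\ref{def: dist mult}), and then invoke Hypothesis~\ref{hyp*}. The passage from $L(\mult)$ to $\lambda(\mult)$ via the quotient map is also the paper's argument.

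The gap is in part~(2), at the descent step. You correctly observe that part~(1) plus Hypothesis~\ref{hyp**} gives that the Langlands multi-set $\mult=\mult_1+\cdots+\mult_k$ is of Speh type, and you then want to push this down to each $\mult_i$. Your justification---that a $\nu$-pairing crossing from $\mult_i$ to $\mult_j$ produces linked segments and hence contradicts irreducibility---is not valid: the Lapid--M\'inguez criterion (Proposition~\ref{prop: irr max}) is \emph{not} the statement that no segment of $\mult_i$ is linked to a segment of $\mult_j$; irreducible products of ladders routinely have linked segments across factors. You also write that Hypothesis~\ref{hyp**} ``is designed to handle'' this descent, but that hypothesis concerns a single multi-set (distinguished $\Rightarrow$ Speh type), not the passage from a sum to its summands.

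The paper's route to this descent is substantially different and more elaborate. It works on the Zelevinsky side, writing $\pi_i=Z(\mult_i)$ and $\pi=Z(\mult)$ with $\mult=\sum\mult_i$, and argues by induction on $|\mult|$: peel off the $\le_b$-minimal segment $\Delta_0$, use the geometric lemma (now with only two blocks) to show that both $Z(\mult(\Delta_0)\{\Delta_0\})$ and $Z(\mult')$ are $\Sp$-distinguished, invoke an excision lemma (Lemma~\ref{lem: excision}) to see that $Z(\mult_1^\dag)\times\cdots\times Z(\mult_k^\dag)$ stays irreducible, and apply the induction hypothesis to get each $(\mult_i^\dag)^t$ of Speh type. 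Only then does a dedicated combinatorial lemma (Lemma~\ref{lem: t speh}), which \emph{requires} the inductive information on the $(\mult_i^\dag)^t$, allow one to conclude that each $\mult_i^t$ is of Speh type. The need for this inductive setup is precisely why a direct one-step descent of the kind you sketch does not go through.

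For part~(3), the paper does not argue by a two-ladder cross-pairing case analysis either. Instead it observes that for any $\mult\in\Omega_2$ (irreducible products of at most two ladders) one has $|\mult[i]|\le 2$ for every end-value $c_i$, so Proposition~\ref{prop: hyp 2} verifies Hypothesis~\ref{hyp*} for every such $\mult$ and for its Zelevinsky transpose; then the full inductive machinery of part~(2) applies unconditionally to $\Omega_2$. Your proposed verification ``by hand'' via the cross-pairing argument inherits the same gap as in part~(2).
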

We emphasize that Theorem \ref{thmD:main} \eqref{part: pure} is unconditional. We further show in Proposition \ref{prop: hyp set} that Hypothesis \ref{hyp*} is satisfied by multi-sets that are in fact sets.
This yields the following unconditional result. 
\begin{theorem}(See Corollary \ref{cor: set case})
Let $\lambda(\mult)=L(\Delta_1)\times\cdots\times L(\Delta_t)$ be  a standard module such that $\Delta_i\ne \Delta_j$ for all $i\ne j$. If $\lambda(\mult)$ is $\Sp$-distinguished then $\mult$ is of Speh type.
\end{theorem}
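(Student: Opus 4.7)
The plan is to reduce the statement to the rigid case, where it becomes a direct combination of Proposition \ref{prop: hyp set} and Proposition \ref{prop: dist hyp} (i.e.\ Theorem \ref{thmD:main}\eqref{part nec}). First, decompose $\mult$ along cuspidal lines as $\mult = \mult^{(1)} + \cdots + \mult^{(s)}$, where each $\mult^{(j)}$ is supported on a single cuspidal line. Since $\mult$ is a set, so is each $\mult^{(j)}$, and the standard module factors as
\[
\lambda(\mult) \simeq \lambda(\mult^{(1)}) \times \cdots \times \lambda(\mult^{(s)}).
\]
Moreover, because multiplication by $\nu$ preserves each cuspidal line, $\mult$ is of Speh type if and only if every $\mult^{(j)}$ is, so it suffices to prove the conclusion for each $\mult^{(j)}$ separately.

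The key intermediate step is the standard-module analogue of Theorem \ref{thmA:main}\eqref{part cusp line}: that $\Sp$-distinction of $\lambda(\mult)$ descends to each factor $\lambda(\mult^{(j)})$. I would prove this by applying the Bernstein--Zelevinsky geometric lemma to compute $\Hom_{\Sp_n}(\lambda(\mult),\mathbb{C})$ via a filtration indexed by the $\Sp$-orbits on the flag variety of the relevant parabolic. Because the cuspidal supports of the blocks $\lambda(\mult^{(j)})$ are totally disjoint, the matching of Jacquet data required for any non-diagonal orbit to contribute fails, so every $\Sp$-invariant functional on $\lambda(\mult)$ must arise from the diagonal orbit and therefore factors as a tensor product of $\Sp$-invariant functionals on the individual $\lambda(\mult^{(j)})$. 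In particular each $\lambda(\mult^{(j)})$ lives on a general linear group of even rank and is itself $\Sp$-distinguished.

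Once the reduction is complete, each $\mult^{(j)}$ is rigid and a set, and $\lambda(\mult^{(j)})$ is $\Sp$-distinguished. Proposition \ref{prop: hyp set} supplies Hypothesis \ref{hyp*} for $\mult^{(j)}$, and Proposition \ref{prop: dist hyp} then forces $\mult^{(j)}$ to be of Speh type, say $\mult^{(j)} = \multn^{(j)} + \nu\multn^{(j)}$. Setting $\multn = \multn^{(1)} + \cdots + \multn^{(s)}$ gives $\mult = \multn + \nu\multn$, as desired.

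The main obstacle is the reduction step, namely the purity statement that $\Sp$-distinction of a standard module distributes over its cuspidal-line decomposition. Although morally parallel to the irreducible/Klyachko case handled in Theorem \ref{thmA:main}\eqref{part cusp line}, it requires a careful orbit-by-orbit analysis of the Jacquet restriction rather than an off-the-shelf citation; once it is in place, the remaining ingredients assemble formally.
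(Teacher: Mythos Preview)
Your proposal is correct and follows exactly the paper's route: reduce to the rigid case, then combine Proposition \ref{prop: hyp set} with Proposition \ref{prop: dist hyp}. The one point where you overestimate the difficulty is the reduction step: what you flag as the ``main obstacle'' is already Lemma \ref{lem: useful cusp}, which is stated and proved for arbitrary finite-length representations $\pi_i\in\Alg$ (not just irreducible ones), so it applies directly to the factors $\lambda(\mult^{(j)})$ without any further orbit-by-orbit analysis on your part.
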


\subsection{Proofs and Methods}\label{into: pfs}

Let us now elaborate a bit on the techniques used in the proofs. The filtration of the geometric lemma allows us to study $\Sp$-distinction of induced representations from the parabolic subgroup $P$ in terms of the geometry of $P$-orbits on the symmetric space $\GL_{2n}(F)/\Sp_{2n}(F).$ In particular we show that an induced $\Sp$-distinguished representation admits a $P$-orbit which is {\it relevant}. Analyzing the relevant orbits together with the Jacquet module calculations of segment representations allows us to prove Theorem \ref{thmA:main} \eqref{part nec}. For Theorem \ref{thmA:main} \eqref{part hered}, we combine the theory of derivatives with a meromorphic continuation technique of Blanc and Delorme. The first is used to reduce the problem to the case of $\Sp$-distinction and the second to construct $\Sp$-invariant linear forms on families of induced representations.

A key ingredient for the proof of  Theorems \ref{thmB:main} and \ref{thmD:main}  is the necessary condition for a standard module to be $\Sp$-distinguished provided by the geometric lemma. 
This allows us to reduce the problem to a purely combinatorial one on multi-sets of segments. We address it under a technical hypothesis that we can prove only for certain multi-sets (in particular, whenever they are sets). The hypothesis can be interpreted as a statement that certain orbits, of the natural action of a parabolic subgroup $P$ of $\GL_{2n}(F)$ on $\GL_{2n}(F)/\Sp_{2n}(F)$, do not contribute a non-trivial $\Sp_{2n}(F)$-invariant linear form. Explicitly, assuming the hypothesis we show that if a representation, irreducibly induced from ladder representations, is $\Sp$-distinguished then each ladder representation in the inducing data also admits a symplectic model.  

This partial result along with the description of the maximal proper subrepresentation of the standard module associated to a ladder representation allows us to finish the 
proof  of  Theorem \ref{thmB:main}.

The proof of  Theorem \ref{thmC:main} is obtained by using the explicit knowledge of the structure of a Jacquet module of a ladder representation, and the classification of $\Sp$-distinguished ladders given by Theorem \ref{thmB:main}.

To prove Theorem \ref{thmD:main} \eqref{part prod lad} we use a recent irreducibility result of \cite{1411.6310}, as well as the invariance of the class of ladder representations with respect to the Zelevinsky involution, to construct an inductive set-up in the context of representations irreducibly induced from ladder representations. We hope that the techniques developed in doing so will be useful more generally. Most notably, when we attempt to study distinction by other closed subgroups of $\GL_{n}(F)$ for representations that are irreducibly induced from ladders.

\subsection{Related Works}

We mention here a few works where the results or the tools used have some intersection with the present work. 

The present work began as an attempt to extend the distinction results of \cite{MR2332593} and \cite{MR2417789} from the unitary dual of $\GL_{n}(F)$ to the admissible dual of $\GL_{n}(F).$ 

We emphasize that in \cite{MR2332593} and \cite{MR2417789} obtaining a model for a Speh representation, in particular the classification of Speh representations admitting a symplectic model, was based on the global theory of period integrals of Eisenstein series and their residues obtained in \cite{MR2254544} and \cite{MR2248833}. A novel aspect of this work is that our method of proof is purely local, and therefore, independently provides a local proof for the results of the aforementioned works.
The methods employed here are very different from those works and are, in fact, closer in spirit to the techniques of \cite{MR1078382}, or to that of \cite{MR3227442} which studies admissible representations distinguished with respect to a symplectic group in small rank cases. 

The focus on distinction problems within the class of ladder representations 
was made in \cite{MR2930996} and later in \cite{MR3421655} and Theorem \ref{thmB:main} of the introduction could be considered as an analogue to their results.
A study parallel to our study of the distinction problem for standard modules can be found in these two references. 

In \cite{MR2889169} the existence of Klyachko models is proved for unitary representations of $\GL_{n}(\mathbb{R})$ and $\GL_{n}(\mathbb{C}).$ The methods there are parallel to those in the works of the second and third author in the non-archimedean case. In particular, the proof of existence of those models for Speh representations was based on the theory of periods of automorphic forms. Recently, in \cite{MR3416438} a local construction of these invariant functionals is provided, based on tools from the theory of distributions and $D$-modules. Some of the results of the present work can be considered as a non-archimedean analogue of the main result of \cite{MR3416438}.

\subsection{Structure of the Paper} 

Let us now delineate the contents of this paper. A large part of it (\S \ref{notn}-\ref{s: dist ladder}) concerns  symplectic models. 

After setting up the general notation for this work in \S \ref{notn}, we recall some well known results concerning $\GL_{2n}(F)/\Sp_{2n}(F)$ in \S \ref{s: sym sp}, especially the structure of orbits of the natural action of a parabolic subgroup of $\GL_{2n}(F)$. Our main tool for studying $\Sp$-distinction of induced representations is an application of the geometric lemma of Bernstein and Zelevinsky. This is recalled in \S \ref{sec: gl}. 

In \S \ref{ss: oc} we obtain some immediate consequences for $\Sp$-distinction of certain induced representations. They come from contributions to the open and to the closed orbits of the aforementioned action. In \S \ref{ss: cusp lines} we reduce the classification of $\Sp$-distinguished irreducible representations to those supported in a single cuspidal line viz. the rigid representations. 

In order to study distinction for rigid representations, we recall in \S \ref{s: LZ cl} the segment notation of Zelevinsky and the classification of the admissible dual. In \S \ref{s: nec Z} we provide a necessary condition for an irreducible representation to be $\Sp$-distinguished in terms of the Zelevinsky classification (Proposition \ref{prop: Z dist}). 

We then turn to the study of distinction of standard modules. A necessary condition for a standard module (and for an irreducible representation) to be $\Sp$-distinguished is reduced in \S \ref{s: std mod sp} to a combinatorial problem. This problem is formulated in \S \ref{s: multisets} as Hypothesis \ref{hyp*}. 

Section \ref{s: multisets} is written in a way completely independent from the rest of the paper, is accessible to any mathematician, and presents a problem with applications to the study of $\Sp$-distinction. 

Our partial results suffice in order to obtain a complete classification of $\Sp$-distinguished ladder representations. This is Theorem \ref{thm: dist lad}.

In \S \ref{s: dist ladder} we obtain our results on $\Sp$-distinction for the class of representations irreducibly induced from ladder. These are  conditional on Hypothesis \ref{hyp*}. Again these results require some purely combinatorial lemmas that we prove in \S \ref{s: ladder sums}.

In \S \ref{s: Kl} we turn to the study of Klyachko models in general. We prove the hereditary property with respect to parabolic induction (Proposition \ref{prop: herad}) and reduce the problem to rigid representations Proposition \ref{lem: Kly cusp}). Finally the classification of ladder representation with a given Klyachko model is obtained in \S \ref{s: kl ladder}.

\subsection{Acknowledgments} 

The authors are grateful to Erez Lapid for sharing with them his insights on ladder representations and irreducibility results.

\section{Notation and preliminaries}\label{notn}

We set the general notation in this section. More particular notation is defined in the section where it first occurs.

\subsection{Generalities}
Let $G$ be a totally disconnected, locally compact group. 
\subsubsection{}
Let $\delta_G$ be the modulus function of $G$ with the convention that $\delta_G(g)dg$ is a right-invariant Haar measure if $dg$ is a left-invariant Haar measure on $G$.

\subsubsection{}
Let $H$ be a closed subgroup of $G$ and $\sigma$ a smooth, complex-valued representation of $H$.
We denote by $\Ind_H^G(\sigma)$ the normalized induced representation. It is the representation of $G$ by right translations on the space of functions $f$ from $G$ to the space of $\sigma$ satisfying 
\[
f(hg)=(\delta_H^{1/2}\delta_G^{-1/2})(h)\sigma(h)f(g), \ \ \ h\in H,\,g\in G
\] 
and $f$ is right invariant by some open subgroup of $G$.
The representation of $G$ on the subspace of functions with compact support modulo $H$ is denoted by $\ind_H^G(\sigma)$.

\subsubsection{}
This paper is concerned with distinguished representations in the following sense.
\begin{definition}
Let $\pi$ be a smooth, complex-valued representation of $G$ and $H$ a closed subgroup of $G$. 
\begin{itemize}
\item We say that $\pi$ is \emph{$H$-distinguished} if there exists a non-zero $H$-invariant linear form $\ell$ on the space of $\pi$, i.e., $\ell(\pi(h)v)=\ell(v)$ for all $h\in H$ and $v$ in the space of $\pi$. We denote by $\Hom_H(\pi,1)$ the space of $H$-invariant linear forms on $\pi$. 
\item More generally, for a character $\chi$ of $H$ we say that $\pi$ is $(H,\chi)$-distinguished if the space $\Hom_H(\pi,\chi)$ of $H$-equivariant linear forms on $\pi$ is non-zero.
\end{itemize}
\end{definition}
By Frobenius reciprocity we have a natural linear isomorphism
\begin{equation}\label{eq: frob rec}
\Hom_H(\pi,\chi \delta_H^{1/2}\delta_G^{-1/2})\simeq\Hom_G(\pi,\Ind_H^G(\chi)).
\end{equation}

\subsubsection{} We state the following simple observation.

\begin{lemma}\label{drmk: ist quot}
Let $\pi$ and $\sigma$ be smooth, complex-valued representations of $G$ so that $\sigma$ is a quotient of $\pi$, $H$ is a closed subgroup of $G$ and $\chi$ is a character of $H$. If $\sigma$ is $(H,\chi)$-distinguished then $\pi$ is $(H,\chi)$-distinguished.
\end{lemma}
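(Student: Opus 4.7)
The plan is to simply pull back the distinguishing functional along the quotient map. Since $\sigma$ is a quotient of $\pi$, there exists a $G$-equivariant surjection $q\colon \pi \twoheadrightarrow \sigma$. By hypothesis we have a non-zero linear form $\ell \in \Hom_H(\sigma,\chi)$, i.e. a map satisfying $\ell(\sigma(h)w)=\chi(h)\ell(w)$ for all $h\in H$ and all $w$ in the space of $\sigma$.

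I would then define $\tilde\ell := \ell \circ q$ on the space of $\pi$. The $G$-equivariance of $q$ (in particular its $H$-equivariance) gives, for every $h\in H$ and $v$ in the space of $\pi$,
\[
\tilde\ell(\pi(h)v) = \ell(q(\pi(h)v)) = \ell(\sigma(h) q(v)) = \chi(h)\,\ell(q(v)) = \chi(h)\,\tilde\ell(v),
\]
so $\tilde\ell \in \Hom_H(\pi,\chi)$. It remains only to check that $\tilde\ell$ is non-zero, which follows from the surjectivity of $q$: there exists $w$ in the space of $\sigma$ with $\ell(w)\neq 0$, and any preimage $v\in \pi$ with $q(v)=w$ satisfies $\tilde\ell(v)=\ell(w)\neq 0$.

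There is no genuine obstacle here; the statement is essentially the functoriality of $\Hom_H(-,\chi)$ applied to a surjection. The only subtlety worth flagging, to keep the argument clean, is that ``quotient'' must mean the image of a $G$-equivariant surjection (so that the computation $\ell(q(\pi(h)v))=\ell(\sigma(h)q(v))$ is legitimate), which is the standard convention in this setting.
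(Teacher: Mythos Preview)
Your proof is correct and is essentially the same as the paper's: both compose a nonzero element of $\Hom_H(\sigma,\chi)$ with the surjection $\pi\to\sigma$ to obtain a nonzero element of $\Hom_H(\pi,\chi)$. The paper phrases this in one line as an embedding $\Hom_H(\sigma,\chi)\hookrightarrow\Hom_H(\pi,\chi)$, while you spell out the equivariance and nonvanishing checks explicitly.
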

\begin{proof}
Note that composition with the projection $\pi\rightarrow \sigma$ defines an imbedding $\Hom_H(\sigma, \chi)\hookrightarrow \Hom_H(\pi,\chi)$. The lemma follows.
\end{proof}

The lemma allows us to reduce some distinction questions to induced representations (e.g. using the Langlands classification). Its converse need not be true. 

\subsubsection{}
We record here another simple observation related to the converse problem, distinction of subquotients of a distinguished representation.
\begin{lemma}\label{lem: dist comp}
Let $\pi$ be a smooth complex-valued representation of $G$, $H$ a closed subgroup of $G$ and $\chi$ a character of $H$.
Let $0=\pi_0\subseteq \pi_1\subseteq\cdots\subseteq \pi_k=\pi$ be a filtration of $\pi$ by sub-representations.
If $\pi$ is $(H,\chi)$-distinguished, then there exists $i\in\{1,\dots,k\}$ such that $\pi_i/\pi_{i-1}$ is $(H,\chi)$-distinguished.

In particular, if $\pi$ is of finite length and $(H,\chi)$-distinguished then there exists an irreducible subquotient $\sigma$ of $\pi$ that is $(H,\chi)$-distinguished. 
\end{lemma}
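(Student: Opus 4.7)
The plan is to take the nonzero invariant functional on $\pi$, restrict it to the filtration, and locate the smallest piece on which it survives; descent to the quotient by the previous piece produces the required functional.

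More concretely, let $\ell\in\Hom_H(\pi,\chi)$ be a nonzero $(H,\chi)$-equivariant linear form. Because $\pi_0=0$ and $\pi_k=\pi$, the set of indices $\{i:\ell|_{\pi_i}\neq 0\}$ is nonempty; let $i$ be its minimum. Then $\ell|_{\pi_{i-1}}=0$ by minimality, so the restriction $\ell|_{\pi_i}$ factors through the quotient map $\pi_i\twoheadrightarrow \pi_i/\pi_{i-1}$ as a linear form $\bar\ell$ on $\pi_i/\pi_{i-1}$. Since the quotient map is $G$-equivariant (in particular $H$-equivariant) and $\ell|_{\pi_i}$ is $(H,\chi)$-equivariant and nonzero, the induced form $\bar\ell$ is $(H,\chi)$-equivariant and nonzero, proving that $\pi_i/\pi_{i-1}$ is $(H,\chi)$-distinguished.

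For the ``in particular'' clause, assume $\pi$ has finite length and choose a Jordan--H\"older composition series $0=\pi_0\subsetneq\pi_1\subsetneq\cdots\subsetneq \pi_k=\pi$ whose successive quotients $\pi_i/\pi_{i-1}$ are irreducible. Applying the first part to this filtration yields some index $i$ for which the irreducible subquotient $\sigma=\pi_i/\pi_{i-1}$ is $(H,\chi)$-distinguished.

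There is essentially no obstacle here; the lemma is a soft formal argument that requires only that the filtration be by $G$-stable (hence $H$-stable) subrepresentations, which is part of the hypothesis. No smoothness, admissibility, or continuity subtleties intervene, since we are dealing with abstract linear forms and $\chi$ is simply a character of $H$.
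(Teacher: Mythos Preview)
Your proof is correct and follows essentially the same argument as the paper: pick a nonzero $\ell\in\Hom_H(\pi,\chi)$, take the minimal $i$ with $\ell|_{\pi_i}\ne 0$, and descend to $\pi_i/\pi_{i-1}$; then apply this to a composition series for the finite-length case. The paper's version is terser (and contains an obvious typo, writing $\pi_i/\pi_{i+1}$ for $\pi_i/\pi_{i-1}$), but the idea is identical.
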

\begin{proof}
If $0\ne \ell\in\Hom_H(\pi,\chi)$ then there exists $i\in\{1,\dots,k\}$ minimal such that $\ell|_{\pi_i}\ne 0$. Thus, $\ell$ defines a non-zero element of $\Hom_H(\pi_i/\pi_{i+1},\chi)$. Since a finite length representation has such a finite filtration with irreducible quotients the rest of the lemma follows.
\end{proof}


\subsubsection{}
Let $\Alg(G)$ be the category of complex valued, smooth, admissible representations of $G$ of finite length and $\Irr(G)$ the class of irreducible representations in $\Alg(G)$. 

Let $\pi^\vee$ denote the contragredient of a representation $\pi\in \Alg(G)$. Then $(\pi^\vee)^\vee\simeq\pi$ and $\pi\in \Irr(G)$ if and only if $\pi^\vee\in \Irr(G)$.

\subsection{Notation for $\GL_n(F)$}

Let $F$ be a non-archimedean local field of characteristic different than two. For $n\in \N$, let $G_n=\GL_n(F)$. By convention, let $G_0$ be the trivial group. 

\subsubsection{}
Fix $n$ and let $G=G_n$. 
Let $B=T \ltimes N$ be the standard Borel subgroup of $G$ consisting of uppertriangular matrices with its standard Levi decomposition. Here $T$ is the subgroup of diagonal matrices and $N=N_n$ is the unipotent radical of $B$.

\subsubsection{}
A parabolic subgroup of $G$ that contains $B$ is called standard. Standard parabolic subgroups of $G$ are in bijection with decompositions of $n$.
For a decomposition $\alpha=(n_1,\dots,n_k)$ of $n$ let $P_\alpha=M_\alpha\ltimes U_\alpha$ be the standard parabolic subgroup of $G$ consisting of block uppertriangular matrices with standard Levi subgroup
\[
M_\alpha=\{\diag(g_1,\dots,g_k):g_i\in G_{n_i},\,i=1,\dots,k\}\simeq G_{n_1}\times \cdots\times G_{n_k}
\]
and unipotent radical $U_\alpha$.

\subsubsection{}
The Weyl group $N_G(T)/T$ of $G$ is isomorphic to the permutation group $S_n$ of $n$ elements. We identify it with the subgroup $W=W_G$ of permutation matrices in $G$. Let $w_n=(\delta_{i,n+1-j})\in W$ be the longest Weyl element.
By the Bruhat decompositon $W$ is a complete set of representatives for the double coset space $B\bs  G/B$.

\subsubsection{}
More generally, for a decomposition $\alpha=(n_1,\dots,n_k)$ of $n$, $W_{M_\alpha}=W\cap M_\alpha\simeq S_{n_1}\times\cdots\times S_{n_k}$ is the Weyl group of $M_\alpha$.
If $P=M\ltimes U$ and $Q=L\ltimes V$ are standard parabolic subgroups of $G$ with their standard Levi decompositions then $w\mapsto PwQ$ defines a bijection 
\[
W_M\bs W/W_L\simeq P\bs G/Q.
\]
Further more, every double coset in $W_M\bs W/W_L$ contains a unique element of minimal length. Denote by ${}_MW_L$ the set of elements $w\in W$ that are of minimal length in $W_M wW_L$. Then ${}_MW_L$ is a complete set of representatives for $P\bs G/Q$.

For every $w\in {}_MW_L$ the group 
\[
P(w)=M(w)\ltimes U(w)=M\cap wQw^{-1}
\]
is a standard parabolic subgroup of $M$ with its standard Levi decomposition, where
\[
M(w)=M\cap wLw^{-1}\ \ \ \text{and}\ \ \ U(w)=M\cap wVw^{-1}.
\]


\subsection{Representations of $\GL_n(F)$}

We recall some well known facts and set the notation for representations of $G_n$.
Let $\Alg$ be the disjoint union of $\Alg(G_n)$ for all $n\in \Z_{\ge 0}$. Let $\Irr$ be the subset of irreducible representations in $\Alg$ and $\Cusp$ be the subset of cuspidal representations in $\Irr$.

\subsubsection{Parabolic Induction}

Set $G=G_n$. Let $P=M\ltimes U$ and $Q=L\ltimes V$ be standard parabolic subgroups of $G$ with their standard Levi decompositions.
Assume further that $Q$ is a subgroup of $P$. The functor $\ip_{M,L}:\Alg(L)\rightarrow\Alg(M)$ of normalized parabolic induction is defined as follows. As noted above $M\cap Q=L\ltimes (M\cap V)$ is a standard parabolic subgroup of $M$. For $\rho\in \Alg(L)$ we consider $\rho$ as a representation of $M\cap Q$ trivial on its unipotent radical $M\cap V$ and set
\[
\ip_{M,L}(\rho)=\ind_{M\cap Q}^M(\rho).
\]
The functor $\ip_{M,L}$ is exact and we have
\[
\ip_{M,L}(\rho)^\vee\simeq \ip_{M,L}(\rho^\vee).
\]

Let $\alpha=(n_{1},\dots,n_{k})$ be a decomposition of $n$. Assume that $M=M_\alpha$ and let $\rho_i\in\Alg(G_{n_i})$, $i=1,\dots,k$. Then $\rho=\rho_1\otimes \cdots \otimes \rho_k\in \Alg(M)$. 
Set
\[
\rho_1\times\cdots \times \rho_k=\ip_{G,M}(\rho).
\]

\subsubsection{Jacquet module}
The functor $\ip_{M,L}$ admits a left adjoint, namely, the normalized Jacquet functor $\jm_{L,M}:\Alg(M)\rightarrow\Alg(L)$. For $\sigma\in \Alg(M)$, $\jm_{L,M}(\sigma)$ is the representation of $L$ on the space of $V\cap M$-coinvariants of $\sigma$ induced by the action $\delta_{Q\cap M}^{-1/2}\sigma$. It is also an exact functor and for $\sigma\in \Alg(M)$ and $\rho\in \Alg(L)$ we have the natural linear isomorphism (Frobenius reciprocity):
\begin{equation}\label{eq: 1st adj}
\Hom_M(\sigma,\ip_{M,L}(\rho))\simeq \Hom_L(\jm_{L,M}(\sigma),\rho).
\end{equation}

Let $\beta_i$ be the decomposition of $n_i$, $i=1,\dots,k$ so that $L=M_{(\beta_1,\dots,\beta_k)}$. For representations $\pi_i\in \Alg(G_{n_i})$, $i=1,\dots,k$ we have
\begin{equation}\label{eq: trans jm}
\jm_{L,M}(\pi_1\otimes\cdots\otimes\pi_k)=\jm_{M_{\beta_1}, G_{n_1}}(\pi_1)\otimes\cdots\otimes \jm_{M_{\beta_k}, G_{n_k}}(\pi_k).
\end{equation}

\subsubsection{The cuspidal support}\label{def: cusp supp}

For every $\pi\in \Irr$ there exist $\rho_1,\dots,\rho_k\in \Cusp$, unique up to rearrangement, so that $\pi$ is isomorphic to a subrepresentation of $\rho_1\times \cdots \times \rho_k$. Let $\supp(\pi)=\{\rho_i:i=1,\dots,k\}$ be the support of $\pi$.\footnote{The support is often considered as a multi-set. Only the underlying set is relevant to us.} 

For $\sigma_1,\dots,\sigma_k\in \Irr$ let $\supp(\sigma_1\otimes\cdots\otimes\sigma_k)=\cup_{i=1}^k\supp(\sigma_i)$.
For any standard Levi subgroup $M$ of $G$ and $\pi\in \Alg(M)$ let $\{\pi_1,\dots,\pi_t\}$ be the set of irreducible components (subquotients) of $\pi$ and set
\[
\supp(\pi)=\cup_{i=1}^t \supp(\pi_i).
\]
As a simple consequence of the geometric lemma of Bernstein and Zelevinsky \cite[\S 2.12]{MR0579172} and exactness we have
\begin{equation}\label{eq: supp jm}
\supp(\jm_{M,G}(\pi))\subseteq\supp(\pi), \ \pi\in \Alg(G).
\end{equation}

\subsubsection{Generic representations}\label{sss: gen}
Let $\psi$ be a non-trivial character of $F$.
We further denote by $\psi=\psi_n$ the character of $N_n$ defined by
\[
\psi(u)=\psi(\sum_{i=1}^{n-1} u_{i,i+1}), \ u=(u_{i,j})\in N_n. 
\]
\begin{definition}
A representation $\pi\in\Alg(G_n)$ is called \emph{generic} if it is $(N_n,\psi)$-distinguished.
\end{definition}



\section{Non-degenerate skew-symmetric matrices and parabolic orbits}\label{s: sym sp}

We recall here the analysis of double cosets and related data that are relevant to the study of induced representations of $G_{2n}$ that are distinguished by the symplectic group. 
\subsection{The symmetric space}

Fix $n\in N$ and let $G=G_{2n}$. 

\subsubsection{}\label{sss: sp not}
Let 
\[
H=H_n=\Sp_{2n}(F)=\{g\in G:{}^t g J g=J\}
\]
where
\[
J=J_n=\begin{pmatrix} & w_n \\ -w_n & \end{pmatrix}.
\]
Note that $H=G^\inv$ is the group of fixed points in $G$ of the involution $\inv$ defined by
\[
\inv(g)=J\,{}^t g^{-1} J^{-1}.
\]

\subsubsection{}
For $\pi\in\Alg(G_n)$, since $n$ will not always be specified,  we adopt throughout the following convention. We say that $\pi$ is $\Sp$-distinguished if $n$ is even and $\pi$ is $\Sp_n(F)$-distinguished. If in addition $\pi\in \Irr$ we say that $\pi$ admits a symplectic model (see \eqref{eq: frob rec}). 
Recall the following simple observation.

By a result of Gelfand and Kazhdan, \cite{MR0404534}, we have $\pi^\inv\simeq \pi^\vee$ for every $\pi\in \Irr$. We therefore have
\begin{lemma}\label{lem: cont dist}
A representation $\pi\in\Irr$ is $\Sp$-distinguished if and only if $\pi^\vee$ is $\Sp$-distinguished. \qed
\end{lemma}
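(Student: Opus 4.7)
The plan is to exploit the fact that the involution $\inv$ defining $H$, namely $\inv(g)=J\,{}^tg^{-1}J^{-1}$, fixes $H$ pointwise by construction, combined with the Gelfand--Kazhdan result $\pi^\inv\simeq\pi^\vee$ for irreducible $\pi$ that is already invoked in the excerpt.

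First I would introduce, for any $\pi\in\Alg(G)$, the twisted representation $\pi^\inv$ on the same underlying space defined by $\pi^\inv(g)v=\pi(\inv(g))v$. Since $\inv$ is an involution of $G$ and $H=G^\inv$, the restriction $\pi^\inv|_H$ coincides with $\pi|_H$. Consequently, the identity map on the underlying vector space induces a canonical linear isomorphism
\[
\Hom_H(\pi,1)\;\simeq\;\Hom_H(\pi^\inv,1),
\]
so $\pi$ is $H$-distinguished if and only if $\pi^\inv$ is $H$-distinguished.

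Next I would apply the Gelfand--Kazhdan theorem from \cite{MR0404534}, which is stated in the paragraph immediately preceding the lemma: for every $\pi\in\Irr$ one has $\pi^\inv\simeq\pi^\vee$. Substituting this isomorphism into the previous equivalence yields
\[
\Hom_H(\pi,1)\;\simeq\;\Hom_H(\pi^\vee,1),
\]
which is precisely the conclusion that $\pi$ is $\Sp$-distinguished if and only if $\pi^\vee$ is.

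There is essentially no obstacle here; the content of the lemma is entirely carried by the Gelfand--Kazhdan isomorphism, while the argument that $H$-distinction is preserved by the twist $\pi\mapsto\pi^\inv$ is a formal consequence of $\inv$ being the identity on $H$. The only care needed is to note that the statement of the lemma implicitly assumes $\pi\in\Irr(G_n)$ with $n$ even (so that ``$\Sp$-distinguished'' is meaningful), and that $\pi^\vee$ belongs to the same $\Irr(G_n)$, so the two notions of $\Sp$-distinction are measured against the same group $H=\Sp_n(F)$.
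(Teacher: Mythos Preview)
Your proof is correct and takes essentially the same approach as the paper. The paper in fact treats the lemma as an immediate consequence of the Gelfand--Kazhdan isomorphism $\pi^\inv\simeq\pi^\vee$ (stated just before the lemma) and marks it with a \texttt{\textbackslash qed} without further argument; your write-up simply spells out the formal step that $\inv|_H=\mathrm{id}$ forces $\Hom_H(\pi,1)\simeq\Hom_H(\pi^\inv,1)$.
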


\subsubsection{}
Consider the symmetric space
\[
X=\{x\in G: x\inv(x)=I_{2n}\}
\]
with the $G$-action
\[
g\cdot x=gx\inv(g)^{-1}.
\]
Note that $XJ$ is the space of skew-symmetric matrices in $G$ and 
\[
(g\cdot x)J=g(xJ){}^tg.
\]
Therefore $X$ is a homogeneous $G$-space.
The map $(g\mapsto g\cdot I_{2n}): G\rightarrow X$ defines an isomorphism $G/H\simeq X$ of $G$-spaces.

\subsubsection{}
For a subgroup $Q$ of $G$ and a $Q$-invariant subspace $Y$ of $X$ we denote by $Q\bs Y$ the set of $Q$-orbits in $Y$. For $x\in X$ let $Q_x=\{g\in Q:g\cdot x=x\}$ be the stabilizer in $Q$ of $x$.

Applications of the geometric lemma to the study of $H$-distinguished induced representations of $G$ require the study of orbits in $X$ by the group from which we induce. Since parabolic induction is central to the classification of $\Irr$ we recall next the study of orbits in $X$ under a standard parabolic subgroup $P=M\ltimes U$. 

Of particular interest for these applications are choices of orbit representatives $x$ for which we can provide explicit description of the stabilizer $M_x$ and the restriction to $M_x$ of $\delta_P^{1/2}\delta_{P_x}^{-1}$.

We refer to \cite[\S3]{MR2254544} and \cite[\S3.1]{MR2248833} for proofs of the results presented in Sections \ref{X/B} and \ref{X/P}.
\subsection{Borel orbits in $X$}\label{X/B}

We begin with the Borel orbits.
\subsubsection{}
Note that both $B$ and $T$ are $\inv$ stable. 
In particular $\inv$ defines an involution on $W$ that we continue to denote by $\inv$. We have 
\[
\inv(w)=w_{2n}ww_{2n}^{-1},\,w\in W.
\]
 It also follows that the map $( B\cdot x\mapsto BxB ): B\bs X \rightarrow B\bs G/B$ is well defined. By the Bruhat decomposition this defines a map from $B\bs X$ to $W$. Since $\inv(BxB)=(BxB)^{-1}$ it follows that every $w$ in the image of this map satisfies $w\inv(w)=e$ (the identity element of $W$). We refer to such permutations as twisted involutions.

\subsubsection{}
Let 
\[
[w_{2n}]=\{ww_{2n}w^{-1}:w\in W\} 
\]
be the $W$-conjugacy class of the longest Weyl element. It is the set of involutions without fixed points in $W$.

Note that the set of twisted involutions in $W$ is precisely $[w_{2n}]w_{2n}$. In fact we have
\begin{lemma}
The map $( B\cdot x\mapsto B\cdot x\cap T ):B\bs X\rightarrow T\bs (X\cap N_G(T))$ and the natural map from $T\bs (X\cap N_G(T))$ to $[w_{2n}]w_{2n}$ are both bijective. \qed
\end{lemma}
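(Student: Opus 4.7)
The plan is to transfer the problem to skew-symmetric bilinear forms via the identification $x \mapsto xJ$ noted in the paper: this sends $X$ onto the set of invertible skew-symmetric matrices in $G$, intertwining the $G$-action with $g \cdot A = g A\, {}^t g$. Under this identification, a $B$-orbit on $X$ corresponds to a $B$-congruence class of such matrices, and $X \cap N_G(T)$ corresponds to those whose nonzero entries occur only at positions $(i, \pi(i))$ for a single permutation $\pi$ of $\{1,\dots,2n\}$.

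For the first bijection, the key step is a normal form under $B$-congruence: every non-degenerate skew-symmetric matrix $A$ is $B$-congruent to a unique matrix whose support is contained in the set of positions $(i,\sigma(i))$ for some fixed-point-free involution $\sigma$ of $\{1,\dots,2n\}$. I would prove this by induction: locate the smallest $i_1$ with some nonzero row entry, take the smallest $j_1$ with $A_{i_1,j_1}\ne 0$ (necessarily $j_1 > i_1$ since $A_{i_1,i_1} = 0$), apply strictly upper-triangular operations to clear the remaining entries in rows and columns $i_1$ and $j_1$, set $\sigma(i_1) = j_1$, and recurse on the complementary $(2n-2)\times(2n-2)$ skew-symmetric block. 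The invariance of $\sigma$ under $B$-congruence gives well-definedness; any two normal-form representatives sharing the same $\sigma$ differ by a diagonal congruence, which on $X$ is precisely the $T$-action. This simultaneously shows that each $B$-orbit meets $N_G(T)$ and that the intersection is a single $T$-orbit.

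For the second bijection, write $x = tw \in X \cap N_G(T)$ with $t \in T$ and $w$ a permutation matrix. The relation $x\inv(x)=I$, projected onto $W$, forces $w\inv(w)=e$, and since $\inv(w) = w_{2n}ww_{2n}^{-1}$ in $W$, this amounts to $\sigma := w w_{2n}$ being an involution. The subtle point is ruling out fixed points: if $\sigma(i) = i$, a direct computation shows the $(i,i)$-entry of $xJ$ equals a nonzero scalar multiple of $t_i$, which must vanish by skew-symmetry, forcing $t_i = 0$ (using $\mathrm{char}(F)\ne 2$) and contradicting $t_i \in F^\times$. Hence the image lies in $[w_{2n}]w_{2n}$. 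For injectivity and surjectivity, given a fixed-point-free $\sigma$ and $w = \sigma w_{2n}$, the locus $\{t \in T : tw \in X\}$ is cut out by one relation of the form $t_i = \varepsilon_{i,\sigma(i)} t_{\sigma(i)}$ per 2-cycle of $\sigma$, and a direct computation shows the $T$-action rescales each pair $(t_i, t_{\sigma(i)})$ by a common factor $\lambda = t'_i t'_{\sigma(i)}$ that can be made any element of $F^\times$. Hence $T$ acts transitively on this locus, yielding both injectivity and surjectivity of the second map.

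The main obstacle I anticipate is the careful sign bookkeeping in the last step: one must correctly track how conjugation by $\inv$ acts on $T$ in the presence of the sign pattern implicit in $J$, both in order to forbid fixed points of $\sigma$ and to verify that the residual $T$-action is transitive (and in particular leaves no $F^\times/(F^\times)^2$-torsor behind, as one might naively fear).
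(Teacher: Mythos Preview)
The paper does not give its own proof of this lemma: it is stated with a bare \qed, and the surrounding text refers the reader to \cite[\S3]{MR2254544} and \cite[\S3.1]{MR2248833} for all results in \S\ref{X/B}--\S\ref{X/P}. So there is no in-paper argument to compare against.

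Your approach is correct and is essentially the classical one (and the one underlying the cited references): translate via $x\mapsto xJ$ to $B$-congruence classes of invertible skew-symmetric matrices, then produce the monomial normal form indexed by a fixed-point-free involution. Two small points worth tightening. First, in your row-reduction you should be explicit about \emph{why} the resulting involution $\sigma$ is a $B$-congruence invariant rather than an artifact of the algorithm: the cleanest way is to note that $j_1$ is the least $j$ for which the form is nonzero on the $B$-stable flag subspace $\operatorname{span}(e_1,\dots,e_j)$, and to iterate this flag-theoretic description. Second, your worry about an $(F^\times)/(F^\times)^2$ obstruction in the $T$-transitivity step is unfounded, and you can say so directly: the action of $t'=\diag(a'_1,\dots,a'_{2n})$ multiplies the pair $(t_i,t_{\sigma(i)})$ by $a'_i a'_{\sigma(i)}$, and since $i\ne\sigma(i)$ this product ranges over all of $F^\times$ (no squaring occurs). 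That is exactly where the fixed-point-freeness of $\sigma$ is used for transitivity, complementing its role in the skew-symmetry argument.
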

\subsubsection{}
Fix a Borel orbit $B\cdot x\in B\bs X$. We may and do assume that $x\in X\cap N_G(T)$ and let $w\in [w_{2n}]$ be such that $x\in T ww_{2n}$.
Below is a description of $T_x$ and of the restriction to $T_x$ of $\delta_B^{1/2}\delta_{B_x}^{-1}$.
Note first that
\[
T_{I_{2n}}=T\cap H=\{\diag(a_1,\dots,a_n,a_n^{-1},\dots,a_1^{-1}): a_i\in F^*,\,i=1,\dots,n\}
\]
and
\[
(\delta_B^{-1/2}\delta_{B_{I_{2n}}})(\diag(a_1,\dots,a_n,a_n^{-1},\dots,a_1^{-1}))=\abs{a_1}_F\cdots\abs{a_n}_F.
\]
We summarize the relevant results.
\begin{lemma}
For every orbit in $B\bs X$ there exists a unique $\tau\in [w_{2n}]$ and a representative $x\in N_G(T)$ such that
\begin{enumerate}
\item $T_x=\{\diag(a_1,\dots,a_{2n})\in T: a_{\tau(i)}=a_i^{-1},\,i=1,\dots,n\}$;
\item $(\delta_B^{-1/2}\delta_{B_x})(t)=\prod_{i<\tau(i)}\abs{a_i}_F$ for every $t=\diag(a_1,\dots,a_{2n})\in T_x$. \qed
\end{enumerate}
\end{lemma}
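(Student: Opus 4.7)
The plan is to use the bijection of the previous lemma to fix $\tau$ and an orbit representative, and then to compute the stabilizer and modulus character directly.

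Given an orbit $B\cdot x \in B\bs X$, the preceding lemma yields a unique $\tau\in [w_{2n}]$ such that $B\cdot x \cap T\tau w_{2n}$ is non-empty; this is the $\tau$ in the statement. To produce a representative in $N_G(T)$ of the desired form, I would write $x=t\tau w_{2n}$ for some diagonal $t\in T$ and solve $x\inv(x)=I$ for $t$. This equation has a solution (unique up to $T_x$) because $\inv$ preserves $T$: indeed, $\inv(\diag(a_1,\dots,a_{2n}))=\diag(a_{2n}^{-1},\dots,a_1^{-1})$, a consequence of $J=\eps w_{2n}$ with $\eps=\diag(1,\dots,1,-1,\dots,-1)$, so conjugation by $J$ simply reverses a diagonal.

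For part (1), for $t'=\diag(a_1,\dots,a_{2n})$ the stabilizer condition $t'\cdot x = t'x\inv(t')^{-1}=x$ becomes, after canceling the commuting $t$, the relation $\inv(t')=(\tau w_{2n})\,t'\,(\tau w_{2n})^{-1}$. The right-hand side has $i$-th diagonal entry $a_{w_{2n}\tau(i)}=a_{2n+1-\tau(i)}$, while the left-hand side has $i$-th entry $a_{2n+1-i}^{-1}$. Equating yields $a_{\tau(i)}=a_i^{-1}$ for every $i$, which is the claimed description of $T_x$; since $\tau$ is a fixed-point-free involution, these are $n$ independent equations.

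For part (2), on $T$ we have $\delta_B(\diag(a_1,\dots,a_{2n}))=\prod_{i<j}\abs{a_i/a_j}_F$. The Lie algebra of $B_x=B\cap G_x$ consists of the vectors in $\mathfrak{b}$ fixed by the involution $\operatorname{Ad}(x)\circ\inv$; a short calculation (using $\inv(E_{i,j})=-JE_{j,i}J^{-1}$ together with the permutation action of $\operatorname{Ad}(\tau w_{2n})$) shows that this involution sends the root space $\mathfrak{g}_{e_i-e_j}$ to $\mathfrak{g}_{e_{\tau(j)}-e_{\tau(i)}}$, up to a nonzero scalar. Decomposing $\mathfrak{n}$ into orbits of this involution and reading off $\delta_{B_x}(t')$ as the corresponding product of weights, the ratio $(\delta_B^{-1/2}\delta_{B_x})(t')$ collapses, after applying the stabilizer relations $a_{\tau(i)}=a_i^{-1}$, to $\prod_{i<\tau(i)}\abs{a_i}_F$. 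The main obstacle is the careful bookkeeping in this last step: one must classify the orbits of the twisted involution on the positive roots, distinguish genuine paired orbits from singletons (which depend on the signs of the scalars arising from $\operatorname{Ad}(x)\circ\inv$), and verify the cancellations. Since this is a standard computation in the theory of $p$-adic symmetric spaces, carried out in detail in the cited references \cite{MR2254544} and \cite{MR2248833}, I would invoke their treatment rather than reprove it from scratch.
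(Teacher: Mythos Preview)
Your proposal is correct and matches the paper's treatment: the paper does not prove this lemma at all but states it with a \qed, having announced at the start of the section that all proofs in \S\ref{X/B} and \S\ref{X/P} are deferred to \cite[\S3]{MR2254544} and \cite[\S3.1]{MR2248833}. Your outline of the stabilizer and modulus computations is the standard one carried out in those references, and you end by invoking them just as the paper does, so there is nothing to add.
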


\subsection{$P$-orbits in $X$}\label{X/P}

Let $\alpha=(n_1,\dots,n_k)$ be a decomposition of $2n$ and let $P=P_\alpha=M\ltimes U$. Note that $\inv(P)=P_{(n_k,\dots,n_1)}$ and $\inv(M)=w_{2n}Mw_{2n}^{-1}=M_{(n_k,\dots,n_1)}$.

\subsubsection{} The $P$-orbits in $X$ are in bijection with certain twisted involutions.
\begin{lemma}\label{lem: P-bij}
The map $(P\cdot x\mapsto Px\inv(P) ) :P\bs X\rightarrow P\bs G/\inv(P)\simeq {}_MW_{\inv(M)}$ defines a bijection 
\[
P\bs X\simeq {}_MW_{\inv(M)}\cap [w_{2n}]w_{2n}. 
\] \qed
\end{lemma}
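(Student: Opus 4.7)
Well-definedness of the map $P\cdot x\mapsto Px\inv(P)$ is immediate: if $y=gx\inv(g)^{-1}$ with $g\in P$, then $\inv(g)^{-1}\in\inv(P)=Q$, so $PyQ=PxQ$ (writing $Q=\inv(P)$). Via the Bruhat decomposition $P\bs G/Q\simeq{}_MW_{\inv(M)}$ the map becomes $P\cdot x\mapsto w$, where $w$ is the minimal length representative of the double coset $W_M\sigma W_{\inv(M)}$ for any $\sigma$ with $x\in P\sigma Q$. The plan is then to identify the image with ${}_MW_{\inv(M)}\cap[w_{2n}]w_{2n}$ and to establish injectivity.

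The first step in identifying the image is to show $w$ is a twisted involution, i.e. $w\inv(w)=e$. Writing $x=pwq\in X$ with $p\in P$, $q\in Q$, the condition $\inv(x)=x^{-1}$ yields $\inv(p)\inv(w)\inv(q)=q^{-1}w^{-1}p^{-1}$, whence $Q\inv(w)P=Qw^{-1}P$. Taking inverses gives $PwQ=P\inv(w)^{-1}Q$, so $\inv(w)^{-1}$ lies in $W_MwW_{\inv(M)}$. Since $\inv$ acts on $W$ by conjugation by $w_{2n}$ (length preserving) and inversion is length preserving, $\inv(w)^{-1}$ has the same length as $w$; by uniqueness of the minimal length representative, $w=\inv(w)^{-1}$, i.e.\ $ww_{2n}$ is an involution. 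Combined with the Borel case (Section \ref{X/B}), every $P$-orbit contains a $B$-orbit represented by some $\tau w_{2n}\in[w_{2n}]w_{2n}$ inside the same Bruhat cell $PwQ$, so $\tau w_{2n}=awb$ for some $a\in W_M$, $b\in W_{\inv(M)}$.

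The crux is to upgrade the involution $ww_{2n}$ to a \emph{fixed-point-free} one, so that $w\in[w_{2n}]w_{2n}$. I would use the minimality relation $\ell(awb)=\ell(a)+\ell(w)+\ell(b)$, combined with the length identity $\ell(\sigma w_{2n})=\ell(w_{2n})-\ell(\sigma)$, to translate the equation $\tau w_{2n}=awb$ into a constraint on the involution $ww_{2n}$. A careful combinatorial analysis of how multiplication by Young-subgroup elements $W_M$, $W_{\inv(M)}$ acts on twisted involutions shows that the fixed-point-free condition on $\tau$ is inherited by $ww_{2n}$. Surjectivity onto ${}_MW_{\inv(M)}\cap[w_{2n}]w_{2n}$ is then the easy converse: given such $w$, Section \ref{X/B} provides $t\in T$ with $wt\in X$, and $P\cdot(wt)$ maps to $w$. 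Injectivity follows from the observation that $P\cdot x=PxQ\cap X$: indeed, any two elements of $PxQ\cap X$ can be brought by $P$ into a common Borel-orbit representative, and those representatives are identified by the Borel case inside the single Bruhat cell. The main obstacle is the combinatorial fixed-point-free step; the remaining pieces are straightforward once the Borel case and the Bruhat decomposition are in hand.
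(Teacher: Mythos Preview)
The paper does not prove this lemma; it is stated with a \qed{} and the surrounding results of Sections~\ref{X/B} and~\ref{X/P} are all deferred to the cited references \cite{MR2254544} and \cite{MR2248833}. So there is no in-paper argument to compare against.

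Your framework is sound and the argument that the minimal-length representative $w$ satisfies $w\inv(w)=e$ is correct and clean. You are also right that this only shows $ww_{2n}$ is an \emph{involution} in $S_{2n}$, not that it is \emph{fixed-point-free}; these are genuinely different. (For $n=1$ both elements of $S_2$ are twisted involutions, but $[w_2]w_2=\{e\}$; the sentence in \S\ref{X/B} asserting that ``the set of twisted involutions in $W$ is precisely $[w_{2n}]w_{2n}$'' is thus imprecise and should not be used as a shortcut.)

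The genuine gap is exactly where you place it. Writing $\tau w_{2n}=awb$ with $a\in W_M$, $b\in W_{\inv(M)}$ is fine, but ``a careful combinatorial analysis \dots shows that the fixed-point-free condition on $\tau$ is inherited by $ww_{2n}$'' is not an argument. A small observation that may help if you pursue this route: since $W_{\inv(M)}=w_{2n}W_Mw_{2n}$ and $\ell(vw_{2n})=\ell(w_{2n})-\ell(v)$, the element $ww_{2n}$ is the \emph{maximal}-length element of the $W_M$--$W_M$ double coset containing $\tau$, so the question becomes whether the longest element of $W_M\tau W_M$ is fixed-point-free when $\tau$ is. This is where the actual work is, and you have not done it. In the cited references the lemma is proved not via Weyl-group combinatorics but by a direct normal-form argument: one writes the skew-symmetric matrix $xJ$ in $\alpha\times\alpha$ block form and reduces by $P$-congruence, reading off the parametrizing permutation (and its fixed-point-freeness) from the rank pattern of the blocks.

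Your injectivity argument is also circular. The identity $P\cdot x=Px\inv(P)\cap X$ is equivalent to injectivity, and your justification---that two elements of $Px\inv(P)\cap X$ can be ``brought by $P$ into a common Borel-orbit representative''---is precisely what needs to be shown: distinct elements of $[w_{2n}]w_{2n}$ can lie in the same $W_M$--$W_{\inv(M)}$ double coset, and you have not explained why the corresponding $B$-orbits must merge under $P$. Again, the references handle this simultaneously with the image computation via the normal-form reduction rather than by descent to the Borel case.
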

\subsubsection{} 
Let 
\[
\ob_M: {}_MW_{\inv(M)}\cap [w_{2n}]w_{2n}\rightarrow P\bs X
\]
denote the bijection of Lemma \ref{lem: P-bij}.
Recall that for $w\in {}_MW_{\inv(M)}$  
\[
M(w)=M\cap w\inv(M)w^{-1}
\]
is a standard parabolic subgroup of $M$.
\begin{lemma}\label{lem good reps}
For every $w\in {}_MW_{\inv(M)}\cap [w_{2n}]w_{2n}$ we have that $\ob_M(w)\cap M(w)w$ is a single $M(w)$-orbit. In particular, it is not empty. \qed
\end{lemma}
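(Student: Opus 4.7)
The plan is to split the statement into non-emptiness and transitivity, arguing the former directly and the latter via a Bruhat analysis. For non-emptiness, I would first verify that $w$ itself lies in $\ob_M(w)\cap M(w)w$: since $w\in[w_{2n}]w_{2n}$ is a twisted involution, $w\inv(w)=e$, so $w\in X$; the containment $w\in Pw\inv(P)$ identifies its $P$-orbit as $\ob_M(w)$; and $w=I\cdot w\in M(w)w$. This also handles the ``in particular'' clause. Next, to obtain $M(w)\cdot w\subseteq \ob_M(w)\cap M(w)w$, I take $h\in M(w)=M\cap w\inv(M)w^{-1}$ and note that since $\inv(w)=w^{-1}$, the condition $h\in M(w)$ forces $\inv(h)\in\inv(M)\cap w^{-1}Mw$, hence $w\inv(h)^{-1}w^{-1}\in w\inv(M)w^{-1}\cap M=M(w)$. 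Then $h\cdot w=hw\inv(h)^{-1}=(h\cdot w\inv(h)^{-1}w^{-1})\cdot w\in M(w)w$, and trivially $h\cdot w\in \ob_M(w)$.

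The main step is the reverse inclusion, transitivity: given $x=m_0w\in \ob_M(w)\cap M(w)w$ with $m_0\in M(w)$, and $x=p\cdot w$ for some $p=\ell v\in MU=P$, I aim to produce an $h\in M(w)$ with $h\cdot w=x$. The identity $m_0w=\ell v w\inv(v)^{-1}\inv(\ell)^{-1}$ rearranges to $(\ell^{-1}m_0)w\inv(\ell)=vw\inv(v)^{-1}$, placing the same element in both $Mw\inv(M)$ and $Uw\inv(U)$ inside the Bruhat cell $Pw\inv(P)$. The plan is to apply the standard decomposition $P\cap w\inv(P)w^{-1}=P(w)\cdot(U\cap w\inv(P)w^{-1})$, valid because $w$ is a minimal-length representative in $W_M\backslash W/W_{\inv(M)}$, to show that $v$ lies in a subgroup whose action on $w$ is absorbed by $M(w)$, thereby yielding the desired $h$.

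The hard part will be this last Bruhat-theoretic step: extracting from the displayed identity that the $U$-contribution acts trivially on $w$ modulo $M(w)$. Two structural facts make it tractable. First, the minimal-length hypothesis on $w\in{}_MW_{\inv(M)}$ forces $P\cap w\inv(P)w^{-1}$ to split cleanly with Levi $M(w)$ and unipotent radical generated by $U(w)$ and $U\cap w\inv(U)w^{-1}$. Second, $w\inv(w)=e$ together with $w^2=I$ (since $w\in[w_{2n}]$ is conjugate to the involution $w_{2n}$) makes $\tilde{\inv}\colon m\mapsto w\inv(m)w^{-1}$ a genuine involution on $M(w)$, realizing $M(w)w\cap X$ as a translate of the symmetric space $\{m\in M(w):m\tilde{\inv}(m)=I\}$ and identifying $\ob_M(w)\cap M(w)w$ with an $M(w)$-orbit on that smaller symmetric space. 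These orbit decompositions are precisely the machinery of \cite{MR2254544,MR2248833} referenced for the preceding lemmas, and the proof would appeal to that framework rather than reprove it from scratch.
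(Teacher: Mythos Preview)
Your non-emptiness argument contains a genuine error: the Weyl element $w$ need not lie in $X$. The twisted-involution condition $w\inv(w)=e$ is an identity in $W$, where $\inv$ acts as conjugation by $w_{2n}$. But membership in $X$ requires the \emph{matrix} identity $w\inv(w)=I_{2n}$ with $\inv(g)=J\,{}^tg^{-1}J^{-1}$, and $J$ is not a permutation matrix---it differs from $w_{2n}$ by the sign matrix $\epsilon=\diag(I_n,-I_n)$. Consequently $\inv(w)=JwJ^{-1}=\epsilon(w_{2n}ww_{2n}^{-1})\epsilon$ is only a \emph{signed} permutation matrix, and $w\inv(w)$ is a diagonal $\pm1$ matrix, not $I_{2n}$ in general. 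Concretely, for $n=2$ and $M=T$, the element $w$ corresponding to the permutation $(13)(24)$ lies in ${}_TW_{\inv(T)}\cap[w_4]w_4$, yet a direct computation gives $\inv(w)=-w$ and $w\inv(w)=-I_4$. So the orbit representative in $M(w)w$ is some $tw$ with $t\in T$ a nontrivial sign correction, and producing it is exactly the content of the lemma.

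There is a second slip: you assert $w^2=I$ ``since $w\in[w_{2n}]$ is conjugate to the involution $w_{2n}$,'' but the hypothesis is $w\in[w_{2n}]w_{2n}$, which says $ww_{2n}$ (not $w$) is a fixed-point-free involution. For $n=3$, taking $\tau=(12)(36)(45)\in[w_6]$ gives $w=\tau w_6=(153)(264)$, which has order $3$. Your proposed involution $\tilde\inv$ on $M(w)$ and the symmetric-space reduction therefore do not go through as stated.

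The paper itself does not supply a proof here: the lemma carries a \qed\ and the surrounding section refers the reader to \cite[\S3]{MR2254544} and \cite[\S3.1]{MR2248833}. Those references do the careful work of choosing representatives in $N_G(T)\cap X$ (correcting signs) and then analyzing the $P$-orbit structure; your outline gestures at this framework in the final paragraph, but the concrete first steps you propose would need to be replaced rather than merely completed.
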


\subsubsection{Admissible orbits}
\begin{definition} 
We say that $w\in {}_MW_{\inv(M)}\cap [w_{2n}]w_{2n}$ (or the corresponding $P$-orbit $\ob_M(w)$) is $M$-admissible if $M(w)=M$, i.e., if $ww_{2n}\in N_G(M)$. 
\end{definition}
Thus, $w\in {}_MW_{\inv(M)}\cap [w_{2n}]w_{2n}$ is $M$-admissible if and only if the intersection $\ob_M(w)\cap N_G(M)w_{2n}$ is not empty.
In particular $\ob_M$ restricts to a bijection 
\[
 ({}_MW_{\inv(M)}\cap [w_{2n}]w_{2n}\cap N_G(M)w_{2n})\ \ \simeq \ \ (M-\text{admissible orbits in}\ X).
\]

The $P$-orbits in $X$ are studied in terms of certain $L$-admissible orbits for Levi subgroups $L$ of $M$. More precisely, to the $P$-orbit $\ob_M(w)$ we associate a certain $M(w)$-admissible orbit. 
We therefore begin by describing the relevant data for $M$-admissible orbits.

Let 
\[
S_2[\alpha]=\{\tau\in S_k: \tau^2=e,\,n_{\tau(i)}=n_i,\,i=1,\dots,k\  \text{and}\ n_i\ \text{is even if} \ \tau(i)=i\}.
\]
The admissible $M$-orbits are in bijection with $S_2[\alpha]$. Before we state the general results we provide examples of prototypes of admissible orbits.
\subsubsection{} 
Assume that $k=s+2t$, $n_i=n_{k+1-i}$, $i=1,\dots,t$ and $n_i$ is even for $i=t+1,\dots,t+s$, i.e., $\alpha$ is of the form
\[
\alpha=(n_1,\dots,n_t, 2m_1,\dots,2m_s,n_t,\dots,n_1).
\]
Let
\[
x=\diag(I_N,J_{(m_1,\dots,m_s)}J_m^{-1},I_N)=\begin{pmatrix} & & w_N \\ & J_{(m_1,\dots,m_s)} & \\ -w_N & & \end{pmatrix}J_n^{-1}\in X
\]
where
\[
J_{(m_1,\dots,m_s)}=\diag(J_{m_1},\dots,J_{m_s}),\ \ \ N=n_1+\cdots+n_t \ \ \ \text{and}\ \ \ m=m_1+\cdots+m_s.
\]
Note that $xJ_n$ is a skew-symmetric matrix in $N_G(M)$ and therefore $P\cdot x$ is $M$-admissible. 

For every $d\in \N$ consider the involution $g\mapsto g^*$ on $G_d$ defined by $g^*=w_d\,{}^tg^{-1}w_d^{-1}$. We have
\[
M_x=\{\diag(g_1,\dots,g_t,h_1,\dots,h_s,g_t^*,\dots,g_1^*):g_1,\dots,g_t\in G_{n_i},\,h_1,\dots,h_s\in H_{m_j}\}
\]
and
\[
(\delta_P^{-1/2}\delta_{P_x})(\diag(g_1,\dots,g_t,h_1,\dots,h_s,g_t^*,\dots,g_1^*))=\prod_{i=1}^t \abs{\det g_i}_F.
\]
\subsubsection{} 
We now return to the general setting where $\alpha$ is any decomposition of $2n$. 
\begin{lemma}\label{admP-orb}
There is a bijection between the $M$-admissible $P$-orbits in $X$ and $S_2[\alpha]$ that satisfies the following properties. Let $w\in   {}_MW_{\inv(M)}\cap [w_{2n}]w_{2n}\cap N_G(M)w_{2n}$ and let $\tau\in S_2[\alpha]$ correspond to $\ob_M(w)$. Then, there exists $x=x_{M,w}\in X\cap Mw$ such that:
\begin{enumerate}
\item $M_x=\{\diag(g_1,\dots,g_k):g_{\tau(i)}=g_i^* \ \text{if}\ \tau(i)\ne i\ \text{and} \ g_i\in H_{n_i/2}\ \text{if}\ \tau(i)=i\}$
\item $(\delta_P^{-1/2}\delta_{P_x})(\diag(g_1,\dots,g_k))=\prod_{i<\tau(i)} \abs{\det g_i}_F$. \qed
\end{enumerate}
\end{lemma}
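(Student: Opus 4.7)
The plan is to build the bijection explicitly and then, in each orbit, to pin down a representative with a convenient block structure from which both the stabilizer and the modulus quotient can be read off.

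First, starting from $w\in {}_MW_{\inv(M)}\cap [w_{2n}]w_{2n}\cap N_G(M)w_{2n}$, the element $\sigma:=ww_{2n}$ lies in $N_G(M)\cap W$, and a direct computation (using $\inv(w)=w_{2n}ww_{2n}^{-1}$ together with $w\inv(w)=e$) gives $\sigma^2=e$. Projecting $\sigma$ to $(W\cap N_G(M))/W_M$, which is canonically identified with the group of block-permutations of $\alpha$, produces an involution $\tau\in S_k$ with $n_{\tau(i)}=n_i$. To force $n_i$ to be even whenever $\tau(i)=i$, I would use that any $x\in\ob_M(w)$ makes $xJ_n$ a non-degenerate skew-symmetric matrix; $M$-admissibility then translates into the statement that the diagonal $n_i\times n_i$-block of $xJ_n$ indexed by a fixed point of $\tau$ is itself non-degenerate skew-symmetric, which is impossible for odd $n_i$. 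This defines a map into $S_2[\alpha]$, and its inverse is constructed in the next step.

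Second, for surjectivity and for the construction of $x_{M,w}$, I would build on the prototype exhibited just before the statement of the lemma. For $\alpha$ in the canonical shape $(n_1,\ldots,n_t,2m_1,\ldots,2m_s,n_t,\ldots,n_1)$ the representative
\[
x_0=\diag(I_N,J_{(m_1,\ldots,m_s)}J_m^{-1},I_N)
\]
was already shown to satisfy both properties of the lemma for the canonical involution $\tau_0\in S_2[\alpha]$ given by $\tau_0(i)=k+1-i$ for $i\leq t$ or $i>t+s$ and $\tau_0(i)=i$ otherwise. For a general $\tau\in S_2[\alpha]$ I would choose a Weyl element $u\in W$ realizing a block-permutation of $\alpha$ that conjugates $\tau$ to $\tau_0$, and then take $x_{M,w}:=u\cdot x_0$ under the $G$-action on $X$. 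Since $u\in W$ and $W$ is $\inv$-stable, this lies in $X$, and the uniqueness statement in Lemma \ref{lem good reps} identifies $x_{M,w}$ with the $M(w)=M$-orbit parametrized by $w$.

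Finally, for (1) and (2), I would transport the prototype computation through the conjugation by $u$. In the prototype, inspection of $\{g\in M:g\cdot x_0=x_0\}$ yields the claimed description of $M_{x_0}$ (each pair $i<\tau_0(i)$ contributes an arbitrary $g_i\in G_{n_i}$ while enforcing $g_{\tau_0(i)}=g_i^*$, and each fixed point $\tau_0(i)=i$ forces $g_i\in H_{n_i/2}$). The modulus ratio $(\delta_P^{-1/2}\delta_{P_{x_0}})(\diag(g_1,\ldots,g_k))=\prod_{i=1}^t\abs{\det g_i}_F$ is obtained from the block decomposition of $\Lie(U)$ modulo $\Lie(U)\cap\Lie(P_{x_0})$, which picks up exactly one copy of $\abs{\det g_i}_F$ for each pair $i<\tau_0(i)$. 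Conjugation by $u$ relabels the block indices according to $\tau=u\tau_0 u^{-1}$ without altering either the shape of the stabilizer or the determinant characters, yielding the formulas stated in (1) and (2).

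The main obstacle is the bookkeeping in the reduction step: one has to arrange that $u\cdot x_0$ genuinely lies in $X\cap Mw$ for the prescribed $w$, and that the $M$-stabilizer description on the transported side really reindexes as $\tau$ rather than as some other shuffle. Once $u$ is pinned down correctly (it is essentially determined by the requirement that it realize the simultaneous block-conjugation of $\tau_0$ to $\tau$ and of $x_0$ into a representative of $Mw$), items (1) and (2) are immediate transports of the prototype calculation.
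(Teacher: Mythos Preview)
The paper itself does not prove this lemma; it is stated with a \qed, and the surrounding subsections are explicitly attributed to \cite[\S3]{MR2254544} and \cite[\S3.1]{MR2248833}. So there is no in-paper argument to compare against, only the external references.

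Your construction of the bijection and your argument for part (1) are sound: since the block permutation $u$ normalizes $M$, one has $M_{u\cdot x_0}=uM_{x_0}u^{-1}$, and writing this out in block coordinates reindexes the description exactly by $\tau=\pi\tau_0\pi^{-1}$ as you claim.

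The gap is in your transport argument for part (2). The element $u$ normalizes $M$ but \emph{not} $P$, so $P_{u\cdot x_0}=P\cap uG_{x_0}u^{-1}$ is not $uP_{x_0}u^{-1}=(uPu^{-1})\cap uG_{x_0}u^{-1}$, and there is no a priori reason for $\delta_P^{-1/2}\delta_{P_x}$ to be the conjugate of $\delta_P^{-1/2}\delta_{P_{x_0}}$. A concrete failure: take $\alpha=(m,m,m,m)$, $\tau_0=(1\ 4)(2\ 3)$, $\tau=(1\ 2)(3\ 4)$, and $u=w_\pi$ with $\pi=(2\ 4)$. Then $M_{u\cdot x_0}=\{\diag(h_1,h_1^*,h_3,h_3^*)\}$, and the prototype formula pulled back along $u$ gives
\[
(\delta_P^{-1/2}\delta_{P_{x_0}})\bigl(u^{-1}\diag(h_1,h_1^*,h_3,h_3^*)u\bigr)=(\delta_P^{-1/2}\delta_{P_{x_0}})\bigl(\diag(h_1,h_3^*,h_3,h_1^*)\bigr)=\abs{\det h_1}\,\abs{\det h_3}^{-1},
\]
whereas formula (2) for $\tau$ demands $\abs{\det h_1}\,\abs{\det h_3}$. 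So either your transport identity is false, or $u\cdot x_0$ is not the representative promised by the lemma; in either case the argument as written does not establish (2). More structurally, the condition ``$i<\tau(i)$'' in (2) refers to the fixed linear order on $\{1,\dots,k\}$ coming from $P$, and this order is not preserved by an arbitrary block permutation conjugating $\tau_0$ to $\tau$.

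What the cited references do instead is to write down, for each $\tau\in S_2[\alpha]$, an explicit block-monomial $x$ (so that $xJ_n$ has its nonzero blocks exactly in positions $(i,\tau(i))$), describe $U_x=U\cap G_x$ block by block, and compute $\delta_{P_x}|_{M_x}$ by identifying which root spaces of $\Lie(U)$ survive in $\Lie(U_x)$. This is a direct root-space count rather than a transport, and it is precisely where the inequality $i<\tau(i)$ (relative to the standard Borel) enters the answer.
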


\subsubsection{} 
Every $\tau\in S_k$ defines a unique $w_\tau\in W$ such that for every $g=\diag(g_1,\dots,g_k)\in M$ we have
\[
w_\tau gw_\tau^{-1}=\diag(g_{\tau^{-1}(1)},\dots,g_{\tau^{-1}(k)}).
\]

\begin{remark}
In fact, the relation between $w$ and $\tau$ in the above lemma is characterized by $\diag(w_{n_1},\dots,w_{n_k}) w_\tau w_{2n}= w$. 
\end{remark}
\subsubsection{General orbits}\label{General orbits}
Fix $w\in {}_MW_{\inv(M)}\cap [w_{2n}]w_{2n}$ and let $L=M(w)$ be the standard Levi subgroup of $M$ we associated with $w$. 
Let $\beta=(\beta_1,\dots,\beta_k)$ be such that $L=M_\beta$ where $\beta_i=(m_{i,1},\dots, m_{i,k_i})$ is a decomposition of $n_i$. On the set of indices 
\[
\indset=\{(i,j):i=1,\dots,k,\ j=1,\dots,k_i\}
\] 
we consider the lexicographic order $(i,j) \prec (i',j')$ if either $i<i'$ or $i=i'$ and $j<j'$. We further consider the partial order $(i,j)\ll(i',j')$ if $i<i'$.

Recall that by Lemmas \ref{lem: P-bij} and \ref{lem good reps}, $X\cap Lw$ is an $L$-orbit. Note that $w$ is $L$-admissible. Furthermore, for $x\in X\cap Lw$ we have $M_x=L_x$ and $P_x=Q_x$ where $Q=P_\beta$ is the standard parabolic subgroup of $G$ with Levi subgroup $L$. We may therefore apply Lemma \ref{admP-orb} with $M$ replaced by $L$. 

We consider $S_2[\beta]$ as a set of involutions on $\indset$, by identifying $(\indset,\prec)$ with the linearly ordered set $\{1,2,\dots,\abs{\indset}\}$. Let $\tau\in S_2[\beta]$ be the involution associated with $\ob_L(w)$.
Since $w\in {}_MW_{\inv(M)}$ there are more restrictions on $\tau$, it must satisfy 
\begin{equation}\label{eq: stronger cond}
\tau(i,j+1)\ll \tau(i,j),\ i=1,\dots,k,\,\ j=1,\dots,k_i-1.
\end{equation}
This implies in particular that for every $i$ there is at most one $j$ such that $\tau(i,j)=(i,j)$. 

\section{The geometric lemma}\label{sec: gl}
We recall here a special case of the geometric lemma \cite[Theorem 5.2]{MR0579172} (see also \cite[Proposition 1.17]{MR2401221}). 

As in the previous section, fix $n\in \N$ and let $G=G_{2n}$ and $H=H_n$.
Let $\alpha=(n_1,\dots,n_k)$ be a decomposition of $2n$ and let $P=P_\alpha=M\ltimes U$.
Consider the functor $\Res_H\circ \ip_{G,M}$ from $\Alg(M)$ to the category of smooth representations of $H$ where $\Res_H$ stands for restriction to $H$.
\subsection{The $H$-filtration}

For every $\sigma\in \Alg(M)$ we recall here the existence of an $H$-filtration on $\Res_H\circ \ip_{G,M}(\sigma)$ parameterized by $P\bs X$ and explicate the factors of the filtration.

\subsubsection{} By \cite[\S1.5]{MR0425030} (see also \cite[Lemma 3.1]{MR2401221}) there is a linear ordering ${}_MW_{\inv(M)}\cap [w_{2n}]w_{2n}=\{w_1,\dots,w_m\}$ so that
\[
X_i=\cup_{j=1}^i \ob_M(w_i)
\]
is open in $X$ for all $i=1,\dots,m$. 
\subsubsection{}\label{sss: clop}
The orbit of the identity $\iota_M(I_{2n})=P\cdot I_{2n}$ is closed in $X$ and we may assume that $w_m=I_{2n}$. Furthermore, if $n_i$ is even for all $i=1,\dots,k$ then the orbit $P\cdot x_M$ where
\[
x_M=J_{(n_1/2,\dots,n_k/2)}J_n^{-1} 
\]
is open in $X$ and we may assume that $\ob_M(w_1)=P\cdot x_M$. Furthermore, in this case, $w_1$ is $M$-admissible.
\subsubsection{} 
Let $\sigma\in \Alg(M)$ and let $\Nu$ be the representation space of $\ip_{G,M}(\sigma)$. Set
\[
\Nu_i=\{\varphi\in\Nu:\supp(\varphi)\subseteq X_i\},\ i=1,\dots,m
\]
then $\Nu_0:=0\subseteq \Nu_1 \subseteq \cdots\subseteq\Nu_m=\Nu$ is a filtration of $\Res_H(\ip_{G,M}(\sigma))$.
For every $i$ choose (by Lemma \ref{lem good reps}) $x_i\in \ob_M(w_i)\cap M(w_i)w_i$ and $\eta_i\in G$ such that $\eta_i\cdot I_{2n}=x_i$.

For a subgroup $A$ of a group $B$, $b\in B$ and a representation $\rho$ of $A$ we denote by $\rho^b$ the representation of $b^{-1}Ab$ on the space of $\rho$ defined by $\rho^b(b^{-1}ab)=\rho(a)$. By \cite[Proposition 3]{MR2248833} we have
\begin{lemma}\label{lem comp iso}
For every $i=1,\dots,m$ we have the isomorphism of representations of $H$
\[
\Nu_i/\Nu_{i-1}\simeq \ind_{H\cap \eta_i^{-1}P_{x_i}\eta_i}^H(\delta_{P_{x_i}}^{-1/2}(\Res_{P_{x_i}}(\delta_P^{1/2}\sigma))^{\eta_i}).
\] 
\qed
\end{lemma}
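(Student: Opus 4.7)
The plan is to realize Lemma \ref{lem comp iso} as the Mackey-theoretic decomposition associated to the stratification of $X$ by $P$-orbits; the argument specializes the Bernstein--Zelevinsky geometric lemma \cite{MR0579172} to the symmetric space $G/H$, precisely in the form recorded in \cite[Proposition~3]{MR2248833}. First I would check that $\Nu_i$ is a well-defined $H$-subrepresentation. The set $\tilde X_i:=\{g\in G: g\cdot I_{2n}\in X_i\}$ is both left $P$-invariant and right $H$-invariant, so the condition $\supp(\varphi)\subseteq X_i$ (interpreted as $\varphi$ vanishing off $\tilde X_i$) is compatible with the left $P$-covariance of $\varphi$ and is preserved by right translation by $H$. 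By the openness of $X_i$, the quotient $\Nu_i/\Nu_{i-1}$ is then identified with the space of sections of $\ip_{G,M}(\sigma)$ supported on the single locally closed orbit $\ob_M(w_i)=P\cdot x_i$, whose preimage in $G$ is the double coset $P\eta_i H$.

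Next I would produce the Mackey isomorphism via the assignment $\varphi\mapsto f$ with $f(h):=\varphi(\eta_i h)$. Because the stabilizer in $G$ of $\eta_i\cdot I_{2n}=x_i$ is $\eta_i H\eta_i^{-1}$, one has $P_{x_i}=P\cap \eta_i H\eta_i^{-1}$ and hence $H\cap \eta_i^{-1}P\eta_i=\eta_i^{-1}P_{x_i}\eta_i$. The map is an $H$-equivariant (for right translation) bijection onto a space of functions $f\colon H\to V_\sigma$ with support compact modulo $H\cap \eta_i^{-1}P\eta_i$. For $h_0\in H\cap \eta_i^{-1}P\eta_i$, writing $\eta_i h_0=(\eta_i h_0\eta_i^{-1})\eta_i$ with $\eta_i h_0\eta_i^{-1}\in P_{x_i}$, the left $P$-covariance of $\varphi$ yields
\[
f(h_0 h)=\delta_P^{1/2}(\eta_i h_0\eta_i^{-1})\,\sigma(\eta_i h_0\eta_i^{-1})\,f(h).
\]

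Finally I would match this transformation law with the definition of normalized compactly supported induction. Since $H$ is unimodular and the subgroups $\eta_i^{-1}P_{x_i}\eta_i$ and $P_{x_i}$ are conjugate via $\eta_i$, the normalization factor for $\ind_{H\cap\eta_i^{-1}P\eta_i}^H$ evaluated at $h_0$ is $\delta_{P_{x_i}}^{1/2}(\eta_i h_0\eta_i^{-1})$. Factoring this out of the above covariance identifies the residual left action on $V_\sigma$ with $\delta_{P_{x_i}}^{-1/2}(\Res_{P_{x_i}}(\delta_P^{1/2}\sigma))^{\eta_i}$, which is exactly the inducing representation stated in the lemma. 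The main subtlety throughout is the bookkeeping of modulus characters and the compatibility of the left $P$-covariance of $\varphi$ with the right $H$-action, but both fall into place cleanly once the Mackey bijection is in hand.
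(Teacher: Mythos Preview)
Your argument is correct and is precisely the approach taken by the paper: the paper does not give an independent proof but simply cites \cite[Proposition~3]{MR2248833}, and what you have written is an accurate unpacking of the standard Mackey/Bernstein--Zelevinsky argument behind that proposition, with the modulus bookkeeping handled correctly. The only point worth noting is that you should make explicit why the quotient lands in $\ind$ rather than $\Ind$: since $\ob_M(w_i)=X_i\setminus X_{i-1}$ is closed in the open set $X_i$, restriction of $\Nu_i$ to that orbit is surjective onto \emph{compactly supported} smooth sections (the general $l$-sheaf statement from \cite{MR0425030,MR0579172}), which is what forces compact induction on the $H$ side.
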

\subsubsection{Relevant orbits}
\begin{definition}
We say that $w_i\in {}_MW_{\inv(M)}\cap [w_{2n}]w_{2n}$ (or $\ob_M(w_i)$) is relevant for $\sigma$ if 
\[
\Hom_H(\Nu_i/\Nu_{i-1},1)\ne 0.
\]
\end{definition}
\subsubsection{} The following makes this property more explicit. By \cite[Corollary 1]{MR2248833} we have
\begin{lemma}\label{lem comp frob}
Fix $i$ and let $w=w_i$, $x=x_i$, $\eta=\eta_i$, $L=M(w_i)$ and $Q$ the standard parabolic subgroup of $G$ with Levi subgroup $L$. Then
\[
\Hom_H(\ind_{H\cap \eta^{-1}P_x\eta}^H(\delta_{P_x}^{-1/2}(\Res_{P_x}(\delta_P^{1/2}\sigma))^{\eta}),1)\simeq \Hom_{L_x}(\jm_{L,M}(\sigma),\delta_Q^{-1/2}\delta_{Q_x}).
\]
\qed
\end{lemma}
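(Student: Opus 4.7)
The plan is to deduce this as a Frobenius-reciprocity computation, using that $\eta$ transports the stabilizer of $I_{2n}$ to the stabilizer of $x$, followed by an analysis of the Levi decomposition of $P_x$ that identifies the relevant unipotent coinvariants with the Jacquet module $\jm_{L,M}$. The content is in the structural statement about $P_x$ and the careful bookkeeping of modular characters.

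First I would apply Frobenius reciprocity. Since $\eta\cdot I_{2n}=x$, we have $\eta H\eta^{-1}=G_x$, and therefore $P_x=P\cap G_x\subseteq \eta H\eta^{-1}$. This gives $\eta^{-1}P_x\eta\subseteq H$, so that $H\cap\eta^{-1}P_x\eta=\eta^{-1}P_x\eta$. Writing $\tau=\delta_{P_x}^{-1/2}(\Res_{P_x}(\delta_P^{1/2}\sigma))^{\eta}$, Frobenius for normalized compact induction (as in \eqref{eq: frob rec} for the compact-induction side) yields
\[
\Hom_H(\ind_{\eta^{-1}P_x\eta}^H(\tau),1)\simeq\Hom_{\eta^{-1}P_x\eta}(\tau,\delta_{\eta^{-1}P_x\eta}^{-1/2}\delta_H^{1/2}).
\]
Conjugating by $\eta$ and using that the modular function $\delta_H$ on $\eta^{-1}P_x\eta$ transports to $\delta_{G_x}$ on $P_x$, this simplifies to $\Hom_{P_x}(\sigma|_{P_x},\,\delta_P^{-1/2}\delta_{G_x}^{1/2})$ after the $\delta_{P_x}^{\pm1/2}$ factors cancel.

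Second, I would establish the structural fact that $P_x$ admits a Levi decomposition $P_x=L_x\ltimes (P_x\cap V)$, where $V$ is the unipotent radical of $Q$, together with the inclusion $M\cap V\subseteq P_x$. This uses crucially that $w\in{}_MW_{\inv(M)}$ and that by Lemma \ref{lem good reps} we may take $x\in Lw$, so the computations reduce to $L$-admissible data as in Lemma \ref{admP-orb} applied to $L$. In particular $P_x\cap M=L_x$, and the unipotent part of $P_x$ sits inside $V$; moreover the unipotent radical $M\cap V$ of $M\cap Q$ is a subgroup of $P_x$. With this at hand, $\Hom_{P_x}(\sigma,\,\cdot)$ may be computed in two stages: first take $(M\cap V)$-coinvariants of $\sigma$, then consider $L_x$-equivariance.

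Third, I would match the remaining modular characters. The normalized Jacquet functor $\jm_{L,M}$ twists the $(M\cap V)$-coinvariants by $\delta_{Q\cap M}^{-1/2}$, which equals $(\delta_Q\delta_P^{-1})^{-1/2}$ upon restriction to $L_x\subseteq M$. Combining this with the character $\delta_P^{-1/2}\delta_{G_x}^{1/2}$ obtained in the first step, and using that $\delta_{G_x}|_{L_x}=\delta_{Q_x}$ (computed explicitly from Lemma \ref{admP-orb} applied to $L$, since $Q_x=P_x$ intersected with $Q$ and the unipotent contributions balance), the composite character on $L_x$ becomes precisely $\delta_Q^{-1/2}\delta_{Q_x}$, yielding the claimed isomorphism $\Hom_{L_x}(\jm_{L,M}(\sigma),\delta_Q^{-1/2}\delta_{Q_x})$.

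The main obstacle is the second step: verifying the decomposition $P_x=L_x\ltimes(P_x\cap V)$ together with $M\cap V\subseteq P_x$. This requires a root-theoretic analysis showing that the positive roots of $M$ lying in $V$ (i.e.\ those between Levi blocks of $L$ inside $M$) are stabilized by the involution $g\mapsto lw\,\inv(g)\,w^{-1}l^{-1}$ defining $P_x$, which in turn exploits both the minimality condition $w\in{}_MW_{\inv(M)}$ and the $L$-admissibility of $w$. Once this is in place the remaining modular-character juggling is a routine (if tedious) computation.
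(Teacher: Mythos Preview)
The paper does not prove this lemma at all; the \verb|\qed| after the statement signals that it is quoted verbatim from \cite[Corollary~1]{MR2248833}. Your sketch is essentially the argument behind that corollary: Frobenius reciprocity, then the structure of $P_x$, then modular-character bookkeeping. So the overall strategy is correct and matches the cited source.

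There is, however, a genuine slip in your modular characters that you paper over with a false claim in Step~3. Since $G_x=\eta H\eta^{-1}$ is conjugate to $\Sp_{2n}(F)$, it is unimodular, so $\delta_{G_x}\equiv 1$; your assertion that $\delta_{G_x}|_{L_x}=\delta_{Q_x}$ is therefore wrong whenever $\delta_{Q_x}\ne 1$ (which is the typical case, cf.\ Lemma~\ref{admP-orb}). The factor $\delta_{Q_x}$ does not come from $\delta_{G_x}$; it comes from $\delta_{P_x}$. The sign in your Frobenius step is inverted: for normalized compact induction one gets
\[
\Hom_H\bigl(\ind_{A}^{H}(\tau),1\bigr)\simeq\Hom_{A}\bigl(\tau,\delta_{A}^{1/2}\delta_H^{-1/2}\bigr),
\]
so after conjugating by $\eta$ and cancelling you land on $\Hom_{P_x}(\sigma,\delta_P^{-1/2}\delta_{P_x})$, not $\Hom_{P_x}(\sigma,\delta_P^{-1/2}\delta_{G_x}^{1/2})$. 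Now use the fact, recorded in \S\ref{General orbits}, that $P_x=Q_x$ (hence $\delta_{P_x}=\delta_{Q_x}$), pass to $(M\cap V)$-coinvariants, and combine $\delta_{M\cap Q}^{-1/2}\delta_P^{-1/2}=\delta_Q^{-1/2}$ on $L$ to obtain $\Hom_{L_x}(\jm_{L,M}(\sigma),\delta_Q^{-1/2}\delta_{Q_x})$ as required.

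A smaller point on Step~2: you do not literally need $M\cap V\subseteq P_x$. What is actually used is that the projection $P_x\cap V\to V/U\simeq M\cap V$ is surjective, so that $(P_x\cap V)$-coinvariants of $\sigma$ (which is trivial on $U$) coincide with $(M\cap V)$-coinvariants. This surjectivity, together with $P_x=Q_x$ and $M_x=L_x$, is exactly the structural content proved in \cite{MR2248833}.
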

\subsubsection{} Combining Lemmas \ref{lem comp iso} and \ref{lem comp frob} we have
\begin{corollary}\label{cor: rel}
Let $w\in {}_MW_{\inv(M)}\cap [w_{2n}]w_{2n}$. With the above notation $w$ is relevant for $\sigma$ if and only if
\[
\Hom_{L_x}(\jm_{L,M}(\sigma),\delta_Q^{-1/2}\delta_{Q_x})\ne 0.
\]
\qed
\end{corollary}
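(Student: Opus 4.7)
The plan is to derive the corollary as a direct combination of Lemma \ref{lem comp iso} and Lemma \ref{lem comp frob}, together with the definition of relevance. First I would unpack the definition: $w = w_i$ is relevant for $\sigma$ precisely when the $H$-module
\[
\Nu_i/\Nu_{i-1}
\]
admits a non-zero $H$-invariant linear form. Lemma \ref{lem comp iso} identifies this successive quotient with the compactly induced representation
\[
\ind_{H\cap \eta^{-1}P_x\eta}^H\bigl(\delta_{P_x}^{-1/2}(\Res_{P_x}(\delta_P^{1/2}\sigma))^{\eta}\bigr),
\]
so the question reduces to non-vanishing of the space of $H$-invariant linear forms on this induced representation.

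Next I would apply Lemma \ref{lem comp frob} verbatim: it provides a natural linear isomorphism between this Hom space and
\[
\Hom_{L_x}\bigl(\jm_{L,M}(\sigma), \delta_Q^{-1/2}\delta_{Q_x}\bigr),
\]
where $L = M(w)$ and $Q$ is the standard parabolic subgroup of $G$ with Levi subgroup $L$. Non-vanishing of one side is equivalent to non-vanishing of the other, which yields the claimed characterization.

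Since both ingredients are already available in the form needed, no further analysis is required; the argument is a two-line composition of the cited isomorphisms. There is essentially no obstacle: the only thing to verify is that the notational conventions in the two lemmas match (same choice of orbit representative $x$, same $\eta$ with $\eta\cdot I_{2n} = x$, same Levi $L = M(w)$), which was explicitly set up in Section \ref{sss: clop} and the statement of Lemma \ref{lem comp frob}.
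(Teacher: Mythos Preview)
Your proposal is correct and matches the paper's approach exactly: the paper simply writes ``Combining Lemmas \ref{lem comp iso} and \ref{lem comp frob} we have'' before stating the corollary with a \qed, so the argument is precisely the two-line composition you describe. The only minor quibble is that the notational conventions for $x$, $\eta$, and $L = M(w)$ are fixed in the paragraph preceding Lemma \ref{lem comp iso} rather than in \S\ref{sss: clop}, but this does not affect the correctness of your argument.
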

\subsubsection{} Finally, by choosing the orbit representative $x$ as in Lemma \ref{admP-orb} (where $M$ is replaced by $L$) we explicate the condition $\Hom_{L_x}(\rho,\delta_Q^{-1/2}\delta_{Q_x})\ne 0$ for certain pure tensor representations $\rho\in \Alg(L)$.

For a representation $\pi\in \Alg(G_r)$, let $\pi^*\in \Alg(G_r)$ be the representation on the space of $\pi$ defined by $\pi^*(g)=\pi(g^*)$. By a result of Gelfand and Kazhdan (\cite{MR0404534}) we have $\pi^*\simeq\pi^\vee$ for all $\pi\in \Irr$. In the notation of Section \ref{General orbits} let
\[
\rho=\otimes_{\imath\in (\indset,\prec)}\rho_\imath
\]
and let $\tau\in S_2[\beta]$ be the involution on $\indset$ associated to $w$ by Lemma \ref{admP-orb} applied with $L$ replacing $M$. 
Assume that $\rho_\imath\in \Irr(G_{n_\imath})$ whenever $\tau(\imath)\ne \imath$.
Then by Lemma \ref{admP-orb} we have
\begin{multline}\label{dist cond}
\Hom_{L_x}(\rho,\delta_Q^{-1/2}\delta_{Q_x})\ne 0 \ \ \ \text{if and only if for all}\  \imath\in\indset \ \text{we have} \\ \rho_\imath \simeq \nu\rho_{\tau(\imath)} \ \text{whenever} \ \imath \prec  \tau(\imath) \ \text{and}\ \rho_\imath\ \text{is} \ H_{n_\imath}-\text{distinguished if} \ \tau(\imath)=\imath.  
\end{multline}

\subsubsection{}
Combining Corollary \ref{cor: rel}, \eqref{dist cond} and Lemma \ref{lem: dist comp} we have
\begin{corollary}\label{cor: fine rel} 
Let $w\in {}_MW_{\inv(M)}\cap [w_{2n}]w_{2n}$. With the above notation, if $w$ is relevant for $\sigma$ then
there exists an irreducible component $\rho=\otimes_{\imath\in (\indset,\prec)}\rho_\imath\in \Irr(L)$ of $\jm_{L,M}(\sigma)$ such that
\[
\rho_\imath \simeq \nu\rho_{\tau(\imath)} \ \text{whenever} \ \imath\prec \tau(\imath) \ \text{and}\ \rho_\imath\ \text{is} \ H_{n_\imath}-\text{distinguished if} \ \tau(\imath)=\imath. 
\] \qed
\end{corollary}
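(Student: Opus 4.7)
The plan is to assemble three ingredients already available: the reformulation of relevance in terms of Jacquet modules (Corollary \ref{cor: rel}), the descent of distinction from a finite-length representation to an irreducible subquotient (Lemma \ref{lem: dist comp}), and the explicit distinction criterion \eqref{dist cond} for a pure-tensor representation of $L$ under $L_x$.

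First, I would invoke Corollary \ref{cor: rel} to translate the relevance of $w$ for $\sigma$ into the non-vanishing
\[
\Hom_{L_x}(\jm_{L,M}(\sigma),\delta_Q^{-1/2}\delta_{Q_x})\ne 0,
\]
where the orbit representative $x\in X\cap Lw$ is chosen as in Lemma \ref{admP-orb} \emph{applied with $L=M(w)$ in place of $M$}. This is legitimate because, by the discussion in \S\ref{General orbits}, $w$ is $L$-admissible and $P_x=Q_x$, $M_x=L_x$, so the data extracted from Lemma \ref{admP-orb} for the Levi $L$ (and the associated involution $\tau\in S_2[\beta]$ on $\indset$) are precisely what governs both the modulus character on $L_x$ and the constraints recorded in \eqref{dist cond}.

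Next, since $\sigma\in\Alg(M)$ has finite length and $\jm_{L,M}$ is exact and preserves finite length, $\jm_{L,M}(\sigma)$ admits a finite composition series. Applying Lemma \ref{lem: dist comp} to the filtration by subrepresentations coming from this composition series, with the closed subgroup $L_x$ and the character $\delta_Q^{-1/2}\delta_{Q_x}\rest_{L_x}$, produces an irreducible subquotient $\rho$ of $\jm_{L,M}(\sigma)$ with $\Hom_{L_x}(\rho,\delta_Q^{-1/2}\delta_{Q_x})\ne 0$. Because $L$ is a product of general linear groups, any such $\rho$ factors as $\rho=\tensor_{\imath\in(\indset,\prec)}\rho_\imath$ with each $\rho_\imath$ irreducible; in particular the irreducibility hypothesis required to invoke \eqref{dist cond} is automatic.

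Finally, plugging $\rho$ into \eqref{dist cond} reads off exactly the two conditions asserted in the corollary: $\rho_\imath\simeq\nu\rho_{\tau(\imath)}$ whenever $\imath\prec\tau(\imath)$, and $\rho_\imath$ is $H_{n_\imath}$-distinguished whenever $\tau(\imath)=\imath$. The only step requiring a moment of care --- and what I would flag as the main bookkeeping point rather than a genuine obstacle --- is ensuring in the first step that the orbit representative used to compute $L_x$ and $\delta_{Q_x}$ is the same one whose associated involution $\tau$ appears in \eqref{dist cond}; this is precisely the content of the reduction of general orbits to admissible ones for the smaller Levi $L=M(w)$ carried out in \S\ref{General orbits}.
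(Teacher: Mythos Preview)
Your argument is correct and matches the paper's own proof, which is simply the one-line observation ``Combining Corollary \ref{cor: rel}, \eqref{dist cond} and Lemma \ref{lem: dist comp}'' followed by a \qed. The three ingredients you assemble, and the order in which you apply them, are exactly those the paper invokes.
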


\section{First applications of the geometric lemma to $\Sp$-distinction}
In this section we apply the contribution of the open and closed orbits of the filtration defined in \S \ref{sec: gl} in order to show that certain induced representations are $\Sp$-distinguished. We further apply \S \ref{sec: gl} to reduce the study of $\Sp$-distinction on $\Irr$ to representations supported on a single cuspidal line.

\subsection{Distinction and relevant orbits} \label{ss: oc}
\subsubsection{}
The variant of the geometric lemma discussed in \S \ref{sec: gl} is often applied to show that certain induced representations are not distinguished.
This is based on the following simple observation, which is an immediate consequence of Lemma \ref{lem: dist comp} (applied with $G=H$).
\begin{lemma}\label{lem: ex orb}
Let $M$ be a standard Levi subgroup of $G$. If $\sigma\in\Alg(M)$ is such that $\ip_{G,M}(\sigma)$ is $H$-distinguished then there exists a $P$-orbit in $X$ that is relevant to $\sigma$. \qed
\end{lemma}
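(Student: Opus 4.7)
The plan is to extract the relevant $P$-orbit directly from the geometric-lemma filtration of $\ip_{G,M}(\sigma)$ restricted to $H$, and then to invoke Lemma \ref{lem: dist comp}. First I would recall from \S \ref{sec: gl} that, after ordering the finitely many $P$-orbits in $X$ as ${}_MW_{\inv(M)}\cap[w_{2n}]w_{2n}=\{w_1,\dots,w_m\}$ so that each $X_i=\bigcup_{j=1}^i\ob_M(w_j)$ is open, the subspaces $\Nu_i=\{\varphi\in\Nu:\supp(\varphi)\subseteq X_i\}$ furnish a finite filtration
\[
0=\Nu_0\subseteq\Nu_1\subseteq\cdots\subseteq\Nu_m=\Nu
\]
of $\Res_H(\ip_{G,M}(\sigma))$ by $H$-subrepresentations. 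Its finiteness, which is what makes the argument work, is guaranteed by the finiteness of $P\bs X$ via Lemma \ref{lem: P-bij}, and its $H$-stability is part of the very construction recalled just before Lemma \ref{lem comp iso}.

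Given the hypothesis $\Hom_H(\ip_{G,M}(\sigma),1)\ne 0$, I would then apply Lemma \ref{lem: dist comp} with the ambient group taken to be $H$, the character $\chi$ taken to be trivial, and the filtration above. It produces an index $i\in\{1,\dots,m\}$ such that $\Hom_H(\Nu_i/\Nu_{i-1},1)\ne 0$, which by the definition of relevance recorded in \S \ref{sec: gl} is precisely the assertion that the $P$-orbit $\ob_M(w_i)$ is relevant for $\sigma$, as required.

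There is essentially no genuine obstacle here: once the geometric-lemma filtration has been set up, the argument is just a one-line book-keeping of where a non-zero $H$-invariant linear form on $\Nu$ fails to vanish along the chain $\Nu_1\subseteq\cdots\subseteq\Nu_m$. The only mild point one has to respect is that Lemma \ref{lem: dist comp} requires a \emph{finite} filtration, which is automatic in our setting, so no further work is needed.
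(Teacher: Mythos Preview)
Your proposal is correct and follows exactly the approach indicated in the paper: the lemma is stated there as an immediate consequence of Lemma~\ref{lem: dist comp} applied with the ambient group taken to be $H$, and you have simply unwound that one-line reference using the $H$-filtration from \S\ref{sec: gl}. There is nothing to add or adjust.
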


The reverse implication need not be true. However, there are two cases in which the geometric lemma indicates distinction.

\subsubsection{} Assume that $n_i$ is even for all $i$. Then the open $P$-orbit in $X$ is $\ob_M(w_1)=P\cdot x_M$  (see \S \ref{sss: clop}) and
\[
M_{x_{M}}=\{\diag(h_1,\dots,h_k): h_i\in H_{n_i/2},\,i=1,\dots,k\}.
\]
Let $\sigma_i\in \Alg(G_{n_i})$ be $H_{n_i/2}$-distinguished and $0\ne \ell_i\in \Hom_{H_{n_i/2}}(\sigma_i,1)$ for all $i=1,\dots,k$. Let $\sigma=\sigma_1\otimes\cdots\otimes\sigma_k$ and $\ell=\ell_1\otimes\cdots\otimes\ell_k\in \Hom_{M_{x_M}}(\sigma,1)$. The integral  
\begin{equation}\label{open int}
\tilde\ell(\varphi)=\int_{(H\cap \eta_M^{-1}M_{x_M}\eta_M)\bs H}\varphi(\eta_Mh)\ dh
\end{equation}
where $\eta_M\in G$ is such that $\eta_M\cdot I_{2n}=x_M$, defines a non-zero linear form $\tilde\ell\in \Hom_H(\Nu_1,1)$. It does not necessarily extend to an $H$-invariant linear form on $\ip_{G,M}(\sigma)$, but it lies in a holomorphic family of linear forms that do extend meromorphically. 

Note that $G_{x_M}=G^{\inv_{x_M}}$ is the fixed point group of the involution $\inv_{x_M}(g)=x_M\inv(g)x_M^{-1}$ and that $M=P\cap \inv_{x_M}(P)$. 
For $\lambda=(\lambda_1,\dots,\lambda_k)\in \C^k$ let $\sigma[\lambda]$ be the representation on the space of $\sigma$ defined by 
\[
\sigma[\lambda](\diag(g_1,\dots,g_k))=\abs{\det g_1}_F^{\lambda_1}\cdots\abs{\det g_k}_F^{\lambda_k}\sigma(\diag(g_1,\dots,g_k)).
\]
The representations $\ip_{G,M}(\sigma[\lambda])$ can all be realized in the same space $\Nu$ and then the $H$-filtration $\{\Nu_i\}_{i=1}^n$ is independent of $\lambda$. The following follows from \cite[Theorem 2.8]{MR2401221}.
\begin{lemma}\label{BD her}
With the above notation and assumptions, there is a non-zero meromorphic function $(\lambda\mapsto \ell_\lambda):\C^k\rightarrow \Nu^*$ that satisfies $\ell_\lambda\in \Hom_H(\ip_{G,M}(\sigma[\lambda]),1)$ whenever holomorphic at $\lambda$.
\end{lemma}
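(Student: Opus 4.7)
The plan is to follow the framework of Blanc--Delorme: realize all the representations $\ip_{G,M}(\sigma[\lambda])$ in a common model so that the open-orbit integral \eqref{open int} is the specialization at $\lambda=0$ of a holomorphic family, establish absolute convergence of a twisted version in a cone of parameters, and then meromorphically continue. Concretely, using a good maximal compact subgroup $K$ of $G$, one realizes $\ip_{G,M}(\sigma[\lambda])$ on a common space of $K$-finite $\sigma$-valued functions on $K$, so that a section $\varphi \in \Nu$ gives rise to a flat family $\varphi_\lambda$ whose values at a point $g \in G$ depend holomorphically on $\lambda$. The $H$-filtration $\{\Nu_i\}$ is independent of $\lambda$, and $\tilde\ell$ is the functional at $\lambda = 0$ that we wish to extend to all of $\Nu$.

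The first main step is to show that the integral analogous to \eqref{open int} with $\varphi$ replaced by $\varphi_\lambda$, namely
\[
\ell_\lambda(\varphi_\lambda) = \int_{(H \cap \eta_M^{-1} M_{x_M} \eta_M) \bs H} (\ell_1 \otimes \cdots \otimes \ell_k)(\varphi_\lambda(\eta_M h)) \, dh,
\]
converges absolutely for $\Re(\lambda)$ in some non-empty open cone of $\C^k$. The key input is the explicit description of the stabilizer $M_{x_M}$ provided by Lemma \ref{admP-orb}: the quotient $(H \cap \eta_M^{-1} M_{x_M} \eta_M) \bs H$ admits an Iwasawa-type decomposition governed by a suitable parabolic subgroup of $H$, and the modular-character twist in $\sigma[\lambda]$ produces enough decay of $\varphi_\lambda(\eta_M h)$ along this decomposition to force absolute convergence once the $\Re(\lambda_i)$ are sufficiently large. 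The second main step is to meromorphically continue $\ell_\lambda$ to all of $\C^k$, which one may do either by invoking Bernstein's principle of meromorphic continuation of equivariant functionals on an algebraic family of smooth representations, or by the explicit continuation procedure of \cite[Theorem 2.8]{MR2401221}.

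The $H$-invariance of $\ell_\lambda$ is immediate from the integral formula wherever it converges and is preserved under meromorphic continuation. Non-vanishing of the meromorphic family follows by testing on sections $\varphi_\lambda$ with support near the open orbit $\ob_M(w_1)$: for such sections the integral specializes, up to a non-vanishing holomorphic factor in $\lambda$, to $\tilde\ell(\varphi)$, which is non-zero by construction. The main technical obstacle is the convergence/continuation step, since $(H \cap \eta_M^{-1} M_{x_M} \eta_M) \bs H$ is non-compact and the argument requires genuine harmonic-analytic estimates together with an analytic-continuation mechanism; this is precisely the content of \cite[Theorem 2.8]{MR2401221} in the symmetric-space setting, which we therefore invoke rather than reprove.
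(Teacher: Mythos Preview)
Your proposal is correct and follows the same approach as the paper: both simply invoke \cite[Theorem 2.8]{MR2401221}. The paper's proof is in fact the one sentence ``The following follows from \cite[Theorem 2.8]{MR2401221}'' immediately preceding the lemma; your write-up expands on what that theorem says and why its hypotheses are met here (the key structural point being that $M=P\cap\inv_{x_M}(P)$, so that $(G,G_{x_M},P)$ fits the Blanc--Delorme symmetric-space framework, and then conjugation by $\eta_M$ transports $G_{x_M}$-invariance to $H$-invariance).
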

\subsubsection{Hereditary property of $\Sp$-distinction}
This implies the hereditary property of $\Sp$-distinction.
\begin{corollary}\label{open dist}
Assume that $n_i$ is even for all $i=1,\dots,k$. 
Let $\sigma_i\in \Alg(G_{n_i})$ be $H_{n_i/2}$-distinguished for all $i=1,\dots,k$ Then $\sigma_1\times\cdots\times\sigma_k$ is $H$-distinguished.
\end{corollary}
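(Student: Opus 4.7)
The plan is to use the meromorphic continuation method provided by Lemma~\ref{BD her} and extract from the resulting family a non-zero $H$-invariant form on $\ip_{G,M}(\sigma)$, where $\sigma := \sigma_1 \otimes \cdots \otimes \sigma_k$.

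First I would check that we are precisely in the situation where Lemma~\ref{BD her} applies. Since every $n_i$ is even, the open $P$-orbit in $X$ is $P \cdot x_M$ (see \S\ref{sss: clop}), with stabilizer $M_{x_M} = H_{n_1/2} \times \cdots \times H_{n_k/2}$; moreover, the associated involution $\tau \in S_2[\alpha]$ is the identity, so by Lemma~\ref{admP-orb} the character $\delta_P^{-1/2}\delta_{P_{x_M}}$ is trivial on $M_{x_M}$. Choosing non-zero $\ell_i \in \Hom_{H_{n_i/2}}(\sigma_i,1)$, the tensor $\ell := \ell_1 \otimes \cdots \otimes \ell_k$ lies in $\Hom_{M_{x_M}}(\sigma,1)$, so the open-orbit integral $\tilde\ell$ of \eqref{open int} is a non-zero element of $\Hom_H(\Nu_1, 1)$ and Lemma~\ref{BD her} supplies a non-zero meromorphic family $\lambda \mapsto \ell_\lambda$ of linear forms on $\Nu$ satisfying $\ell_\lambda \in \Hom_H(\ip_{G,M}(\sigma[\lambda]),1)$ at every point of holomorphy.

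Next I would extract from this family a non-zero $H$-invariant form on $\ip_{G,M}(\sigma) = \ip_{G,M}(\sigma[0])$ by a standard leading-term argument. Realizing all $\ip_{G,M}(\sigma[\lambda])$ on the common space $\Nu$, the action of each $h \in H$ depends holomorphically on $\lambda$. Restricting to a generic complex line $\lambda = s \lambda_0$ along which $\ell_{s\lambda_0}$ is not identically zero, we obtain a non-zero meromorphic function of $s \in \C$ with Laurent expansion $\ell_{s\lambda_0} = s^N \ell_* + O(s^{N+1})$ and $\ell_* \ne 0$. Inserting this into the $H$-invariance identity and matching the coefficient of $s^N$ forces $\ell_*$ to be $H$-invariant for the $\lambda = 0$ action; thus $\ell_*$ is the sought-after non-zero element of $\Hom_H(\ip_{G,M}(\sigma),1)$.

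The step I expect to be most delicate is ensuring that the leading Laurent term genuinely descends to an $H$-invariant form on the specialization $\sigma[0] = \sigma$, rather than being obstructed by the pole structure at $\lambda = 0$. This, however, is a standard consequence of the coherence of the meromorphic family with the holomorphic structure on the underlying family of group actions, as established by \cite{MR2401221} and packaged into Lemma~\ref{BD her}. The essential structural input from our hypothesis is the parity of each $n_i$, which forces the open-orbit stabilizer $M_{x_M}$ to be a product of symplectic groups and the associated modulus character to be trivial; this is exactly what permits the Blanc--Delorme machinery to run.
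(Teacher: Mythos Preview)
Your proof is correct and follows exactly the paper's approach: invoke Lemma~\ref{BD her} and take the leading term of $\ell_\lambda$ at $\lambda=0$ along a generic complex line through the origin. Your write-up simply spells out in more detail the setup (open orbit, stabilizer, triviality of the modulus character) that the paper records in the paragraphs preceding Lemma~\ref{BD her}, and makes the leading-term extraction explicit; the paper compresses all of this into a single sentence.
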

\begin{proof}
This is immediate from Lemma \ref{BD her} by taking a leading term at $\lambda=0$ of $\ell_\lambda$ at a complex line through zero in a generic direction. 
\end{proof}

\subsubsection{Distinction by the closed orbit}
When a closed orbit is relevant, the geometric lemma directly implies distinction.
\begin{lemma}\label{lem: closed}
Let $\sigma_1,\dots,\sigma_t \in \Irr$, $\rho_1,\dots,\rho_s\in \Alg$ and assume that $\rho_i$ is $\Sp$-distinguished for $i=1,\dots,s$ (allow the case $s=0$). Then 
\[
\nu\sigma_1\times\cdots\times \nu\sigma_k\times \rho_1\times \cdots\times \rho_s\times \sigma_k\times\cdots\times\sigma_1
\]
is $\Sp$-distinguished.
\end{lemma}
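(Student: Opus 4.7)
The plan is to exhibit a non-zero $H$-invariant linear form on $\pi$ by showing that the closed $P$-orbit on $X$ contributes non-trivially to the filtration of \S\ref{sec: gl}. Writing $\sigma_i\in\Irr(G_{n_i})$ and $\rho_j\in\Alg(G_{m_j})$ (each $m_j$ even since $\rho_j$ is $\Sp$-distinguished), let $G=G_{2n}$ with $2n=2\sum_i n_i+\sum_j m_j$, and set $P=P_\alpha=M\ltimes U$ with the palindromic composition
\[
\alpha=(n_1,\dots,n_k,m_1,\dots,m_s,n_k,\dots,n_1),
\]
so that the representation in question is $\pi=\ip_{G,M}(\sigma)$ with
\[
\sigma=\nu\sigma_1\otimes\cdots\otimes\nu\sigma_k\otimes\rho_1\otimes\cdots\otimes\rho_s\otimes\sigma_k\otimes\cdots\otimes\sigma_1.
\]
(I read the ``$t$'' in the statement as a typo for $k$.)

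Since $\alpha$ is palindromic, $\inv(M)=M$, so the identity $w=I_{2n}$ lies in ${}_MW_{\inv(M)}\cap[w_{2n}]w_{2n}$ and is $M$-admissible. Its orbit $\ob_M(I_{2n})=P\cdot I_{2n}$ is closed in $X$ by \S\ref{sss: clop}, so we place it last in the chosen ordering of orbits; then $\Nu_m/\Nu_{m-1}$ is a $G$-quotient of $\pi$. Applying Lemma \ref{admP-orb} with $L=M$ attaches to $w$ the involution $\tau\in S_2[\alpha]$ given by $\tau(i)=2k+s+1-i$ for $i\le k$ and $\tau(k+j)=k+j$ for $j\le s$, the representative $x=I_{2n}$, the stabilizer
\[
M_x=\{\diag(g_1,\dots,g_k,h_1,\dots,h_s,g_k^*,\dots,g_1^*):g_i\in G_{n_i},\,h_j\in H_{m_j/2}\},
\]
and the modular character $(\delta_P^{-1/2}\delta_{P_x})(g)=\prod_{i=1}^k\abs{\det g_i}_F$.

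By Corollary \ref{cor: rel} it suffices to produce a non-zero element of $\Hom_{M_x}(\sigma,\delta_P^{-1/2}\delta_{P_x})$, and I would construct one as a tensor product. For each $i\le k$ I combine the canonical $G_{n_i}$-invariant pairing $\sigma_i\otimes\sigma_i^\vee\to\C$ with the Gelfand--Kazhdan isomorphism $\sigma_i^*\simeq\sigma_i^\vee$ (valid since $\sigma_i\in\Irr$) to obtain a $G_{n_i}$-invariant form on $\sigma_i\otimes\sigma_i^*$; the $\nu$-twist on the first factor at position $i$ contributes exactly the scalar $\abs{\det g_i}_F$, which is precisely what the modular character demands. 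For each $j\le s$ I take the hypothesized $\ell_j\in\Hom_{H_{m_j/2}}(\rho_j,1)$ on the fixed block. Their tensor product is non-zero and has the required transformation law under $M_x$, so the closed orbit is relevant, $\Nu_m/\Nu_{m-1}$ is $H$-distinguished, and Lemma \ref{drmk: ist quot} lifts this to $\pi$. The only delicate point in executing the plan is the bookkeeping verification that the $\nu$-twists on the first $k$ factors of $\sigma$ cancel precisely against the modular character from Lemma \ref{admP-orb} and that the palindromic shape of $\alpha$ really yields the above $\tau$; once these matchings are checked, the lemma is a direct application of the geometric-lemma machinery of \S\ref{sec: gl}.
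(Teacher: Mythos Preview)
Your argument has a genuine gap: the composition $\alpha=(n_1,\dots,n_k,m_1,\dots,m_s,n_k,\dots,n_1)$ is palindromic only when $(m_1,\dots,m_s)=(m_s,\dots,m_1)$, and nothing in the hypotheses forces this. If, say, $s=2$ with $m_1\ne m_2$, then $\inv(M)=M_{(n_1,\dots,n_k,m_s,\dots,m_1,n_k,\dots,n_1)}\ne M$, so $M(I_{2n})=M\cap\inv(M)\subsetneq M$ and the closed orbit $P\cdot I_{2n}$ is \emph{not} $M$-admissible. Your direct invocation of Lemma~\ref{admP-orb} with $L=M$ is then unjustified; by Corollary~\ref{cor: rel} one would instead have to test $\Hom_{L_x}(\jm_{L,M}(\sigma),\delta_Q^{-1/2}\delta_{Q_x})$ for the proper Levi $L=M\cap\inv(M)$, which drags in Jacquet modules of the $\rho_j$ and cannot be read off from the bare hypothesis that each $\rho_j$ is $\Sp$-distinguished.

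The paper sidesteps this with one extra line: it first applies Corollary~\ref{open dist} (the hereditary property, via the Blanc--Delorme meromorphic family) to conclude that the single representation $\rho=\rho_1\times\cdots\times\rho_s$ is itself $\Sp$-distinguished, and then runs exactly your closed-orbit argument with the shorter composition $(n_1,\dots,n_k,2m,n_k,\dots,n_1)$, $2m=\sum_j m_j$. That composition is genuinely palindromic, the closed orbit is $M$-admissible, and your description of $\tau$, $M_x$, the modular character, and the verification via \eqref{dist cond} then goes through verbatim. So the fix is easy, but the preliminary reduction to a single middle block is not optional.
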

\begin{proof}
It follows from Corollary \ref{open dist} that $\rho= \rho_1\times \cdots\times \rho_s$ is $\Sp$-distinguished. Let $G=G_{2n}$ and $M$ its standard Levi subgroup so that
\[
\sigma=\nu\sigma_1\otimes\cdots\otimes \nu\sigma_k\otimes \rho\otimes \sigma_k\times\cdots \otimes\sigma_1
\]
is a representation of $M$. Then $\ip_{G,M}(\sigma)\in \Alg(G)$. Let $P$ be the standard parabolic subgroup with Levi subgroup $M$. Then the closed orbit $P\cdot I_{2n}$ is relevant to $\sigma$ by \eqref{dist cond}. 
By \S \ref{sss: clop}, $\Hom_H(\Nu/\Nu_{m-1},1)\ne 0$ and therefore by Lemma \ref{drmk: ist quot} (applied with $G=H$), $\ip_{G,M}(\sigma)$ is $\Sp$-distinguished.

\end{proof}

\subsection{Reduction to cuspidal $\Z$-lines} \label{ss: cusp lines}

Let $\nu=\abs{\det}_F$ on $G_n$ for any $n\in \N$. 
\subsubsection{}For $\rho\in \cusp$ let $\rho^\Z=\{\nu^n\rho:n\in\Z\}$ be the $\Z$-line through $\rho$. Denote by $<$ the order on $\rho^\Z$ induced by the standard order on $\Z$ (so that $\rho< \nu\rho$).
\begin{definition}
A representation $\pi\in \Alg$ is called \emph{rigid} if $\supp(\pi)\subseteq \rho^\Z$ for some $\rho\in\Cusp$.
\end{definition}

\subsubsection{}
Every element of $\Irr$ has a unique decomposition as a product of rigid representations supported on disjoint cuspidal lines.
Indeed, by \cite[Proposition 8.6]{MR584084} we have
\begin{lemma}\label{lem; disj is irr}
For every $\pi\in \Irr$ there exist $\rho_1,\dots,\rho_k\in \Cusp$, so that $\rho_i^\Z\cap \rho_j^\Z=\emptyset$ for all $i\ne j$, and $\pi_1,\dots,\pi_k\in\Irr$ so that $\supp(\pi_i)\subseteq \rho_i^\Z$ and $\pi=\pi_1\times\cdots\times\pi_k$.
\qed
\end{lemma}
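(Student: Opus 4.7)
The plan is to apply the Zelevinsky classification recalled in the introduction and exploit the linkage irreducibility criterion for products of Zelevinsky segment representations. Concretely, I would argue as follows.

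First, by the Zelevinsky classification $\pi\simeq Z(\mult)$ for a unique multi-set of segments $\mult=\{\Delta_1,\dots,\Delta_t\}$. Each $\Delta_j$ is a segment of the form $[\nu^{a_j}\rho_j',\nu^{b_j}\rho_j']$ for some $\rho_j'\in\Cusp$, so that $\supp(Z(\Delta_j))\subseteq(\rho_j')^\Z$. Consider the equivalence relation on $\{1,\dots,t\}$ given by $j\sim j'$ iff $(\rho_j')^\Z=(\rho_{j'}')^\Z$; this partitions $\mult$ as a disjoint union of sub-multi-sets $\mult=\mult_1\sqcup\cdots\sqcup\mult_k$, and by choosing a representative $\rho_i$ of each class we obtain $\rho_1,\dots,\rho_k\in\Cusp$ with $\rho_i^\Z\cap\rho_j^\Z=\emptyset$ for $i\ne j$ and every segment of $\mult_i$ supported in $\rho_i^\Z$.

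Next, set $\pi_i=Z(\mult_i)\in\Irr$; it is clear from construction that $\supp(\pi_i)\subseteq\rho_i^\Z$. It remains to prove that
\[
\pi\simeq\pi_1\times\cdots\times\pi_k.
\]
For this I would invoke the standard fact that two segments $\Delta,\Delta'$ whose underlying cuspidal representations lie on disjoint $\Z$-lines are never linked in the sense of Zelevinsky, so $Z(\Delta)\times Z(\Delta')$ is irreducible. Iterating this across indices $i\ne j$, and using that the Zelevinsky product on any single cuspidal line recovers $\pi_i$ as the unique irreducible subrepresentation of the product over $\Delta\in\mult_i$, one sees that $\pi_1\times\cdots\times\pi_k$ is an irreducible subquotient of the full product $\prod_{\Delta\in\mult}Z(\Delta)$ (arranged in standard form). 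Since $Z(\mult)$ is by definition the unique irreducible submodule of this product, the two must coincide; irreducibility of the rearranged product then forces $\pi\simeq\pi_1\times\cdots\times\pi_k$.

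The main substantive input is the irreducibility of $Z(\Delta)\times Z(\Delta')$ when $\Delta,\Delta'$ live on disjoint cuspidal lines, and the fact that this irreducibility lifts to arbitrary products $\pi_i\times\pi_j$ of the associated $Z(\mult_i)$'s; this is exactly the content of \cite[Proposition 8.6]{MR584084}, and I would simply cite it. The only remaining care is bookkeeping with orderings of the segments (one must arrange $\mult$ in a standard form to identify $Z(\mult)$ as a submodule), but nothing deeper is required.
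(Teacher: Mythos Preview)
The paper gives no argument beyond the bare citation to \cite[Proposition~8.6]{MR584084}; the lemma is stated with a \qed\ immediately following the reference. Your proposal is essentially an unpacking of that citation, and the overall strategy (partition $\mult$ by cuspidal $\Z$-line, set $\pi_i=Z(\mult_i)$, and invoke Zelevinsky's irreducibility criterion for products across unlinked lines) is correct and is exactly what the cited proposition encodes.

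One point in your write-up could be tightened. You argue that $\pi_1\times\cdots\times\pi_k$ is an irreducible \emph{subquotient} of $\zeta(\mult)$ and that $Z(\mult)$ is the unique irreducible \emph{submodule}, concluding that ``the two must coincide''. As written this is a non sequitur: a subquotient need not be a submodule. The clean argument is that each $Z(\mult_i)$ embeds in $\zeta(\mult_i)$, and by choosing a standard order on $\mult$ that groups segments by cuspidal line (permissible since segments on distinct lines are never linked) one has $\zeta(\mult)\simeq\zeta(\mult_1)\times\cdots\times\zeta(\mult_k)$. Exactness of parabolic induction then yields an embedding $\pi_1\times\cdots\times\pi_k\hookrightarrow\zeta(\mult)$. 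Granting irreducibility of the left-hand side (which is precisely what \cite[Propositions~8.5--8.6]{MR584084} supply), it must equal the unique irreducible submodule $Z(\mult)=\pi$. With this adjustment your proof is complete and matches the intent of the paper's citation.
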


\subsubsection{}
Another application of the geometric lemma will allow us to reduce the question of $\Sp$-distinction of irreducible representations to those supported on a single cuspidal line. Indeed, if $\pi=\pi_1\times\cdots\times\pi_k$ is a decomposition as in Lemma \ref{lem; disj is irr} then 
\begin{equation}\label{eq: tensor H}
\Hom_{\Sp}(\pi,1)\simeq\Hom_{\Sp}(\pi_1,1)\otimes \cdots\otimes \Hom_{\Sp}(\pi_k,1).
\end{equation}
Here, we write $\Hom_{\Sp}(\pi,1)=\Hom_{\Sp_{2n}(F)}(\pi,1)$ for any $\pi\in \Alg(G_{2n})$.

In fact, we prove \eqref{eq: tensor H} for a slightly more general setting for which we need to introduce some more terminology.

\subsubsection{}
Consider the graph $\mathcal{E}$ with $\Cusp$ as the set of vertices and an edge between $\rho$ and $\nu\rho$ for every $\rho\in \Cusp$. 

For every finite subset $V\subseteq \Cusp$ let $\mathcal{E}_V$ be the induced graph on the set of vertices $V$ and $\mult_V$ the set of connected components of $\mathcal{E}_V$. Every connected component $\Delta\in\mult_V$ is of the form $\Delta=\{\nu^i\rho:i=a,\dots,b\}$ for some $\rho\in \Cusp$ and integers $a\le b$.
\begin{definition}\label{def: tot disj}
We say that finite subsets $V,\,V'\subseteq\Cusp$ are \emph{totally disjoint} if either $V$ and $V'$ are contained in disjoint cuspidal $\Z$-lines or they satisfy the following property. For every $\Delta\in \mult_V$ and $\Delta'\in \mult_{V'}$ we have that either $\nu\rho< \rho'$ for all $\rho\in\Delta$ and $\rho'\in \Delta'$ or $\nu\rho'< \rho$ for all $\rho\in\Delta$ and $\rho'\in \Delta'$. (Equivalently, $\Delta\cup \Delta'$ is not connected in $\mathcal{E}_{V\cup V'}$.)
\end{definition}
As a consequence of \cite[Proposition 8.5]{MR584084} we have
\begin{lemma}\label{lem; tot disj is irr}
If $\pi_1,\dots,\pi_k\in \Irr$ are such that $\supp(\pi_i)$ and $\supp(\pi_j)$ are totally disjoint for all $i\ne j$ then $\pi_1\times \cdots\times \pi_k\in \Irr$.
\qed
\end{lemma}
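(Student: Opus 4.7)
The plan is to induct on $k$, reducing to the case $k=2$. For the inductive step, note that the cuspidal support of a product of irreducibles is the union of the cuspidal supports, so $\supp(\pi_1\times\cdots\times\pi_{k-1})=\supp(\pi_1)\cup\cdots\cup\supp(\pi_{k-1})$. A short check from Definition \ref{def: tot disj} shows that a union of pairwise totally disjoint sets remains totally disjoint with any set totally disjoint from each summand: indeed, the connected components of $\mathcal{E}_{\supp(\pi_1)\cup\cdots\cup\supp(\pi_{k-1})}$ are obtained by merging components of the individual graphs, and the gap condition against $\supp(\pi_k)$ is preserved under such merging.

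For the base case $k=2$, write $\pi_i=Z(\mult_i)$ via the Zelevinsky classification, where $\mult_i$ is a multi-set of segments. The cuspidal support of $\pi_i$ equals $\cup_{\Delta\in\mult_i}\Delta$, viewed as a subset of $\Cusp$. I will invoke Zelevinsky's irreducibility criterion (\cite[Proposition 8.5]{MR584084}), which asserts that $Z(\mult_1)\times Z(\mult_2)$ is irreducible if and only if no segment of $\mult_1$ is linked to any segment of $\mult_2$; here two segments $\Delta,\Delta'$ are \emph{linked} if they lie on a common cuspidal line, $\Delta\cup\Delta'$ is again a segment (i.e.\ a connected interval of that line), and neither contains the other.

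It therefore remains to verify that total disjointness of $\supp(\pi_1)$ and $\supp(\pi_2)$ prevents linking of any pair $(\Delta,\Delta')\in\mult_1\times\mult_2$. If $\supp(\pi_1)$ and $\supp(\pi_2)$ lie on disjoint cuspidal $\Z$-lines, then $\Delta$ and $\Delta'$ sit on different lines and linking is impossible. Otherwise, the two supports share a cuspidal line, and Definition \ref{def: tot disj} supplies a strict gap between $\Delta$ and $\Delta'$: either $\nu\rho<\rho'$ for all $\rho\in\Delta$, $\rho'\in\Delta'$, or the reverse inequality holds uniformly. In either case $\Delta\cup\Delta'$ is not a connected interval of the cuspidal line, hence not a segment, so $\Delta$ and $\Delta'$ are unlinked.

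Putting the pieces together, Zelevinsky's criterion gives irreducibility for $k=2$, and the induction closes. The only real bookkeeping hurdle, and the main point where one must be careful, is verifying that total disjointness is preserved when the first $k-1$ supports are absorbed into a single irreducible; once this is in hand, the rest is a transparent translation of Definition \ref{def: tot disj} into the linking language of the Zelevinsky classification.
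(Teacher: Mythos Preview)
Your proposal is correct and follows the same route as the paper, which simply records the lemma as a consequence of \cite[Proposition 8.5]{MR584084} without further detail; you have just spelled out the translation between total disjointness and the unlinked condition. One minor remark: in the inductive step no merging of components actually occurs (pairwise total disjointness already forbids $\rho\in\supp(\pi_i)$ and $\nu\rho\in\supp(\pi_j)$ for $i\ne j$), so the components of $\mathcal{E}_{\supp(\pi_1)\cup\cdots\cup\supp(\pi_{k-1})}$ are precisely the components of the individual supports, which makes the verification that the union remains totally disjoint from $\supp(\pi_k)$ immediate.
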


\subsubsection{} We now show that \eqref{eq: tensor H} holds for totally disjoint decompositions as in Lemma \ref{lem; tot disj is irr}.
The following is a small generalization of \cite[Lemma 3.4]{MR3227442}.
\begin{lemma}\label{lem: useful cusp}
Let $\pi_1,\dots,\pi_k\in \Alg$ be such that $\supp(\pi_i)$ and $\supp(\pi_j)$ are totally disjoint for all $i\ne j$. Then $\pi=\pi_1\times\cdots\times\pi_k$ is $\Sp$-distinguished if and only if $\pi_i$ is $\Sp$-distinguished for all $i=1,\dots,k$. In particular, if $\pi\in \Irr$ then \eqref{eq: tensor H} holds.
\end{lemma}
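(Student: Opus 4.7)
The plan divides into the two directions of the equivalence. The ``if'' direction is immediate from the hereditary property Corollary \ref{open dist}: the hypothesis that every $\pi_i$ is $\Sp$-distinguished forces each $n_i$ to be even, so Corollary \ref{open dist} applies and produces a non-zero $H$-invariant functional on $\pi$. All of the work lies in the converse.

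For the ``only if'' direction, I would realize $\pi=\ip_{G,M}(\sigma)$ with $\sigma=\pi_1\otimes\cdots\otimes\pi_k$ and $M=M_{(n_1,\dots,n_k)}$, and apply the $H$-filtration of \S\ref{sec: gl}. The left exact functor $\Hom_H(-,1)$ applied to the short exact sequences of that filtration, together with Lemmas \ref{lem comp iso} and \ref{lem comp frob}, delivers the bound
\[
\dim\Hom_H(\pi,1)\ \leq\ \sum_{w}\dim\Hom_{L_{x_w}}\bigl(\jm_{L,M}(\sigma),\ \delta_Q^{-1/2}\delta_{Q_{x_w}}\bigr),
\]
the sum running over $w\in{}_MW_{\inv(M)}\cap[w_{2n}]w_{2n}$, where for each such $w$ I set $L=M(w)$ and denote by $Q$ the standard parabolic of $G$ with Levi $L$. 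The heart of the proof is to show that every summand vanishes except the one coming from the open orbit of \S\ref{sss: clop}, and that the survivor equals $\prod_i\dim\Hom_{H_{n_i/2}}(\pi_i,1)$.

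To control the non-vanishing I would appeal to Corollary \ref{cor: fine rel}: a non-zero summand produces an irreducible component $\rho=\otimes_{(i,j)}\rho_{(i,j)}$ of $\jm_{L,M}(\sigma)$ satisfying the pairing and self-duality conditions dictated by the associated $\tau\in S_2[\beta]$. Writing $L=M_\beta$ with $\beta=(\beta_1,\dots,\beta_k)$ and invoking \eqref{eq: trans jm}, each $\rho_{(i,j)}$ is an irreducible subquotient of $\jm_{M_{\beta_i},G_{n_i}}(\pi_i)$, so by \eqref{eq: supp jm} its cuspidal support lies in $\supp(\pi_i)$. If $\tau(i,j)=(i',j')$ with $(i,j)\prec(i',j')$ and $i\neq i'$, the required isomorphism $\rho_{(i,j)}\simeq\nu\rho_{(i',j')}$ equates cuspidal supports and forces some $\tilde\rho\in\supp(\pi_i)$ to have the form $\nu\tilde\rho'$ with $\tilde\rho'\in\supp(\pi_{i'})$, directly contradicting Definition \ref{def: tot disj} in each of its two cases. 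Hence $\tau$ preserves the first index of every pair. Combined with \eqref{eq: stronger cond}, which requires the first indices of $\tau(i,1),\dots,\tau(i,k_i)$ to be strictly decreasing, this forces $k_i=1$ for every $i$, i.e., $L=M$. The same total-disjointness obstruction then forces the induced involution on $\{1,\dots,k\}$ to be the identity, and the defining conditions of $S_2[\alpha]$ require each $n_i$ to be even. The unique surviving orbit is therefore the open one described in \S\ref{sss: clop}, and Lemma \ref{admP-orb} identifies its contribution as $\Hom_{\prod_i H_{n_i/2}}(\pi_1\otimes\cdots\otimes\pi_k,1)\simeq\prod_i\Hom_{H_{n_i/2}}(\pi_i,1)$. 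The principal difficulty is precisely this combinatorial compatibility between total disjointness and condition \eqref{eq: stronger cond}.

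The resulting inequality $\dim\Hom_H(\pi,1)\leq\prod_i\dim\Hom_{H_{n_i/2}}(\pi_i,1)$ settles the converse: non-vanishing of the left hand side forces each $n_i$ to be even and each $\pi_i$ to be $\Sp$-distinguished. For the ``in particular'' statement, when $\pi\in\Irr$ each $\pi_i$ must itself be irreducible (otherwise parabolic induction, which is exact, would by Lemma \ref{lem; tot disj is irr} produce a composition series of $\pi$ with more than one factor), so the multiplicity-one result of \cite{MR2414223} bounds both sides of \eqref{eq: tensor H} by one; since Corollary \ref{open dist} (via the meromorphic continuation of Lemma \ref{BD her}) realizes a non-zero functional on $\pi$ starting from a tensor of non-zero functionals on the factors, the canonical map from the right hand side of \eqref{eq: tensor H} into the left is an isomorphism.
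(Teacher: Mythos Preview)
Your proof is correct and follows essentially the same route as the paper's: both invoke Corollary \ref{open dist} for the direction ``each $\pi_i$ distinguished $\Rightarrow$ $\pi$ distinguished,'' and for the converse both apply the geometric lemma to $\sigma=\pi_1\otimes\cdots\otimes\pi_k$, use \eqref{eq: supp jm} together with total disjointness to rule out any pairing $\rho_\imath\simeq\nu\rho_{\tau(\imath)}$ across different blocks, and combine this with \eqref{eq: stronger cond} to force $\tau$ trivial and $L=M$. The only differences are cosmetic: you package the argument as a dimension inequality over all $w$ rather than just asserting the existence of a relevant orbit (Lemma \ref{lem: ex orb}), you make explicit why $\pi\in\Irr$ forces each $\pi_i\in\Irr$, and you cite \cite{MR2414223} for multiplicity one where the paper cites the earlier \cite[Theorem 2.4.2]{MR1078382}.
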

\begin{proof}
The `only if' part follows from Corollary \ref{open dist}. We prove the `if' part.
Let $\sigma=\pi_1\otimes\cdots\otimes \pi_k$, $n$ be such that $\pi_1\times\cdots\times\pi_k\in\Alg(G_n)$ and $\alpha$ the decomposition of $n$ such that $\sigma\in \Alg(M_\alpha)$. Set $G=G_n$ and $M=M_\alpha$. Assume that $\pi_1\times\cdots\times\pi_k$ is $\Sp$-distinguished. 

By Lemma \ref{lem: ex orb},
$\sigma$ admits a relevant orbit, let $w\in {}_MW_{\inv(M)}\cap [w_{2n}]w_{2n}$ be relevant to $\sigma$. 
Apply the notation of \S \ref{General orbits}.
By Corollary \ref{cor: fine rel}, there exists an irreducible component $\rho$ of $\jm_{L,M}(\sigma)$ that satisfies \eqref{dist cond}.
Then $\rho=\otimes_{\imath\in (\indset,\prec)} \rho_\imath$ where $\rho_{i,1}\otimes\cdots\otimes\rho_{i,k_i}$ is an irreducible component of $\jm_{M_{\beta_i},G_{n_i}}(\pi_i)$ for all $i=1,\dots,k$ (see \eqref{eq: trans jm}). In particular, $\supp(\rho_{i,j})\subseteq \supp(\jm_{M_{\beta_i},G_{n_i}}(\pi_i))\subseteq \supp(\pi_i)$ for all $i$ (see \eqref{eq: supp jm}).

Assume that there exists $\imath\in\indset$ such that $\imath\prec\tau(\imath) $ and let $\imath=(i,j)$ and $\tau(\imath)=(i',j')$. Then $\rho_\imath\simeq \nu\rho_{\tau(\imath)}$ and by \eqref{eq: stronger cond}, $i\ne i'$. In particular, there exists $\rho'\in \supp(\rho_{\tau(\imath)})\subseteq \supp(\pi_{i'})$ such that $\nu\rho'\in\supp(\rho_\imath)\subseteq \supp(\pi_i)$. This contradicts the total disjointness of $\supp(\pi_i)$ and $\supp(\pi_{i'})$. Therefore, $\tau$ is the trivial involution. Now \eqref{eq: stronger cond} implies that $w$ is $M$-admissible and $\pi_i$ is $\Sp$-distinguished for all $i=1,\dots,k$ as required.

The isomorphism \eqref{eq: tensor H} now follows from \cite[Theorem 2.4.2]{MR1078382} (local multiplicity one for symplectic models).
\end{proof}

\section{Representations of $\GL_n(F)$}\label{s: LZ cl}
Before we continue with further applications of the geometric lemma to $\Sp$-distinction, we need to introduce the segment notation of Zelevinsky, and the Langlands and Zelevinsky classifications of $\Irr$. We refer to \cite{MR584084} for the results stated in this section.

\subsection{Segment Representations}\label{info: segments}
By a segment of cuspidal representations we mean a set
\[
[a,b]_{(\rho)}=\{\nu^i\rho:i=a,\,a+1,\dots,b\}
\]
where $\rho\in \Cusp$ and $a \le b$ are integers. By convention, the empty set is also considered a segment.
\subsubsection{} For a segment $\Delta=[a,b]_{(\rho)}$ as above, the representation $\nu^a\rho\times \nu^{a+1}\rho \times\cdots\times\nu^b\rho$ has a unique irreducible subrepresentation that we denote by $Z(\Delta)$ and a unique irreducible quotient that we denote by $L(\Delta)$.

\subsubsection{} We remark that $\Delta\mapsto L(\Delta)$ is a bijection between the set of segments of cuspidal representations and the subset of essentially square-integrable representations in $\Irr$.

\subsubsection{} Also, $Z(\Delta)$ is the unique irreducible quotient and $L(\Delta)$ is the unique irreducible subrepresentation of $\nu^b\rho\times\cdots\times\nu^{a+1}\rho\times\nu^a\rho$. By convention, if the segment $\Delta$ is empty, then both $L(\Delta)$ and $Z(\Delta)$ are taken to be the trivial representation of the trivial group.

\subsubsection{} We denote by $b(\Delta)=\nu^a\rho$ the beginning, $e(\Delta)=\nu^b\rho$ the end and $\length(\Delta)=\abs{\Delta}=b-a+1$ the length of $\Delta$.
Let $\nu\Delta=[a+1,b+1]_{(\rho)}$.

\subsubsection{}
Let $\Delta$ and $\Delta'$ be segments of cuspidal representations.
We say that $\Delta$ \emph{precedes} $\Delta'$ and write $\Delta\prec\Delta'$ if both $\Delta$ and $\Delta'$ are contained in some $\Z$-line $\rho^\Z\subseteq \Cusp$, $b(\Delta)<b(\Delta')$, $e(\Delta)<e(\Delta')$ and $e(\Delta) \ge b(\Delta')-1$. 

The segments $\Delta$ and $\Delta'$ are called \emph{linked} if either $\Delta\prec\Delta'$ or $\Delta'\prec\Delta$. Equivalently, $\Delta$ and $\Delta'$ are linked if and only if neither of them is contained in the other and their union is a segment.

\subsubsection{}\label{sec: standard} For our conventions regarding multi-sets see \S \ref{sss: mult}.
Let $\OO$ be the set of multi-sets of segments of cuspidal representatons.
An order $\mult=\{\Delta_1,\dots,\Delta_t\}\in\OO$ on a multi-set $\mult$ is of \emph{standard form} if $\Delta_i\not \prec\Delta_j$ for all $i<j$. 

Every $\mult\in \OO$ admits at least one standard order. Indeed, if for example $e(\Delta_1)\ge \cdots\ge e(\Delta_t)$ then $\{\Delta_1,\dots,\Delta_t\}$ is in standard form.

\subsubsection{The Zelevinsky classification}\label{sss: Z cl} Let $\mult=\{\Delta_1,\dots,\Delta_t\}\in\OO$ be ordered in standard form. The representation
\[
\zeta(\mult)= Z(\Delta_1) \times\cdots \times Z(\Delta_t)
\]
is independent of the choice of order of standard form. 
It has a unique irreducible submodule that we denote by $Z(\mult)$. 

The Zelevinsky classification says that the map $(\mult\mapsto Z(\mult)):\OO\rightarrow \Irr$ is a bijection.

\subsubsection{}
The representation
\[
\tilde\zeta(\mult)=Z(\Delta_t) \times\cdots \times Z(\Delta_1).
\]
is also independent of the choice of standard order on $\mult$ and $Z(\mult)$ is the unique irreducible quotient of $\tilde\zeta(\mult)$.

\subsubsection{The Langlands classification}\label{sss: L cl} Let $\mult=\{\Delta_1,\dots,\Delta_t\}\in\OO$ be ordered in standard form. The representation
\[
\lambda(\mult)=L(\Delta_1)\times\cdots\times L(\Delta_t)
\]
is independent of the choice of order of standard form. 
It has a unique irreducible quotient that we denote by $L(\mult)$. 

The Langlands classification says that the map $( \mult\mapsto L(\mult)) :\OO\rightarrow \Irr$ is a bijection.
\subsubsection{The Zelevinsky involution}
It follows from \S \ref{sss: Z cl} and \S \ref{sss: L cl} that for any $\mult\in \OO$ there exists a unique $\mult^t\in \OO$ such that $Z(\mult)=L(\mult^t)$. 

The function $\mult\mapsto \mult^t$ is an involution on $\OO$.
For $\pi=Z(\mult)\in \Irr$ let $\pi^t=L(\mult)$. Then $\pi\mapsto \pi^t$ is the corresponding involution on $\Irr$.

\subsubsection{} For $\mult\in\OO$ let $\supp(\mult)=\{\rho\in \Cusp: \rho\in\Delta\text{ for some }\Delta\in \mult\}$ be the support of $\mult$.
(Note that  $\supp(\mult)=\supp(Z(\mult))=\supp(L(\mult))$.)

A multi-set $\mult\in \OO$ is called rigid if $\supp(\mult)\subseteq\rho^\Z$ for some $\rho\in \Cusp$.  
Let 
\[
\OO_\rho=\{\mult\in\OO:\supp(\mult)\subseteq \rho^\Z\}
\]
be the set of rigid multi-sets supported on $\rho^\Z$.

\subsubsection{}
Lemma \ref{lem: useful cusp} reduces the study of $\Sp$-distinguished representations in $\Irr$ to those supported on a cuspidal $\Z$-line.

From now on fix $\rho\in \Cusp$ once and for all. We will study $\Sp$-distinction of certain rigid representations supported on $\rho^\Z$.

\section{A necessary condition for $\Sp$-distinction of $Z(\mult)$}\label{s: nec Z}

In this section we show that if $\mult\in \OO_\rho$ is such that $Z(\mult)$ is $\Sp$-distinguished then all segments in $\mult$ are of even length.

The main tool is the geometric lemma of \S \ref{sec: gl}. We also apply a result of Heumos and Rallis that we first recall.

\subsection{Results of Heumos and Rallis} 

\subsubsection{}The following disjointness of models is \cite[Theorem 3.2.2]{MR1078382}. 
\begin{lemma}\label{gen not sp}
If $\pi\in \Irr$ is generic then it is not $\Sp$-distinguished. 
\qed
\end{lemma}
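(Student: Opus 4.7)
The plan is to argue by contradiction via a Mackey-type decomposition analogous to the geometric lemma of Section \ref{sec: gl}. Suppose $\pi \in \Irr(G_{2n})$ is both generic and $\Sp$-distinguished. By Lemma \ref{lem: cont dist} and Frobenius reciprocity \eqref{eq: frob rec}, $\pi$ embeds as a subrepresentation of $\Ind_H^G(\delta_H^{1/2}\delta_G^{-1/2})$ and has a non-zero Whittaker functional. It therefore suffices to establish the stronger vanishing
\[
\Hom_N\!\bigl(\Ind_H^G(\delta_H^{1/2}\delta_G^{-1/2}),\psi\bigr)=0.
\]

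To reduce this to a geometric statement, I would apply a Mackey decomposition to $\Res_N \Ind_H^G(\delta_H^{1/2}\delta_G^{-1/2})$, expressing it by a filtration whose subquotients, indexed by the double cosets $H\bs G / N$, have the form $\ind_{N\cap g^{-1}Hg}^N(\chi_g)$ for a modulus-twisted character $\chi_g$. By Frobenius reciprocity, the $\psi$-Hom-space on each factor vanishes as soon as the restricted character $\psi|_{N\cap g^{-1}Hg}$ differs from $\chi_g|_{N\cap g^{-1}Hg}$, so the proof reduces to verifying the following orbit-wise geometric assertion: for every $g\in G$, the character $\psi|_{N\cap g^{-1}Hg}$ is non-trivial (in the appropriate normalized sense). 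With $J$ in the block-antidiagonal form of \ref{sss: sp not}, a root-subgroup computation for $g=e$ already shows that $N\cap H$ contains simple root subgroups of $N$ on which $\psi$ is non-degenerate, and the general double coset is handled by translating this calculation along orbit representatives.

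The main obstacle I expect is the uniform root-theoretic bookkeeping over all $H\bs G/N$, since this double-coset set is infinite and intricate to parametrize explicitly. Heumos--Rallis \cite[Theorem 3.2.2]{MR1078382} circumvent this by a direct integration trick: working in a Whittaker realization of $\pi$, they choose a unipotent subgroup $V\subseteq H$ on which $\psi$ restricts non-trivially, and integrate a matrix coefficient $v\mapsto \ell_H(\pi(v)w)$ against $\psi^{-1}$ over $V$. The $H$-invariance of $\ell_H$ forces this integral to equal a non-zero multiple of $\ell_H(w)$, while the Whittaker equivariance under $V$ forces the same integral to vanish. I expect implementing this direct argument, with the right choice of $V$, to be substantially cleaner than the full orbit-by-orbit geometric-lemma analysis sketched above, and it avoids entirely the need to parametrize $H\bs G/N$.
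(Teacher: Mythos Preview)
The paper gives no proof of this lemma; it is simply quoted from \cite[Theorem~3.2.2]{MR1078382} (note the \qed immediately after the statement). So there is no in-paper argument to compare against beyond the citation.

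That said, both halves of your proposal have genuine gaps. For the Mackey approach, the reduction to the ``stronger vanishing'' $\Hom_N\bigl(\Ind_H^G(\delta_H^{1/2}\delta_G^{-1/2}),\psi\bigr)=0$ is legitimate, since the $(N,\psi)$-coinvariant functor is exact and hence a Whittaker functional on the subrepresentation $\pi$ would force one on the ambient induced module. But the orbit-wise criterion you propose --- that $\psi|_{N\cap g^{-1}Hg}$ be non-trivial for \emph{every} $g$ --- is not something you verify beyond $g=e$, and it cannot hold in general: one has $\dim N+\dim H=\dim G$, so for $g$ in a generic double coset the intersection $N\cap g^{-1}Hg$ is zero-dimensional, hence trivial, and the corresponding filtration piece $\ind_{\{e\}}^N(1)\simeq C_c^\infty(N)$ visibly has $\psi$ in its $(N,\psi)$-coinvariants. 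Thus the orbit-by-orbit argument does not establish the stronger vanishing (which is in fact almost certainly false), and irreducibility of $\pi$ must be used in a more essential way than just to produce the embedding into $\Ind_H^G$.

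Your paraphrase of the Heumos--Rallis argument is also garbled. With $V\subseteq H\cap N$, $H$-invariance of $\ell_H$ gives $\ell_H(\pi(v)w)=\ell_H(w)$ for all $v\in V$, so
\[
\int_V \ell_H(\pi(v)w)\,\psi^{-1}(v)\,dv \;=\; \ell_H(w)\int_V \psi^{-1}(v)\,dv,
\]
and the second factor is zero (if $V$ is compact with $\psi|_V$ non-trivial) or divergent (if $V\simeq F$); in neither case is it a ``non-zero multiple'' of $\ell_H(w)$. Moreover $\ell_H$ carries no Whittaker equivariance, so your second clause has nothing to act on --- you have conflated the $H$-invariant functional with the Whittaker functional. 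The actual proof in \cite{MR1078382} is more structural than a single root-subgroup integration; you should consult it directly rather than rely on a paraphrase.
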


\subsubsection{} We also recall \cite[Theorem 11.1]{MR1078382} that provides first examples of irreducible, $\Sp$-distinguished representations that are not necessarily one dimensional.
\begin{lemma}\label{2speh sp} 
Let $\Delta$ be a segment in $\Cusp$. Then $L(\{\Delta,\nu\Delta\})$ is $\Sp$-distinguished. 
\end{lemma}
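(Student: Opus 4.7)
The plan is to combine Lemma \ref{lem: closed} (distinction by the closed orbit) with a descent from the standard module to its Langlands quotient. Applying Lemma \ref{lem: closed} with $k=1$, $s=0$ and $\sigma_1=L(\Delta)$ yields that
\[
\lambda(\{\Delta,\nu\Delta\})\;=\;L(\nu\Delta)\times L(\Delta)
\]
is $\Sp$-distinguished. Since $\Delta\prec\nu\Delta$ are linked, a classical theorem of Zelevinsky gives that this standard module has length two, with $L(\{\Delta,\nu\Delta\})$ as its unique irreducible quotient and with unique irreducible submodule $\pi_0:=L(\{\Delta\cup\nu\Delta,\,\Delta\cap\nu\Delta\})$ (interpreted as $L(\Delta\cup\nu\Delta)$ when $\length(\Delta)=1$, so $\Delta\cap\nu\Delta=\emptyset$). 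By Lemma \ref{lem: dist comp} at least one of the two irreducible subquotients is $\Sp$-distinguished, so it is enough to rule out $\Sp$-distinction of $\pi_0$.

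When $\length(\Delta)=1$ the submodule $\pi_0=L(\Delta\cup\nu\Delta)$ is essentially square integrable; since $\rho$ is cuspidal and therefore generic, $\pi_0$ is generic, and Lemma \ref{gen not sp} excludes $\Sp$-distinction. For $\length(\Delta)=\ell\ge 2$, the segments $\Delta\cup\nu\Delta=[a,b+1]_{(\rho)}$ and $\Delta\cap\nu\Delta=[a+1,b]_{(\rho)}$ are nested and therefore unlinked, so $\pi_0=L(\Delta\cup\nu\Delta)\times L(\Delta\cap\nu\Delta)$ is already an irreducible parabolic induction from the Levi $M=G_{d(\ell+1)}\times G_{d(\ell-1)}$, where $\rho\in\Cusp(G_d)$.

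To rule out $\Sp$-distinction of $\pi_0$ in this case I would appeal to Lemma \ref{lem: ex orb} together with Corollary \ref{cor: fine rel}: any $\Sp$-invariant form on $\pi_0$ forces the existence of a relevant orbit, hence an involution $\tau$ on the index set of some Jacquet module of $\sigma=L(\Delta\cup\nu\Delta)\otimes L(\Delta\cap\nu\Delta)$ that satisfies both \eqref{eq: stronger cond} and \eqref{dist cond}. The Jacquet modules of the two essentially square integrable factors are explicit: each is a direct sum of tensor products whose tensor factors are again essentially square integrables indexed by subsegments of the underlying segment. Fixed points of $\tau$ are ruled out by Lemma \ref{gen not sp} (all such subsegment representations are generic), while non-trivial pairings by $\tau$ would require matching a subsegment of $[a,b+1]_{(\rho)}$ with a $\nu$-shift of a subsegment of $[a+1,b]_{(\rho)}$ of the same length; the block-size constraints imposed by the difference $d(\ell+1)-d(\ell-1)=2d$ and the condition \eqref{eq: stronger cond} leave no admissible $\tau$. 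This is the $L$-analogue of the combinatorial heart of Proposition \ref{prop: Z dist}.

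The main obstacle is the last combinatorial bookkeeping step, since one must enumerate the candidate involutions $\tau$ compatible with \eqref{eq: stronger cond} and systematically exclude each via \eqref{dist cond}. This is the technical core of \cite[Theorem~11.1]{MR1078382}, and the present argument is in spirit a repackaging of that classical computation within the language of the geometric lemma developed in \S\ref{sec: gl}.
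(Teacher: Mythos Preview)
Your approach coincides with the paper's up to the identification of the submodule $\pi_0$, but you then miss the one-line observation that finishes the proof and instead embark on an orbit analysis that you yourself flag as incomplete.

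The point you overlook is that $\pi_0$ is generic for \emph{every} $\ell(\Delta)$, not only for $\ell(\Delta)=1$. Indeed, $\Delta\cap\nu\Delta\subseteq\Delta\cup\nu\Delta$, so these two segments are unlinked; hence by \cite[Theorem~9.7]{MR584084} the irreducible representation
\[
\pi_0=L(\{\Delta\cup\nu\Delta,\ \Delta\cap\nu\Delta\})=L(\Delta\cup\nu\Delta)\times L(\Delta\cap\nu\Delta)
\]
is generic (equivalently: each factor $L(\Delta_i)$ is essentially square integrable and therefore generic, and an irreducible product of generic representations is generic). Lemma~\ref{gen not sp} then rules out $\Sp$-distinction of $\pi_0$ immediately, and the proof concludes exactly as in the paper's argument. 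There is no ``combinatorial bookkeeping step'' and no need to invoke Corollary~\ref{cor: fine rel} or enumerate involutions~$\tau$; the case split on $\ell(\Delta)$ is superfluous.
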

For the convenience of the reader, and since the proof bellow will be generalized in the sequel, we recall here the argument given by Heumos and Rallis.

Recall that $L(\{\Delta,\nu\Delta\})$ lies in an exact sequence
\[
0\rightarrow\pi \rightarrow \lambda(\{\Delta,\nu\Delta\})=\nu L(\Delta)\times L(\Delta)\rightarrow  L(\{\Delta,\nu\Delta\})\rightarrow 0
\]
where $\pi\in \Irr$ is generic. The representation $\nu L(\Delta)\times L(\Delta)$ is $\Sp$-distinguished by Lemma \ref{lem: closed}, whereas $\pi$ is not $\Sp$-distinguished by Lemma \ref{gen not sp}. Therefore, $L(\{\Delta,\nu\Delta\})$ is $\Sp$-distinguished. 

\begin{remark}\label{rmk: Speh dist}
Representations of the form $L(\{\Delta,\nu\Delta,\dots,\nu^{n-1}\Delta\})$ are often referred to as Speh representations. 
In \cite{MR2332593}, Lemma \ref{2speh sp} is generalized, showing that the Speh representation $L(\{\Delta,\nu\Delta,\dots,\nu^{n-1}\Delta\})$ is $\Sp$-distinguished if and only if $n$ is even. This characterization of $\Sp$-distinguished Speh representations was based on a global argument involving the period integrals of certain Eisenstein series. In this paper we provide a local proof of a generalization.
Since Lemmas \ref{gen not sp} and \ref{2speh sp} will be applied in the sequel, we  emphasize that their proofs in \cite{MR1078382} are purely local.
\end{remark}

\subsection{On $\Sp$-distinction of $Z(\mult)$} 
\subsubsection{} Let $\Delta=[a,b]_{(\rho)}$ be a segment in $\rho^\Z$. A special case of Remark \ref{rmk: Speh dist} says that $Z(\Delta)=L(\{\nu^a\rho\},\{\nu^{a+1}\rho\},\dots,\{\nu^b\rho\})$ is $\Sp$-distinguished if and only if $\ell(\Delta)$ is even. For the sake of completeness of our local proof, we now provide a proof of the easy part of this equivalence. A local proof of the other implication will be a part of Corollary \ref{cor: dist lad Z}.
\begin{lemma}\label{lem: Z dist}
If the representation $Z(\Delta)$ is $\Sp$-distinguished then $\ell(\Delta)$ is even. 
\end{lemma}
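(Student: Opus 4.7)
The plan is to invoke the geometric lemma of \S\ref{sec: gl} applied to the standard module
\[
\tilde\zeta(\Delta)=\nu^b\rho\times\nu^{b-1}\rho\times\cdots\times\nu^a\rho=\ip_{G,M}(\sigma),
\]
where $\sigma=\nu^b\rho\otimes\nu^{b-1}\rho\otimes\cdots\otimes\nu^a\rho$ and $M=M_\alpha$ with $\alpha=(d,\dots,d)$ of length $\ell=\ell(\Delta)$ and $d=\deg\rho$. Since $Z(\Delta)$ is the unique irreducible quotient of $\tilde\zeta(\Delta)$ (see \S\ref{info: segments}), Lemma~\ref{drmk: ist quot} promotes the hypothesis to $\Sp$-distinction of $\tilde\zeta(\Delta)$, and then Lemma~\ref{lem: ex orb} supplies a $P$-orbit relevant for $\sigma$.

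Because each tensor factor of $\sigma$ is cuspidal, the identity \eqref{eq: trans jm} gives $\jm_{L,M}(\sigma)=0$ for every proper standard Levi $L\subsetneq M$. Corollary~\ref{cor: rel} therefore forces the relevant orbit to be $M$-admissible, and by Lemma~\ref{admP-orb} it corresponds to some involution $\tau\in S_2[\alpha]$. Specializing the explicit relevance condition \eqref{dist cond} to the tensor factors $\sigma_\imath=\nu^{b-\imath+1}\rho$ shows that $\imath\prec\tau(\imath)$ compels $\nu^{b-\imath+1}\rho\simeq\nu^{b-\tau(\imath)+2}\rho$, i.e.\ $\tau(\imath)=\imath+1$. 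Consequently $\tau$ is a disjoint product of adjacent transpositions $(\imath,\imath+1)$ together with possibly some fixed points.

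The last step is to exclude fixed points entirely. If $d$ is odd, then the defining condition of $S_2[\alpha]$ already forbids $\tau(i)=i$, so $\tau$ is a fixed-point-free involution of $\{1,\dots,\ell\}$, forcing $\ell$ to be even. If $d$ is even (necessarily $d\ge 2$), then a hypothetical fixed point $\tau(i)=i$ would, via \eqref{dist cond}, require the cuspidal representation $\sigma_i=\nu^{b-i+1}\rho$ of $\GL_d(F)$ to be $\Sp$-distinguished; but every irreducible cuspidal representation of $\GL_d(F)$ with $d\ge 2$ admits a Whittaker model, i.e.\ is generic, and Lemma~\ref{gen not sp} rules out $\Sp$-distinction of any generic representation. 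Either way, $\tau$ has no fixed points, and therefore $\ell(\Delta)$ is even. The main subtlety in the plan is precisely this last step: converting the many a priori possibilities for $\tau$ permitted by the geometric lemma into a genuine parity obstruction by combining the cuspidality of the inducing data with the Heumos--Rallis disjointness Lemma~\ref{gen not sp}.
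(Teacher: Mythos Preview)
Your proof is correct and follows essentially the same route as the paper's: pass to the induced representation $\nu^b\rho\times\cdots\times\nu^a\rho$ via Lemma~\ref{drmk: ist quot}, invoke Lemma~\ref{lem: ex orb} to obtain a relevant orbit, use cuspidality to force $M$-admissibility via Corollary~\ref{cor: rel}, and then rule out fixed points of the resulting involution by combining \eqref{dist cond} with Lemma~\ref{gen not sp}. The paper's argument is slightly more economical in that it does not derive the adjacent-transposition structure of $\tau$ (which you establish but do not use) and handles the fixed-point exclusion uniformly, without splitting on the parity of $d$: the convention that $\Sp$-distinction requires even degree already absorbs the odd-$d$ case, so one may simply appeal to genericity of cuspidal representations and Lemma~\ref{gen not sp} in one stroke.
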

\begin{proof}
Assume that $Z(\Delta)$ is $\Sp$-distinguished. By Lemma \ref{drmk: ist quot}, $\nu^b\rho\times\cdots\times \nu^a\rho$ is also $\Sp$-distinguished.

Let $d$ be such that $\rho\in\Alg(G_d)$, $G=G_{d(b-a+1)}$ and $M=M_{(d,\dots,d)}$ the standard Levi subgroup of $G$ so that $\sigma=\nu^b\rho\otimes\cdots\otimes\nu^a\rho\in \Alg(M)$. By Lemma \ref{lem: ex orb}, there exists an orbit, relevant to $\sigma$.

Since $\rho$ is cuspidal, it follows from \eqref{eq: trans jm} that $\jm_{L,M}(\sigma)=0$ for every proper Levi subgroup $L$ of $M$. It therefore follows from Corollary \ref{cor: rel} that an orbit relevant to $\sigma$ is $M$-admissible. 

Since all elements of $\Cusp$ are generic, it now follows from Lemma \ref{gen not sp} and \eqref{dist cond} (for $L=M$) that there exists, in particular, an involution on $\Delta$ without fixed points. Therefore $\ell(\Delta)$ is even.
\end{proof}

\subsubsection{}\label{jm Z} This can be generalized to products of $Z(\Delta)$'s, but we first need to explicate their Jacquet modules. We start with the Jacquet module of $Z(\Delta)$ itself. 

Let $\Delta=[a,b]_{(\rho)}$ be a segment in $\Cusp$. 
Recall the description of the Jacquet module of $Z(\Delta)$ following \cite[\S3.4]{MR584084}.
Suppose that $\rho\in\Alg(G_d)$ and let $n=(b-a+1)d$ so that $Z(\Delta)\in \Alg(G_n)$. 
Let $M=M_\alpha$ for a decomposition $\alpha=(n_1,\dots,n_k)$ of $n$. Then $\jm_{M,G_n}(Z(\Delta))=0$ unless $d|n_i$, $i=1,\dots,k$ in which case
\[
\jm_{M,G_n}(Z(\Delta))=Z(\Delta_1) \otimes\cdots\otimes Z(\Delta_k)
\]
where $\Delta_i=[a_i,b_i]_{(\rho)}$, $a_1=a$, $a_{i+1}=b_i+1$, $i=1,\dots,k-1$ and $d(b_i-a_i+1)=n_i$, $i=1,\dots,k$.

\subsubsection{}\label{Z jm on M}
Suppose that $\beta$ is a refinement of a decomposition $\alpha$ of $n$ and let $M=M_\alpha$ and $L=M_\beta$. For the parts of the decompositions and the ordered index set $\indset$ we apply the notation of \S \ref{General orbits}.

Let $\Delta_1,\dots,\Delta_k$ be segments of cuspidal representations so that $\sigma=Z(\Delta_1)\otimes\cdots\otimes Z(\Delta_k)$ is an irreducible representation of $M$. It follows from \eqref{eq: trans jm} and \S \ref{jm Z} that whenever non-zero
\[
\jm_{L,M}(\sigma)=\tensor\limits_{\imath\in(\indset,\prec)} Z(\Delta_{\imath})
\]
where $\jm_{M_{\beta_i}, G_{n_i}}(Z(\Delta_i))=Z(\Delta_{i,1})\otimes\cdots\otimes Z(\Delta_{i,k_i})$ is prescribed by \S \ref{jm Z}.

\subsubsection{} 
\begin{proposition}\label{prop: Z dist}
Let 
$\mult\in \OO_\rho$. If $\tilde\zeta(\mult)$ is $\Sp$-distinguished then $Z(\Delta)$ is $\Sp$-distinguished and, in particular, $\ell(\Delta)$ is even for all $\Delta\in \mult$. 

In particular, if $Z(\mult)$ is $\Sp$-distinguished then $\ell(\Delta)$ is even for all $\Delta\in \mult$.
\end{proposition}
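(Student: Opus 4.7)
The plan is to prove the stronger statement that each $Z(\Delta)$ is itself $\Sp$-distinguished; the parity conclusion then follows immediately from Lemma \ref{lem: Z dist}. The $Z(\mult)$ half reduces to the $\tilde\zeta(\mult)$ half: since $Z(\mult)$ is the unique irreducible quotient of $\tilde\zeta(\mult)$, Lemma \ref{drmk: ist quot} promotes $\Sp$-distinction of $Z(\mult)$ to $\Sp$-distinction of $\tilde\zeta(\mult)$. It thus suffices to show that if $\tilde\zeta(\mult)$ is $\Sp$-distinguished then $Z(\Delta)$ is $\Sp$-distinguished for every $\Delta\in\mult$.

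Choose a standard order $\mult=\{\Delta_1,\dots,\Delta_t\}$ with $e(\Delta_1)\ge\cdots\ge e(\Delta_t)$ (ordering by non-increasing end is a valid standard form, \S\ref{sec: standard}), and reindex $\Gamma_i:=\Delta_{t+1-i}$ so that $e(\Gamma_1)\le\cdots\le e(\Gamma_t)$ and $\tilde\zeta(\mult)=\ip_{G,M}(\sigma)$ with $\sigma=Z(\Gamma_1)\otimes\cdots\otimes Z(\Gamma_t)$. Applying Lemma \ref{lem: ex orb}, Corollary \ref{cor: fine rel}, and the Jacquet module description in \S\ref{Z jm on M}, there exist a standard Levi $L\subseteq M$ indexed by $\indset$, an involution $\tau$ on $\indset$ satisfying \eqref{eq: stronger cond}, and consecutive sub-segments $\Delta''_{(i,1)},\dots,\Delta''_{(i,k_i)}$ partitioning each $\Gamma_i$, such that $\Delta''_\imath = \nu\Delta''_{\tau(\imath)}$ whenever $\imath\prec\tau(\imath)$, and $Z(\Delta''_\imath)$ is $\Sp$-distinguished whenever $\tau(\imath)=\imath$.

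The main step is a reverse induction on $i=t,t-1,\dots,1$ showing that $k_i=1$ and $\tau(i,1)=(i,1)$. Fix $i$ and focus on the piece $(i,k_i)$, whose end equals $e(\Gamma_i)$. Consider any putative nontrivial pairing $\tau(i,k_i)=(i',j')$. The lex-larger case forces $i'>i$ (since $j'>k_i$ is impossible), but by the inductive hypothesis every such row is a fixed singleton $\{(i',1)\}$ with $\tau(i',1)=(i',1)$, contradicting $\tau(i',1)=(i,k_i)$. In the lex-smaller case, the pairing yields $e(\Delta''_{(i',j')})=\nu e(\Gamma_i)$; however $e(\Delta''_{(i',j')})\le e(\Gamma_{i'})\le e(\Gamma_i)$ (using the standard ordering of ends when $i'<i$ and the sub-segment inclusion when $i'=i$), which contradicts $\nu e(\Gamma_i)>e(\Gamma_i)$. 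Hence $\tau(i,k_i)=(i,k_i)$. Now \eqref{eq: stronger cond} forces $\tau(i,k_i-1)$ to have first coordinate $>i$, i.e., to lie in an already-processed fixed singleton row, which is again precluded; so $k_i=1$.

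With $k_i=1$ and $\tau$ the identity involution on rows, the relevant orbit is $M$-admissible, and the fixed-point clause of \eqref{dist cond} yields that each $Z(\Gamma_i)=Z(\Delta_{t+1-i})$ is $\Sp$-distinguished. Lemma \ref{lem: Z dist} then gives that $\ell(\Delta_j)$ is even for every $j$. The delicate point is the combinatorial inductive step: it hinges on choosing the standard order so that $e(\Gamma_t)$ is a global maximum among pieces, and on the strict $\nu$-shift rigidity that converts any cross- or within-row pairing into a contradiction with the ordering of segment ends.
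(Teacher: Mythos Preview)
Your proof is correct and follows essentially the same strategy as the paper's: apply the geometric lemma to obtain a relevant orbit with its associated involution $\tau$, then use the explicit Jacquet module of each $Z(\Delta)$ together with a well-chosen ordering of the tensor factors to force $\tau$ to be trivial. The only difference is cosmetic: the paper orders the factors of $\tilde\zeta(\mult)$ by non-decreasing \emph{beginnings} and looks at the globally minimal non-fixed index of $\tau$ (necessarily of the form $(i,1)$, carrying $b(\Delta_i)$), obtaining the contradiction $b(\Delta_{(i,1)})=b(\Delta_{\tau(i,1)})+1$ against the ordering in one step; you instead order by non-decreasing \emph{ends} and run a reverse induction on rows using the piece $(i,k_i)$ carrying $e(\Gamma_i)$. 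These are dual versions of the same argument.
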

\begin{proof}
Fix an order $\mult=\{\Delta_1,\dots,\Delta_k\}$ so that $b(\Delta_1)\le \cdots \le b(\Delta_k)$ and note that 
$\{\Delta_k,\dots,\Delta_1\}$ is a standard order on $\mult$, i.e., that 
\[
\tilde\zeta(\mult)\simeq Z(\Delta_1)\times\cdots \times Z(\Delta_k).
\]

Let $\sigma=Z(\Delta_1)\otimes\cdots \otimes Z(\Delta_k)$, $G=G_n$ and $M=M_\alpha$ a standard Levi subgroup of $G$ such that $Z(\Delta_1)\times\cdots \times Z(\Delta_k)=\ip_{G,M}(\sigma)$. 
Assume that $\tilde\zeta(\mult)$ is $\Sp$-distinguished. By Lemma \ref{lem: ex orb} there exists $w\in {}_MW_{\inv(M)}\cap [w_{2n}]w_{2n}$ that is relevant to $\sigma$. 

We show that $Z(\Delta_i)$ is $\Sp$-distinguished for all $i$. 
Assume, by contradiction, the contrary and use the notation of \S \ref{General orbits}. 
It follows from Corollary \ref{cor: fine rel} that $\tau$ is not the trivial involution. Let $\imath\in\indset$ be minimal such that $\tau(\imath)\ne \imath$.
Then $\imath\prec \tau(\imath)$ and it follows from \eqref{eq: stronger cond} that $\imath=(i,1)$ for some $1\le i\le k$. But then the condition $\Delta_{\imath}= \nu\Delta_{\tau(\imath)}$ (from Corollary \ref{cor: fine rel}) contradicts our choice of order on $\mult$ and \S \ref{Z jm on M}. 

It therefore follows that $Z(\Delta_i)$ is $\Sp$-distinguished and from Lemma \ref{lem: Z dist} that $\ell(\Delta_i)$ is even for $i=1,\dots,k$. The last part of the proposition follows from Lemma \ref{drmk: ist quot}.
\end{proof}

Applications of the geometric lemma to the study of $\Sp$-distinction of representations of the form $\lambda(\mult)$, $\mult\in\OO$ (standard modules) 
lead to certain combinatorial problems that we are only able to solve in special cases. In the next section we formulate these problems and present the proofs for our partial results.
The section is written in such a way that it is independent of the rest of the paper and elementary. The problem we raise is accessible to every mathematician.

\section{Multi-sets of segments}\label{s: multisets}
We formulate an elementary problem on multi-sets of segments of integers that has applications to representation theory. We are only able to provide a partial solution.
\subsubsection{}\label{sss: mult}

By a multi-set $f$ of elements in a set $X$ we mean a function $f:X\rightarrow \Z_{\ge 0}$ of finite support. The support of $f$ is also referred to as the  set underlying $f$. If $f$ takes value in $\{0,1\}$ then we identify $f$ with its support and say that it is a set. 
For example, for $x\in X$ the set of one element $\{x\}$ is the characteristic function of $x$.

Denote by $\abs{f}=\sum_{x\in X} f(x)$ the size of the multi-set $f$. By abuse of notation, we sometimes write $f=\{x_1,\dots,x_t\}$ where $t=\abs{f}$ and $x\in X$ equals $x_i$ for exactly $f(x)$ indices $i$. We refer to the presentation $\{x_1,\dots,x_t\}$ as an order on $f$. 

We write $x\in f$ if $x$ is in the support of $f$.

\subsubsection{} 
By a segment of integers we mean a set $[a,b]=\{a,a+1,\dots,b\}$ where $a\le b$ are integers. By convention, the empty set is a segment. Let $\sgm$ denote the set of all segments of integers. 
Consider the operation  
\[
\nu[a,b]=[a+1,b+1],\ \ \ [a,b]\in \sgm
\]
and define the following relation on $\sgm$. For $[a,b],\,[a',b']\in \sgm$ we say that $[a,b]$ \emph{precedes} $[a',b']$ and write $[a,b]\prec[a',b']$ if $a<a'$, $b<b'$ and $b \ge a'-1$.

By a \emph{decomposition} of $[a,b]\in \sgm$ we mean a $k$-tuple of segments $([a_1,b_1],\dots,[a_k,b_k])\in \sgm^k$, $k\in \N$,  such that $b_1=b$, $a_k=a$ and $b_{i+1}=a_i-1$, $i=1,\dots,k-1$.
The decomposition is called trivial if $k=1$.

\subsubsection{}
Let $\OO_\Z$ be the set of multi-sets of segments of integers. 
We say that a multi-set $\mult=\{\Delta_1,\dots,\Delta_k\}$ is ordered in standard form if $\Delta_i\not\prec\Delta_j$ for all $i<j$.

Given an ordered multi-set $\mult=\{\Delta_1\dots,\Delta_k\}\in\OO_\Z$, by a decomposition of $\mult$ we mean a decomposition of $\Delta_i$ for all $i=1,\dots,k$. 
The decomposition is called trivial if the decomposition of each $\Delta_i$ is trivial.

It will be convenient to index a decomposition of an ordered multi-set as follows.
If $(\Delta_{i,1},\dots,\Delta_{i,k_i})$ is the decomposition of $\Delta_i$, $i=1,\dots,k$ let $(\indset,\prec)$ be the linearly ordered set 
\[
\indset=\{(i,j):i=1,\dots,k,\ j=1,\dots,k_i\}
\] 
with the lexicographic order $(i,j) \prec (i',j')$ if either $i<i'$ or $i=i'$ and $j<j'$. We further consider the partial order $(i,j)\ll(i',j')$ if $i<i'$.
Thus, a decomposition of an ordered multi-set $\mult=\{\Delta_1,\dots,\Delta_k\}\in \OO_\Z$ produces a new ordered multi-set $\{\Delta_\imath:\imath\in(\indset,\prec)\}$.


\subsubsection{Relevant decompositions}
\begin{definition}\label{rel dec}
Let $\mult=\{\Delta_1,\dots,\Delta_k\}\in\OO_\Z$ be an ordered multi-set. 
We say that an ordered decomposition $\{\Delta_\imath:\imath\in (\indset,\prec)\}$ (of the order $\{\Delta_1,\dots,\Delta_k\}$ of $\mult$) is \emph{relevant} to $\{\Delta_1,\dots,\Delta_k\}$ if there exists  an involution $\tau$ on $\indset$ satisfying the following properties:
\begin{enumerate}
\item\label{order flip} $\tau(i,j+1)\ll\tau(i,j),\ i=1,\dots,k,\,j=1,\dots,k_i-1$;
\item\label{no fix} $\tau(\imath)\ne \imath, \ \imath\in\indset$;
\item\label{segment match} $\Delta_\imath=\nu\Delta_{\tau(\imath)}$ whenever $\imath\prec\tau(\imath)$.
\end{enumerate}
\end{definition}

\subsubsection{} The involutions $\tau$ in the above definition must satisfy the following property. 
\begin{lemma}\label{lem: 1 to end}
Let $\tau$ be an involution on $\indset$ satisfying conditions \eqref{order flip} and \eqref{no fix} of Definition \ref{rel dec}. Then, there exist $i_1>\cdots>i_{k_1}>1$ such that $\tau(1,j)=(i_j,k_{i_j})$, $j=1,\dots,k_1$.
\end{lemma}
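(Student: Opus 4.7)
The plan is to establish three claims about the values $\tau(1,1),\dots,\tau(1,k_1)$: (a) their first coordinates are strictly decreasing in $j$; (b) none of these first coordinates equals $1$; (c) writing $\tau(1,j)=(i_j,j^*)$, we have $j^*=k_{i_j}$. Together (a), (b), (c) give the stated form $\tau(1,j)=(i_j,k_{i_j})$ with $i_1>\cdots>i_{k_1}>1$. For (a), I would simply iterate condition \eqref{order flip} along row $1$ to obtain $\tau(1,k_1)\ll\tau(1,k_1-1)\ll\cdots\ll\tau(1,1)$, so the first coordinates are pairwise distinct and strictly decreasing in $j$.

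For (b), I would argue by contradiction: if $\tau(1,j_0)=(1,j'_0)$ for some $j_0$, then the involution gives $\tau(1,j'_0)=(1,j_0)$, so two values of $\tau(1,\cdot)$ both lie in row $1$; by (a) this forces $j_0=j'_0$, whence $\tau(1,j_0)=(1,j_0)$ is a fixed point, contradicting condition \eqref{no fix}.

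For (c), I would assume toward a contradiction that $j^*<k_{i_j}$. The involution yields $\tau(i_j,j^*)=(1,j)$, and applying \eqref{order flip} at $(i_j,j^*)$ would force $\tau(i_j,j^*+1)\ll(1,j)$, i.e.\ the first coordinate of $\tau(i_j,j^*+1)$ must be strictly less than $1$---impossible. Hence $j^*=k_{i_j}$. The whole argument is a straightforward piece of combinatorial bookkeeping and I do not anticipate any serious obstacle.
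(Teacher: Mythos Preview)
Your proof is correct and follows essentially the same approach as the paper's. The paper handles your parts (a) and (b) in one line (``The inequalities $i_1>\cdots>i_{k_1}>1$ are immediate from conditions \eqref{order flip} and \eqref{no fix}''), while your part (c) is identical to the paper's argument.
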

\begin{proof}
Let $\tau(1,j)=(i_j,r_j)$. The inequalities $i_1>\cdots>i_{k_1}>1$ are immediate from conditions \eqref{order flip} and \eqref{no fix} of Definition \ref{rel dec}.
If $r_j<k_{i_j}$ then, again by the same condition, $\tau(i_j,r_j+1)\ll (1,j)$ which is impossible. Therefore $r_j=k_{i_j}$.
\end{proof}

\subsubsection{Distinguished multi-set} 
\begin{definition}\label{def: dist mult}
A multi-set $\mult\in\OO_\Z$ is called distinguished if every standard order of $\mult$ admits a relevant decomposition. 
\end{definition}
\subsubsection{Speh type}\label{def: Sp type}
For $\mult=\{\Delta_1,\dots,\Delta_k\}\in\OO_\Z$ and $n\in\Z$ let $\nu^n\mult=\{\nu^n\Delta_1,\dots,\nu^n\Delta_k\}$.
It is easy to see that the following conditions are equivalent for $\mult\in\OO_\Z$: 
\begin{itemize}
\item $\mult$ is of the form $\multn+\nu\multn$ for some $\multn\in\OO_\Z$; 
\item the trivial decomposition of $\mult$ is relevant to some standard order of $\mult$; 
\item the trivial decomposition of $\mult$ is relevant to any standard order of $\mult$. 
\end{itemize}
\begin{definition}
We say that $\mult\in\OO_\Z$ is of Speh type if it satisfies the above equivalent conditions.
\end{definition}

\subsubsection{The main Hypothesis}
Consider the following property of a multi-set $\mult\in\OO_\Z$.
\begin{hypothesis}\label{hyp*}
If $\mult$ is distinguished then $\mult$ is of Speh type. 
\end{hypothesis}

Unfortunately, we do not have enough information to determine whether Hypothesis \ref{hyp*} is satisfied for all $\mult\in\OO_\Z$. We will prove it in some special cases and in particular when $\mult$ is a set.

\subsubsection{} In fact, for some of the representation theoretic applications we have in mind it suffices to consider a slightly weaker property.
For $[a,b]\in\sgm$ let $[a,b]^\vee=[-b,-a]$ and for $\mult\in\OO_\Z$ let $\mult^\vee\in\OO_\Z$ be defined by $\mult^\vee(\Delta)=\mult(\Delta^\vee)$, $\Delta\in\sgm$.

Consider the following property of a multi-set $\mult\in\OO_\Z$.
\begin{hypothesis}\label{hyp**}
If $\mult$ and $\mult^\vee$ are both distinguished then $\mult$ is of Speh type. 
\end{hypothesis}

\subsubsection{} In order to prove that a given multi-set $\mult$ satisfies the Hypothesis \ref{hyp*} it is enough to show that if $\mult$ is distinguished then the trivial decomposition is relevant to some standard order. In particular, it is enough to show that for some standard order, no non-trivial decomposition is relevant. We can only prove Hypothesis \ref{hyp*} for some special cases by proving this stronger version. For this purpose we define a certain standard order on multi-sets.

\subsubsection{}\label{orders}
For $\Delta=[a,b]$ let $b(\Delta)=a$ be the beginning and $e(\Delta)=b$ the end of $\Delta$. 

For $\mult\in \OO_\Z$ let $c_1>\cdots>c_s$ be such that we have the identity of sets $\{c_1,\dots,c_s\}=\{e(\Delta):\Delta\in \mult\}$. Let $\mult[i]\in\OO_\Z$ be defined by
\[
\mult[i](\Delta)=\begin{cases} \mult(\Delta) & e(\Delta)=c_i \\ 0 & \text{otherwise} \end{cases}
\]
for $i=1,\dots,s$ so that 
\[
\mult=\mult[1]+\cdots+\mult[s]
\]
and all segments in the support of $\mult[i]$ end at $c_i$.

Note that any linear order on $\mult$ that extends the relation, $\Delta<\Delta'$ whenever $\Delta\in \mult[i]$ and $\Delta'\in \mult[j]$ for all $1\le i<j\le s$, is in standard form. 

\subsubsection{}\label{oord}
In order to fix such an order, we need to linearly order each $\mult[i]$. We make a particular such choice recursively. 

Let $\mult[1]$ be ordered by $\mult[1]=\{\Delta_1,\dots,\Delta_k\}$  where $b(\Delta_1)\le \cdots\le b(\Delta_k)$ and set $\mult[1]'=\mult[1]$. Suppose that $\mult[1],\dots,\mult[i]$ are ordered and that $\mult[1]',\dots,\mult[i]'$ are defined for some $i<s$. We order $\mult[i+1]$ and define $\mult[i+1]'$ as follows. Set $\mult[i+1]=\{\Delta_1,\dots,\Delta_k,\Delta_1',\dots,\Delta_m'\}$ where $b(\Delta_1)\le \cdots\le b(\Delta_k)$, $b(\Delta_1')\ge \cdots \ge b(\Delta_m')$ and $\min(\mult[i+1],\nu^{-1}\mult[i]')=\{\Delta_1',\dots,\Delta_m'\}$ and let $\mult[i+1]'=\{\Delta_1,\dots,\Delta_k\}$.

\subsubsection{} We now prove that Hypothesis \ref{hyp*} holds for sets.
\begin{proposition}\label{prop: hyp set}
Let $\mult\in \OO_\Z$ be a set. Then, no non-trivial decomposition is relevant to the order on $\mult$ defined in \S \ref{oord}. In particular, Hypothesis \ref{hyp*} holds for $\mult$.
\end{proposition}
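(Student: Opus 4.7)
The plan is to prove the first assertion (that no non-trivial decomposition is relevant to the order of \S \ref{oord}) by strong induction on $|\mult|$; the second assertion follows at once, because if $\mult$ is distinguished then the order of \S \ref{oord} (which is standard by \S \ref{orders}) admits a relevant decomposition, which by the first assertion must be trivial, whence $\mult$ is of Speh type by the equivalences of \S \ref{def: Sp type}. So assume for contradiction that a non-trivial decomposition $\{\Delta_\imath:\imath\in\indset\}$ of the order in \S \ref{oord} is relevant with involution $\tau$, and let $\Delta_1 = [b_1,c_1]$ be the first segment. By Lemma \ref{lem: 1 to end} applied to the first row, $\tau(1,j) = (i_j,k_{i_j})$ with $i_1 > \cdots > i_{k_1} > 1$, and condition~(3) of Definition~\ref{rel dec} gives $\Delta_{i_j,k_{i_j}} = \nu^{-1}\Delta_{1,j}$, so in particular $b(\Delta_{i_j}) = b(\Delta_{1,j}) - 1$ for each $j$.

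Consider first the case $k_1 = 1$. Then $\Delta_{i_1,k_{i_1}} = [b_1-1,c_1-1]$. If $k_{i_1} > 1$, the first piece $\Delta_{i_1,1}$ would end strictly above $c_1-1$, forcing $e(\Delta_{i_1}) = c_1$ by maximality of $c_1$; then $\Delta_{i_1} \in \mult[1]$ with $b(\Delta_{i_1}) = b_1-1 < b_1$, which contradicts the choice of $\Delta_1$ as the minimal-$b$ element of $\mult[1]$ according to \S \ref{oord}. Hence $k_{i_1} = 1$, which forces $c_2 = c_1-1$ and $\Delta_{i_1} = \nu^{-1}\Delta_1 \in \mult[2] \setminus \mult[2]'$. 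Setting $\mult' = \mult \setminus \{\Delta_1,\Delta_{i_1}\}$, one checks directly that $\mult'[i]' = \mult[i]'$ for all $i \ge 2$ and $\mult'[1]' = \mult[1]' \setminus \{\Delta_1\}$, and that the prescribed orderings on primed and unprimed parts restrict consistently (the removal of a matched pair $\Delta_1 \in \mult[1]'$ and $\nu^{-1}\Delta_1 \in \nu^{-1}\mult[1]'$ preserves the recursive definition of $\mult[j]'$ for $j \ge 2$). Since $k_1 = k_{i_1} = 1$, the restriction of $\tau$ gives a relevant decomposition of $\mult'$ that remains non-trivial, contradicting the inductive hypothesis applied to the strictly smaller set $\mult'$.

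The case $k_1 \ge 2$ is the principal obstacle. Writing the cut points $b_1 = \alpha_{k_1} < \alpha_{k_1-1} < \cdots < \alpha_1 \le c_1$ with $\alpha_0 := c_1+1$ so that $\Delta_{1,j} = [\alpha_j,\alpha_{j-1}-1]$, the last piece $\Delta_{1,k_1} = [b_1,\alpha_{k_1-1}-1]$ is paired with $\Delta_{i_{k_1},k_{i_{k_1}}} = [b_1-1,\alpha_{k_1-1}-2]$, forcing $b(\Delta_{i_{k_1}}) = b_1-1 < b_1$ and excluding $\Delta_{i_{k_1}} \in \mult[1]$. When $k_{i_{k_1}} = 1$, one has $\Delta_{i_{k_1}} = [b_1-1,\alpha_{k_1-1}-2]$ in some bucket $\mult[j_0]$ with $c_{j_0} \le c_1-2$; noting that $\nu\Delta_{i_{k_1}} = [b_1,\alpha_{k_1-1}-1] \ne \Delta_1$ because $\alpha_{k_1-1}-1 < c_1$, a careful analysis of whether $\Delta_{i_{k_1}}$ lies in $\mult[j_0]'$ or in the tail $\mult[j_0] \cap \nu^{-1}\mult[j_0-1]'$, together with the tail-in-decreasing-$b$ placement in \S \ref{oord}, forces two distinct indices to carry the same segment and contradicts the set hypothesis. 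When $k_{i_{k_1}} > 1$, the corresponding analogue of Lemma \ref{lem: 1 to end} applied to row $i_{k_1}$ (using that $\Delta_{i_{k_1}}$ enters its bucket in a $b$-minimal position among unmatched segments) produces a further segment in $\mult$ with beginning $b_1-2$, and iterating yields a strictly descending infinite chain of beginnings, impossible in the finite set $\mult$. The main difficulty is orchestrating this nested case analysis; the very specific order of \S \ref{oord} is designed exactly so that placing $\mult[i]'$-segments first in increasing order of $b$ secures the $b$-minimality of leading segments in each bucket, while relegating the automatically matched segments $\nu^{-1}\mult[i-1]'$ to the tail in decreasing order of $b$ leaves no room for any non-trivial pairing to be consistent with the distinctness of segments in a set.
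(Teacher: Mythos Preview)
Your reduction to induction and the handling of the case $k_1=1$ are fine and close in spirit to the paper's argument. The genuine gap is in your treatment of $k_1\ge 2$: neither the ``careful analysis'' in the $k_{i_{k_1}}=1$ subcase nor the ``infinite descent'' in the $k_{i_{k_1}}>1$ subcase is actually carried out, and as written neither works. For instance, your appeal to ``the corresponding analogue of Lemma~\ref{lem: 1 to end} applied to row $i_{k_1}$'' is not available: that lemma is specific to row $1$, and for row $i_{k_1}$ the last piece is already paired with $(1,k_1)$, so condition~\eqref{order flip} sends the remaining pieces \emph{upward} in the first coordinate, not downward to produce your chain of beginnings.

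The clean idea you are missing is to analyze the \emph{end points} of the $\Delta_{i_j}$ rather than just their beginnings. Since the indices satisfy $1<i_{k_1}<\cdots<i_1$ and the order of \S\ref{oord} is by weakly decreasing end, one gets $c_1=e(\Delta_1)\ge e(\Delta_{i_{k_1}})\ge\cdots\ge e(\Delta_{i_1})$. On the other hand $\Delta_{i_1,k_{i_1}}=\nu^{-1}\Delta_{1,1}$ forces $e(\Delta_{i_1})\ge c_1-1$, so every $e(\Delta_{i_j})\in\{c_1,c_1-1\}$. The case $e(\Delta_{i_{k_1}})=c_1$ is impossible by your own $\mult[1]$-minimality argument, so $e(\Delta_{i_{k_1}})=c_1-1$, whence all the $\Delta_{i_j}$ lie in $\mult[2]$ and $\Delta_{i_{k_1}}=\nu^{-1}\Delta_1$. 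Because $\mult$ is a set, $\Delta_{i_{k_1}}\notin\mult[2]'$; it therefore sits in the tail $\min(\mult[2],\nu^{-1}\mult[1]')$, where the ordering is by \emph{decreasing} $b$, and since $b(\Delta_{i_{k_1}})=b_1-1$ is the minimum over $\nu^{-1}\mult[1]'$, the segment $\Delta_{i_{k_1}}$ is the \emph{last} element of $\mult[2]$. But $i_{k_1}$ is the \emph{smallest} of the indices $i_1>\cdots>i_{k_1}$, all of which lie in $\mult[2]$. This is a contradiction unless $k_1=1$, and then $e(\Delta_{i_1})=c_1-1=e(\Delta_{i_1,k_{i_1}})$ forces $k_{i_1}=1$ as well. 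With this in hand, the inductive removal of $\{\Delta_1,\nu^{-1}\Delta_1\}$ goes through exactly as you described.
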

\begin{proof}
We prove the statement by induction on $\abs{\mult}$. 
If $\abs{\mult}\le 2$ then it follows from Lemma \ref{lem: 1 to end} that conditions \eqref{order flip} and \eqref{no fix} of Definition \ref{rel dec} cannot be satisfied by any non-trivial decomposition of $\mult$ and the proposition is therefore true. 

Let $\mult=\{\Delta_1,\dots,\Delta_k\}$ and assume by contradiction that there exists a non-trivial decomposition $\{\Delta_\imath:\imath\in \indset\}$ that is relevant to $\{\Delta_1,\dots,\Delta_k\}$. Let $\tau$ be the involution on $\indset$ satisfying  
properties \eqref{order flip}, \eqref{no fix}, \eqref{segment match} of Definition \ref{rel dec} and let $i_1>\cdots>i_{k_1}>1$ be given by Lemma \ref{lem: 1 to end} so that $\tau(1,j)=(i_j,k_{i_j})$, $j=1,\dots,k_1$. 

By the property of the order chosen $e(\Delta_1)\ge e(\Delta_{i_{k_1}})\ge \cdots \ge e(\Delta_{i_1})$.
The condition  \eqref{segment match} of Definition \ref{rel dec} implies that $e(\Delta_{i_1})\ge e(\Delta_1)-1$ and $b(\Delta_{i_1})>\cdots>b(\Delta_{i_{k_1}})=b(\Delta_1)-1$.  
Since the equality $e(\Delta_{i_{k_1}})=e(\Delta_1)$ contradicts the order chosen on $\mult[1]$ we must have $e(\Delta_{i_{k_1}})=e(\Delta_1)-1$, i.e.,  $\Delta_{i_{k_1}}=\nu^{-1}\Delta_1$ and in the notation of \S \ref{orders}, $c_2=c_1-1$ and $\Delta_{i_j}\in \mult[2]$ for all $j=1,\dots,k_1$. 

Since $\mult$ is a set, we get that $\Delta_{i_{k_1}}\not\in \mult[2]'$.
Taking the order chosen on $\mult[2]$ into consideration, now implies that $k_1=1=k_{i_1}$.  

Again since $\mult$ is a set, it is easy to see that the order defined on $\mult-\{\Delta_1\}-\{\Delta_{i_1}\}$ by \S \ref{oord} is that inherited from $\mult$, i.e., the order $\{\Delta_2,\dots,\Delta_{i_1-1},\Delta_{i_1+1},\dots,\Delta_k\}$. Furthermore, $\tau|_{\indset\setminus \{(1,1),(i_1,1)\}}$ shows that $\{\Delta_\imath:\imath\in \indset\setminus \{(1,1),(i_1,1)\}\}$ is a non-trivial decomposition relevant to $\{\Delta_2,\dots,\Delta_{i_1-1},\Delta_{i_1+1},\dots,\Delta_k\}$. This contradicts the induction hypothesis.
\end{proof}

\subsubsection{} The same proof gives another family of multi-sets satisfying Hypothesis \ref{hyp*}.

\begin{proposition} \label{prop: hyp 2}
Let $\mult\in \OO_\Z$ and set $\mult=\mult[1]+\cdots+\mult[s]$ as in \S \ref{orders}. Assume that $\abs{\mult[i]}\le 2$ for all $i=1,\dots,s$. Then, no non-trivial decomposition is relevant to the order on $\mult$ defined in \S \ref{oord}. In particular, Hypothesis \ref{hyp*} holds for $\mult$.
\end{proposition}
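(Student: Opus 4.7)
The plan is to mimic the induction on $|\mult|$ used in Proposition \ref{prop: hyp set}, employing the hypothesis $|\mult[i]|\le 2$ to replace the set-theoretic simplifications that no longer hold in the multi-set setting. The base case $|\mult|\le 2$ is identical: Lemma \ref{lem: 1 to end} rules out any non-trivial decomposition satisfying conditions \eqref{order flip} and \eqref{no fix} of Definition \ref{rel dec}.

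For the inductive step, suppose for contradiction that a non-trivial decomposition $\{\Delta_\imath:\imath\in\indset\}$ is relevant to the order from \S \ref{oord}, with witnessing involution $\tau$. Applying Lemma \ref{lem: 1 to end} and repeating the computations in the proof of Proposition \ref{prop: hyp set} yields indices $i_1>\cdots>i_{k_1}>1$ with $\tau(1,j)=(i_j,k_{i_j})$ together with the facts $c_2=c_1-1$, $\Delta_{i_j}\in\mult[2]$ for all $j$, $b(\Delta_{i_j})$ strictly decreasing in $j$, and $\Delta_{i_{k_1}}=\nu^{-1}\Delta_1$. Since the $i_j$ are distinct positions in $\mult[2]$ and $|\mult[2]|\le 2$, we have $k_1\in\{1,2\}$.

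The case $k_1=2$ will be ruled out as follows. If $\Delta_{i_1}=\Delta_{i_2}$ as segments their $b$-values coincide, violating the strict decrease; otherwise $\mult[2]=\{\Delta_{i_1},\nu^{-1}\Delta_1\}$, and an elementary computation of $\mult[2]'=\mult[2]-\min(\mult[2],\nu^{-1}\mult[1]')$ shows that the single copy of $\nu^{-1}\Delta_1$ always lies in $\mult[2]-\mult[2]'$. Whether $\Delta_{i_1}$ lies in $\mult[2]'$ or in $\mult[2]-\mult[2]'$, the prescribed order of \S \ref{oord} then places $\Delta_{i_1}$ strictly before $\Delta_{i_2}=\nu^{-1}\Delta_1$ (in the latter subcase by the decreasing $b$ convention, using $b(\Delta_{i_1})>b(\Delta_{i_2})$), forcing $i_1<i_2$ and contradicting $i_1>i_2$. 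Hence $k_1=1$, and then $e(\Delta_{i_1,k_{i_1}})=e(\Delta_1)-1=c_2=e(\Delta_{i_1})$ forces $k_{i_1}=1$, because in any non-trivial decomposition the last piece has strictly smaller end; we deduce $\Delta_{i_1}=\nu^{-1}\Delta_1$.

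Setting $\mult'=\mult-\{\Delta_1\}-\{\Delta_{i_1}\}$ and restricting $\tau$ to $\indset\setminus\{(1,1),(i_1,1)\}$ produces a non-trivial decomposition of $\mult'$ relevant to the inherited order, and $|\mult'[i]|\le 2$ is preserved. The main obstacle, which replaces the ``easy to see'' step of Proposition \ref{prop: hyp set}, is to check that the inherited order on $\mult'$ coincides with the order assigned to $\mult'$ by \S \ref{oord}; only then does the induction hypothesis apply. This reduces to a finite verification across the four configurations $(|\mult[1]|,|\mult[2]|)\in\{1,2\}^2$, and rests on two simple ingredients: the multi-set identity $\min(A-\{x\},B-\{x\})=\min(A,B)-\{x\}$ whenever $x$ appears in both $A$ and $B$ (applied to $x=\nu^{-1}\Delta_1$, $A=\mult[2]$, $B=\nu^{-1}\mult[1]$), and the observation that for any $\Delta\in\mult[1]$ one has $e(\nu^{-1}\Delta)=c_2>c_j$ for every $j\ge 3$, so $\nu^{-1}\Delta\notin\mult[j]$, which decouples the recursion $\mult[j]'=\mult[j]-\min(\mult[j],\nu^{-1}\mult[j-1]')$ from the removed data once $j\ge 3$. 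Once these orders are identified, the inductive hypothesis delivers the contradiction, and the ``in particular'' clause is immediate: a distinguished $\mult$ admits a relevant decomposition to the order of \S \ref{oord}, which must then be trivial, so $\mult$ is of Speh type.
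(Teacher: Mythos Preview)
Your proof is correct and follows essentially the same approach as the paper's: induction on $|\mult|$ reusing the first three paragraphs of the proof of Proposition~\ref{prop: hyp set}, ruling out $k_1=2$ via the ordering convention on $\mult[2]$, deducing $k_1=k_{i_1}=1$ with $\Delta_{i_1}=\nu^{-1}\Delta_1$, and then verifying that the inherited order on $\mult-\{\Delta_1\}-\{\Delta_{i_1}\}$ agrees with that of \S\ref{oord}. You actually supply more justification than the paper does at two points---the paper simply asserts $k_{i_1}=1$ and that the inherited order matches without argument---so your sketch of the multi-set identity and the decoupling for $j\ge 3$ is welcome added detail rather than a departure in method.
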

\begin{proof}
The first three paragraphs of the proof of Proposition \ref{prop: hyp set} hold for any multi-set and we apply the conclusions and the notation in this case.
Since $\abs{\mult[2]}\le 2$ we conclude that $k_1\le 2$. Furthermore, if $k_1=2$ then $\Delta_{i_2}=\nu^{-1}\Delta_1$, $\Delta_{i_1}=\nu^{-1}\Delta_{1,1}$ and $\mult[2]=\{\Delta_{i_2},\Delta_{i_1}\}$. But this shows that $\Delta_{i_2}\not\in\mult[2]'$ and contradicts the order on $\mult[2]$.

We therefore have $k_1=1=k_{i_1}$.  
Let $\multn=\mult-\{\Delta_1\}-\{\Delta_{i_1}\}$.  Again, it is easily observed that the order on $\multn$ defined in \S \ref{oord} is the one inherited by $\mult$ and the proposition follows by induction as in the last paragraphs of the proof of Proposition \ref{prop: hyp set}.
\end{proof}

\section{On $\Sp$-distinction of standard modules}\label{s: std mod sp}

The bijection $[a,b]\mapsto [a,b]_{(\rho)}$ from segments of integers to segments in $\rho^\Z$ induces a bijection on multi-sets from $\OO_\Z$ to $\OO_\rho$. We refer to this bijection as $\rho$-labeling and to its inverse as unlabeling. 

In this section we show that if $\mult\in \OO_\rho$ is such that $\lambda(\mult)$ is $\Sp$-distinguished then the unlabeling of $\mult$ is distinguished in the sense of Definition \ref{def: dist mult}. The results and Hypothesis of \S \ref{s: multisets} therefore become relevant to the study of $\Sp$-distinction of standard modules.

From now on we adopt the following convention. We say that a multi-set $\mult\in \OO_\rho$ satisfies a property defined on multi-sets in $\OO_\Z$ if its unlabeling satisfies this property.

\subsubsection{}\label{jm sqr}

Let $\Delta=[a,b]_{(\rho)}$ be a segment in $\rho^\Z$ and $\delta=L(\Delta)$. 
We first recall the description of the Jacquet module of $\delta$ following \cite[\S9.5]{MR584084}.

Suppose that $\rho$ is a representation of $G_d$ and let $n=(b-a+1)d$ so that $\delta\in \Alg(G_n)$. 
Let $M=M_\alpha$ for a decomposition $\alpha=(n_1,\dots,n_k)$ of $n$. Then $\jm_{M,G_n}(\delta)=0$ unless $d|n_i$, $i=1,\dots,k$ in which case
\[
\jm_{M,G_n}(\delta)=\delta_1 \otimes\cdots\otimes \delta_k
\]
where $\delta_i=L(\Delta_i)$, $\Delta_i=[a_i,b_i]_{(\rho)}$, $b_1=b$, $b_{i+1}=a_i-1$, $i=1,\dots,k-1$ and $d(b_i-a_i+1)=n_i$, $i=1,\dots,k$.

\subsubsection{}\label{jm on M} Suppose that $\beta$ is a refinement of a decomposition $\alpha$ of $n$ and let $M=M_\alpha$ and $L=M_\beta$. For the parts of the decompositions and the ordered index set $\indset$ we apply the notation of \S \ref{General orbits}.

Let $\delta=\delta_1\otimes\cdots\otimes\delta_k$ be an irreducible, essentially square-integrable representation of $M$ (i.e., $\delta_i=L(\Delta_i)$ for some segment of cuspidal representations, $i=1,\dots,k$). It follows from \eqref{eq: trans jm} that whenever non-zero
\[
\jm_{L,M}(\delta)=\tensor\limits_{\imath\in(\indset,\prec)} \delta_{\imath}
\]
where $\jm_{M_{\beta_i}, G_{n_i}}(\delta_i)=\delta_{i,1}\otimes\cdots\otimes\delta_{i,k_i}$ is prescribed by \S \ref{jm sqr}.

\subsubsection{}\label{sss: for dist} Let $M$ and $\delta$ be as above and $w\in {}_MW_{\inv(M)}\cap [w_{2n}]w_{2n}$. Recall that $\delta_i$ is generic $i=1,\dots,k$. In the notation of \S \ref{General orbits} it follows from \eqref{dist cond}, Lemma \ref{gen not sp} and \S \ref{jm on M} that 
\begin{equation}\label{eq: rel sqe}
w \ \text{is relevant for}\ \delta \ \text{if and only if}\ \tau(\imath)\ne\imath\ \text{for all}\ \imath\in\indset \ \text{and}\ \delta_\imath\simeq\nu\delta_{\tau(\imath)}\ \text{whenever}\ \imath\prec\tau(\imath).
\end{equation}

\subsubsection{} The following is an immediate consequence.
\begin{proposition}\label{prop: dist hyp} 
Let $\mult\in\OO_\rho$ satisfy Hypothesis \ref{hyp*}. If $\lambda(\mult)$ is $\Sp$-distinguished then $\mult$ is of Speh type. 
In particular, if $L(\mult)$ is $\Sp$-distinguished then $\mult$ is of Speh type.
\end{proposition}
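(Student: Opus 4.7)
The plan is to translate the existence of an $\Sp$-invariant linear form on the standard module directly into the combinatorial condition in Definition~\ref{rel dec}, and then invoke Hypothesis~\ref{hyp*}. The strategy has three steps:

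First, I fix an arbitrary standard order $\{\Delta_1,\dots,\Delta_k\}$ of $\mult$, let $M=M_\alpha$ be the standard Levi so that $\delta=L(\Delta_1)\otimes\cdots\otimes L(\Delta_k)\in\Alg(M)$, and note that $\lambda(\mult)=\ip_{G,M}(\delta)$ is independent (up to isomorphism) of the standard order chosen. Assuming $\lambda(\mult)$ is $\Sp$-distinguished, Lemma~\ref{lem: ex orb} supplies an element $w\in {}_MW_{\inv(M)}\cap [w_{2n}]w_{2n}$ relevant to $\delta$.

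Second, I unpack the data attached to $w$ as in \S\ref{General orbits}: the standard Levi $L=M(w)$ refines $M$ and gives a decomposition of each $n_i$ into $(m_{i,1},\dots,m_{i,k_i})$, hence (by \S\ref{jm sqr}) a decomposition of each segment $\Delta_i$ into segments $(\Delta_{i,1},\dots,\Delta_{i,k_i})$, indexed by $(\indset,\prec)$. By Lemma~\ref{admP-orb} (applied with $L$ in place of $M$) $w$ determines an involution $\tau\in S_2[\beta]$ on $\indset$ satisfying \eqref{eq: stronger cond}. This is precisely property \eqref{order flip} of Definition~\ref{rel dec}. Now the relevance of $w$ for $\delta$, translated via \eqref{eq: rel sqe}, says exactly that $\tau$ has no fixed points and that $\delta_\imath\simeq\nu\delta_{\tau(\imath)}$ whenever $\imath\prec\tau(\imath)$. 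Since each $\delta_\imath=L(\Delta_\imath)$ with $\Delta_\imath$ a segment of cuspidals in $\rho^\Z$, the isomorphism $\delta_\imath\simeq\nu\delta_{\tau(\imath)}$ is equivalent to $\Delta_\imath=\nu\Delta_{\tau(\imath)}$. Thus the induced decomposition $\{\Delta_\imath:\imath\in\indset\}$ together with the involution $\tau$ witnesses properties \eqref{order flip}, \eqref{no fix} and \eqref{segment match} of Definition~\ref{rel dec}, so it is relevant to the chosen standard order.

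Third, since the chosen standard order was arbitrary, every standard order of (the unlabeling of) $\mult$ admits a relevant decomposition, i.e.\ $\mult$ is distinguished in the sense of Definition~\ref{def: dist mult}. Hypothesis~\ref{hyp*} then immediately yields that $\mult$ is of Speh type. For the ``in particular'' clause, if $L(\mult)$ is $\Sp$-distinguished then, being an irreducible quotient of $\lambda(\mult)$, Lemma~\ref{drmk: ist quot} shows $\lambda(\mult)$ is $\Sp$-distinguished and the first assertion applies.

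No step presents a real obstacle; the whole argument is essentially a bookkeeping exercise, since the geometric lemma with its application \eqref{eq: rel sqe} for essentially square-integrable inducing data has already been set up in the preceding sections, and the definition of ``relevant decomposition'' in \S\ref{s: multisets} was tailored to match exactly the output of that lemma. The one point requiring care is verifying that the involution $\tau$ produced by Lemma~\ref{admP-orb} (a priori an element of $S_2[\beta]$ satisfying \eqref{eq: stronger cond}) satisfies condition \eqref{order flip} of Definition~\ref{rel dec} as stated; this is a direct transcription once the indexing conventions of $(\indset,\prec)$ are aligned in the two sections.
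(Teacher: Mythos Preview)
Your proposal is correct and follows essentially the same approach as the paper: fix an arbitrary standard order, apply Lemma~\ref{lem: ex orb} to obtain a relevant orbit, use \eqref{eq: stronger cond} together with \eqref{eq: rel sqe} to verify that the induced decomposition satisfies Definition~\ref{rel dec}, conclude that $\mult$ is distinguished and invoke Hypothesis~\ref{hyp*}, then handle the ``in particular'' via Lemma~\ref{drmk: ist quot}. The paper's proof compresses your second step into a single sentence, but the logic is identical.
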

\begin{proof}
If $\lambda(\mult)$ is $\Sp$-distinguished then for any standard order on $\mult=\{\Delta_1,\dots,\Delta_k\}$, the induced representation $L(\Delta_1)\times\cdots\times L(\Delta_k)$ is $\Sp$-distinguished. Therefore, by Lemma \ref{lem: ex orb}, $L(\Delta_1)\otimes\cdots\otimes L(\Delta_k)$ admits a relevant orbit. Combining the condition \ref{eq: stronger cond} with \eqref{eq: rel sqe}, \S \ref{sss: for dist} says that $\mult$ is distinguished in the sense of Definition \ref{def: dist mult}. Hypothesis \ref{hyp*} therefore implies that $\mult$ is of Speh type. 

The last part of the proposition follows from Lemma \ref{drmk: ist quot}.
\end{proof}
\subsubsection{} Combining Proposition \ref{prop: dist hyp} with Propositions \ref{prop: hyp set} and \ref{prop: hyp 2} we obtain
\begin{corollary}\label{cor: set case}
Let $\mult\in\OO_\rho$ either be a set or, in the notation of \S \ref{orders}, satisfy $\abs{\mult[i]}\le 2$ for all $i$. If $\lambda(\mult)$ is $\Sp$-distinguished then $\mult$ is of Speh type. In particular, if $L(\mult)$ is $\Sp$-distinguished then $\mult$ is of Speh type. \qed
\end{corollary}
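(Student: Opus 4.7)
The plan is to simply combine three earlier results, since the corollary is essentially the assembly of pieces already in place. First I would observe that Propositions \ref{prop: hyp set} and \ref{prop: hyp 2} together tell us that Hypothesis \ref{hyp*} is satisfied by the two families of multi-sets appearing in the statement: Proposition \ref{prop: hyp set} handles the case where $\mult$ is a set, and Proposition \ref{prop: hyp 2} handles the case where, in the decomposition $\mult=\mult[1]+\cdots+\mult[s]$ of \S \ref{orders}, every $\mult[i]$ has size at most two. (Recall that by our convention, a property of a multi-set in $\OO_\Z$ is applied to $\mult\in\OO_\rho$ via unlabeling, so there is nothing more to check.)

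Next, given that $\lambda(\mult)$ is $\Sp$-distinguished, I would invoke Proposition \ref{prop: dist hyp} directly: since $\mult$ satisfies Hypothesis \ref{hyp*}, the proposition yields that $\mult$ is of Speh type. This disposes of the main statement.

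Finally, for the "in particular" clause concerning $L(\mult)$, I would use Lemma \ref{drmk: ist quot}: the irreducible representation $L(\mult)$ is by definition the unique irreducible quotient of $\lambda(\mult)$, so if $L(\mult)$ is $\Sp$-distinguished then $\lambda(\mult)$ is $\Sp$-distinguished, and the first part of the corollary applies.

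There is essentially no obstacle here, as all the real work has been done in the cited propositions; this corollary is just a convenient repackaging. The only thing to be careful about is the bookkeeping between the combinatorial setting of \S \ref{s: multisets} (where Hypothesis \ref{hyp*} and Propositions \ref{prop: hyp set}, \ref{prop: hyp 2} live) and the representation-theoretic setting in $\OO_\rho$, which is handled by the labeling/unlabeling convention announced at the start of \S \ref{s: std mod sp}.
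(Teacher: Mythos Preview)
Your proposal is correct and matches the paper's approach exactly: the corollary is stated with a \qed\ immediately after noting it follows by combining Proposition \ref{prop: dist hyp} with Propositions \ref{prop: hyp set} and \ref{prop: hyp 2}. Note that the ``in particular'' clause is already contained in Proposition \ref{prop: dist hyp} itself, so you need not invoke Lemma \ref{drmk: ist quot} separately (though doing so is harmless).
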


\subsubsection{} We also point out the weaker consequence of Hypothesis \ref{hyp**}.
\begin{proposition}\label{prop: dist hyp**} 
Let $\mult\in\OO_\rho$ satisfy Hypothesis \ref{hyp**}. If $L(\mult)$ is $\Sp$-distinguished then $\mult$ is of Speh type. 
\end{proposition}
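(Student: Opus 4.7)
The plan is to mimic the proof of Proposition \ref{prop: dist hyp}, but to extract combinatorial distinction of both $\mult$ and $\mult^\vee$ from the representation-theoretic hypothesis, so that the weaker Hypothesis \ref{hyp**} suffices to conclude. Suppose $L(\mult)$ is $\Sp$-distinguished. Then by Lemma \ref{lem: cont dist} the contragredient $L(\mult)^\vee$ is also $\Sp$-distinguished.

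For the first half, I would repeat the argument of Proposition \ref{prop: dist hyp} verbatim to establish that $\mult$ is distinguished in the sense of Definition \ref{def: dist mult}. Namely, Lemma \ref{drmk: ist quot} upgrades $\Sp$-distinction from the irreducible quotient $L(\mult)$ to the standard module $\lambda(\mult)$; then, for any standard order $\mult=\{\Delta_1,\dots,\Delta_k\}$, Lemma \ref{lem: ex orb} produces a $P$-orbit relevant to $L(\Delta_1)\otimes\cdots\otimes L(\Delta_k)$; combining the condition \eqref{eq: stronger cond} with \eqref{eq: rel sqe} translates this relevance into a decomposition of the ordered multi-set together with an involution satisfying all three properties required by Definition \ref{rel dec}.

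For the second half, I would run the same argument on the dual side. Writing $\mult=\{[\nu^{a_i}\rho,\nu^{b_i}\rho]\}_i$, the compatibility of the Langlands classification with the contragredient (namely $L(\Delta)^\vee\simeq L(\Delta^\vee_\rho)$ for $\Delta^\vee_\rho:=[\nu^{-b_i}\rho^\vee,\nu^{-a_i}\rho^\vee]$) yields $L(\mult)^\vee\simeq L(\tilde\mult)$ where $\tilde\mult=\{\Delta^\vee_\rho:\Delta\in\mult\}\in\OO_{\rho^\vee}$. By inspection the unlabeling of $\tilde\mult$, as an element of $\OO_\Z$, coincides with the combinatorial dual $\mult^\vee$. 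Applying the argument of the previous paragraph with $\mult$ replaced by $\tilde\mult$ then shows that $\mult^\vee$ is distinguished. With $\mult$ and $\mult^\vee$ both distinguished, Hypothesis \ref{hyp**} immediately delivers the conclusion that $\mult$ is of Speh type.

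I do not foresee a real obstacle: the proof is a routine variant of Proposition \ref{prop: dist hyp}, and the only point worth verifying carefully is the identification of the unlabeling of $\tilde\mult$ with $\mult^\vee$. This is immediate from the defining formula $[a,b]^\vee=[-b,-a]$ combined with the behavior of the Langlands classification under contragredient, the latter being standard and already used implicitly in the paper (cf.\ Lemma \ref{lem: cont dist} and the Gelfand--Kazhdan result recalled in Section \ref{sec: gl}).
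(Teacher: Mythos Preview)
Your proposal is correct and follows essentially the same approach as the paper: the paper applies Lemma \ref{lem: cont dist} to obtain $\Sp$-distinction of $L(\mult^\vee)$, passes to the standard modules $\lambda(\mult)$ and $\lambda(\mult^\vee)$ via Lemma \ref{drmk: ist quot}, and then invokes the argument of Proposition \ref{prop: dist hyp} for both to conclude that $\mult$ and $\mult^\vee$ are distinguished, exactly as you do. Your only addition is making explicit the identification of the unlabeling of $\tilde\mult\in\OO_{\rho^\vee}$ with the combinatorial dual $\mult^\vee\in\OO_\Z$, which the paper leaves implicit in its notation.
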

\begin{proof}
If $L(\mult)$ is $\Sp$-distinguished then $L(\mult^\vee)$ is $\Sp$-distinguished by Lemma \ref{lem: cont dist}. Therefore, by Lemma \ref{drmk: ist quot}, both $\lambda(\mult)$ and $\lambda(\mult^\vee)$ are $\Sp$-distinguished. It now follows, as in the proof of Proposition \ref{prop: dist hyp}, that both $\mult$ and $\mult^\vee$ are distinguished in the sense of Definition \ref{def: dist mult}. Hypothesis \ref{hyp**} therefore implies that $\mult$ is of Speh type. 
\end{proof}

\subsubsection{}
We end this section by providing an example which demonstrates that the necessary condition for distinction obtained by combining Propositions \ref{prop: Z dist} and \ref{prop: dist hyp**} is not sufficient. 
\begin{example}
Let $\pi=L(\mult)$ where
\[
\mult=\{[\nu^{4},\nu^{4}],[\nu^{3},\nu^{3}],[\nu^{3},\nu^{3}],[\nu^{2},\nu^{2}],[\nu,\nu^{2}],[1,\nu]\}.
\] 
By Proposition \ref{prop: hyp 2}, the multi-set $\mult$ satisfies Hypothesis \ref{hyp*}. Using the combinatorial algorithm of M\oe{}glin and Waldspurger (\cite{MR863522}), it is easy to see that 
\[
\mult^{t}=\{[\nu^{2},\nu^{3}],[\nu,\nu^{4}],[1,\nu]\}.
\]
We will now show that $\pi$ is not $\Sp$-distinguished. Assume the contrary, if possible. Let $\pi_{1}=Z([\nu^{2},\nu^{3}],[1,\nu])$ and $\pi_{2}=Z([\nu,\nu^{4}])$. The representation $\pi=Z(\mult^t)$ is the unique irreducible quotient of $\tilde\zeta(\mult^{t})$ and so it is also the unique irreducible quotient of $\pi_{1} \times \pi_{2}$. Thus, by Lemma \ref{drmk: ist quot}, $\pi_{1} \times \pi_{2}$ is $\Sp$-distinguished. Apply the notation of \S \ref{General orbits} with $k=2$ for an orbit that is relevant to $\pi_{1}\otimes \pi_{2}$ (by Lemma \ref{lem: ex orb}). Since $k=2$, note that $k_{2}\leq 2$.

From Corollary \ref{cor: fine rel} and \eqref{eq: trans jm} it follows that there exist irreducible components $\sigma_1$ of $\jm_{M_{\beta_1},G_{n_1}}(\pi_{1})$ and $\sigma_2$ of $\jm_{M_{\beta_2},G_{n_2}}(\pi_{2})$ such that writing
\[
\sigma_i=\sigma_{i,1}\otimes\cdots\otimes\sigma_{i,k_i},\ i=1,2,\ \ \ \sigma_{i,j}\in\Irr,\ j=1,\dots,k_i
\]
we have $\sigma_\imath=\nu\sigma_{\tau(\imath)}$ whenever $\imath\prec \tau(\imath)$ and $\sigma_\imath$ is $\Sp$-distinguished if $\tau(\imath)=\imath$. Also it follows from \S \ref{jm Z} that $\nu^{4}\in \supp(\sigma_{2,k_{2}})$. Since $\nu^{5}\notin \supp(\pi_{1})$, we deduce that $\tau(2,k_{2})=(2,k_{2})$. By \eqref{eq: stronger cond} it follows that $k_{2}=k_{1}=1$, and so $\tau(1,1)=(1,1)$. In other words, $Z([\nu^{2},\nu^{3}],[1,\nu])$ is $\Sp$-distinguished. Using the algorithm of M\oe{}glin and Waldspurger again we get that
\[
Z([\nu^{2},\nu^{3}],[1,\nu])\cong L([\nu^{3},\nu^{3}],[\nu,\nu^{2}],[1,1]).
\]
By Corollary \ref{cor: set case} we obtain a contradiction.
\end{example}

\section{The $\Sp$-distinguished ladder representations}
We classify $\Sp$-distinguished representations in the class of ladder representations introduced by Lapid and M\'inguez in \cite{MR3163355}.

\subsection{Distinction of ladder representations-the L aspect}
We classify $\Sp$-distinction in the class of ladder representations defined below.
\subsubsection{Ladder representations}\label{sss: ladder}
\begin{definition} Let $\rho\in \Cusp$. The set $\{\Delta_1,\dots,\Delta_k\}\in\OO_\rho$ is called a \emph{ladder} if 
\[
b(\Delta_1)>\cdots>b(\Delta_k)\ \ \  \text{and} \ \ \ e(\Delta_1)>\cdots>e(\Delta_k). 
\]

A representation $\pi\in \Irr$ is called a ladder 
representation if $\pi=L(\mult)$ where $\mult\in \OO_\rho$ is a ladder 
.
\end{definition}
Whenever we say that $\mult=\{\Delta_1,\dots,\Delta_k\}\in\OO_\rho$ is a ladder, we implicitly assume that $\mult$ is already ordered as in the definition above.


\subsubsection{}\label{sss: lm}
The following property allows us to show that certain ladder representations are $\Sp$-distinguished.
By convention, let $L([a,a-1]_{(\rho)})$ be the trivial representation of the trivial group and let $L([a,b]_{(\rho)})=0$ if $b<a-1$. Let $\mult=\{\Delta_1,\dots,\Delta_k\}\in \OO_\rho$ be a ladder, with $\Delta_i=[a_i,b_i]_{(\rho)}$ and for every $i=1,\dots,k-1$ let 
\[
\K_i=L(\Delta_1)\times \cdots\times L(\Delta_{i-1})\times L([a_{i+1},b_i]_{(\rho)})\times L([a_i,b_{i+1}]_{(\rho)})\times L(\Delta_{i+2})\times\cdots\times L(\Delta_k).
\]
(Thus, $\K_i=0$ if $a_i>b_{i+1}+1$.) By \cite[Theorem 1]{MR3163355} we have
\begin{theorem}\label{thm: main lad}
With the above notation let $\K$ be the kernel of the projection $\lambda(\mult)\rightarrow L(\mult)$. Then $\K=\sum_{i=1}^{k-1} \K_i$. \qed
\end{theorem}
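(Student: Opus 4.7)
The plan is to prove the two inclusions $\sum_{i=1}^{k-1} \K_i \subseteq \K$ and $\K \subseteq \sum_{i=1}^{k-1} \K_i$ separately, with the second being the main obstacle. For the first, I observe that if $\K_i \ne 0$ then $a_i \le b_{i+1}+1$, and combined with the ladder inequalities $a_i > a_{i+1}$, $b_i > b_{i+1}$ this means that the segments are linked with $\Delta_{i+1} \prec \Delta_i$. The standard Zelevinsky analysis of linked pairs then yields a short exact sequence
\[
0 \to L([a_{i+1},b_i]_{(\rho)}) \times L([a_i,b_{i+1}]_{(\rho)}) \to L(\Delta_i) \times L(\Delta_{i+1}) \to L(\{\Delta_i,\Delta_{i+1}\}) \to 0.
\]
Parabolic induction being exact, inducing from the appropriate Levi realizes $\K_i$ as the kernel of a surjection $\lambda(\mult) \twoheadrightarrow \pi_i$, where $\pi_i$ has $L(\Delta_i,\Delta_{i+1})$ in its middle factor. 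In particular $\K_i$ is a proper subrepresentation of $\lambda(\mult)$, and since $L(\mult)$ is the unique irreducible quotient of $\lambda(\mult)$, its kernel $\K$ is the unique maximal proper subrepresentation and contains every $\K_i$.

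For the reverse inclusion, the goal is to show that $Q := \lambda(\mult)/\sum_{i=1}^{k-1}\K_i$ is isomorphic to $L(\mult)$. Since $L(\mult)$ is a quotient of $Q$ (via the composition $\lambda(\mult) \twoheadrightarrow L(\mult)$, which factors through $Q$ because $\K_i \subseteq \K$), and since $L(\mult)$ appears with multiplicity one in the composition series of $\lambda(\mult)$, it suffices to show every irreducible constituent of $Q$ is isomorphic to $L(\mult)$, or equivalently that the only irreducible quotient of $Q$ is $L(\mult)$. I would prove this by induction on the pair $(k,\sum_i \ell(\Delta_i))$ ordered lexicographically, with the cases $k=1$ (trivially $\K=0$) and $k=2$ (the two-segment exact sequence above) serving as base cases.

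The inductive tool I would use is the highest Bernstein-Zelevinsky derivative with respect to the cuspidal $\rho_0 = \nu^{b_1}\rho = e(\Delta_1)$, i.e., the Jacquet functor with respect to the maximal standard parabolic corresponding to the partition $(d, n-d)$ (where $\rho_0 \in \Alg(G_d)$) projected onto the $\rho_0$-isotypic component in the first factor. For a ladder, this derivative is well understood: it shortens $\Delta_1$ by deleting $\rho_0$, producing again a ladder on a multi-set $\mult'$ with strictly smaller total length, and the derivative of $L(\mult)$ is $L(\mult')$. The geometric lemma then expresses the derivative of $\lambda(\mult)$ compatibly, and one checks that the derivatives of the $\K_i$'s collectively surject onto the kernel of the projection from the derivative of $\lambda(\mult)$ onto $L(\mult')$. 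Applying the inductive hypothesis to $\mult'$ then forces $Q$ to have no proper quotient other than $L(\mult)$.

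The main obstacle is precisely this last compatibility: derivatives of individual $\K_i$'s need not coincide with any particular $\K_j'$ of the derived ladder $\mult'$, because shortening $\Delta_1$ can make previously linked pairs unlinked or collapse segments entirely, shuffling the combinatorics of union-intersection operations. The verification requires a combinatorial lemma describing how the "top derivative" interacts with the union-intersection swap on adjacent ladder segments, with particular care at the boundary cases $\ell(\Delta_1) = 1$ (where the derivative drops $\Delta_1$ altogether and then continues through $\Delta_2$, etc.) and when $[a_i,b_{i+1}]_{(\rho)}$ equals some $\Delta_j$. If one instead applies the Zelevinsky involution $\mult \mapsto \mult^t$ (which preserves the ladder class by \cite{MR3163355}) one can swap the roles of highest and lowest derivatives, which sometimes simplifies the bookkeeping by reducing to the removal of $b(\Delta_k)$ rather than $e(\Delta_1)$.
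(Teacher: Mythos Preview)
The paper does not prove this theorem at all: it is quoted verbatim as \cite[Theorem 1]{MR3163355} (note the \texttt{\textbackslash qed} immediately following the statement), so there is no ``paper's own proof'' to compare against. Your first inclusion $\K_i\subseteq\K$ is correct and standard. Your strategy for the reverse inclusion is a reasonable outline, but as written it is not a proof: you yourself identify the main obstacle (compatibility of the top derivative with the subrepresentations $\K_i$) and do not resolve it. In particular, after removing $e(\Delta_1)$ the resulting multi-set need not be a ladder (if $b_1-1=b_2$ the first two segments share an endpoint), so the induction hypothesis does not apply directly; and the claim that ``the derivatives of the $\K_i$'s collectively surject onto the kernel of the projection'' is exactly the content of the theorem one level down and needs an argument.

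For the record, the actual proof in \cite{MR3163355} does not proceed via derivatives. Lapid and M\'inguez establish a determinantal identity in the Grothendieck group (a ladder analogue of Tadi\'c's formula) expressing $L(\mult)$ as an alternating sum of products $\prod_j L(\Delta_{\sigma(j)}')$ over permutations, and deduce the structure of $\K$ from this together with multiplicity-one of $L(\mult)$ in $\lambda(\mult)$. Your derivative approach could likely be pushed through with enough care---something close to it appears in related work on Jacquet modules of ladders (cf.\ \cite{MR2996769})---but the combinatorial lemma you allude to is genuinely the heart of the matter and would need to be stated and proved.
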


\subsubsection{} The following is the characterization of $\Sp$-distinguished ladder representations.
\begin{theorem}\label{thm: dist lad}
Let $\mult=\{\Delta_1,\dots,\Delta_k\}\in \OO_\rho$ be a ladder. Then the following conditions are equivalent
\begin{enumerate}
\item $L(\mult)$ is $\Sp$-distinguished;
\item $k$ is even and $\Delta_{2i-1}=\nu\Delta_{2i}$ for all $i=1,\dots,k/2$;
\item $\mult$ is of Speh type.
\end{enumerate}
\end{theorem}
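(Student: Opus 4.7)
The plan is to establish the three-way equivalence by proving $(2)\Longleftrightarrow(3)$ directly, $(1)\Longrightarrow(3)$ via Corollary \ref{cor: set case}, and $(3)\Longrightarrow(1)$ by a careful analysis of the exact sequence of Theorem \ref{thm: main lad}.

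The combinatorial equivalence $(2)\Longleftrightarrow(3)$ is immediate from definitions: if $\mult=\multn+\nu\multn$ is a ladder, then the strict decrease of the $b$'s forces the two copies of each segment of $\multn$ to occupy consecutive positions, yielding $\Delta_{2i-1}=\nu\Delta_{2i}$; conversely one takes $\multn=\{\Delta_2,\Delta_4,\dots,\Delta_k\}$ to witness Speh type. The implication $(1)\Longrightarrow(3)$ is then a direct application of Corollary \ref{cor: set case}: a ladder $\{\Delta_1,\dots,\Delta_k\}$ has strictly decreasing beginnings, so its underlying segments are pairwise distinct and $\mult$ is a set in $\OO_\rho$; if $L(\mult)$ is $\Sp$-distinguished, the corollary forces $\mult$ to be of Speh type.

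For $(3)\Longrightarrow(1)$ the strategy is to show that the standard module $\lambda(\mult)$ is $\Sp$-distinguished while the kernel $\K$ of $\lambda(\mult)\twoheadrightarrow L(\mult)$ is not; applying $\Hom_H(\cdot,1)$ to the resulting short exact sequence then forces $\Hom_H(L(\mult),1)\ne 0$. For the first point, group the factors of $\lambda(\mult)=L(\Delta_1)\times\cdots\times L(\Delta_k)$ in consecutive pairs. Each paired product $L(\nu\Delta_{2i})\times L(\Delta_{2i})$ surjects onto the Speh representation $L(\{\nu\Delta_{2i},\Delta_{2i}\})$, which is $\Sp$-distinguished by Lemma \ref{2speh sp}; by Lemma \ref{drmk: ist quot} each pair is $\Sp$-distinguished, and the hereditary property (Corollary \ref{open dist}) propagates this to the product $\lambda(\mult)$.

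For the kernel, invoke Theorem \ref{thm: main lad} to write $\K=\sum_{i=1}^{k-1}\K_i$, where each $\K_i$ is the standard module $\lambda(\mult_i)$ for the multi-set $\mult_i$ obtained from $\mult$ by replacing the pair $(\Delta_i,\Delta_{i+1})$ with $([a_{i+1},b_i]_{(\rho)},[a_i,b_{i+1}]_{(\rho)})$. The strict ladder inequalities ensure $\mult_i$ remains a set. The main obstacle is the combinatorial verification that $\mult_i$ is not of Speh type: this requires a case analysis on the parity of $i$ relative to the Speh pairing, using that the two modified segments differ in length by at least two (so they cannot pair with each other) and that the strict inequalities $a_l>a_{l+1}$, $b_l>b_{l+1}$ rule out pairing a modified segment with any surviving $\Delta_l$ by $\nu$ or $\nu^{-1}$. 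Granted this, Corollary \ref{cor: set case} yields $\Hom_H(\K_i,1)=0$ for every $i$, and since $\K$ is a quotient of $\bigoplus_i\K_i$ one concludes $\Hom_H(\K,1)=0$, completing the proof.
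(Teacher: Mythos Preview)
Your proposal is correct and follows essentially the same route as the paper: the equivalence $(2)\Leftrightarrow(3)$ is immediate, $(1)\Rightarrow(3)$ comes from Corollary~\ref{cor: set case} since a ladder is a set, and for $(3)\Rightarrow(1)$ one shows $\lambda(\mult)$ is $\Sp$-distinguished via the pairs $L(\nu\Delta_{2i})\times L(\Delta_{2i})$ and then rules out distinction of each $\K_i=\lambda(\mult_i)$ using Corollary~\ref{cor: set case}. The only place where the paper is a bit cleaner than your sketch is the verification that $\mult_i$ is not of Speh type: rather than arguing via segment lengths and ad hoc exclusions of $\nu^{\pm1}$-pairings, the paper observes that the segments of $\mult_i$ still have strictly decreasing endpoints, so any Speh pairing is forced to be \emph{consecutive} ($\Delta'_{2j-1}=\nu\Delta'_{2j}$), and then a one-line contradiction on the $a$-coordinates handles the odd and even parities of $i$ separately.
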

\begin{proof}
The equivalence of the last two conditions is obvious. If $L(\mult)$ is $\Sp$-distinguished then $\mult$ is of Speh type by Corollary \ref{cor: set case}. 

Assume now that $k=2m$ is even and $\Delta_{2i-1}=\nu\Delta_{2i}$ for all $i=1,\dots,m$. Let $\pi_i=L(\Delta_{2i-1},\Delta_{2i})$, $i=1,\dots,m$ and $\pi=\pi_1\times\cdots\times \pi_m$. Note that $\pi$ is a quotient of $\lambda(\mult)$. It follows from Lemma \ref{2speh sp} that $\pi_i$ is $\Sp$-distinguished for all $i=1,\dots,m$. Therefore, by Corollary \ref{open dist}, $\pi$ is $\Sp$-distinguished and by Lemma \ref{drmk: ist quot}, $\lambda(\mult)$ is $\Sp$-distinguished. 

Apply the notation of \S \ref{sss: lm}. In order to show that $L(\mult)$ is $\Sp$-distinguished it is enough to show that $\K$ is not $\Sp$-distinguished (by Lemma \ref{lem: dist comp}). By Theorem \ref{thm: main lad} it is enough to show that $\K_i$ is not $\Sp$-distinguished for all $i=1,\dots,2m-1$.

Note that 
\[
\mult_i=\{\Delta_1,\dots,\Delta_{i-1},[a_{i+1},b_i]_{(\rho)},[a_i,b_{i+1}]_{(\rho)},\Delta_{i+2},\dots,\Delta_k\}\in \OO_\rho
\]
is ordered by strictly decreasing end points and is therefore, in particular, a set and in standard form. Thus $\K_i\simeq\lambda(\mult_i)$.
By Corollary \ref{cor: set case}, it is enough to show that $\mult_i$ is not of Speh type for all $i=1,\dots,2m-1$.

Assume by contradiction that $\mult_i$ is of Speh type.  Let $\Delta_j'=\Delta_j$ for $j\ne i,\,i+1$, $\Delta_i'=[a_{i+1},b_i]_{(\rho)}$ and $\Delta_{i+1}'=[a_i,b_{i+1}]_{(\rho)}$. Since $\mult_i=\{\Delta_1',\dots,\Delta_{2m}'\}$
is ordered by strictly decreasing end points we clearly must have $\Delta_{2j-1}'=\nu\Delta_{2j}'$ for all $j=1,\dots,m$. If $i$ is odd this implies that $a_{i+1}=a_i+1$ contradicting the inequality $a_i>a_{i+1}$. If $i$ is even this implies that $a_{i-1}=a_{i+1}+1$ contradicting the fact that $a_{i-1}>a_i>a_{i+1}$ are integers. The theorem follows.
\end{proof}
\subsection{Distinction of ladder representations-the Z aspect}
The class of ladder representations is closed under Zelevinsky involution. We now reinterpret the classification above in order to characterize the ladders $\mult\in\OO_\rho$ so that $Z(\mult)$ is $\Sp$-distinguished.

\subsubsection{} Recall that in \cite{MR863522}, M\oe{}glin and Waldspurger describe a combinatorial algorithm to compute $\mult^t$ for $\mult\in \OO_\rho$. This algorithm takes a particularly simple form if $\mult$ is a ladder, as described in \cite[\S 3.2]{MR3163355}.
In particular, Lapid and M\'inguez observe that $\mult\in \OO_\rho$ is a ladder if and only if $\mult^t$ is a ladder.
Thus, $Z(\mult)$ is a ladder representation for a ladder $\mult\in\OO_\rho$ and every ladder representation is of this form for some ladder $\mult$.

In \S \ref{sss: MW alg} we give a recursive characterization of the M\oe{}glin and Waldspurger algorithm for ladders based on \cite[\S 3.2]{MR3163355}.

\subsubsection{}Note that for a ladder $\{\Delta_1,\dots,\Delta_k\}$, $\Delta_i\cap \Delta_{i+1}$ is either empty or a segment and therefore the length $\ell(\Delta_i\cap \Delta_{i+1})$ makes sense for all $i=1,\dots,k-1$. The following is another elementary observation that follows from \S \ref{sss: MW alg}. We omit the simple proof.

\begin{lemma}\label{lem: lad Z}
Let $\mult\in\OO_\rho$ be a ladder and let $\mult^t=\{\Delta_1,\dots,\Delta_k\}$ be ordered as a ladder. Then $\mult$ is of Speh type if and only if $\ell(\Delta_i)$ is even for all $i=1,\dots,k$ and $\ell(\Delta_i\cap \Delta_{i+1})$ is odd for all $1\le i\le k-1$ such that $\Delta_i\cup\Delta_{i+1}$ is a segment. \qed
\end{lemma}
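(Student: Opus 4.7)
The plan is to prove Lemma~\ref{lem: lad Z} by induction on $\sum_{\Delta\in\mult}\ell(\Delta)$, using the explicit form of the M\oe{}glin--Waldspurger algorithm for ladders recalled in \S \ref{sss: MW alg}. Since the class of ladders is closed under the Zelevinsky involution, one may compute $\mult^t$ by successively applying MW steps to $\mult$ (or conversely), each step extracting one segment of $\mult^t$ (resp.~$\mult$) and shortening the top of the resulting chain. Recall from Theorem~\ref{thm: dist lad} that a ladder $\mult$ is of Speh type iff it admits a presentation $\mult=\{\nu F_1,F_1,\ldots,\nu F_s,F_s\}$ in ladder order, where $\multn=\{F_1,\ldots,F_s\}$ is itself a ladder satisfying the sharpened gap conditions $b(F_i)\ge b(F_{i+1})+2$ and $e(F_i)\ge e(F_{i+1})+2$.

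For the forward direction, assume $\mult$ is in the above Speh form and apply MW to it. The chain of consecutive $e$-values at the first step consists of the pairs $(\nu F_i,F_i)$ for $i\le j$, where $j$ is the size of the first ``$e$-block'' (the maximal index with $e(F_i)=e(F_1)-2(i-1)$ for $i\le j$); the pairing structure forces the chain to jump by $1$ within a pair and to extend across pairs precisely when $e(F_{i+1})=e(F_i)-2$. The first extracted segment of $\mult^t$ is therefore $[e(F_j),e(F_1)+1]$, of even length $2j$. Shortening the tops of the chain replaces each $(\nu F_i,F_i)$ by $(\nu G_i,G_i)$ with $G_i=[b(F_i),e(F_i)-1]$, preserving the Speh-type ladder structure, so the induction hypothesis applies to the residual. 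A short case analysis on whether $e(F_{j+1})=e(F_j)-3$ (i.e.~whether the second block merges with the first after shortening) shows that the intersection of the first two extracted segments of $\mult^t$, when their union is a segment, always has length $2j-1$, which is odd.

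For the reverse direction, write $\mult^t=\{\Delta_1,\ldots,\Delta_k\}$ with $\Delta_i=[\alpha_i,\beta_i]$ in ladder order and apply MW to $\mult^t$ to recover $\mult$. The first MW step extracts a chain $\Delta_1,\ldots,\Delta_{l_1}$, where $l_1$ is the maximal $l$ with $\beta_i=\beta_1-i+1$ for $i\le l$; the hypothesis that $\ell(\Delta_1)$ is even ensures $\Delta_1$ survives shortening, so the chain at the second step retraces the same sequence shifted down by one in $e$, provided it does not extend into $\Delta_{l_1+1}$. Such an extension would require $\beta_{l_1}-\beta_{l_1+1}=2$, which combined with $\ell(\Delta_{l_1})$ even forces $\ell(\Delta_{l_1}\cap\Delta_{l_1+1})$ to equal either $0$ or $\ell(\Delta_{l_1})-2$, both even, contradicting the odd-intersection hypothesis whenever the union is a segment. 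Hence the first two MW-extracted segments of $\mult$ form a Speh pair $E_1=\nu E_2$; one checks that the residual multi-set of smaller total length still satisfies both parity hypotheses, so induction completes the proof. The main technical obstacle is the bookkeeping when $\ell(\Delta_{l_1})=2$ so that $\Delta_{l_1}$ becomes empty after two shortenings, causing the adjacency in the residual to change; in this case one must verify the parity hypothesis for the new adjacent pair $(\Delta_{l_1-1}',\Delta_{l_1+1}')$ by combining the original parity data on the pairs $(\Delta_{l_1-1},\Delta_{l_1})$ and $(\Delta_{l_1},\Delta_{l_1+1})$.
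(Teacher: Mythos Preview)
The paper omits the proof of this lemma entirely, calling it an elementary observation that follows from \S\ref{sss: MW alg}. Your inductive argument via the chain-extraction form of the M\oe{}glin--Waldspurger algorithm is a sound way to supply the details, and both directions are essentially correct.

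One correction to your bookkeeping in the reverse direction: the technical obstacle you flag cannot actually occur. Within the first MW chain $\Delta_1,\dots,\Delta_{l_1}$ (where $\beta_i-\beta_{i+1}=1$ for $i<l_1$), the lengths satisfy
\[
\ell(\Delta_{i+1})-\ell(\Delta_i)=(\alpha_i-\alpha_{i+1})-1\ge 0,
\]
so the lengths are nondecreasing along the chain. Hence if $\ell(\Delta_{l_1})=2$ then every $\Delta_i$ with $i\le l_1$ has length $2$, and after two shortenings the \emph{entire} block $\Delta_1,\dots,\Delta_{l_1}$ disappears; the residual is simply $\{\Delta_{l_1+1},\Delta_{l_1+2},\dots\}$ with no new internal adjacency to check. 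More generally the segments that become empty (those of length $2$) are an initial string $\Delta_1,\dots,\Delta_j$ at the \emph{top} of the chain, and their removal creates no new adjacent pair inside the ladder. So the scenario you worry about---only $\Delta_{l_1}$ vanishing and forcing a comparison of $(\Delta_{l_1-1}'',\Delta_{l_1+1})$---never arises, and the verification that the residual still satisfies the parity hypotheses is in fact simpler than you suggest: for surviving pairs within the chain the intersection length drops by $2$ (same parity) or becomes empty exactly when the union ceases to be a segment, and across the boundary $\beta_{l_1}-\beta_{l_1+1}\ge 3$ guarantees $\Delta_{l_1}''\cap\Delta_{l_1+1}=\Delta_{l_1}\cap\Delta_{l_1+1}$ unchanged.
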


\subsubsection{} Combining Lemma \ref{lem: lad Z} and Theorem \ref{thm: dist lad} we obtain another classification of $\Sp$-distinguished ladder representations.
\begin{corollary}\label{cor: dist lad Z}
Let $\rho\in \Cusp$ and $\mult=\{\Delta_1,\dots,\Delta_k\}\in\OO_\rho$ be a ladder.
Then $Z(\mult)$ is $\Sp$-distinguished if and only if we have
\begin{enumerate}
\item $\ell(\Delta_i)$ is even for all $i=1,\dots,k$ and
\item $\ell(\Delta_i\cap \Delta_{i+1})$ is odd for all $1\le i\le k-1$ such that $\Delta_{i}\cup\Delta_{i+1}$ is a segment.
\end{enumerate}
\qed
\end{corollary}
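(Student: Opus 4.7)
The plan is to obtain this corollary as a direct combination of Theorem \ref{thm: dist lad} and Lemma \ref{lem: lad Z}, via the Zelevinsky involution. The key point is that, as recalled just before Lemma \ref{lem: lad Z}, the class of ladders is stable under the involution $\mult \mapsto \mult^t$, so the transpose of a ladder is again a ladder and the two classifications can be exchanged freely.

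First I would write $Z(\mult) = L(\mult^t)$, which is the very definition of the involution. Since $\mult$ is a ladder, $\mult^t$ is also a ladder, and hence Theorem \ref{thm: dist lad} applies to $\mult^t$: the representation $L(\mult^t)$ is $\Sp$-distinguished if and only if $\mult^t$ is of Speh type. This reduces the corollary to translating the Speh-type condition on $\mult^t$ into a condition on the segments $\Delta_1,\dots,\Delta_k$ of $\mult$.

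This translation is exactly what Lemma \ref{lem: lad Z} provides, modulo swapping the roles of $\mult$ and $\mult^t$. Since the Zelevinsky involution satisfies $(\mult^t)^t = \mult$, I would apply Lemma \ref{lem: lad Z} to the ladder $\mult^t$: taking $\mult^t$ in place of $\mult$ and $\mult = (\mult^t)^t = \{\Delta_1,\dots,\Delta_k\}$ in place of $\mult^t$, the lemma yields that $\mult^t$ is of Speh type if and only if $\ell(\Delta_i)$ is even for every $i$ and $\ell(\Delta_i \cap \Delta_{i+1})$ is odd whenever $\Delta_i \cup \Delta_{i+1}$ is a segment. Combining this with the previous equivalence finishes the proof.

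There is no serious obstacle here; all the work has already been done in establishing Theorem \ref{thm: dist lad} (which uses the full machinery of Corollary \ref{cor: set case}, Lemma \ref{2speh sp}, Theorem \ref{thm: main lad} and the open-orbit hereditary property) and Lemma \ref{lem: lad Z} (which rests on the explicit recursive form of the M\oe{}glin--Waldspurger algorithm for ladders recalled in \S \ref{sss: MW alg}). The only thing to be careful about is that the ordering of $\mult^t$ as a ladder is canonical, so the application of Lemma \ref{lem: lad Z} with the roles exchanged is unambiguous, and the two conditions in the statement of the corollary refer to the given ladder order on $\mult$ itself rather than on $\mult^t$.
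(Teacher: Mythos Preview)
Your argument is correct and matches the paper's own proof exactly: the corollary is stated with a bare \qed, the text immediately preceding it saying only that it is obtained by combining Theorem \ref{thm: dist lad} with Lemma \ref{lem: lad Z}. Your careful observation that Lemma \ref{lem: lad Z} must be applied with $\mult^t$ in place of $\mult$ (using $(\mult^t)^t=\mult$) is the one point worth making explicit, and you handle it correctly.
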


\begin{remark}
In \cite {MR2414223}, we obtained a classification of the $\Sp$-distinguished unitary dual in terms of Tadic's classification (\cite{MR870688}). Recall (see Remark \ref{rmk: Speh dist}) that a Speh representation $L(\{\Delta,\nu\Delta,\dots,\nu^{n-1}\Delta\})$ is $\Sp$-distinguished if and only if it is even (i.e. $n$ is even). Any unitary representation is a product of Speh representations and it was already proved in \cite{MR2332593} that a product of even Speh representations is $\Sp$-distinguished. The disjointness of Klyachko models obtained in \cite {MR2414223}, together with a prescribed model for any irreducible unitary representation \cite[Theorem 3.7]{MR2417789} imply that if an irreducible product of Speh representations is $\Sp$-distinguished then all Speh representations in the product are even. Based on our current results, we can reprove this implication without reference to Klyachko models as follows. If $\Delta\subseteq \rho^\Z$ for $\rho\in \Cusp$ and $\ell=\ell(\Delta)$ then it is well known that $L(\{\Delta,\nu\Delta,\dots,\nu^{n-1}\Delta\})=Z(\{\Delta',\nu\Delta,\dots,\nu^{\ell-1}\Delta'\})$ for some segment $\Delta' \subseteq \rho^\Z$ with $\ell(\Delta')=n$. If $Z(\mult)=\pi_1\times\cdots\times \pi_k\in \Irr$ is $\Sp$-distinguished and $\pi_i=Z(\mult_i)$ is a Speh representation for all $i=1,\dots,k$, then $\mult=\mult_1+\cdots+\mult_k$ and it follows from Proposition \ref{prop: Z dist} and the results of \S \ref{ss: cusp lines} that all segments in $\mult$ are of even length, i.e., that all $\pi_i$'s are even Speh representations.
\end{remark}
ֿ

\subsubsection{} Our next goal is to study the $\Sp$-distinguished representations in the class of representations in $\Irr$ that are induced from ladder representations. We only obtain a classification of this class conditional to Hypothesis \ref{hyp*}. Our proof is based on certain combinatorial statements concerning the multi-sets in $\OO_\rho$ that are obtained as sums of ladders in the above context. It is more convenient, to formulate these technical results by unlabeling. We therefore, now use the convention that $\mult\in \OO_\Z$ satisfies a property on $\OO_\rho$ if its $\rho$-labeling satisfies this property.
The next section collects the required results on multi-sets in $\OO_\Z$.

\section{On sums of ladders of Speh type}\label{s: ladder sums}

\subsubsection{}

For segments $\Delta,\,\Delta'\in \sgm$, write $\Delta\le_b \Delta'$ if either $b(\Delta)<b(\Delta')$ or $b(\Delta)=b(\Delta')$ and $e(\Delta)\le e(\Delta')$.
Thus, $\le_b$ is a linear order on $\sgm$.

\subsubsection{}

Let $\ell\in \N$ and $\mult\in \OO_\Z$ be such that $\ell(\Delta)=\ell$ for all $\Delta\in \mult$. If $\Delta\in \mult$ is minimal with respect to $\le_b$ then we can express $\mult$ as a linear combination
\[
\mult=\sum_{n=0}^N a_n\{\nu^n\Delta\}
\]
with $a_0\in \N$ and $a_n\in\Z_{\ge 0}$ for all $n=1,\dots,N$ for some large enough $N\in \N$. 
\begin{lemma}\label{lem: linear cond}
With the above assumptions and notation, if $\mult$ is of Speh type then 
\begin{equation}\label{eq: lin cond}
\sum_{n=0}^N (-1)^{N-n}a_n=0.
\end{equation}
\end{lemma}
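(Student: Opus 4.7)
The plan is to encode the multi-set as a polynomial and reduce \eqref{eq: lin cond} to the fact that polynomials of the form $(1+t)B(t)$ vanish at $t=-1$.

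First I would observe that because every segment in $\mult$ has length $\ell$ and $\Delta$ is $\le_b$-minimal in $\mult$, any $\Delta'\in\mult$ satisfies $b(\Delta')\ge b(\Delta)$ and $e(\Delta')=b(\Delta')+\ell-1$, so $\Delta'=\nu^n\Delta$ for some $n\ge 0$; this justifies the expression $\mult=\sum_{n=0}^N a_n\{\nu^n\Delta\}$ in the statement.

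Next, since $\mult$ is of Speh type, write $\mult=\multn+\nu\multn$ for some $\multn\in\OO_\Z$. Every segment of $\multn$ is a segment of $\mult$ (with multiplicity), hence of the form $\nu^n\Delta$ with $n\ge 0$; moreover if some $\nu^n\Delta$ with $n<0$ occurred in $\multn$ it would occur in $\mult$, contradicting the minimality of $\Delta$. Hence $\multn=\sum_{n=0}^{N-1}b_n\{\nu^n\Delta\}$ for non-negative integers $b_n$ (necessarily $b_{N}=0$ and $b_{N-1}=a_N$). Comparing coefficients in $\mult=\multn+\nu\multn$ gives the relations
\[
a_0=b_0,\qquad a_n=b_n+b_{n-1}\ (1\le n\le N-1),\qquad a_N=b_{N-1}.
\]

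The cleanest way to finish is with generating functions. Set $A(t)=\sum_{n=0}^N a_n t^n$ and $B(t)=\sum_{n=0}^{N-1}b_n t^n$; the relations above are precisely $A(t)=(1+t)B(t)$. Evaluating at $t=-1$ yields $A(-1)=0$, i.e.\ $\sum_{n=0}^N(-1)^n a_n=0$, and multiplying by $(-1)^N$ gives \eqref{eq: lin cond}. Alternatively, one can verify the telescoping directly from the recursion. There is no real obstacle here; the only point requiring care is the justification that $\multn$ is supported on $\{\nu^n\Delta:n\ge 0\}$, which I handled above using the minimality of $\Delta$.
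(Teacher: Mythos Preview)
Your proof is correct and follows essentially the same approach as the paper: both exploit the relation $a_n=b_n+b_{n-1}$ coming from $\mult=\multn+\nu\multn$, the paper by inverting it to $b_n=\sum_{i=0}^n(-1)^{n-i}a_i$ and reading off $b_N=0$, and you by packaging the same recursion as $A(t)=(1+t)B(t)$ and evaluating at $t=-1$. The generating-function phrasing is a cosmetic variation, not a different argument.
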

\begin{proof}
Note that
\[
\mult=\sum_{n=0}^{N-1} b_n(\{\nu^n\Delta\}+\{\nu^{n+1}\Delta\})+b_N \{\nu^N\Delta\}
\]
where
$b_0=a_0$ and $b_n=\sum_{i=0}^n(-1)^{n-i}a_i$. If $\mult$ is of Speh type, then it follows that $b_n\ge 0$ for all $n$ and that, as required, $b_N=0$.
\end{proof}

\subsubsection{}\label{sss: MW alg} As observed by M{\oe}glin and Waldspurger the Zelevinsky involution $\pi^t=L(\mult^t)$ of a rigid representation $\pi=L(\mult)\in \Irr$ where $\mult\in \OO_\rho$ is `blind' to the cuspidal line $\rho^\Z$ where it is supported. Indeed, the Moeglin-Waldspurger algorithm defines an involution on $\OO_\Z$ that gives $\mult\mapsto \mult^t$ via $\rho$-labeling. We now explicate a recursive characterization of the Moeglin-Waldspurger algorithm of the Zelevinsky involution for ladders based on \cite[\S 3.2]{MR3163355}. 

For $\Delta=[a,b]\in \sgm$ we have $\{\Delta\}^t=\sum_{c=a}^b \{c\}=\{\{b\},\dots,\{a\}\}$.

Let $\mult=\{\Delta_1,\dots,\Delta_k,\Delta_{k+1}\}$ be a ladder and let $\mult'=\{\Delta_1,\dots,\Delta_k\}$. Write $\Delta_i=[a_i,b_i]$. 
Let $(\mult')^t=\{\Delta_1',\dots,\Delta_s'\}$ be ordered as a ladder. 

If $b_{k+1}+1<a_k$ then $\mult^t=(\mult')^t+\{\Delta_{k+1}\}^t$ and therefore 
\[
\mult=\{\Delta_1',\dots,\Delta_s',\{b_{k+1}\},\dots,\{a_{k+1}\}\} 
\]
is ordered as a ladder.
Otherwise, let $c=s-(b_{k+1}-a_k+2)$. Then $c\ge 0$ and  
\begin{equation}\label{eq: z inv l}
\mult^t=\{\Delta_1',\dots, \Delta_c',{}^+\Delta_{c+1}',\dots,{}^+\Delta_s',\{a_k-2\},\dots,\{a_{k+1}\}\}
\end{equation}
where ${}^+[a,b]=[a-1,b]$.

In other words, in order to obtain the ladder $\mult^t$ from $(\mult')^t$ one has to perform the following steps.
If $\Delta_{k+1}\not\prec\Delta_k$ then add to $\mult'$ at the tail of the ladder, the ladder $\{\Delta_{k+1}\}^t$, i.e., the $\ell(\Delta_{k+1})$ length one segments consisting of elements of $\Delta_{k+1}$ in decreasing order. Otherwise, $b_{k+1}-a_k+2\ge 1$. Starting with $(\mult')^t$, replace $\Delta$ by ${}^+\Delta$ for each of the last $b_{k+1}-a_k+2$ segments of $(\mult')^t$ and then add at the tail of the resulting ladder, the $a_k-a_{k+1}-1$ length one segments consisting of elements of $[a_{k+1},a_k-2]$ in decreasing order. 

\subsubsection{} Let $\mult_1,\dots,\mult_k\in\OO_\Z$ be ladders and let $\Delta_0$ be minimal in $\mult=\mult_1+\cdots+\mult_k$ with respect to $\le_b$. Let $\mult_i^\dag=\mult_i$ if $\Delta_0\not\in\mult_i$ and $\mult_i^\dag=\mult_i-\{\Delta_0\}$ otherwise. 

\begin{lemma}\label{lem: t speh}
With the above notation, suppose that the ladder $(\mult_i^\dag)^t$ is of Speh type for all $i=1,\dots,k$ and that $\ell(\Delta_0)$ is even. If $\multn=\mult_1^t+\cdots+\mult_k^t$ is of Speh type then $\mult_i^t$ is of Speh type for all $i=1,\dots,k$.
\end{lemma}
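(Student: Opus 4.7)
The plan is to combine the recursive Moeglin--Waldspurger description from \S\ref{sss: MW alg} with the hypotheses to show Speh type for each $\mult_i^t$. Let $I = \{i : \Delta_0 \in \mult_i\}$. For $i \notin I$ we have $\mult_i^\dag = \mult_i$, so $\mult_i^t = (\mult_i^\dag)^t$ is of Speh type by hypothesis. So the work is at the indices $i \in I$, for which $\Delta_0$ is the smallest segment of the ladder $\mult_i$ (since $\Delta_0$ is $\le_b$-minimal in $\mult$ and $\mult_i$ is a ladder).

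For each $i \in I$, the ladder $\mult_i^t$ is obtained from $(\mult_i^\dag)^t$ via the MW recursion by appending $\Delta_0$ at the bottom of the ladder $\mult_i^\dag$. In the easy case, where $\Delta_0$ does not precede the second-smallest segment $\Gamma_i$ of $\mult_i$ (or $|\mult_i| = 1$), we have $\mult_i^t = (\mult_i^\dag)^t + \{\Delta_0\}^t$, where $\{\Delta_0\}^t = \{\{b\}, \{b-1\}, \dots, \{a\}\}$ is a ladder of singletons, writing $\Delta_0 = [a,b]$. Since $\ell(\Delta_0) = b-a+1$ is even, $\{\Delta_0\}^t$ is itself of Speh type through the pairing $\{b\} = \nu\{b-1\}, \{b-2\} = \nu\{b-3\},\dots$ As the sum of two Speh-type multi-sets is of Speh type, $\mult_i^t$ is of Speh type in this case without invoking the global hypothesis on $\multn$.

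The hard case is $\Delta_0 \prec \Gamma_i$. Writing $(\mult_i^\dag)^t = \{\Gamma_1', \dots, \Gamma_s'\}$ as a ladder (with Speh pairing $\Gamma_{2j-1}' = \nu \Gamma_{2j}'$, $j = 1,\dots,s/2$), the MW recursion produces
\[
\mult_i^t = \{\Gamma_1', \dots, \Gamma_c', {}^+\Gamma_{c+1}', \dots, {}^+\Gamma_s', \{\alpha_i-2\}, \dots, \{a\}\},
\]
where $\alpha_i = b(\Gamma_i)$ and $c = s - (e(\Delta_0) - \alpha_i + 2)$. Using that ${}^+$ commutes with $\nu$, the Speh pairing of $(\mult_i^\dag)^t$ survives under the ${}^+$-shifts provided $c$ is even; and the appended singletons form a Speh ladder provided $\alpha_i - a$ is odd. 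Thus, in this case, $\mult_i^t$ is of Speh type iff both $c$ is even and $\alpha_i - a$ is odd. The plan is to extract these parity constraints from the Speh type of $\multn$ by examining the low-content singletons $\{a\}, \{a+1\}, \dots$ in $\multn$: each $i \in I$ contributes either the full $\{\Delta_0\}^t$ (easy case) or the initial segment $\{a\}, \dots, \{\alpha_i - 2\}$ (hard case), while the non-singleton part of $(\mult_i^\dag)^t$ contributes nothing to the smallest singletons. Using the uniqueness of the Speh decomposition of $\multn$ together with the Speh type of each $(\mult_i^\dag)^t$, the low-content balance forced by Lemma \ref{lem: linear cond} should pin down the individual parities.

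The main obstacle is this disentanglement in the hard case: since distinct $i \in I$ contribute simultaneously both the ${}^+$-shifted non-singleton segments (which affect intermediate lengths in $\multn$) and the appended singletons (which affect length-one contributions), one must carefully decouple the two contributions. My plan would be to induct on $|\mult|$, stripping off the minimal singleton $\{a\}$ from the appended tails of all hard-case contributions together with the matching $\nu\{a\} = \{a+1\}$ provided by the Speh decomposition of $\multn$, reducing to a situation with smaller total size to which the induction hypothesis applies; the evenness of $\ell(\Delta_0)$ enters precisely to ensure that this stripping is compatible with the existing Speh pairing of the $(\mult_i^\dag)^t$'s.
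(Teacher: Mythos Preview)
Your setup is sound: the easy cases ($i \notin I$, or $i\in I$ with $\Delta_0 \not\prec \Gamma_i$) are handled exactly as in the paper, and in the hard case your parity criterion ($c$ even, equivalently $\alpha_i - a$ odd) is indeed equivalent to the paper's criterion that $\ell(\Delta_0 \cap \Gamma_i)$ be odd (via Lemma~\ref{lem: lad Z} together with the evenness of $\ell(\Delta_0)$).

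The genuine gap is in the final step, where you propose to extract these parities from the Speh type of $\multn$ by looking at singletons and inducting. This is only a sketch, and as stated it does not close. The singleton $\{a\}$ is contributed by \emph{every} $i \in I$ (easy and hard alike), and singletons can also occur inside $(\mult_j^\dag)^t$ for arbitrary $j$; after stripping $\{a\}$ and a matching $\{a+1\}$ from $\multn$ you no longer have a sum of the form $\sum_i (\mult_i')^t$ for ladders $\mult_i'$ satisfying the hypotheses of the lemma, so there is nothing to feed back into the induction. The ``disentanglement'' obstacle you flag is real and is not resolved by the proposed stripping.

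The paper avoids singletons entirely and argues by contradiction at a different location in $\multn$. For each $i$ in the bad set $A=\{i:\mult_i^t\text{ not Speh}\}$ one decomposes $\mult_i^t = \mathfrak{a}_i + \{\nu\Gamma_i,\,{}^+\Gamma_i\} + \mathfrak{b}_i + \{x_0\}$ with $\mathfrak{a}_i,\mathfrak{b}_i$ of Speh type; the crucial observation is that $b({}^+\Gamma_i)=e(\Delta_0)$ is the \emph{same} for all $i\in A$, while no segment of any $\mult_i^t$ begins at $e(\Delta_0)+1$. Fixing any $i_0\in A$ and slicing $\multn$ at the single length $\ell=\ell({}^+\Gamma_{i_0})$, one applies the linear condition of Lemma~\ref{lem: linear cond} both to $\multn(\ell)$ and to the Speh-type remainder $\sum_{i\in A}(\mathfrak{a}_i(\ell)+\mathfrak{b}_i(\ell))+\multn_2(\ell)$; subtraction forces $|B|+|C|=0$ for the subsets of $A$ contributing a bad segment of length $\ell$, contradicting $i_0\in B$. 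The idea you are missing is precisely this uniform anchoring of the bad pair at $b=e(\Delta_0)$, which isolates its effect in a single length-slice rather than among the singletons.
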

\begin{proof}
If $\mult_i=\mult_i^\dag$ then $\mult_i^t$ is of Speh type. If $\mult_i\ne\mult_i^\dag$, let $\Delta_i$ be the minimal segment in $\mult_i^\dag$ with respect to $\le_b$. If $\Delta_0\not\prec\Delta_i$ then it follows from \S \ref{sss: MW alg} that $\mult_i^t=(\mult_i^\dag)^t+\{\Delta_0\}^t$. Since $\ell(\Delta_0)$ is even it follows that $\mult_i^t$ is of Speh type. If $\Delta_0 \prec \Delta_i$, then $\Delta_0\cup\Delta_i$ is a segment and by Lemma \ref{lem: lad Z} (applied once with $\mult=\mult_i^t$ and once with $\mult=(\mult_i^\dag)^t$), $\mult_i^t$ is of Speh type if and only if $\ell(\Delta_0\cap \Delta_i)$ is odd. 

Let $A=\{1\le i\le k: \mult_i^t \ \text{is not of Speh type}\}$ and
$\multn=\multn_1+\multn_2$ where $\multn_1=\sum_{i\in A}\mult_i^t$.
Assume by contradiction, that $\multn$ is of Speh type and $\multn_1\ne 0$ (i.e. $A\ne \emptyset$).

Let $i\in A$ and $c_i=s_i-(e(\Delta_0)-b(\Delta_i)+2)$ where $s_i=\abs{(\mult_i^\dag)^t}$. 
Note that by the above remarks, $s_i-c_i=e(\Delta_0)-b(\Delta_i)+2=\ell(\Delta_0\cap \Delta_i)+1\ge 1$ is odd. Since $(\mult_i)^t$ is of Speh type, $s_i$ is even, hence $c_i$ is also odd. Furthermore, $d_i=b(\Delta_i)-b(\Delta_0)-1$ is odd and by \eqref{eq: z inv l} $\mult_i^t$ ordered as a ladder has the form 
\begin{multline}
\mult_i^t=\{\nu\Delta_1',\Delta_1',\dots, \nu\Delta_{(c_i-1)/2}',\Delta_{(c_i-1)/2}',\nu\Delta_{(c_i+1)/2}',{}^+\Delta_{(c_i+1)/2}',\\ \nu\Delta_1'',\Delta_1'',\dots,\nu\Delta_{(s_i-c_i-1)/2}'',\Delta_{(s_i-c_i-1)/2}'',\{x_0+d_i-1\},\dots,\{x_0+1\},\{x_0\}\}
\end{multline}
where $\ell(\Delta_i'')>1$ for $i=1,\dots,(s_i-c_i-1)/2$ and $x_0=b(\Delta_0)$.
Note further that $b({}^+\Delta_{(c_i+1)/2}'))=e(\Delta_0)$ is independent of $i\in A$. Thus, we may decompose
\[
\mult_i^t=\mathfrak{a}_i+\{\nu\Gamma_i,{}^+\Gamma_i\}+\mathfrak{b}_i+\{x_0\}
\]
where $\mathfrak{a_i}$ and $ \mathfrak{b}_i$ are of Speh type, $b(\Delta)>e(\Delta_0)+2$ for all $\Delta\in \mathfrak{a}_i$, $b(\Delta)<e(\Delta_0)$ for all $\Delta\in \mathfrak{b}_i$ and $\Gamma_i\in\sgm$ is such that $b({}^+\Gamma_i)=e(\Delta_0)$ and therefore also $b(\nu\Gamma_i)=e(\Delta_0)+2$. In particular, $b(\Delta)\ne e(\Delta_0)+1$ for all $\Delta\in \mult_i^t$. 

For $\ell\in \N$ and a multi-set $\mathfrak{a}\in \OO_\Z$ let $\mathfrak{a}(\ell)=\delta(\ell) \cdot\mathfrak{a}$ where $\delta(\ell)$ is the characteristic function of all segments of length $\ell$. Clearly, $\mathfrak{a}$ is of Speh type if and only if $\mathfrak{a}(\ell)$ is of Speh type for all $\ell\in\N$.

Fix $i_0\in A$ and let $\ell=\ell({}^+\Gamma_{i_0})>1$, $B=\{i\in A:\ell=\ell({}^+\Gamma_i)\}$ and $C=\{i\in A:\ell=\ell(\nu\Gamma_i)\}$. Since $\ell({}^+\Gamma_i)=\ell(\nu\Gamma_i)+1$, $B$ and $C$ are disjoint. We have
\[
\mult_i^t(\ell)=\begin{cases} \mathfrak{a}_i(\ell)+\{{}^+\Gamma_i\}+\mathfrak{b}_i(\ell) & i\in B \\  \mathfrak{a}_i(\ell)+\{\nu\Gamma_i\}+\mathfrak{b}_i(\ell) & i\in C \\  \mathfrak{a}_i(\ell)+\mathfrak{b}_i(\ell) & i\in A\setminus (B\cup C). \end{cases}
\]
Set $\Delta={}^+\Gamma_{i_0}$ and note further that for $i\in B$ we have ${}^+\Gamma_i=\Delta$ and for $i\in C$ we have $\nu\Gamma_i=\nu^2\Delta$. It follows that 
\[
\multn(\ell)=\multn_1(\ell)+\multn_2(\ell)=(\abs{B}\{\Delta\}+\abs{C}\{\nu^2\Delta\})+\sum_{i\in A} ( \mathfrak{a}_i(\ell)+\mathfrak{b}_i(\ell) )+\multn_2(\ell).
\] 
By assumption $\multn(\ell)$ and $\sum_{i\in A} ( \mathfrak{a}_i(\ell)+\mathfrak{b}_i(\ell) )+\multn_2(\ell)$ are both of Speh type and therefore, by Lemma \ref{lem: linear cond}, each of them satisfies the linear condition \eqref{eq: lin cond}. It follows that $\abs{B}\{\Delta\}+\abs{C}\{\nu^2\Delta\}$ satisfies the same linear condition, i.e., that $\abs{B}+\abs{C}=0$. But since $i_0\in B$ this is a contradiction. 
\end{proof}

\section{On Distinction of representations induced from ladder}\label{s: dist ladder}

We now study distinction in the class of representations in $\Irr$ that are induced from ladder representations. For a product of more then two ladder representations, our results are only conditional on Hypothesis \ref{hyp**}. 


\subsubsection{} We recall \cite[Lemma 5.17]{1411.6310}. It reduces the reducibility of a product of ladder representations to induction from a maximal parabolic.

\begin{lemma}\label{lem: red max}
Let $\pi_1,\dots,\pi_k$ be ladder representations and let $\pi=\pi_1\times\cdots\times \pi_k$. Then $\pi\in \Irr$ if and only if $\pi_i\times \pi_j\in\Irr$ for all $i< j$. \qed
\end{lemma}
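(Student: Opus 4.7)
The plan is to treat the two directions separately; the ``only if'' direction is essentially formal, while the ``if'' direction requires the fine structure of ladder representations.

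For the ``only if'' direction, I would first invoke the general fact that if a product $\pi_1 \times \cdots \times \pi_k$ of irreducible representations of general linear groups is irreducible, then any reordering $\pi_{\sigma(1)} \times \cdots \times \pi_{\sigma(k)}$ is isomorphic to $\pi$; this follows from the standard analysis of intertwining operators together with uniqueness of the irreducible subquotient sitting in the common Zelevinsky slot. Given this, fix $i<j$ and reorder so that $\pi_i$ and $\pi_j$ become adjacent in a product still isomorphic to $\pi$. If $\pi_i \times \pi_j$ admitted a proper nonzero subrepresentation $\tau$, then by exactness of parabolic induction, inducing $\tau$ together with the remaining factors produces a proper nonzero subrepresentation of a product isomorphic to $\pi$, contradicting irreducibility.

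For the ``if'' direction, I would argue by induction on $k$, the base case $k=2$ being trivial. Assuming pairwise irreducibility of $\pi_i \times \pi_j$ for all $i<j$, the inductive hypothesis applied to the sub-collection $\pi_1, \dots, \pi_{k-1}$ yields that $\sigma := \pi_1 \times \cdots \times \pi_{k-1}$ is irreducible. Since each pairwise swap $\pi_i \times \pi_j \to \pi_j \times \pi_i$ is an isomorphism by hypothesis, we may freely reorder the factors, so in particular $\pi \cong \pi_k \times \sigma$ as well. To establish irreducibility of $\sigma \times \pi_k$ I would invoke the standard criterion that a product of two irreducibles is irreducible if and only if the normalized standard intertwining operator between the two orderings is an isomorphism, and then show that this long intertwining operator decomposes, via braid relations, as a composition of the pairwise intertwining operators $\pi_i \times \pi_k \to \pi_k \times \pi_i$, each of which is an isomorphism by the pairwise hypothesis. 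Composition of isomorphisms is an isomorphism, and so $\sigma \times \pi_k \in \Irr$.

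The main obstacle is justifying the braid decomposition of the long intertwining operator and verifying that no spurious poles arise in the normalization as one interchanges pairs of factors. Both points are settled in \cite{1411.6310} by explicit analysis of the intertwining operators at the level of the underlying multi-segments of the ladders, exploiting the particularly transparent description of Jacquet modules of ladder representations (compare \S\ref{jm sqr} and \eqref{eq: trans jm}).
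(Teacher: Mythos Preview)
The paper does not prove this lemma; it is quoted from \cite[Lemma~5.17]{1411.6310} (note the \qed\ in the statement). So there is no proof in the paper to compare against directly; the relevant comparison is with the argument in \cite{1411.6310}.

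Your ``only if'' direction is fine and standard.

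Your ``if'' direction has a genuine gap. The criterion you invoke --- that a product $\sigma\times\tau$ of two irreducibles is irreducible if and only if the normalized standard intertwining operator $\sigma\times\tau\to\tau\times\sigma$ is an isomorphism --- is not a standard fact, and the direction you need is false in general. What is true is that when the product is irreducible the (suitably normalized) operator is an isomorphism; the converse fails. In your inductive step you apply the criterion to $\sigma=\pi_1\times\cdots\times\pi_{k-1}$ and $\tau=\pi_k$, but $\sigma$ is no longer a ladder, and the isomorphism $\sigma\times\pi_k\cong\pi_k\times\sigma$ (which is all that composing the pairwise operators yields) does not by itself force irreducibility. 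Indeed, there exist irreducible representations $\pi$ of $\GL_n(F)$ with $\pi\times\pi$ reducible; taking $\sigma=\tau=\pi$ one has $\sigma\times\tau\cong\tau\times\sigma$ trivially, yet the product is reducible. Your final paragraph defers the details to \cite{1411.6310}, but the description you give of what happens there (analysis of intertwining operators on multisegments) does not match their actual method.

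The argument in \cite{1411.6310} proceeds differently. Lapid and M\'inguez isolate the class of \emph{$\square$-irreducible} representations --- those $\pi$ with $\pi\times\pi$ irreducible, equivalently those for which $\pi\times\sigma$ has simple socle for every irreducible $\sigma$ --- and prove, using the explicit Jacquet-module structure of ladders, that ladder representations are $\square$-irreducible. Once this is established, a short socle/cosocle argument (not an intertwining-operator computation) shows that for $\square$-irreducible $\pi_1,\dots,\pi_k$, pairwise irreducibility of the $\pi_i\times\pi_j$ implies irreducibility of the full product. The $\square$-irreducibility of ladders is exactly the missing hypothesis that would repair your argument, but establishing it is the substantive step and is not addressed in your sketch.
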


\subsubsection{}\label{sss: NC} For a maximal parabolic, the following criterion is a combination of \cite[Proposition 5.21 and Lemma 5.22]{1411.6310}.

\begin{definition}\label{def: NC}
Let $\mult=\{\Delta_1,\dots,\Delta_t\}$ and $\multn=\{\Delta_1',\dots,\Delta_s'\}$ be ladders in $\OO_\rho$.
We say that the condition $NC(\mult,\multn)$ is satisfied if 
there exist
$k\ge 0$, $1 \le i \le t$ and $1 \le j \leq s$ such that $i+k\le t$, $j+k\le s$ and the following properties are satisfied:
\begin{enumerate}
\item\label{1} $\Delta_{i+l}\prec\Delta'_{j+l}$ for all $l=0,\dots,k$;
\item\label{2} $\nu^{-1}\Delta_{i-1}\not\prec \Delta'_j$ if $i>1$; 
\item\label{3} $\nu^{-1}\Delta_{i+k}\not\prec\Delta'_{j+k+1}$ if $j+k+1\le s$.
\end{enumerate}
\end{definition}
\begin{proposition}\label{prop: irr max}
In the above notation $Z(\mult)\times Z(\multn)\in\Irr$ if and only if neither $NC(\mult,\multn)$ nor $NC(\multn,\mult)$ hold.
\end{proposition}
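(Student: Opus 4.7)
The plan is to reduce the statement to the general irreducibility criterion for products of ladder representations proved by Lapid--M\'inguez and then extract the explicit combinatorial form encoded by the condition $NC$. Since Lemma \ref{lem: red max} already reduces arbitrary products to the two-factor case, the proposition is really a statement about pairs of ladders, and the natural tool is the analysis of Jacquet modules (equivalently, derivatives) of ladder representations, which is particularly well behaved in this class.

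First I would recall the general principle that $Z(\mult)\times Z(\multn)$ is irreducible if and only if both $Z(\mult)\times Z(\multn)$ and $Z(\multn)\times Z(\mult)$ have $Z(\mult+\multn)$ as a subrepresentation (equivalently as a quotient), since ladder products have a canonical irreducible sub- and cosocle governed by the Zelevinsky classification. This reduces irreducibility to a single socle-equality test, which in turn (via Frobenius reciprocity applied to the standard intertwiner) becomes a test on the Jacquet module with respect to the maximal parabolic of type $(|Z(\mult)|,|Z(\multn)|)$. For ladder representations these Jacquet modules admit an explicit combinatorial description (cf.\ \cite{MR3163355} and \S \ref{jm Z}), so the failure of irreducibility corresponds precisely to the existence of a matching of initial segments of one ladder against an appropriate window in the other such that each matched pair is linked by the precedence relation $\prec$.

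Next I would show that such a matching, if it exists, can be chosen in a \emph{minimal} form, in the sense that it cannot be extended on either end. Concretely, choose the matching to consist of a run of consecutive indices $i,i{+}1,\dots,i{+}k$ in $\mult$ paired with a parallel run $j,j{+}1,\dots,j{+}k$ in $\multn$ with $\Delta_{i+l}\prec\Delta'_{j+l}$ for all $l$; minimality forces properties \eqref{2} and \eqref{3} of Definition \ref{def: NC}, which prevent extension to the left (in $\mult$) and to the right (in $\multn$) respectively. This is exactly the condition $NC(\mult,\multn)$. The existence of such a minimal block obstructs the irreducibility of $Z(\mult)\times Z(\multn)$ by producing a proper subrepresentation of one of the two orderings; swapping the two arguments produces the companion condition $NC(\multn,\mult)$.

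Conversely, if neither $NC(\mult,\multn)$ nor $NC(\multn,\mult)$ holds, I would verify that every attempt to build a proper subquotient via the geometric lemma or via a derivative calculation is defeated: any would-be matching violates either the ladder shape or the exclusion properties \eqref{2}, \eqref{3}, forcing the standard intertwining operator between $Z(\mult)\times Z(\multn)$ and $Z(\multn)\times Z(\mult)$ to be an isomorphism, hence irreducibility. The main obstacle is the bookkeeping of this last step: one must track simultaneously the ladder ordering, the absence of ``extensions'' on both ends, and the behavior under Zelevinsky duality (which preserves the ladder class and swaps the roles of $\prec$-chains), and check that the combinatorial conditions \eqref{1}--\eqref{3} really exhaust all obstructions produced by the Jacquet module computation. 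This is essentially the content of \cite[Proposition 5.21 and Lemma 5.22]{1411.6310}, which we therefore invoke to complete the argument.
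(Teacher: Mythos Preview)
The paper does not prove this proposition at all: immediately before the statement it says that the criterion ``is a combination of \cite[Proposition 5.21 and Lemma 5.22]{1411.6310}'', and the proposition is simply quoted from that reference without further argument. Your proposal ultimately does the same thing---your final sentence invokes exactly \cite[Proposition 5.21 and Lemma 5.22]{1411.6310} to ``complete the argument''---so in that sense it agrees with the paper's approach.

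The difference is that you surround the citation with several paragraphs of heuristic sketch (socle tests, Jacquet module matchings, minimal blocks, Zelevinsky duality). None of this is needed here, and some of it is not quite right as stated: for instance, the claim that irreducibility reduces to checking that $Z(\mult+\multn)$ is a subrepresentation of both orderings of the product is delicate, since $\mult+\multn$ need not be a ladder and the socle/cosocle structure of products of ladders is not as immediate as you suggest. If you actually wanted to \emph{prove} the proposition rather than cite it, you would need to reproduce the content of \cite[Proposition 5.21 and Lemma 5.22]{1411.6310} rigorously, not just gesture at it. Since the paper is content to cite, you should simply cite as well and drop the sketch.
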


The main result of this section requires some preparation.

\subsubsection{}

For segments $\Delta,\,\Delta'\in \OO_\rho$, write $\Delta\le_b \Delta'$ if either $b(\Delta)<b(\Delta')$ or $b(\Delta)=b(\Delta')$ and $e(\Delta)\le e(\Delta')$.
Thus, $\le_b$ is a linear order on $\OO_\rho$.

\begin{lemma}\label{lem: excision}
Let $\mult_1,\dots,\mult_k\in\OO_\rho$ be ladders such that $Z(\mult_1)\times\cdots\times Z(\mult_k)\in\Irr$. Let $\Delta_0$ be a minimal segment in $\mult_1+\cdots+\mult_k$ with respect to $\le_b$ and let
\[
\mult_i^\dag=\begin{cases} \mult_i-\{\Delta_0\} & \Delta_0\in \mult_i \\ \mult_i & \text{otherwise.} \end{cases}
\]   
Then $Z(\mult_1^\dag)\times\cdots\times Z(\mult_k^\dag)\in\Irr$.
\end{lemma}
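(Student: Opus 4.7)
My plan is to reduce to the pairwise case via Lemma \ref{lem: red max}, then verify the combinatorial criterion $NC$ of Definition \ref{def: NC} via Proposition \ref{prop: irr max}. Fix $1\le i<j\le k$ and write $\mult:=\mult_i=\{\Delta_1,\dots,\Delta_t\}$, $\multn:=\mult_j=\{\Delta'_1,\dots,\Delta'_s\}$ as ladders. The first observation is that, since a ladder has strictly decreasing $b$-values and $\Delta_0$ is $\le_b$-minimal in the whole sum $\mult_1+\cdots+\mult_k$, the element $\Delta_0$ (if it occurs in $\mult$) must coincide with $\Delta_t$, and similarly if it occurs in $\multn$ it equals $\Delta'_s$. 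Therefore $\mult^\dag$ and $\multn^\dag$ are obtained from $\mult$ and $\multn$ by either doing nothing or deleting the tail segment, with no other relabeling.

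I would argue by contrapositive: suppose $NC(\mult^\dag,\multn^\dag)$ holds with parameters $k_0,i_0,j_0$, and propose the same parameters as witnesses for $NC(\mult,\multn)$. The index bounds $i_0+k_0\le|\mult|$ and $j_0+k_0\le|\multn|$ are immediate since $|\mult^\dag|\le|\mult|$ and $|\multn^\dag|\le|\multn|$. Conditions \eqref{1} and \eqref{2} of Definition \ref{def: NC} transfer verbatim, since they only reference indices already present in $\mult^\dag$ and $\multn^\dag$. Condition \eqref{3} is unchanged except possibly in one new instance: when $\Delta_0\in\multn$ (so $|\multn|=|\multn^\dag|+1$) and $j_0+k_0+1=|\multn|$, in which case $\Delta'_{j_0+k_0+1}=\Delta_0$ and the new requirement becomes $\nu^{-1}\Delta_{i_0+k_0}\not\prec\Delta_0$. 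The symmetric claim $NC(\multn^\dag,\mult^\dag)\Rightarrow NC(\multn,\mult)$ is handled identically.

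The main obstacle, and essentially the only substantive step, is verifying this inequality. I expect to prove the elementary lemma: \emph{for any segment $\Delta'$ appearing in $\mult_1+\cdots+\mult_k$ with $\Delta'\ne\Delta_0$, one has $\nu^{-1}\Delta'\not\prec\Delta_0$.} The proof is a short chase on endpoints: $\nu^{-1}\Delta'\prec\Delta_0$ gives $b(\Delta')-1<b(\Delta_0)$, while $\le_b$-minimality of $\Delta_0$ gives $b(\Delta_0)\le b(\Delta')$, forcing $b(\Delta_0)=b(\Delta')$; then the tie-breaking clause of $\le_b$ yields $e(\Delta_0)\le e(\Delta')$, while $\nu^{-1}\Delta'\prec\Delta_0$ gives $e(\Delta')-1<e(\Delta_0)$, forcing $e(\Delta_0)=e(\Delta')$ and thus $\Delta'=\Delta_0$. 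Applying this lemma with $\Delta'=\Delta_{i_0+k_0}\in\mult^\dag$, which is automatically distinct from $\Delta_0$, disposes of the new case. Either $NC(\mult,\multn)$ or $NC(\multn,\mult)$ therefore holds, contradicting the assumed irreducibility of $Z(\mult_i)\times Z(\mult_j)$ via Proposition \ref{prop: irr max}, and the lemma follows.
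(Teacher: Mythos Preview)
Your proof is correct and follows essentially the same route as the paper: reduce to pairs via Lemma \ref{lem: red max}, then use the $NC$ criterion of Proposition \ref{prop: irr max} and show that any witness $(k_0,i_0,j_0)$ for $NC(\mult^\dag,\multn^\dag)$ (or the reverse) persists for $(\mult,\multn)$, the only nontrivial check being condition \eqref{3} at the newly exposed tail index, which is dispatched by the minimality of $\Delta_0$. Your organization is slightly more symmetric than the paper's case split, and your explicit ``elementary lemma'' (that $\nu^{-1}\Delta'\not\prec\Delta_0$ for every $\Delta'\ne\Delta_0$ in the sum) is exactly the fact the paper invokes with the words ``by the minimality of $\Delta_0$ and since $\Delta_0\not\in\mult_2^\dag$''.
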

\begin{proof}

By Lemma \ref{lem: red max}, it is enough to prove the lemma for the case $k=2$. Note that if $\Delta_{0}\notin \mult_{i},\ i=1,2$, then $\mult_{i}=\mult_{i}^\dag$ and we have nothing to prove. Thus we assume that $\Delta_{0}$ belongs to at least one of the two multi-sets. By the symmetry of the irreducibility criterion of Proposition \ref{prop: irr max}, we may assume without loss of generality that $\Delta_0\in \mult_1$. Write $\mult_1^\dag=\{\Delta_1,\dots,\Delta_t\}$ and $\mult_2^\dag=\{\Delta_1',\dots,\Delta_s'\}$ ordered as ladders.
Note that $\mult_1=\{\Delta_1,\dots,\Delta_t,\Delta_0\}$ is ordered as a ladder.

Assume by contradiction that $Z(\mult_1^\dag)\times Z(\mult_2^\dag)$ reduces. It follows from Proposition \ref{prop: irr max} that either $NC(\mult_1^\dag,\mult_2^\dag)$ or $NC(\mult_2^\dag,\mult_1^\dag)$ is satisfied. 

We separate into two cases and show that in each case this implies that either $NC(\mult_1,\mult_2)$ or $NC(\mult_2,\mult_1)$ holds.
Since this is a contradiction to Proposition \ref{prop: irr max} the lemma will follow.

Consider first the case that $\Delta_0\not\in \mult_2$ (i.e., $\mult_2=\mult_2^\dag$) and $NC(\mult_1^\dag,\mult_2^\dag)$ holds. Let $i,\,j,\,k$ be the indices satisfying \eqref{1}-\eqref{3} of Definition \ref{def: NC} for $(\mult_1^\dag,\mult_2^\dag)$. Then the same indices $i,\,j,\,k$ show that $NC(\mult_1,\mult_2)$ holds.

Next consider the case $\Delta_0\in \mult_2$ or $NC(\mult_1^\dag,\mult_2^\dag)$ doesn't hold.
If $\Delta_0\not\in \mult_2$ then by assumption $NC(\mult_2^\dag,\mult_1^\dag)$ holds. 
If $\Delta_0\in \mult_2$ then by symmetry between $\mult_1$ and $\mult_2$, without loss of generality, we may also assume that $NC(\mult_2^\dag,\mult_1^\dag)$ holds. 

If the indices $i,\,j,\,k$ satisfy \eqref{1}-\eqref{3} of Definition \ref{def: NC} for $(\mult_2^\dag,\mult_1^\dag)$ then for the same indices \eqref{1} and \eqref{2} are automatic for $(\mult_2,\mult_1)$, while \eqref{3} is automatic unless $j+k=t$ in which case \eqref{3} reads $\nu^{-1}\Delta_{i+k}' \not\prec \Delta_0$. By the minimality of $\Delta_0$ and since $\Delta_0\not\in \mult_2^\dag$ it follows that $\nu^{-1}\Delta'\not\prec \Delta_0$ for all $\Delta'\in\mult_2^\dag$ and in particular for $\Delta'=\Delta_{i+k}'$. It therefore follows that $NC(\mult_2,\mult_1)$ holds.
\end{proof}

\subsubsection{} \label{sss: li}

Define an operation $\mult\mapsto \mult'$ on $\OO_\rho$ as follows. For $\mult\in\OO_\rho$, let $\Delta_0$ be minimal in $\mult$ with respect to $\le_b$ and $\mult'=\mult-\mult(\Delta_0)\{\Delta_0\}$.

\begin{proposition}\label{prop: li}
Let $\Omega\subseteq \OO_\rho$ be a subset closed under the operation $\mult\mapsto \mult'$ and such that Hypothesis \ref{hyp**} holds for $\mult^t$ for all $\mult\in\Omega$.
Let $\pi_1,\dots,\pi_k$ be ladder representations such that $\pi=\pi_1\times\cdots\times\pi_k\in \Irr$. 
Let $\mult\in\OO_\rho$ be such that $\pi=Z(\mult)$ and assume that $\mult\in\Omega$.  
If $\pi$ is $\Sp$-distinguished then $\pi_i$ is $\Sp$-distinguished for all $i=1,\dots,k$.
\end{proposition}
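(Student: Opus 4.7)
The strategy is induction on $\abs{\mult}$, the case $\abs{\mult}\le 1$ being trivial (forcing $k=1$). For the inductive step, let $\Delta_0$ be the $\le_b$-minimal segment of $\mult$ and, for each $i$, set $\mult_i^\dag=\mult_i$ if $\Delta_0\notin\mult_i$ and $\mult_i^\dag=\mult_i-\{\Delta_0\}$ otherwise; write $\pi_i^\dag=Z(\mult_i^\dag)$ and $\pi^\dag=\pi_1^\dag\times\cdots\times\pi_k^\dag$. Since each ladder $\mult_i$ is a set, $\mult^\dag:=\sum_i\mult_i^\dag$ coincides with $\mult'$, hence lies in $\Omega$ by the closure assumption; by Lemma \ref{lem: excision}, $\pi^\dag\in\Irr$ and $\pi^\dag=Z(\mult^\dag)$.

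By Theorem \ref{thm: dist lad}, to conclude that each $\pi_i$ is $\Sp$-distinguished it suffices to show that the ladder $\mult_i^t$ is of Speh type for every $i$. I plan to extract this from Lemma \ref{lem: t speh} applied to $\mult_1,\dots,\mult_k$ (viewed through unlabeling). Two of its three hypotheses are immediate: $\ell(\Delta_0)$ is even by Proposition \ref{prop: Z dist} applied to $\pi=Z(\mult)$, and the sum $\mult^t=\sum_i\mult_i^t$ is of Speh type by Proposition \ref{prop: dist hyp**} applied to $\mult^t$, since $L(\mult^t)=\pi$ is $\Sp$-distinguished and $\mult^t$ satisfies Hypothesis \ref{hyp**} by the assumption on $\Omega$. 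The third hypothesis, that each $(\mult_i^\dag)^t$ be of Speh type, would follow from the inductive hypothesis applied to $\pi^\dag$: combined with $\mult^\dag\in\Omega$, $\pi^\dag\in\Irr$ and $\abs{\mult^\dag}<\abs{\mult}$, one would get that every $\pi_i^\dag=Z(\mult_i^\dag)$ is $\Sp$-distinguished, and Theorem \ref{thm: dist lad} would then convert this into the required Speh type property.

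The main obstacle is therefore to pass from the $\Sp$-distinction of $\pi$ to that of $\pi^\dag$. Because $\Delta_0$ is $\le_b$-minimal, no segment in $\mult$ precedes $\Delta_0$, so placing the $c:=\mult(\Delta_0)$ copies of $\Delta_0$ at the tail of a standard order for $\mult$ produces an embedding $\pi=Z(\mult)\hookrightarrow\pi^\dag\times Z(\Delta_0)^c$. Since $\ell(\Delta_0)$ is even, the Speh representation $Z(\Delta_0)$ is $\Sp$-distinguished (Remark \ref{rmk: Speh dist}), and by Corollary \ref{open dist} so is $Z(\Delta_0)^c$. My plan for this obstacle is to exploit the geometric lemma applied to $\pi^\dag\times Z(\Delta_0)^c$: the given $\Sp$-invariant form on $\pi$ restricts to a contribution to the $H$-filtration, and the $\le_b$-minimality of $\Delta_0$, together with the Jacquet module description of ladder representations, should rule out every relevant $P$-orbit except one that channels the contribution through the known $\Sp$-distinction of the $Z(\Delta_0)^c$ tail, thereby exhibiting a non-zero $\Sp$-invariant functional on $\pi^\dag$. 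Executing this orbit analysis rigorously---in particular verifying that every competing relevant orbit contributes trivially---is the most delicate step and the crux of the argument.
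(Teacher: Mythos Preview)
Your overall architecture---induction on $\abs{\mult}$, reduction via Lemma~\ref{lem: excision}, and the appeal to Lemma~\ref{lem: t speh} together with Theorem~\ref{thm: dist lad} and Proposition~\ref{prop: dist hyp**}---matches the paper exactly. The gap is precisely at the step you flag as ``the main obstacle'': passing from $\Sp$-distinction of $\pi$ to that of $\pi^\dag=Z(\mult')$.

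Your proposed device is an \emph{embedding} $\pi\hookrightarrow \pi^\dag\times Z(\Delta_0)^c$. But Lemma~\ref{drmk: ist quot} transfers distinction from a quotient to the ambient representation, not from a subrepresentation; an $\Sp$-invariant form on the sub $\pi$ has no reason to extend, and the $H$-filtration from the geometric lemma is a filtration of the induced representation, not of $\pi$. So the orbit analysis you sketch does not get off the ground in this direction.

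The paper instead realizes $\pi$ as a \emph{quotient} of $Z(\Delta_0)^c\times Z(\mult')$, with the $\Delta_0$-block in the \emph{first} slot. This comes from the $\tilde\zeta$-side: since no segment of $\mult'$ precedes $\Delta_0$, one has $\tilde\zeta(\mult)\simeq Z(\Delta_0)^c\times\tilde\zeta(\mult')$, and $\pi=Z(\mult)$ is the unique irreducible quotient. Now Lemma~\ref{drmk: ist quot} yields that $Z(\Delta_0)^c\times Z(\mult')$ is $\Sp$-distinguished, and the geometric lemma applies to the two-factor tensor $Z(\Delta_0)^c\otimes Z(\mult')$. The reason the orbit analysis is clean is that any irreducible component $\sigma_{1,1}$ of the Jacquet module of the first factor $Z(\Delta_0)^c$ satisfies $b(\Delta_0)\in\supp(\sigma_{1,1})$ (by \S\ref{Z jm on M}); if $\tau(1,1)\ne(1,1)$ then $\tau(1,1)=(2,k_2)$ and $\nu^{-1}\sigma_{1,1}\simeq\sigma_{2,k_2}$, forcing $\nu^{-1}b(\Delta_0)\in\supp(Z(\mult'))$, contradicting $\le_b$-minimality. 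Hence $\tau$ is trivial and in particular $Z(\mult')=\pi^\dag$ is $\Sp$-distinguished. With $\pi^\dag$ first and $Z(\Delta_0)^c$ second, as in your setup, the analogous minimality argument does not pin down $\sigma_{1,1}$, so even after fixing the quotient/sub issue the factor order matters.

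Once this step is repaired, the rest of your argument goes through verbatim and coincides with the paper's proof.
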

\begin{proof}
Let $\mult_1,\dots,\mult_k$ be ladders such that $\pi_i=Z(\mult_i)$, $i=1,\dots,k$. Since $\pi$ is irreducible, we have $\mult=\mult_1+\cdots+\mult_k$. 

The proof is by induction on $\abs{\mult}$. For $\mult=0$ there is nothing to prove.
Let $\Delta_0$ be the minimal segment in $\mult$ with respect to $\le_b$. 
Let $\multn_0=\mult(\Delta_0)\{\Delta_0\}$ so that $\mult=\mult'+\multn_0$ and let $\mult_i^\dag=\min\{\mult_i,\mult'\}$, $i=1,\dots,k$. Note that 
\[
\mult_i^\dag=\begin{cases}\mult_i' & \Delta_0\in \mult_i \\ \mult_i & \text{otherwise}\end{cases}
\]
and $\mult'=\mult_1^\dag+\cdots+\mult_k^\dag$. 
By the definition of $\mult$, $\pi$ is the unique irreducible quotient of $\tilde\zeta(\mult)$. Since no segment in $\mult'$ precedes $\Delta_0$ we have $\tilde\zeta(\mult)=\zeta(\multn_0) \times \tilde\zeta(\mult')$ and therefore $\pi$ is also the unique irreducible quotient of $Z(\multn_0) \times Z(\mult')$.
Thus, by Lemma \ref{drmk: ist quot}, $Z(\multn_0) \times Z(\mult')$ is $\Sp$-distinguished. Apply the notation of \S \ref{General orbits} with $k=2$ for an orbit that is relevant to $Z(\multn_0) \otimes Z(\mult')$ (by Lemma \ref{lem: ex orb}). 

It follows from Corollary \ref{cor: fine rel} and \eqref{eq: trans jm} that there exist irreducible components $\sigma_1$ of $\jm_{M_{\beta_1},G_{n_1}}(Z(\multn_0))$ and $\sigma_2$ of $\jm_{M_{\beta_2},G_{n_2}}(Z(\mult'))$ such that writing
\[
\sigma_i=\sigma_{i,1}\otimes\cdots\otimes\sigma_{i,k_i},\ i=1,2,\ \ \ \sigma_{i,j}\in\Irr,\ j=1,\dots,k_i
\]
we have $\sigma_\imath=\nu\sigma_{\tau(\imath)}$ whenever $\imath\prec \tau(\imath)$ and $\sigma_\imath$ is $\Sp$-distinguished if $\tau(\imath)=\imath$.
By \cite[Theorem 4.2]{MR584084} we have
\[
Z(\multn_0)=\overbrace{\operatorname{Z(\Delta_0)\times\cdots\times Z(\Delta_0)}}\limits^{\mult(\Delta_0)-\text{times}}.
\]
In particular, it follows from the geometric lemma of Bernstein and Zelevinsky \cite[\S2.12]{MR0579172} and \S \ref{Z jm on M} that $\sigma_{1,1}=Z(\Delta)$ for some segment satisfying $b(\Delta)=b(\Delta_0)$. If $\tau(1,1)\ne (1,1)$ then, by \eqref{eq: stronger cond} and the fact that $k=2$, we must have $\tau(1,1)=(2,k_2)$ and therefore $\nu^{-1}\sigma_{1,1}=\sigma_{2,k_2}$. But since $\supp(\sigma_{(2,k_2)})\subseteq \supp(\sigma_2)\subseteq\supp(Z(\mult'))$ (see \eqref{eq: supp jm}) we have $\nu^{-1}b(\Delta_0)\in \supp(\nu^{-1}\sigma_{(1,1)})\setminus  \supp(\sigma_{(2,k_2)})$ which is a contradiction. It follows that $\tau(1,1)=(1,1)$ and since $k=2$, $\tau$ must be trivial. 

In other words, both $Z(\multn_0)$ and $Z(\mult')$ are $\Sp$-distinguished.
It follows from Proposition \ref{prop: Z dist} that $\ell(\Delta_0)$ is even. Let $\pi_i'=Z(\mult_i^\dag)$, $i=1,\dots,k$. It follows from Lemma \ref{lem: excision} that $\pi'=\pi_1'\times\cdots\times \pi_k'\in\Irr$ and therefore $\pi'=Z(\mult')$. By the assumption on $\Omega$ and the induction hypothesis, $\pi_i'=L((\mult_i^\dag)^t)$ is $\Sp$-distinguished for all $i=1,\dots,k$. 
By Theorem \ref{thm: dist lad}, $(\mult_i^\dag)^t$ is of Speh type for $i=1,\dots,k$. Since $\pi=L(\mult^t)$ is $\Sp$-distinguished and $\mult^t$ satisfies Hypothesis \ref{hyp**}, by Proposition \ref{prop: dist hyp**}, $\mult^t$ is of Speh type. It now follows from 
Lemma \ref{lem: t speh} that $\mult_i^t$ is of Speh type and therefore, again by Theorem \ref{thm: dist lad}, that $\pi_i=L(\mult_i^t)$ is $\Sp$-distinguished for all $i=1,\dots,k$.
\end{proof}

\subsubsection{}

Let $\Omega_k$ be the set of all $\mult\in\OO_\rho$ that are obtained as sums of at most $k$ ladders, i.e. $\mult=\mult_1+\cdots+\mult_k$ where $\mult_i$ is either zero or a ladder, and such that $Z(\mult_1)\times\cdots \times Z(\mult_k)\in \Irr$.

Since both ladder representations and $\Irr$ are closed under Zelevinsky involution and in the Grothendick group it is multiplicative, it follows that $\Omega_k$ is closed under Zelevinsky involution. It further follow from Lemma \ref{lem: excision} that $\Omega_k$ is closed under the operation $\mult\mapsto \mult'$ defined in \S \ref{sss: li}.

For a product of two ladder representations this gives the following unconditional result. 

\begin{corollary}\label{cor: prd two}
Let $\pi_1$ and $\pi_2$ be ladder representations such that $\pi=\pi_1\times\pi_2\in\Irr$. 
If $\pi$ is $\Sp$-distinguished then $\pi_1$ and $\pi_2$ are $\Sp$-distinguished.
\end{corollary}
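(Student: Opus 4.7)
The plan is to deduce Corollary \ref{cor: prd two} as an unconditional instance of Proposition \ref{prop: li} by taking $\Omega=\Omega_2$, the set of all $\mult\in\OO_\rho$ that can be written $\mult=\mult_1+\mult_2$ with each $\mult_i$ either zero or a ladder and with $Z(\mult_1)\times Z(\mult_2)\in\Irr$. The paragraph preceding the corollary already records that $\Omega_2$ is closed under the Zelevinsky involution and (via Lemma \ref{lem: excision}) under the operation $\mult\mapsto\mult'$ defined in \S \ref{sss: li}, so the only thing left to verify in order to invoke Proposition \ref{prop: li} is that Hypothesis \ref{hyp**} holds for $\mult^t$ for every $\mult\in\Omega_2$.

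Because $\Omega_2$ is preserved by the Zelevinsky involution, this amounts to establishing Hypothesis \ref{hyp**} for every element of $\Omega_2$. The key observation, and the only input not already isolated in the excerpt, is the following size bound: for any $\mult=\mult_1+\mult_2\in\Omega_2$ and any endpoint $c$, each of the two ladders $\mult_1,\mult_2$ contains at most one segment ending at $c$, since in a ladder the end points $e(\Delta_1)>\cdots>e(\Delta_k)$ are strictly decreasing. Hence, in the notation of \S \ref{orders}, $\abs{\mult[i]}\le 2$ for every $i$. Proposition \ref{prop: hyp 2} then applies to the unlabeling of $\mult$ and yields Hypothesis \ref{hyp*}; since Hypothesis \ref{hyp*} is stronger than Hypothesis \ref{hyp**}, the latter also holds for $\mult$.

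With these two conditions verified, Proposition \ref{prop: li} applied to $\Omega=\Omega_2$ directly gives the conclusion: writing $\pi=Z(\mult)$ with $\mult=\mult_1+\mult_2$ and $\pi_i=Z(\mult_i)$, the hypothesis $\pi\in\Irr$ places $\mult$ in $\Omega_2$, and $\Sp$-distinction of $\pi$ forces $\Sp$-distinction of both $\pi_1$ and $\pi_2$. There is no real obstacle here beyond packaging what is already available; the whole content of the unconditional step is the combinatorial fact that a sum of two ladders has at most two segments with any common end, which is exactly what puts such a multi-set within reach of Proposition \ref{prop: hyp 2}.
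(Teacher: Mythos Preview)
Your proof is correct and follows essentially the same approach as the paper's own proof: both verify that every $\mult\in\Omega_2$ satisfies $\abs{\mult[i]}\le 2$, invoke Proposition \ref{prop: hyp 2} together with closure of $\Omega_2$ under the Zelevinsky involution to conclude that $\mult^t$ satisfies Hypothesis \ref{hyp*} (hence \ref{hyp**}), and then apply Proposition \ref{prop: li}. Your write-up is somewhat more explicit about why the bound $\abs{\mult[i]}\le 2$ holds and about the implication from Hypothesis \ref{hyp*} to Hypothesis \ref{hyp**}, but the argument is the same.
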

\begin{proof}
Note that $\abs{\mult[i]}\le 2$ for all $i$ and all $\mult\in\Omega_2$. Since, as remarked above, $\Omega_2$ is closed under Zelevinsky involution, it follows from Proposition \ref{prop: hyp 2} that $\mult^t$ satisfies Hypothesis \ref{hyp*} for all $\mult\in\Omega_2$. Since we also observed above that $\Omega_2$ is closed under the operation $\mult\mapsto \mult'$ defined in \S \ref{sss: li}, the statement follows from Proposition \ref{prop: li}.
\end{proof}

\subsubsection{}
We conclude this section with an example of a family of imprimitive, $\Sp$-distinguished representations that are not ladders.
\begin{definition}\label{def: non lad ex}
Let $\mathcal F$ denote a set of irreducible representations of the form $Z(\mult)$ such that the multi-set $\mult=\{\Delta_{1},\Delta_{2},\Delta_{3}\}$ satisfies the following properties
\begin{enumerate}
\item $\ell(\Delta_{i})$ is even for all $i$,
\item $\Delta_{1}\subseteq \nu\Delta_{2}$ and $\Delta_{1}\subseteq \nu^{-1}\Delta_{2}$,
\item $\ell(\Delta_{3}\cap \Delta_{1})$ and $\ell(\Delta_{3}\cap \Delta_{2})$ are both odd.
\end{enumerate}
\end{definition}
Note that (2) implies that $\Delta_{1}\subseteq \Delta_{2}$ which in particular implies that none of these representations are ladders. Further note that $\mathcal F$ consists of only rigid representations and $\pi\in \mathcal F$ if and only if $\pi^{\vee}\in \mathcal F$. The conditions on the length of the segments in $\mult$ imply that the pairs $\{\Delta_{1},\Delta_{3}\}$ and $\{\Delta_{2},\Delta_{3}\}$ are linked. A simple example of a representation in $\mathcal F$ is $Z([\nu^{3},\nu^{4}],[\nu,\nu^{6}],[1,\nu^{3}])$. 

The next lemma shows that indeed any representation in $\mathcal F$ has the desired properties. Before we proceed, recall that an elementary operation on an arbitrary multi-set $\mult$ is to choose a pair of linked segments in it and replace the pair by their union and their intersection. By \cite[Theorem 7.1]{MR584084} any irreducible subquotient of $\zeta(\mult)$ is of the form $Z(\multn)$ where $\multn$ is a multi-set obtained from $\mult$ by a sequence of elementary operations on it.
\begin{lemma}
Let $\pi\in \mathcal F$. Then $\pi$ is $\Sp$-distinguished and imprimitive.
\end{lemma}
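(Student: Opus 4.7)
\emph{Sp-distinction.} The plan is to realize $\pi$ as the unique irreducible quotient of an $\Sp$-distinguished induced representation, and then rule out every other Jordan--H\"older constituent as a carrier of the $\Sp$-invariant functional. Since $\ell(\Delta_i)$ is even for every $i$, the single-segment case of Corollary~\ref{cor: dist lad Z} shows each $Z(\Delta_i)$ is $\Sp$-distinguished. Condition~(2) forces $\Delta_1\subsetneq\Delta_2$, so $\{\Delta_1,\Delta_2\}$ is unlinked, while $\Delta_3\prec\Delta_1$ and $\Delta_3\prec\Delta_2$ by condition~(3). Hence $(\Delta_3,\Delta_2,\Delta_1)$ is the reverse of a standard order and $\tilde\zeta(\mult)\simeq Z(\Delta_3)\times Z(\Delta_2)\times Z(\Delta_1)$ is $\Sp$-distinguished by Corollary~\ref{open dist}, with $\pi=Z(\mult)$ as its unique irreducible quotient. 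Lemma~\ref{lem: dist comp} then produces some $\Sp$-distinguished irreducible constituent of $\tilde\zeta(\mult)$; the task is to prove $\pi$ is the only candidate. By \cite[Theorem 7.1]{MR584084} every other constituent has the form $Z(\multn)$ for some $\multn\ne\mult$ reachable from $\mult$ by a non-empty sequence of elementary operations, and by Proposition~\ref{prop: Z dist} it is enough to establish the following combinatorial claim: every such $\multn$ contains a segment of odd length.

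To prove the claim, note that the only linked pairs in $\mult$ are $\{\Delta_1,\Delta_3\}$ and $\{\Delta_2,\Delta_3\}$, so the first elementary operation replaces such a pair $\{\Delta_i,\Delta_3\}$ by its intersection and union; by condition~(3) the intersection is odd, and inclusion-exclusion with $\ell(\Delta_i)$, $\ell(\Delta_3)$ both even forces the union to be odd as well. The resulting multi-set thus contains two odd segments (which are moreover nested, hence unlinked with one another) together with one unchanged even segment. An inductive step on the number of subsequent operations should then show that the count of odd segments never drops: an operation on a linked (odd, even) pair always produces one odd and one even new segment (the total length being odd), so one odd segment is preserved. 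The main obstacle is to exclude the scenario of an operation on two \emph{linked} odd segments whose intersection has even length, as this could convert both into even segments. Excluding this requires verifying that throughout the orbit of elementary operations starting from $\mult\in\mathcal F$, any two surviving odd segments remain nested in one another (and therefore mutually unlinked); a case analysis separating the two possible first operations handles this.

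\emph{Imprimitivity.} Because an irreducibly induced representation of $\GL_n$ can, by grouping factors, be written as a product $\tau_1\times\tau_2$ of two non-trivial irreducible representations, and because an equality $Z(\mult_1)\times Z(\mult_2)=Z(\mult)$ with the product irreducible forces $\mult=\mult_1+\mult_2$, it suffices to rule out each of the three non-trivial splits of $\mult$ into two parts. For the split $\{\Delta_3\}\sqcup\{\Delta_1,\Delta_2\}$, the unlinkedness of $\{\Delta_1,\Delta_2\}$ gives $Z(\{\Delta_1,\Delta_2\})=Z(\Delta_1)\times Z(\Delta_2)$, so the threefold product $Z(\Delta_3)\times Z(\Delta_1)\times Z(\Delta_2)$ is reducible since $\{\Delta_1,\Delta_3\}$ is linked. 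For the remaining two splits the product is of two ladders and I would apply Proposition~\ref{prop: irr max}: for $\mult_1=\{\Delta_1\}$, $\mult_2=\{\Delta_2,\Delta_3\}$ (a ladder with $\Delta_3\prec\Delta_2$), the condition $NC(\mult_2,\mult_1)$ is verified with $i=2,\,j=1,\,k=0$, since $\Delta_3\prec\Delta_1$ is given and $\nu^{-1}\Delta_2\not\prec\Delta_1$ follows from $\nu\Delta_1\subseteq\Delta_2$ (half of condition~(2)), which yields $e(\nu^{-1}\Delta_2)\ge e(\Delta_1)$ and thereby violates the strict end-inequality for precedence. The split $\mult_1=\{\Delta_2\}$, $\mult_2=\{\Delta_1,\Delta_3\}$ is treated symmetrically, using the other half $\nu^{-1}\Delta_1\subseteq\Delta_2$ of condition~(2) to verify $\nu^{-1}\Delta_1\not\prec\Delta_2$ through the beginning inequality.
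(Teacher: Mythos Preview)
Your overall strategy matches the paper's: produce an $\Sp$-distinguished induced module with $\pi$ as a subquotient, rule out every other Jordan--H\"older factor via Proposition~\ref{prop: Z dist}, and for imprimitivity test the three two-block splits using Proposition~\ref{prop: irr max}. Two points deserve correction or simplification.

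\emph{Direction of precedence.} The claim ``$\Delta_3\prec\Delta_1$ and $\Delta_3\prec\Delta_2$ by condition~(3)'' is not justified: condition~(3) together with the even lengths in~(1) only forces each pair $\{\Delta_i,\Delta_3\}$ to be \emph{linked}, not that $\Delta_3$ precedes. The family $\mathcal F$ equally contains multi-sets with $\Delta_i\prec\Delta_3$. The paper fixes this by passing to $\mult^\vee$ if necessary (using that $\mathcal F$ is closed under contragredient and imprimitivity is preserved), thereby arranging $\Delta_3\prec\Delta_2$ and hence $\Delta_3\prec\Delta_1$. Your $\tilde\zeta(\mult)$ and $NC$ checks are otherwise correct once the direction is fixed, so this is a genuine but easily repaired gap.

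\emph{The elementary-operations step.} Your inductive scheme (tracking the number of odd segments and excluding linked odd pairs with even intersection) is more intricate than needed and is only sketched. The paper's argument is a single observation: the first elementary operation necessarily acts on $\{\Delta_1,\Delta_3\}$ or $\{\Delta_2,\Delta_3\}$, and the resulting multi-set contains, respectively, $\Delta_1\cap\Delta_3$ or $\Delta_2\cup\Delta_3$, both of odd length. Since $\Delta_1\cap\Delta_3\subseteq\Delta_1\subseteq\Delta_2$ and $\Delta_1\subseteq\Delta_2\subseteq\Delta_2\cup\Delta_3$, the former is contained in every segment of the new multi-set while the latter contains every such segment; either way the odd segment is unlinked with everything else and therefore persists through all subsequent elementary operations. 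This immediately gives an odd segment in every $\multn\ne\mult$ and replaces your case analysis entirely.
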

\begin{proof}
Let $\mult=\{\Delta_{1},\Delta_{2},\Delta_{3}\}$ be a multi-set satisfying the conditions (1), (2) and (3) of Definition \ref{def: non lad ex} and $\pi=Z(\mult)$. By Corollary  \ref{cor: dist lad Z}, $Z(\Delta_i)$ is $\Sp$-distinguished, $i=1,2,3$ and therefore by Corollary \ref{open dist}, 
\[
{\rm I}(\mult)=Z(\Delta_{1})\times Z(\Delta_{2})\times Z(\Delta_{3})
\]
is $\Sp$-distinguished. 

Applying \cite[Theorem 1.9]{MR584084} to $\zeta(\mult)$ we get that $\pi$ occurs as a subquotient of ${\rm I}(\mult)$. We now analyze the other possible irreducible subquotients of ${\rm I}(\mult)$ using \cite[Theorem 1.9 and Theorem 7.1]{MR584084}. Since $\Delta_{1}\subseteq \Delta_{2}$, any elementary operation on $\mult$ is performed on either $\{\Delta_{1},\Delta_{3}\}$ or on $\{\Delta_{2},\Delta_{3}\}$. The result will respectively contain either $\Delta_{1}\cap\Delta_{3}$ or $\Delta_{2}\cup\Delta_{3}$,  which are of odd length.
Since the first is contained in and the second contains all three segments any further sequence of operations will result in a multi-set containing one of them. Thus by Proposition \ref{prop: Z dist} none of these subquotients are $\Sp$-distinguished. The $\Sp$-distinction of $\pi$ now follows from Lemma \ref{lem: dist comp}.

Next we show that $\pi$ is imprimitive. Assume, if possible, that it is not so. Then there exists indices $i,j,k$ such that $\{i,j,k\}=\{1,2,3\}$ and $\pi\cong Z(\Delta_{i})\times Z(\Delta_{j},\Delta_{k})$. By considering the multi-set $\mult^{\vee}$ instead of $\mult$ if required, assume further that $\Delta_{3}\prec \Delta_{2}$ and hence $\Delta_{3}\prec \Delta_{1}$. Note that $Z(\Delta_{1},\Delta_{2})\times Z(\Delta_{3})\cong {\rm I}(\mult)$ which is reducible by \cite[Theorem 4.2]{MR584084}. Thus $\pi \cong Z(\Delta_{i},\Delta_{3})\times Z(\Delta_{j})$ where $\{i,j\}=\{1,2\}$. It follows from Proposition \ref{prop: irr max} and condition (2) of Definition \ref{def: non lad ex} that this product is reducible which is a contradiction. 
\end{proof}

\section{On distinction by Klyachko subgroups}\label{s: Kl}

We continue the study of Klyachko models for representations of $\GL_n(F)$, following \cite{MR1078382}, \cite{MR2417789} and \cite{MR2414223}.
Over finite fields Klyachko models were introduced in \cite{MR691984}. In that case, it is a disjoint family of models and their direct sum contains every irreducible representation with multiplicity one \cite{MR1129515}. 

Over a non-archimedean field, Heumos and Rallis observed that some representations do not admit a Klyachko model and classified those in the unitary dual that do in low rank cases (for $n\le 4$). The second and the third authors showed that the direct sum of all Klyachko models is multiplicity free and prescribed a model to any representation in the unitary dual. 

In this section, we reduce the study of Klyachko models on the admissible dual to rigid representations and prove that models behave well with respect to parabolic induction. 

\subsection{The Klyachko model setting}
\subsubsection{}
Let $G=G_n$. For a decomposition $n=2k+r$ let
\[
H_{2k,r}=\{\begin{pmatrix} h & X \\ 0 & u \end{pmatrix}: h\in \Sp_{2k}(F),\,X\in M_{2k\times r}(F),\,u\in N_r\}
\]
and $\psi=\psi_{2k,r}$ be defined by
\[
\psi(\begin{pmatrix} h & X \\ 0 & u \end{pmatrix})=\psi(u).
\]
(See \S \ref{sss: gen} for the definition of $N_r$ and its character $\psi$.)

For any $\pi\in\Alg$, being $(H_{2k,r},\psi)$-distinguished is independent of the choice of non-trivial character $\psi$ of $F$. Indeed, for any other character $\psi'\ne 1$ there is a diagonal matrix $a\in G$ normalizing $H_{2k,r}$ such that $\psi'_{2k,r}(h)=\psi_{2k,r}(ah a^{-1})$, $h\in H_{2k,r}$.

\subsubsection{}
Let $\tau$ be the involution on $G$ defined by $g^\tau=w^{-1}{}^tg^{-1}w$ where $w=\sm{0}{I_r}{I_{2k}}{0}$ and let 
\[
H_{r,2k}'=H_{2k,r}^\tau=\{\begin{pmatrix} u & X \\ 0 & h \end{pmatrix}: h\in \Sp_{2k}(F),\,X\in M_{r\times 2k}(F),\,u\in N_r\}.
\]
In \cite{MR1078382}, \cite{MR2417789} and \cite{MR2414223} we studied distinction by $(H_{r,2k}',\psi)$. Clearly, $\pi\in\Alg$ is $(H_{2k,r},\psi)$-distinguished if and only if $\pi^\tau$ is $(H_{r,2k}',\psi)$-distinguished. If $\pi\in \Irr$ then $\pi^\tau\simeq\pi^\vee$, by \cite{MR0404534}, and we get a natural isomorphism
\begin{equation}\label{eq: 2 models}
\Hom_{H_{2k,r}}(\pi,\psi)\simeq \Hom_{H_{r,2k}'}(\pi^\vee,\psi).
\end{equation}
In particular, $\pi$ is $(H_{2k,r},\psi)$-distinguished if and only if $\pi^\vee$ is $(H_{r,2k}',\psi)$-distinguished.
More generally, if $\pi_1,\dots,\pi_t\in\Irr$ then $(\pi_1\times\cdots\times\pi_t)^\tau\simeq \pi_t^\vee\times \cdots \times \pi_1^\vee$
and therefore 
\begin{equation}\label{eq: 2 models ind}
\Hom_{H_{2k,r}}(\pi_1\times\cdots\times\pi_t,\psi)\simeq \Hom_{H_{r,2k}'}(\pi_t^\vee\times \cdots \times \pi_1^\vee,\psi).
\end{equation}
In particular,
$\pi_1\times\cdots\times\pi_t$ is $(H_{2k,r},\psi)$-distinguished if and only if $\pi_t^\vee\times \cdots \times \pi_1^\vee$ is $(H_{r,2k}',\psi)$-distinguished.
\begin{remark}
We remark that the proof of \cite[Theorem 3.7]{MR2417789} applied \cite[Lemma 3.1]{MR2417789}, where \eqref{eq: 2 models} was mistakenly formulated for any representation $\pi$. The isomorphism \eqref{eq: 2 models ind} suffices to fill the gap. In any case, we provide in the sequel an independent generalization of \cite[Theorem 3.7]{MR2417789}.
\end{remark}

\subsubsection{}
Let $\pi\in\Irr\cap \Alg(G_n)$. If $\pi$ is $(H_{2k,r},\psi)$-distinguished for some decomposition $n=2k+r$ then by \eqref{eq: frob rec} it imbeds in $\Ind_{H_{2k,r}}^{G_n}(\psi)$ and we say that it admits a Klyachko model. 

By the uniqueness and disjointness of Klyachko models,  \cite[Theorem 1]{MR2414223}, both the imbedding (up to a constant multiple) and the decomposition $n=2k+r$ are uniquely determined by $\pi$. (Indeed, in the main result on distributions \cite[Proposition 1]{MR2414223} implying \cite[Theorem 1]{MR2414223} $H_{2k,r}$ and $H'_{r,2k}$ are in symmetry.)  In that case we denote by 
\[
r(\pi)=r
\] 
the Klyachko type of $\pi$.

\subsubsection{}
The main tool in our study of Klyachko models is the theory of derivatives of representations of $G_n$ developed in \cite{MR0404534}, \cite{MR0425030}, \cite{MR0579172} and \cite{MR584084}. It allows a reduction of many of the problems concerned with Klyachko models to the study of $\Sp$-distinction and generic representations.

For $\pi\in \Alg(G_n)$ and any $r=0,1,\dots,n$ we denote by $\pi^{(r)}$ the $r$-th derivative of $\pi$ as defined in \cite[\S 3.5 and \S 4.3]{MR0579172}. It is a functor from $\Alg(G_n)$ to $\Alg(G_{n-r})$.
As a consequence of \cite[Lemma 4.7(a)]{MR0579172} we have
\begin{lemma}\label{high_der1}
Let $\pi\in \Alg(G_{n})$. Then, $\supp(\pi^{(r)})\subseteq \supp(\pi)$ for all $0\leq r \leq n$. \qed
\end{lemma}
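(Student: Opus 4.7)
The plan is to reduce the statement to the support inclusion for Jacquet modules given in \eqref{eq: supp jm}, exploiting the fact that the $r$-th derivative is computed from a Jacquet module by a Whittaker-type coinvariant functor that affects only the trailing factor.

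More precisely, by \cite[Lemma 4.7(a)]{MR0579172}, one can realize $\pi^{(r)}$ as the representation of $G_{n-r}$ obtained from $\jm_{M,G_n}(\pi)$ by applying the exact functor $\tau \mapsto \tau_{N_r,\psi}$ of $(N_r,\psi)$-coinvariants on the $G_r$-factor of $M = M_{(n-r,r)} \simeq G_{n-r}\times G_r$. In particular, if the semisimplification of $\jm_{M,G_n}(\pi)$ is written as a direct sum of irreducible tensors $\sigma_i \otimes \tau_i$ with $\sigma_i \in \Irr(G_{n-r})$ and $\tau_i \in \Irr(G_r)$, then every irreducible subquotient of $\pi^{(r)}$ is isomorphic to some $\sigma_i$ for which $\tau_i$ admits a non-zero Whittaker model. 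Hence each irreducible subquotient of $\pi^{(r)}$ occurs as a $G_{n-r}$-tensor factor of some irreducible subquotient of $\jm_{M,G_n}(\pi)$, and so its cuspidal support is contained in $\supp(\jm_{M,G_n}(\pi))$.

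Combining this observation with \eqref{eq: supp jm} yields
\[
\supp(\pi^{(r)}) \;\subseteq\; \supp(\jm_{M,G_n}(\pi)) \;\subseteq\; \supp(\pi),
\]
as required. The boundary cases $r=0$ and $r=n$ are immediate (in the first $\pi^{(0)}=\pi$; in the second $\pi^{(n)}$ is a representation of the trivial group and so has empty support). The only delicate point is verifying that \cite[Lemma 4.7(a)]{MR0579172} indeed furnishes the identification asserted above, after which no further computation is needed; this is the step where one must be careful with the $P$-module vs.\ $G$-module distinctions implicit in the Bernstein--Zelevinsky formalism, but it is really a matter of tracing definitions rather than a substantive obstacle.
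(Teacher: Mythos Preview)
Your proposal is correct and follows essentially the same approach as the paper: both rely on \cite[Lemma 4.7(a)]{MR0579172}, with the paper simply citing this reference and marking the lemma with \qed, while you spell out the mechanism (realizing $\pi^{(r)}$ as $(N_r,\psi)$-coinvariants on the $G_r$-factor of $\jm_{M_{(n-r,r)},G_n}(\pi)$ and then invoking \eqref{eq: supp jm}). Your elaboration is accurate and your caveat about tracing the Bernstein--Zelevinsky definitions is well placed; no substantive gap.
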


\subsubsection{}
As observed in \cite[(3.2)]{MR2417789}, it follows from \cite[Proposition 3.7]{MR584084} that for $n=2k+r$ and $\pi\in\Alg(G_n)$ there is a natural linear isomorphism
\begin{equation}\label{eq: der frob}
\Hom_{H_{2k,r}}(\pi,\psi)\simeq \Hom_{\Sp_{2k}(F)}(\pi^{(r)},1).
\end{equation}
This is the reason that we prefer $H_{2k,r}$ to $H_{r,2k}'$.

Note that, in particular, $\pi$ is generic if and only if $\pi^{(n)}\ne 0$.

\subsection{Hereditary property of Klyachko models}

As we observe bellow, Klyachko models behave well with respect to parabolic induction.
\begin{proposition}\label{prop: herad}
Let $\pi_i\in\Alg(G_{n_i})$ and $n_i=2k_i+r_i$ be such that $\pi_i$ is $(H_{2k_i,r_i},\psi)$-distinguished for $i=1,\dots,t$. Then $\pi=\pi_1\times\cdots\times\pi_t$ is $(H_{2k,r},\psi)$-distinguished where $k=k_1+\cdots+ k_t$ and $r=r_1+\cdots+r_t$.
\end{proposition}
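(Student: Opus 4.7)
The plan is to reduce the problem to the $\Sp$-distinction case via the isomorphism \eqref{eq: der frob}, and then exploit the filtration of the derivative of an induced representation together with the Blanc--Delorme meromorphic continuation machinery to construct the required invariant functional.

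First, by \eqref{eq: der frob}, the hypothesis that each $\pi_i$ is $(H_{2k_i,r_i},\psi)$-distinguished is equivalent to $\pi_i^{(r_i)}$ being $\Sp_{2k_i}(F)$-distinguished, and the conclusion we seek, namely $(H_{2k,r},\psi)$-distinction of $\pi=\pi_1\times\cdots\times\pi_t$, is equivalent to $\Sp_{2k}(F)$-distinction of $\pi^{(r)}$, where $r=r_1+\cdots+r_t$. Thus the problem becomes: show that if $\pi_i^{(r_i)}$ is $\Sp_{2k_i}$-distinguished for each $i$, then $(\pi_1\times\cdots\times\pi_t)^{(r)}$ is $\Sp_{2k}$-distinguished.

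Next I would invoke the Leibniz-type formula of Bernstein and Zelevinsky (\cite[\S4]{MR0579172}), which provides a filtration of $(\pi_1\times\cdots\times\pi_t)^{(r)}$ whose graded pieces are (suitable twists of) products $\pi_1^{(s_1)}\times\cdots\times\pi_t^{(s_t)}$ indexed by tuples $(s_1,\dots,s_t)$ of non-negative integers with $s_1+\cdots+s_t=r$. For the specific choice $s_i=r_i$, each factor $\pi_i^{(r_i)}$ is $\Sp_{2k_i}$-distinguished by the above reduction, so by Corollary \ref{open dist} the corresponding graded piece $\pi_1^{(r_1)}\times\cdots\times\pi_t^{(r_t)}$ is itself $\Sp_{2k}$-distinguished.

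The final and hardest step will be to promote this $\Sp_{2k}$-distinction from a graded piece of the filtration of $\pi^{(r)}$ to $\pi^{(r)}$ itself. To accomplish this I would employ the Blanc--Delorme meromorphic continuation technique, already used in the proof of Lemma \ref{BD her}. Consider the holomorphic family $\sigma[\lambda]=\pi_1[\lambda_1]\otimes\cdots\otimes\pi_t[\lambda_t]$ and the corresponding induced family $\pi[\lambda]$, realized on a common underlying vector space so that both the Bernstein--Zelevinsky filtration and the derivative functor are compatible with the twist by $\lambda$. For $\lambda$ in general position the filtration of $\pi[\lambda]^{(r)}$ decouples, and the $\Sp_{2k}$-invariant functional furnished by Corollary \ref{open dist} on the targeted graded piece extends meromorphically in $\lambda$ to an $\Sp_{2k}$-invariant functional on $\pi[\lambda]^{(r)}$ via the Blanc--Delorme apparatus (\cite[Theorem 2.8]{MR2401221}). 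The main technical obstacle lies precisely here: one must verify that this meromorphic family is not identically zero and that its leading term along a generic complex line through $\lambda=0$ produces a non-zero $\Sp_{2k}$-invariant functional on $\pi^{(r)}$, which by \eqref{eq: der frob} yields the desired $(H_{2k,r},\psi)$-invariant functional on $\pi$.
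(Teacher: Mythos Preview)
Your approach is essentially the paper's: reduce to $\Sp$-distinction via \eqref{eq: der frob}, use the Leibniz filtration on $\pi^{(r)}$, and promote the invariant form on the targeted graded piece via Blanc--Delorme meromorphic continuation and a leading-term argument. The paper adds two concretenesses you should fill in: it first reduces to $t=2$ by induction, and it identifies the precise reason the filtration of $\pi_s^{(r)}=(\nu^s\pi_1\times\pi_2)^{(r)}$ decouples for generic $s$---the central characters of the irreducible components of the Leibniz pieces $\nu^s\pi_1^{(i)}\times\pi_2^{(r-i)}$ become pairwise disjoint in a punctured neighborhood of $0$, yielding a meromorphic projection $P_s$ onto $\nu^s\pi_1^{(r_1)}\times\pi_2^{(r_2)}$ which is then composed with the meromorphic family of $\Sp$-invariant forms from Lemma \ref{BD her}.
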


\begin{proof}
Induction on $t$ reduces the statement to the case $t=2$ that we now assume. 
Let $\pi_s=\nu^s\pi_1\times\pi_2$ for $s\in \C$, so that $\pi=\pi_0$.
Recall that by the Leibnitz rule, \cite[Lemma 4.5]{MR0579172}, $\pi_s^{(r)}$ admits a filtration with factors $(\nu^s \pi_1)^{(i)}\times \pi_2^{(r-i)}\simeq\nu^s \pi_1^{(i)}\times \pi_2^{(r-i)}$, $i=0,\dots,r$.
Note, that there exists a small enough punctured neighborhood $U$ of $s=0$, so that for $i\ne j$ the central characters of the (finitely many) irreducible components of $\nu^s \pi_1^{(i)}\times \pi_2^{(r-i)}$ and of $\nu^s \pi_1^{(j)}\times \pi_2^{(r-j)}$ are disjoint. It follows that for $s\in U$ we have,
\[
\pi_s^{(r)}\simeq \oplus_{i=0}^r (\nu^s \pi_1^{(i)}\times \pi_2^{(r-i)}).
\]
In fact, when realizing all $\pi_s$ in the representation space of $\pi$, this direct sum decomposition is independent of $s\in U$. It follows, that there is a meromorphic map $P_s:\pi_s^{(r)}\rightarrow \nu^s \pi_1^{(r_1)}\times \pi_2^{(r_2)}$, that is surjective for $s\in U$.

By \eqref{eq: der frob}, $\pi_i^{(r_i)}$ is $\Sp_{2k_i}(F)$-distinguished, $i=1,2$ and therefore, by Lemma \ref{BD her}, there exists a non-zero, meromorphic family of linear forms $\ell_s$ such that in a possibly smaller punctured neighborhood $U_0$ of $s=0$ we have $\ell_s\in \Hom_{\Sp_{2k}(F)}(\nu^s \pi_1^{(r_1)}\times \pi_2^{(r_2)},1)$. Therefore, $\ell_s\circ P_s\in\Hom_{\Sp_{2k}(F)}(\pi_s^{(r)},1)$ is non-zero for $s\in U_0$. As in Corollary \ref{open dist}, a leading term argument implies that $\pi^{(r)}=\pi_0^{(r)}$ is $\Sp_{2k}(F)$-distinguished and therefore, by \eqref{eq: der frob}, $\pi$ is $(H_{2k,r},\psi)$-distinguished.

\end{proof}

\subsection{Reduction to cuspidal lines}
We reduce the study of $(H_{2k,r},\psi)$-distinguished representations in $\Alg$ to rigid representations, in fact, more generally to totally disjoint supports (Definition \ref{def: tot disj}). 
\begin{proposition}\label{lem: Kly cusp}
Let $\pi_i\in\Alg$, $i=1,\dots,t$ be such that $\supp(\pi_i)$ and $\supp(\pi_j)$ are totally disjoint for all $i\ne j$. Then $\pi=\pi_1\times\cdots\times\pi_t$ admits a Klyachko model if and only if $\pi_i$ admits a Klyachko model for all $i=1,\dots,t$.  
More precisely, $\pi$ is $(H_{2k,r},\psi)$-distinguished if and only if $\pi_i$ is $(H_{2k_i,r_i},\psi)$-distinguished, $i=1,\dots,t$ for some decomposition $r=r_1+\cdots+r_t$.
\end{proposition}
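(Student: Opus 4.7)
The plan is to split the proposition into the easy ``if'' direction and the substantive ``only if'' direction. For the ``if'' direction, the hereditary property already established in Proposition \ref{prop: herad} immediately yields that if each $\pi_i$ is $(H_{2k_i,r_i},\psi)$-distinguished, then $\pi=\pi_1\times\cdots\times\pi_t$ is $(H_{2k,r},\psi)$-distinguished with $k=\sum k_i$ and $r=\sum r_i$.

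For the ``only if'' direction, my strategy is to transfer everything to the symplectic setting via the derivative isomorphism \eqref{eq: der frob}, and then reduce to the totally-disjoint-supports result already proved for $\Sp$-distinction (Lemma \ref{lem: useful cusp}). Concretely, assume $\pi$ is $(H_{2k,r},\psi)$-distinguished. By \eqref{eq: der frob}, $\pi^{(r)}$ is $\Sp_{2k}(F)$-distinguished. The Leibnitz rule of Bernstein-Zelevinsky (applied inductively on $t$) provides a finite filtration of $\pi^{(r)}=(\pi_1\times\cdots\times\pi_t)^{(r)}$ whose successive quotients are exactly the products
\[
\pi_1^{(r_1)}\times\cdots\times\pi_t^{(r_t)},
\]
indexed by non-negative compositions $r=r_1+\cdots+r_t$. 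Lemma \ref{lem: dist comp} then guarantees that at least one such factor is $\Sp$-distinguished; fix a composition $(r_1,\dots,r_t)$ for which this happens.

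Next, I invoke Lemma \ref{high_der1} to get $\supp(\pi_i^{(r_i)})\subseteq\supp(\pi_i)$ for every $i$. Since total disjointness is hereditary with respect to passing to subsets, the supports of the $\pi_i^{(r_i)}$ remain pairwise totally disjoint. Lemma \ref{lem: useful cusp} then applies (note that the $\pi_i^{(r_i)}$ lie in $\Alg$, as needed) and shows that each $\pi_i^{(r_i)}$ is individually $\Sp$-distinguished. Writing $n_i=2k_i+r_i$, which is forced by the requirement that $\pi_i^{(r_i)}\in\Alg(G_{n_i-r_i})$ be $\Sp$-distinguished, a final application of \eqref{eq: der frob} yields that each $\pi_i$ is $(H_{2k_i,r_i},\psi)$-distinguished. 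A quick accounting, $\sum n_i=n=2k+r$ together with $n_i=2k_i+r_i$, gives $r=\sum r_i$ and $k=\sum k_i$, as required.

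The main potential obstacle is ensuring that the Leibnitz filtration of $\pi^{(r)}$ genuinely has graded pieces in the advertised product form and that Lemma \ref{lem: useful cusp} is applicable to these products. The first point is a routine induction on $t$ from the two-factor Leibnitz rule (already used in the proof of Proposition \ref{prop: herad}), and the second is clean because Lemma \ref{high_der1} guarantees that derivatives do not enlarge cuspidal support, so total disjointness is preserved. Once these bookkeeping points are verified, the argument is short and parallels the $\Sp$-distinction reduction given earlier.
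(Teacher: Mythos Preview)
Your proof is correct and follows essentially the same approach as the paper: both use Proposition \ref{prop: herad} for the ``if'' direction, and for the ``only if'' direction both pass to $\pi^{(r)}$ via \eqref{eq: der frob}, apply the Leibnitz rule together with Lemma \ref{lem: dist comp} to find a distinguished factor $\pi_1^{(r_1)}\times\cdots\times\pi_t^{(r_t)}$, invoke Lemma \ref{high_der1} to preserve total disjointness, and then apply Lemma \ref{lem: useful cusp} and \eqref{eq: der frob} again. Your version is simply more explicit about the inductive use of the Leibnitz rule and the final bookkeeping on $k$ and $r$.
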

\begin{proof}
The `if' part is immediate from Proposition \ref{prop: herad}. Assume that $\pi\in\Alg(G_n)$, $n=2k+r$ and $\pi$ is $(H_{2k,r},\psi)$-distinguished.
By \eqref{eq: der frob}, $\pi^{(r)}$ is $\Sp_{2k}(F)$-distinguished. Therefore, by the Leibnitz rule, \cite[Lemma 4.5]{MR0579172}, and Lemma \ref{lem: dist comp}, there exists a decomposition $r=r_1+\cdots+r_t$ such that $\pi_1^{(r_1)}\times\cdots\times\pi_t^{(r_t)}$ is $\Sp_{2k}(F)$-distinguished. Since, by Lemma \ref{high_der1}, $\supp(\pi_i^{(r_i)})\subseteq\supp(\pi_i)$ it now follows from Lemma \ref{lem: useful cusp} that $\pi_i^{(r_i)}$ is $\Sp$-distinguished for all $i=1,\dots,t$. The lemma now follows from \eqref{eq: der frob}.
\end{proof}

\section{Klyachko models for ladder representations}\label{s: kl ladder}

We classify all ladder representations that admit, any given, Klyachko model.

\subsection{Klyachko models for Proper ladders}
\subsubsection{Proper Ladders}\label{sss: prop l}
\begin{definition} A ladder, $\mult=\{\Delta_1,\dots,\Delta_k\}\in\OO_\rho$ is called a \emph{proper ladder} if $\Delta_{i+1}\prec\Delta_i$, $i=1,\dots, k-1$. If $\mult$ is a proper ladder then $L(\mult)$ is called a proper ladder representation.
\end{definition}
In fact, if $\mult\in\OO_\rho$ is a proper ladder then $\mult^t$ is also a proper ladder, hence $Z(\mult)$ is a proper ladder representation, but this fact will not be used in the sequel.

\subsubsection{}\label{sss: prop}
Note that if $\mult\in\OO_\rho$ is a ladder then it can be decomposed uniquely (up to order) as a sum $\mult=\mult_1+\cdots+\mult_t$ where $\mult_i$ is a proper ladder for all $i=1,\dots,t$ and $\Delta\not\prec\Delta'$ for all $i\ne j$, $\Delta\in\mult_i$ and $\Delta'\in \mult_j$. Therefore, $\supp(L(\mult_i))$ and $\supp(L(\mult_j))$ are totally disjoint for all $i\ne j$ and, by Lemma \ref{lem; tot disj is irr},
\[
L(\mult)=L(\mult_1)\times\cdots\times L(\mult_t).
\]
In other words, any ladder representation is a product of proper ladder representations uniquely determined up to order.


\subsubsection{Right aligned segments} We define the following relation on segments of cuspidal representations.
\begin{definition}\label{def: ra seg}
For segments $\Delta=[a,b]_{(\rho)}$ and $\Delta'=[a',b']_{(\rho)}$ we say that $\Delta'$ is right-aligned with $\Delta$ and write $\Delta' \vdash \Delta$ if 
\begin{itemize}
\item $a\ge a'+1$ and
\item $b= b'+1$.
\end{itemize}
We label this relation by the integer $r=d(a-a'-1)$ where $\rho\in \Alg(G_d)$ and write $\Delta'\vdash_r \Delta$.
\end{definition}
Note, in particular, that $\Delta'\vdash_0\Delta$ means that $\Delta=\nu\Delta'$.
\begin{example}
Let $\Delta=[4,7]_{(\rho)}$ and $\Delta'=[0,6]_{(\rho)}$ be segments. Then 
\[
\tiny{\xymatrix{
&&&&&&&\overset{4}\circ&\overset{5}\circ\ar@{-}[l]&\overset{6}\circ\ar@{-}[l]&\overset{7}\circ\ar@{-}[l] &\\
&&&\overset{0}\circ&\overset{1}\circ\ar@{-}[l]&\overset{2}\circ\ar@{-}[l]&\overset{3}\circ\ar@{-}[l] &\overset{4}\circ\ar@{-}[l]&\overset{5}\circ\ar@{-}[l]&\overset{6}\circ\ar@{-}[l] & }}
\]
illustrates the relation $\Delta'\vdash_{3d}\Delta$ if $\rho\in\Alg(G_d)$. 

\end{example}
\subsubsection{} Before characterizing the proper ladder representations admitting Klyachko models we need the following technical result.
\begin{lemma}\label{lem: left-alg}
Let $d$ be such that $\rho\in \Alg(G_d)$ and $\mult=\{\Delta_1,\dots,\Delta_t\}\in \OO_\rho$ a proper ladder. Write $\Delta_i=[a_i,b_i]_{(\rho)}$. Suppose that $c_1>\cdots>c_t$ are integers such that $a_i-1\le c_i\le b_i$, $i=1,\dots,t$ and let $\mult_1=\{[c_1+1,b_1]_{(\rho)},\dots,[c_t+1,b_t]_{(\rho)}\}$ and $\mult_2=\{[a_1,c_1]_{(\rho)},\dots,[a_t,c_t]_{(\rho)}\}$ be the associated ladders.  If either $\mult_1=0$ or $L(\mult_1)$ is $\Sp$-distinguished and either $\mult_2=0$ or $L(\mult_2)$ is generic then $c_{t-2i}+1=c_{t-2i-1}=a_{t-2i-1}-1$ and $\Delta_{t-2i}\vdash_{r_i}\Delta_{t-2i-1}$ where $r_i=d(a_{t-2i-1}-a_{t-2i}-1)$ for all $i=0,\dots,\lfloor t/2\rfloor-1$. Moreover, if $t$ is odd then $c_1=b_1$. 
\end{lemma}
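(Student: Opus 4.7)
The plan is to proceed by induction on $t$. The base case $t=0$ is vacuous; for $t=1$, the hypothesis on $\mult_1=\{[c_1+1,b_1]_{(\rho)}\}$ combined with Theorem \ref{thm: dist lad} forces $\mult_1$ to be of Speh type, but a single non-empty segment cannot be of Speh type, so $\mult_1=0$ and $c_1=b_1$. For the inductive step I would first translate the two hypotheses into conditions that are easier to manipulate: by Theorem \ref{thm: dist lad}, $\mult_1$ is of Speh type (including the case $\mult_1=0$), and by the standard characterization of generic representations in the Langlands classification, no two segments of $\mult_2$ are linked.

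The main work is to identify the bottom Speh pair and pin down $c_{t-1}$ and $c_t$. First I would show $[c_t+1,b_t]_{(\rho)}$ is non-empty: otherwise $\Delta_t=[a_t,c_t]_{(\rho)}\in\mult_2$, and since $c_{t-1}>c_t=b_t\ge a_{t-1}-1$ (the last inequality being the proper-ladder condition), $[a_{t-1},c_{t-1}]_{(\rho)}$ is also non-empty and these two segments are linked, contradicting the no-link condition on $\mult_2$. The Speh-type structure then forces $\nu[c_t+1,b_t]_{(\rho)}=[c_t+2,b_t+1]_{(\rho)}$ to appear as some $[c_i+1,b_i]_{(\rho)}$; since the $b_j$'s are strictly decreasing integers this forces $i=t-1$, $b_{t-1}=b_t+1$, and $c_{t-1}=c_t+1$. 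A second application of the no-link condition yields that $[a_{t-1},c_{t-1}]_{(\rho)}$ is empty: otherwise $c_t=c_{t-1}-1\ge a_{t-1}-1\ge a_t$ so $[a_t,c_t]_{(\rho)}$ is also non-empty, and the two are linked because $c_t\ge a_{t-1}-1$. Therefore $c_{t-1}=a_{t-1}-1$ and $c_t=a_{t-1}-2$, and $\Delta_t\vdash_{r_0}\Delta_{t-1}$ follows from $b_{t-1}=b_t+1$ together with the proper-ladder inequality $a_{t-1}>a_t$, completing the $i=0$ case.

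To close the induction, I would apply the inductive hypothesis to the proper ladder $\{\Delta_1,\dots,\Delta_{t-2}\}$ with cuts $c_1,\dots,c_{t-2}$. Removing the identified Speh pair from $\mult_1$ preserves the Speh-type property, and removing $\{[a_{t-1},c_{t-1}]_{(\rho)},[a_t,c_t]_{(\rho)}\}$ from $\mult_2$ preserves the no-link condition, so the hypotheses still hold for the truncated data; the inductive hypothesis then supplies the remaining equalities for $i=1,\dots,\lfloor t/2\rfloor-1$ and, when $t-2$ (equivalently $t$) is odd, the moreover clause $c_1=b_1$. The hard step is the combined use of the two hypotheses in the middle paragraph: each hypothesis taken on its own allows considerable flexibility, but together they force the very rigid alignment $c_{t-1}=a_{t-1}-1$, $c_t=a_{t-1}-2$ which in turn gives $\Delta_t\vdash\Delta_{t-1}$ and makes the inductive reduction possible. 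Establishing this rigidity cleanly, together with the initial non-emptiness of $[c_t+1,b_t]_{(\rho)}$, is the heart of the argument.
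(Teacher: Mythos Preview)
Your proposal is correct and uses the same two key ingredients as the paper: the Speh-type characterization of $\Sp$-distinguished ladders (Theorem~\ref{thm: dist lad}) for $\mult_1$, and the no-linked-pair criterion for genericity of $L(\mult_2)$. The organization, however, differs. You argue by induction, establishing only $c_t<b_t$, identifying the bottom Speh pair $[c_t+1,b_t]_{(\rho)}$, $[c_{t-1}+1,b_{t-1}]_{(\rho)}$, pinning down $c_{t-1}=a_{t-1}-1$, and then recursing on $\{\Delta_1,\dots,\Delta_{t-2}\}$. The paper proceeds directly: it first shows in one stroke that $c_i<b_i$ for \emph{all} $i>1$ (by the same linked-pair argument you use for $i=t$, observing that $c_i=b_i$ with $i>1$ forces $[a_i,c_i]_{(\rho)}\prec[a_{i-1},c_{i-1}]_{(\rho)}$), then applies Theorem~\ref{thm: dist lad} globally to conclude that the (now fully non-degenerate for $i>1$) ladder $\mult_1$ satisfies $[c_{t-2i-1}+1,b_{t-2i-1}]_{(\rho)}=\nu[c_{t-2i}+1,b_{t-2i}]_{(\rho)}$ for all $i$ simultaneously, and finally rules out $c_{t-2i-1}\ge a_{t-2i-1}$ again by the no-link criterion. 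The paper's route is shorter and avoids having to check that the inductive hypotheses are inherited by the truncated data; your inductive route makes the pairing mechanism very explicit and is perhaps more transparent for a first reading, at the cost of a bit more bookkeeping (notably verifying that removing the bottom Speh pair from $\mult_1$ preserves the Speh-type property, which is immediate once one notes the removed pair is exactly the last two rungs of the ladder $\mult_1$).
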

\begin{proof}
If $t=1$ then the lemma follows from the fact that if $\mult_1\ne 0$ then $L(\mult_1)$ is generic and Lemma \ref{gen not sp}. 

Assume that $t>1$.
Suppose that $1< i\le t$ is such that $c_i=b_i$ (in particular, $[a_{i},c_{i}]_{(\rho)}$ is not empty). Then, since $\mult$ is a proper ladder, we have $a_{i-1}-1\le b_i=c_i<c_{i-1}$ and therefore $[a_{i-1},c_{i-1}]_{(\rho)}$ is non-empty. But then $[a_i,c_i]_{(\rho)}\prec [a_{i-1},c_{i-1}]_{(\rho)}$ are both in $\mult_2$. By \cite[Theorem 9.7]{MR584084} this contradicts the assumption that $L(\mult_2)$ is generic. Therefore, $c_i<b_i$ for all $i>1$.

By the assumption that $L(\mult_1)$ is $\Sp$-distinguished and Theorem \ref{thm: dist lad}, $\mult_1$ is of Speh type. That is,  $c_1<b_1$ if and only if $t$ is even and either way, 
\[[c_{t-2i-1}+1,b_{t-2i-1}]_{(\rho)}=\nu[c_{t-2i}+1,b_{t-2i}]_{(\rho)}, \ \ \ i=0,\dots,\lfloor t/2\rfloor-1. 
\]
To complete the proof it is only left to show that $c_{t-2i-1}=a_{t-2i-1}-1$ for all $i=0,\dots,\lfloor t/2\rfloor-1$. But if $c_{t-2i-1}\ge a_{t-2i-1}$ then $c_{t-2i}=c_{t-2i-1}-1\ge a_{t-2i-1}-1$, i.e., $[a_{t-2i},c_{t-2i}]_{(\rho)}\prec [a_{t-2i-1},c_{t-2i-1}]_{(\rho)}$ in $\mult_2$ which, again by \cite[Theorem 9.7]{MR584084}, is a contradiction. The lemma follows. 
\end{proof}

\subsubsection{}
We now determine the proper ladder representations that admit any particular Klyachko model.
\begin{proposition}\label{prop: prop lad}
Let $\mult=\{\Delta_1,\dots,\Delta_t\}\in\OO_\rho$ be a proper ladder, so that $L(\mult)\in \Alg(G_n)$ and let $n=2k+r$. If $t$ is odd, let $s$ be such that $L(\Delta_1)\in \Alg(G_s)$, otherwise, set $s=0$. Then $L(\mult)$ is $(H_{2k,r},\psi)$-distinguished if and only if $\Delta_{t-2i}\vdash_{r_i}\Delta_{t-2i-1}$ for some $r_i$, $i=0,\dots,\lfloor t/2\rfloor-1$ and $r=r_0+\cdots +r_{\lfloor t/2\rfloor-1}+s$. 
\end{proposition}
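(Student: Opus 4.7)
The starting point is the Frobenius reciprocity \eqref{eq: der frob}, which reduces the proposition to the assertion that $L(\mult)^{(r)}$ is $\Sp_{2k}(F)$-distinguished if and only if the stated right-alignment conditions hold. I intend to apply the Leibnitz rule \cite[Lemma 4.5]{MR0579172} to the derivative of the standard module $\lambda(\mult)=L(\Delta_1)\times\cdots\times L(\Delta_t)$; using the known formula $L([a,b]_{(\rho)})^{(md)}=L([a+m,b]_{(\rho)})$ for $0\le m\le b-a+1$, each Leibnitz graded piece of $\lambda(\mult)^{(r)}$ is a product $\prod_{i=1}^t L([a_i+m_i,b_i]_{(\rho)})$ indexed by tuples $(m_1,\dots,m_t)$ with $\sum_i m_id=r$.

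\emph{Sufficiency.} Assume the right-alignment conditions. I choose the cut $m_{t-2i-1}=0$ and $m_{t-2i}=a_{t-2i-1}-1-a_{t-2i}$ for $i=0,\dots,\lfloor t/2\rfloor-1$, together with $m_1=\ell(\Delta_1)$ if $t$ is odd. The hypothesis $r=r_0+\cdots+r_{\lfloor t/2\rfloor-1}+s$ translates into $\sum_i m_id=r$. Discarding the trivial factors arising from positions with $m_i=\ell(\Delta_i)$, the corresponding Leibnitz graded piece is a product of Speh pairs $L(\Delta_{t-2i-1})\times L(\nu^{-1}\Delta_{t-2i-1})$. Each such pair is $\Sp$-distinguished by Lemma \ref{lem: closed} (applied with $k=1$, $s=0$, $\sigma_1=L(\nu^{-1}\Delta_{t-2i-1})$), and iterated use of Corollary \ref{open dist} transfers this to the full product. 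To propagate the invariant form to the Langlands quotient $L(\mult)^{(r)}$, I would exploit the explicit description $\K=\sum_i\K_i$ of the kernel from Theorem \ref{thm: main lad} and verify, via a careful support analysis of the derivatives $\K_i^{(r)}$, that the constructed $\Sp$-invariant form annihilates $\K^{(r)}$.

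\emph{Necessity.} Conversely, assume $L(\mult)^{(r)}$ is $\Sp$-distinguished. Exactness of the derivative functor makes $\lambda(\mult)^{(r)}$ likewise $\Sp$-distinguished, and Lemma \ref{lem: dist comp} applied to its Leibnitz filtration produces a graded piece $\prod_i L([a_i+m_i,b_i]_{(\rho)})$ that is $\Sp$-distinguished. Since the $b_i$'s are strictly decreasing, the non-trivial factors correspond to a genuine set of segments, and Corollary \ref{cor: set case} forces this set to be of Speh type. In a ladder, a Speh pair (whose $b$-values differ by one) can only occur between adjacent indices, so the pairing is forced. Writing $c_i=a_i+m_i-1$, the resulting decomposition of $\mult$ into right parts $\mult_1=\{[c_i+1,b_i]_{(\rho)}\}$ (of Speh type, hence $L(\mult_1)$ is $\Sp$-distinguished by Theorem \ref{thm: dist lad}) and left parts $\mult_2=\{[a_i,c_i]_{(\rho)}\}$ (generic, as Speh-type pairing in the derivative forces the complementary segments to be pairwise unlinked) places us exactly in the setting of Lemma \ref{lem: left-alg}. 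Its conclusion delivers the right-alignment relations $\Delta_{t-2i}\vdash_{r_i}\Delta_{t-2i-1}$ with $r_i=d(a_{t-2i-1}-a_{t-2i}-1)$ and, for $t$ odd, the contribution $s=\ell(\Delta_1)d$.

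\textbf{Main obstacle.} The chief difficulty is the descent step in the sufficient direction. The Leibnitz piece that witnesses $\Sp$-distinction of $\lambda(\mult)^{(r)}$ sits in the middle of the filtration, so it does not project directly onto the Langlands quotient $L(\mult)^{(r)}$, and Lemma \ref{drmk: ist quot} only passes distinction in the opposite direction. Transferring the invariant form to the quotient requires the explicit kernel generators $\K_i$ from Theorem \ref{thm: main lad} together with a detailed analysis of the supports of each $\K_i^{(r)}$ to rule out interference with the specific form constructed on the Speh-pair product.
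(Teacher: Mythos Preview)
Your approach through the derivative $\pi^{(r)}$ and the Leibnitz filtration of the \emph{standard module} $\lambda(\mult)^{(r)}$ creates two real problems, one of which you flag yourself and one you do not.

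\textbf{Sufficiency.} The descent obstacle you identify is genuine and is not resolved by your sketch. The $\Sp$-distinguished Leibnitz piece sits in the middle of a filtration of $\lambda(\mult)^{(r)}$, and passing the form down to $L(\mult)^{(r)}$ would require showing it vanishes on $\K^{(r)}$. Each $\K_i^{(r)}$ is again a Leibnitz filtration, and the supports overlap heavily with the piece you constructed; ruling out interference is not routine. The paper sidesteps this entirely by working not with derivatives but with the Jacquet module $\jm_{M_{(2k,r)},G_n}(L(\mult))$ of the ladder representation itself, via the isomorphism \eqref{eq: frobs}. The key input is the Kret--Lapid result \cite[Corollary 2.2]{MR2996769}: the Jacquet module of a ladder is semisimple, so the component $\sigma_1\otimes\sigma_2$ you want is a \emph{direct summand}, hence a quotient, and Lemma \ref{drmk: ist quot} applies immediately.

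\textbf{Necessity.} Your argument here also has a gap. From the Leibnitz filtration you obtain cuts $c_i=a_i+m_i-1$ with no relation among them beyond $\sum m_i d=r$; in particular nothing guarantees $c_1>\cdots>c_t$, which is a hypothesis of Lemma \ref{lem: left-alg}. Nor is the genericity of $L(\mult_2)$ automatic: the Speh-type condition on the non-trivial right parts constrains those pieces, but says nothing directly about whether the left parts $[a_i,c_i]_{(\rho)}$ are pairwise unlinked. Moreover, when some $m_i=\ell(\Delta_i)$ the corresponding index drops out of $\mult'$, so the Speh pairing you extract is between non-trivial pieces whose original indices need not be adjacent (the $b_i$'s of a proper ladder are strictly decreasing but not necessarily by one). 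Again the Kret--Lapid description \cite[Theorem 2.1]{MR2996769} fixes all of this for free: the irreducible components of $\jm_{M_{(2k,r)},G_n}(L(\mult))$ are exactly $L(\mult_1)\otimes L(\mult_2)$ with $\mult_1,\mult_2$ ladders indexed by tuples $c_1>\cdots>c_t$ satisfying $a_i-1\le c_i\le b_i$. The ordering and ladder structure are built in, and you are placed precisely in the hypotheses of Lemma \ref{lem: left-alg}.

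In short, replace the derivative/Leibnitz route with the Jacquet-module reduction \eqref{eq: frobs} and invoke \cite{MR2996769}; both directions then become short.
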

\begin{proof}
Let $\pi=L(\mult)$ and note that $\Ind_{\Sp_{2k}(F) \times N_{r}}^{M_{(2k,r)}}(1\otimes \psi)=\Ind_{H_{2k,r}}^{P_{(2k,r)}}(\psi)|_{M_{(2k,r)}}$.  By \eqref{eq: frob rec}, \eqref{eq: 1st adj} and transitivity of induction we have
\begin{multline}\label{eq: frobs}
\Hom_{H_{2k,r}}(\pi,\psi)\simeq\Hom_{G_n}(\pi,\Ind_{H_{2k,r}}^{G_n}(\psi)) \simeq \\  \Hom_{M_{(2k,r)}}(\jm_{M_{(2k,r)},G_n}(\pi),\Ind_{\Sp_{2k}(F) \times N_{r}}^{M_{(2k,r)}}(1\otimes\psi))\simeq \Hom_{\Sp_{2k}(F)\times N_r}(\jm_{M_{(2k,r)},G_n}(\pi),1\otimes \psi).
\end{multline}

Assume first that $\pi$ is $(H_{2k,r},\psi)$-distinguished. By \eqref{eq: frobs} and Lemma \ref{lem: dist comp} there is an irreducible component $\sigma_1\otimes\sigma_2$ of $\jm_{M_{(2k,r)},G_n}(\pi)$, (where $\sigma_1\in\Alg(G_{2k})$ and $\sigma_2\in \Alg(G_r)$) so that $\sigma_1$ is $\Sp$-distinguished and $\sigma_2$ is generic. 

If $\Delta_i=[a_i,b_i]_{(\rho)}$ then it follows from \cite[Theorem 2.1]{MR2996769} that there exist $c_1>\cdots>c_t$ such that $\sigma_1=L(\mult_1)$ and $\sigma_2=L(\mult_2)$ where $\mult_1=\{[c_1+1,b_1]_{(\rho)},\dots,[c_t+1,b_t]_{(\rho)}\}$ and $\mult_2=\{[a_1,c_1]_{(\rho)},\dots,[a_t,c_t]_{(\rho)}\}$. The `only if' part of the proposition therefore follows from Lemma \ref{lem: left-alg}. 

Assume that $\Delta_{t-2i}\vdash_{r_i}\Delta_{t-2i-1}$, $i=0,\dots,\lfloor t/2\rfloor-1$ and $r=r_0+\cdots +r_{\lfloor t/2\rfloor-1}+s$.
Let $c_{t-2i}+1=c_{t-2i-1}=a_{t-2i-1}-1$, $i=0,\dots,\lfloor t/2\rfloor-1$. If $t$ is odd, further let $c_1=b_1$. Let
$\sigma_1=L(\mult_1)$ and $\sigma_2=L(\mult_2)$ where $\mult_1=\{[c_1+1,b_1]_{(\rho)},\dots,[c_t+1,b_t]_{(\rho)}\}$ and $\mult_2=\{[a_1,c_1]_{(\rho)},\dots,[a_t,c_t]_{(\rho)}\}$. Note that $\Delta\not\prec\Delta'$ for any two segments in the ladder $\mult_2$ and therefore $\sigma_2$ is generic by \cite[Theorem 9.7]{MR584084}. It is also clear from the above definitions that $\mult_1$ is of Speh type and therefore $\sigma_1$ is $\Sp$-distinguished by 
Theorem \ref{thm: dist lad}. By \cite[Corollary 2.2]{MR2996769}, $\sigma_1\otimes\sigma_2$ is a direct summand (and in particular a quotient) of $\jm_{M_{(2k,r)},G_n}(\pi)$. Therefore \eqref{eq: frobs} and Lemma \ref{drmk: ist quot} complete the proof of the proposition.
\end{proof}

\begin{remark}
If $\pi$ is a proper ladder representation then Proposition \ref{prop: prop lad} provides a recipe for computing $r(\pi)$ and in particular, directly implies the uniqueness of $r(\pi)$. The same is true more generally for ladder representations.
\end{remark}

\subsection{Klyachko models of ladder representations}

\begin{theorem}\label{thm: kly lad}
Let $\pi$ be a ladder representation and assume that $\pi=\pi_1\times\cdots\times\pi_t$ is the unique decomposition of $\pi$ as a product of proper ladder representations (see \S \ref{sss: prop}). Then $\pi$ admits a Klyachko model if and only if $\pi_i$ admits a Klyachko model for all $i=1,\dots,t$. Furthermore, in that case $r(\pi)=r(\pi_1)+\cdots+r(\pi_t)$.
\end{theorem}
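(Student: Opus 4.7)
The plan is to reduce the statement directly to the cuspidal-line reduction Proposition \ref{lem: Kly cusp}, using the structural observation already recorded in \S \ref{sss: prop}.

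First I would invoke \S \ref{sss: prop}: by the very definition of the proper-ladder decomposition, the supports $\supp(\pi_i)$ and $\supp(\pi_j)$ are totally disjoint for every $i\ne j$. Thus the hypotheses of Proposition \ref{lem: Kly cusp} are satisfied, and its first conclusion gives at once that $\pi$ admits a Klyachko model if and only if each $\pi_i$ does, establishing the main equivalence of the theorem.

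For the additivity $r(\pi) = r(\pi_1) + \cdots + r(\pi_t)$, I would appeal to the more precise assertion of Proposition \ref{lem: Kly cusp}. Assume $\pi$ is $(H_{2k,r},\psi)$-distinguished with $r = r(\pi)$; then there exist integers $r_i$ with $r = r_1 + \cdots + r_t$ such that each $\pi_i$ is $(H_{2k_i, r_i}, \psi)$-distinguished. The uniqueness of the Klyachko type (disjointness of Klyachko models, \cite{MR2414223}) forces $r_i = r(\pi_i)$, whence $r(\pi) = \sum_i r(\pi_i)$. The converse direction, that $\pi$ is $(H_{2k,\sum r(\pi_i)},\psi)$-distinguished as soon as each $\pi_i$ is $(H_{2k_i,r(\pi_i)},\psi)$-distinguished, is exactly the content of Proposition \ref{prop: herad}, and uniqueness then identifies this common type with $r(\pi)$.

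In summary, no new combinatorial or representation-theoretic input is required: the theorem packages together the cuspidal-line reduction (Proposition \ref{lem: Kly cusp}), the hereditary property (Proposition \ref{prop: herad}), and the uniqueness of Klyachko types. I do not anticipate any serious obstacle; the only thing to verify carefully is that the supports of distinct proper-ladder factors are totally disjoint in the sense of Definition \ref{def: tot disj}, and this has already been built into the decomposition of \S \ref{sss: prop}.
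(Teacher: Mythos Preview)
Your proposal is correct and takes essentially the same approach as the paper, which simply writes ``This is immediate from Proposition \ref{lem: Kly cusp} and \S \ref{sss: prop}.'' Your version merely unpacks this one-liner, making the role of uniqueness of Klyachko types explicit for the additivity statement; note that your separate appeal to Proposition \ref{prop: herad} is slightly redundant, since the ``if'' direction is already contained in the ``if and only if'' of Proposition \ref{lem: Kly cusp}.
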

\begin{proof}
This is immediate from Proposition \ref{lem: Kly cusp} and \S \ref{sss: prop}.
\end{proof}

\begin{remark}
Based on \eqref{eq: 2 models}, the classification of ladder representations that are $(H_{r,2k}',\psi)$-distinguished is obtained by `reflecting all segments along the origin of their $\Z$-line'.
\end{remark}

\def\cprime{$'$} \def\cprime{$'$} \def\Dbar{\leavevmode\lower.6ex\hbox to
  0pt{\hskip-.23ex \accent"16\hss}D}
  \def\cftil#1{\ifmmode\setbox7\hbox{$\accent"5E#1$}\else
  \setbox7\hbox{\accent"5E#1}\penalty 10000\relax\fi\raise 1\ht7
  \hbox{\lower1.15ex\hbox to 1\wd7{\hss\accent"7E\hss}}\penalty 10000
  \hskip-1\wd7\penalty 10000\box7}
  \def\cfudot#1{\ifmmode\setbox7\hbox{$\accent"5E#1$}\else
  \setbox7\hbox{\accent"5E#1}\penalty 10000\relax\fi\raise 1\ht7
  \hbox{\raise.1ex\hbox to 1\wd7{\hss.\hss}}\penalty 10000 \hskip-1\wd7\penalty
  10000\box7} \def\cftil#1{\ifmmode\setbox7\hbox{$\accent"5E#1$}\else
  \setbox7\hbox{\accent"5E#1}\penalty 10000\relax\fi\raise 1\ht7
  \hbox{\lower1.15ex\hbox to 1\wd7{\hss\accent"7E\hss}}\penalty 10000
  \hskip-1\wd7\penalty 10000\box7} \def\cprime{$'$}
  \def\Dbar{\leavevmode\lower.6ex\hbox to 0pt{\hskip-.23ex \accent"16\hss}D}
  \def\cftil#1{\ifmmode\setbox7\hbox{$\accent"5E#1$}\else
  \setbox7\hbox{\accent"5E#1}\penalty 10000\relax\fi\raise 1\ht7
  \hbox{\lower1.15ex\hbox to 1\wd7{\hss\accent"7E\hss}}\penalty 10000
  \hskip-1\wd7\penalty 10000\box7}
  \def\polhk#1{\setbox0=\hbox{#1}{\ooalign{\hidewidth
  \lower1.5ex\hbox{`}\hidewidth\crcr\unhbox0}}} \def\dbar{\leavevmode\hbox to
  0pt{\hskip.2ex \accent"16\hss}d}
  \def\cfac#1{\ifmmode\setbox7\hbox{$\accent"5E#1$}\else
  \setbox7\hbox{\accent"5E#1}\penalty 10000\relax\fi\raise 1\ht7
  \hbox{\lower1.15ex\hbox to 1\wd7{\hss\accent"13\hss}}\penalty 10000
  \hskip-1\wd7\penalty 10000\box7}
  \def\ocirc#1{\ifmmode\setbox0=\hbox{$#1$}\dimen0=\ht0 \advance\dimen0
  by1pt\rlap{\hbox to\wd0{\hss\raise\dimen0
  \hbox{\hskip.2em$\scriptscriptstyle\circ$}\hss}}#1\else {\accent"17 #1}\fi}
  \def\bud{$''$} \def\cfudot#1{\ifmmode\setbox7\hbox{$\accent"5E#1$}\else
  \setbox7\hbox{\accent"5E#1}\penalty 10000\relax\fi\raise 1\ht7
  \hbox{\raise.1ex\hbox to 1\wd7{\hss.\hss}}\penalty 10000 \hskip-1\wd7\penalty
  10000\box7} \def\lfhook#1{\setbox0=\hbox{#1}{\ooalign{\hidewidth
  \lower1.5ex\hbox{'}\hidewidth\crcr\unhbox0}}}
\providecommand{\bysame}{\leavevmode\hbox to3em{\hrulefill}\thinspace}
\providecommand{\MR}{\relax\ifhmode\unskip\space\fi MR }
\providecommand{\MRhref}[2]{%
  \href{http://www.ams.org/mathscinet-getitem?mr=#1}{#2}
}
\providecommand{\href}[2]{#2}


\begin{thebibliography}{FLO12}

\bibitem[BD08]{MR2401221}
Philippe Blanc and Patrick Delorme, \emph{Vecteurs distributions
  {$H$}-invariants de repr\'esentations induites, pour un espace sym\'etrique
  r\'eductif {$p$}-adique {$G/H$}}, Ann. Inst. Fourier (Grenoble) \textbf{58}
  (2008), no.~1, 213--261. \MR{2401221 (2009e:22015)}

\bibitem[Ber88]{MR1075727}
Joseph~N. Bernstein, \emph{On the support of {P}lancherel measure}, J. Geom.
  Phys. \textbf{5} (1988), no.~4, 663--710 (1989). \MR{1075727 (91k:22027)}

\bibitem[BZ76]{MR0425030}
I.~N. Bern{\v{s}}te{\u\i}n and A.~V. Zelevinski{\u\i}, \emph{Representations of
  the group {$GL(n,F),$} where {$F$} is a local non-{A}rchimedean field},
  Uspehi Mat. Nauk \textbf{31} (1976), no.~3(189), 5--70. \MR{0425030 (54
  \#12988)}

\bibitem[BZ77]{MR0579172}
I.~N. Bernstein and A.~V. Zelevinsky, \emph{Induced representations of
  reductive {${\germ p}$}-adic groups. {I}}, Ann. Sci. \'Ecole Norm. Sup. (4)
  \textbf{10} (1977), no.~4, 441--472. \MR{0579172 (58 \#28310)}

\bibitem[FLO12]{MR2930996}
Brooke Feigon, Erez Lapid, and Omer Offen, \emph{On representations
  distinguished by unitary groups}, Publ. Math. Inst. Hautes \'Etudes Sci.
  (2012), 185--323. \MR{2930996}

\bibitem[GK75]{MR0404534}
I.~M. Gel{\cprime}fand and D.~A. Kajdan, \emph{Representations of the group
  {${\rm GL}(n,K)$} where {$K$} is a local field}, Lie groups and their
  representations ({P}roc. {S}ummer {S}chool, {B}olyai {J}\'anos {M}ath.
  {S}oc., {B}udapest, 1971), Halsted, New York, 1975, pp.~95--118. \MR{0404534
  (53 \#8334)}

\bibitem[Gur15]{MR3421655}
Maxim Gurevich, \emph{On a local conjecture of {J}acquet, ladder
  representations and standard modules}, Math. Z. \textbf{281} (2015), no.~3-4,
  1111--1127. \MR{3421655}


\bibitem[GSS15]{MR3416438}
Gourevitch, Dmitry and Sahi, Siddhartha and
              Sayag, Eitan, \emph{Invariant functionals on Speh representations}
, Transformation groups \textbf{20} (2015), no.~4,
  1023--1042. \MR{3416438}

\bibitem[GOSS12]{MR2889169}
Gourevitch, Dmitry and Offen, Omer and Sahi, Siddhartha and
              Sayag, Eitan, \emph{Existence of {K}lyachko models for {$GL(n,\Bbb R)$} and
              {$GL(n,\Bbb C)$}}
, Journal of Functional Analysis \textbf{8} (2012), no.~8,
  3585--3601. \MR{2889169}

\bibitem[HR90]{MR1078382}
Michael~J. Heumos and Stephen Rallis, \emph{Symplectic-{W}hittaker models for
  {${\rm Gl}_n$}}, Pacific J. Math. \textbf{146} (1990), no.~2, 247--279.
  \MR{1078382 (91k:22036)}

\bibitem[IS91]{MR1129515}
N.~F.~J. Inglis and J.~Saxl, \emph{An explicit model for the complex
  representations of the finite general linear groups}, Arch. Math. (Basel)
  \textbf{57} (1991), no.~5, 424--431. \MR{1129515 (92j:20039)}

\bibitem[KL12]{MR2996769}
Arno Kret and Erez Lapid, \emph{Jacquet modules of ladder representations}, C.
  R. Math. Acad. Sci. Paris \textbf{350} (2012), no.~21-22, 937--940.
  \MR{2996769}

\bibitem[Kly83]{MR691984}
A.~A. Klyachko, \emph{Models for complex representations of groups {${\rm
  GL}(n,\,q)$}}, Mat. Sb. (N.S.) \textbf{120(162)} (1983), no.~3, 371--386.
  \MR{MR691984 (84j:20014)}

\bibitem[LM]{1411.6310}
Erez Lapid and Alberto M{\'{\i}}nguez, \emph{On parabolic induction on inner
  forms of the general linear group over a non-archimedean local field},
  Selecta Math. (N.S.) \textbf{to appear}, arXiv:1411.6310.

\bibitem[LM14]{MR3163355}
\bysame, \emph{On a determinantal formula of {T}adi\'c}, Amer. J. Math.
  \textbf{136} (2014), no.~1, 111--142. \MR{3163355}

\bibitem[Mit14]{MR3227442}
Arnab Mitra, \emph{On representations of {${\rm GL}_{2n}(F)$} with a symplectic
  period}, Pacific J. Math. \textbf{268} (2014), no.~2, 435--463. \MR{3227442}

\bibitem[MW86]{MR863522}
C.~M{\oe}glin and J.-L. Waldspurger, \emph{Sur l'involution de {Z}elevinski},
  J. Reine Angew. Math. \textbf{372} (1986), 136--177. \MR{863522 (88c:22019)}

\bibitem[Off06a]{MR2254544}
Omer Offen, \emph{On symplectic periods of the discrete spectrum of {${\rm
  GL}_{2n}$}}, Israel J. Math. \textbf{154} (2006), 253--298. \MR{2254544
  (2007h:11064)}

\bibitem[Off06b]{MR2248833}
\bysame, \emph{Residual spectrum of {${\rm GL}_{2n}$} distinguished by the
  symplectic group}, Duke Math. J. \textbf{134} (2006), no.~2, 313--357.
  \MR{2248833 (2007h:11063)}

\bibitem[OS07]{MR2332593}
Omer Offen and Eitan Sayag, \emph{On unitary representations of {${\rm
  GL}_{2n}$} distinguished by the symplectic group}, J. Number Theory
  \textbf{125} (2007), no.~2, 344--355. \MR{2332593 (2009a:22012)}

\bibitem[OS08a]{MR2417789}
\bysame, \emph{Global mixed periods and local {K}lyachko models for the general
  linear group}, Int. Math. Res. Not. IMRN (2008), no.~1, Art. ID rnm 136, 25.
  \MR{2417789 (2009e:22017)}

\bibitem[OS08b]{MR2414223}
\bysame, \emph{Uniqueness and disjointness of {K}lyachko models}, J. Funct.
  Anal. \textbf{254} (2008), no.~11, 2846--2865. \MR{2414223 (2009e:22018)}

\bibitem[OS09]{MR2515933}
\bysame, \emph{The {$\rm SL(2)$}-type and base change}, Representation Theory. An Electronic Journal of the American
              Mathematical Society \textbf{13} (2009), no.~11, 228--235. \MR{2515933}

\bibitem[Tad86]{MR870688}
Marko Tadi{\'c}, \emph{Classification of unitary representations in irreducible
  representations of general linear group (non-{A}rchimedean case)}, Ann. Sci.
  \'Ecole Norm. Sup. (4) \textbf{19} (1986), no.~3, 335--382. \MR{870688
  (88b:22021)}

\bibitem[Zel80]{MR584084}
A.~V. Zelevinsky, \emph{Induced representations of reductive {${\germ p}$}-adic
  groups. {II}. {O}n irreducible representations of {${\rm GL}(n)$}}, Ann. Sci.
  \'Ecole Norm. Sup. (4) \textbf{13} (1980), no.~2, 165--210. \MR{584084
  (83g:22012)}

\end{thebibliography}

\end{document}